\numberwithin{equation}{section}
\pgfplotsset{compat=1.17}
\crefname{assumption}{Assumption}{Assumptions}
     \newtheorem{theorem}{Theorem}[section]
\newtheorem{proposition}[theorem]{Proposition}
\newtheorem{lemma}[theorem]{Lemma}
\newtheorem{corollary}[theorem]{Corollary}
\theoremstyle{remark}
\newtheorem{remark}{Remark}[section]
\theoremstyle{definition}
\newtheorem{definition}{Definition}[section]
\newtheorem{assumption}{Assumption}
\numberwithin{equation}{section}
\newcommand{\N}{\mathbb{N}}
\newcommand{\R}{\mathbb{R}}
\newcommand{\E}{\mathcal{E}}
\newcommand{\F}{\mathcal{F}}
\newcommand{\norm}[1]{\left\Vert #1 \right\Vert}
\newcommand{\abs}[1]{\left\vert #1 \right\vert}
\DeclareMathOperator{\divtmp}{div}
\renewcommand{\div}{\divtmp}
\DeclareMathOperator{\esssup}{ess\,sup}
\newcommand{\st}{\,:\,}
\DeclareMathOperator{\supp}{supp}
\newcommand{\de}{\mathrm{d}}
\renewcommand{\d}{\,\mathrm{d}}
\DeclareMathOperator{\spann}{span}
\newcommand{\eps}{\varepsilon}
\DeclareMathOperator{\dist}{dist}
\DeclareMathOperator{\trace}{Tr}
\DeclareMathOperator{\Lip}{Lip}
\newcommand{\M}{\mathcal M}
\newcommand{\grad}{\nabla}
\renewcommand{\deg}{\operatorname{deg}}
\newcommand{\wsto}{\overset{\ast}{\rightharpoonup}}
\newcommand{\restr}{\,\mathbin{\vrule height 1.6ex depth 0pt width
0.13ex\vrule height 0.13ex depth 0pt width 1.3ex}}
\newcommand{\vol}{\mathrm{Vol}}
\newcommand{\sind}[1]{^{(#1)}}
\titlespacing*{\paragraph}{\parindent}{2pt}{1em}
  \newenvironment{listi}
  {\begin{list} 
 {(\roman{broj})}
{ \usecounter{broj}}
       \setlength{\labelwidth}{25pt}
  }
{   \end{list} }
\newcounter{broj}
  \newcommand{\te}{\textrm}
\newcommand{\nc}{\normalcolor}
\definecolor{mygreen}{rgb}{0.1,0.75,0.2}
\definecolor{dred}{rgb}{0.6,0,0}
\definecolor{lgray}{rgb}{0.5,0.5,0.5}
\definecolor{dgrn}{rgb}{0.1,0.6,0.1}
\def\Xint#1{\mathchoice
   {\XXint\displaystyle\textstyle{#1}}%
   {\XXint\textstyle\scriptstyle{#1}}%
   {\XXint\scriptstyle\scriptscriptstyle{#1}}%
   {\XXint\scriptscriptstyle\scriptscriptstyle{#1}}%
   \!\int}
\def\XXint#1#2#3{{\setbox0=\hbox{$#1{#2#3}{\int}$}
     \vcenter{\hbox{$#2#3$}}\kern-.5\wd0}}
\def\dashint{\Xint-}
\newcommand{\keywords}[1]{\par\smallskip\noindent\textbf{Keywords. }#1\par}
\newcommand{\msc}[1]{\par\noindent\textbf{MSC2020 classes. }#1\par}
\begin{document}

\title{Convergence of graph Dirichlet energies and graph Laplacians on intersecting manifolds of varying dimensions}
\author{Leon Bungert\thanks{%
Institute of Mathematics, Center of Artifical Intelligence and Data Science (CAIDAS), University of Würzburg, Emil-Fischer-Str. 40, 97074 Würzburg, Germany. Email: \href{mailto:leon.bungert@uni-wuerzburg.de}{leon.bungert@uni-wuerzburg.de}
}
\and
Dejan Slep\v{c}ev\thanks{Department of Mathematical Sciences, Carnegie Mellon University, Pittsburgh, United States, Email: \href{mailto:slepcev@math.cmu.edu}{slepcev@math.cmu.edu}}
} 

\maketitle

\begin{abstract}
We study  $\Gamma$-convergence of graph Dirichlet energies and spectral convergence of graph Laplacians on unions of intersecting manifolds of potentially different dimensions. 
Our investigation is motivated by problems of machine learning, as real-world data often consist of parts or classes with different intrinsic dimensions.
An important challenge is to understand which machine learning methods adapt to such varied dimensionalities. 
We investigate the standard unnormalized and the normalized graph Dirichlet energies.
We show that the unnormalized energy and its associated graph Laplacian asymptotically only sees the variations within the manifold of the highest dimension. On the other hand, we prove that the  normalized Dirichlet energy converges to a (tensorized) Dirichlet energy on the union of manifolds that adapts to all dimensions simultaneously. We also establish the related spectral convergence and present a few numerical experiments to illustrate our findings. 
 \end{abstract}

\keywords{graph Laplacian, Dirichlet form,  nonlocal Dirichlet energy, Gamma-convergence, spectral convergence, discrete-to-continuum limit, multi-manifold learning}
\msc{49J55, 49J45, 60D05, 49R50, 68R10, 62G20}

\tableofcontents

\section{Introduction}

\subsection{Motivation and Discussion}

We investigate  $\Gamma$-convergence of graph Dirichlet energies and spectral convergence of graph Laplacians on unions of intersecting manifolds of potentially different dimensions. In the process, we also refine the results on 
$\Gamma$-convergence of nonlocal Dirichlet energies on metric measure spaces, when specialized to unions of intersecting manifolds. 
The motivation for our work arises in machine learning where graph Laplacians are used  to capture and utilize the intrinsic geometry of the data \cite{belkin2001laplacian,belkin2003laplacian,bruna2013spectral,CoifmanLafon05DM,defferrard2016convolutional,ng2001spectral,vonLuxburg2007Tutorial,zhou20gnn}. In many (real life) data sets the intrinsic dimensionality of components or clusters may differ \cite{allegra19, brown2023verifying,medina2019heuristic,carter10} and, furthermore, the clusters can have some intersection. For example, in sets of hand-written digits the intrinsic dimensionality of sets of digit 4 is greater than that of digit 1. Furthermore, there is typically some overlap between sets of digits 1 and 7. 
This lead to the development of specific algorithms tailored to discover the multi-manifold structure of the data \cite{wang2015multi,Arias-Castro_Chen_Lerman_2011,goldberg2009multi,wang2010multi,wang2011spectral,chen2025robust,babaeian2015multiple,chen2023largest,gong2012robust}.

While there have been many theoretical studies of graph Laplacians and their convergence, including \cite{WormellReich21, ChengWu22, CalderGTLewicka22, CalderGT22, ArmstrongVenkatraman25optimal, GTLi2025Venkatraman25minimax, hein2007graph, vonLuxburg2007Tutorial, vLBeBo08, singer2006graph, SinWu13, garcia2019variational, garcia2020error,calder2023rates,bungert2024poisson}, there are only a few works \cite{trillos2023large,ullrich2024} that theoretically analyze Laplacian-based algorithms for data of different intrinsic dimensionality. In particular, none of the rigorous works investigated the asymptotics of the standard  normalized graph Laplacians on unions of manifolds.
Here we show that the asymptotic behavior, as the number of available data points grows, of different, popular, forms of graph Laplacians greatly differs when applied to data sampled from manifolds of different dimensions. 
 We investigate the standard unnormalized and normalized graph Laplacians, 
 and show that the unnormalized graph Laplacian and the associated unnormalized Dirichlet energy, asymptotically only see the variations within the manifold of the highest dimension, \cref{thm:unnormalized_Gamma,thm:unnormalized_spectral}. On the other hand, we prove that the normalized Dirichlet energy converges to a  Dirichlet energy on the union of manifolds that adapts to all dimensions simultaneously, \cref{thm:gamma,thm:spectral_normalized}.

Our results have important consequences to the understanding of graph Laplacian based  machine learning algorithms. Namely the following are  desirable property of algorithms: (i) they capture information about parts of all dimensions present and (ii) separate smooth  manifolds that intersect at a positive angle, as these are thought to represent different data clusters. We show that the standard normalized graph Laplacians has both of these properties, except in the case that intersecting manifolds have the same intrinsic dimension and that the intersection is of codimension one, when (ii) fails. We note that having such intersection would be a very unusual situation for data lying in a high-dimensional ambient space, as is typically the case for images and other data classes. 
On the other hand, the standard, unnormalized, graph Laplacian only sees the variations in data of the highest dimension. Thus, our results provide a strong argument for using the normalized graph Laplacian over the unnormalized one.
\paragraph{Outline.} 
We review the literature on convergence of graph Laplacians in \cref{sec:literature-graphs} and on convergence of nonlocal Laplacians in \cref{sec:literature-nonlocal}.
We introduce the setup and state the main results in \cref{sec:setup+main}. 
In particular, we describe the geometric setup in \cref{sec:unions}, graph construction in \cref{sec:graph_intro}, introduce the graph and nonlocal Dirichlet energies in \cref{sec:normalized_energies}, and state the results on $\Gamma$-convergence (\cref{thm:gamma}) and compactness (\cref{thm:compactness})  for graph-based Dirichlet energies in \cref{sec:main_results}. 
The result on spectral convergence of the normalized graph Laplacian (\cref{thm:spectral_normalized}) is stated and proved in \cref{sec:normalized_EVconvergence}, while the one for the unnormalized graph Laplacian (\cref{thm:unnormalized_spectral}) is established  in \cref{sec:unnormalized_EVconvergence}. 
In \cref{sec:numerics} we illustrate our theoretical findings with numerically computed graph Laplacian spectra. 
Results on $\Gamma$-convergence (\cref{thm:Gamma_NL}) and compactness (\cref{thm:compactness_NL}) for nonlocal Dirichlet energies are presented and proved in \cref{sec:nonlocal}.
Finally, the proofs of our main results,  \cref{thm:compactness} and \cref{thm:gamma}, are presented in \cref{sec:proofs_graphs}.

In the appendix we establish or recall some the auxiliary facts. Specifically in \cref{sec:appendix_angle} we characterize the angle between intersecting manifolds, in \cref{sec:appendix_TLp} we recall the $TL^p$ metric that allows one to compute the discrete and continuum setting, in \cref{sec:appendix_Gamma} we recall existing results on $\Gamma$-convergence of graph Laplacians for data on manifolds, in \cref{sec:appendix_trace} we show that the boundedness of the energies as $n \to \infty$ in the codimension-one case implies that the desired trace condition is satisfied at the intersection,   in
\cref{sec:appendix_smooth_approx} we establish density of smooth functions in Sobolev space $H^1$ on union of intersecting manifolds in the setting where the manifolds are of same dimension and the intersection is a codimension-one submanifold, and in \cref{sec:appendix_Lip_dens} we show the density of Lipschitz functions in the same space, but for the case that the codimension of the intersection in at least one of the manifolds is two or more.

\subsection{Related literature} \label{sec:literature}
\subsubsection{Graph Laplacians} \label{sec:literature-graphs}

Graph Laplacians play an important role in machine learning.
Most of  data encountered in practice are high dimensional: signals include many measurements and  images have many pixels. 
Fortunately, most data possess some intrinsic low-dimensional structure that makes machine learning tasks tractable with a limited amount of data. To access and encode the low-dimensional structure in the data, a large family of algorithms is based on first creating a graph by connecting nearby data points. 
The graphs provide an implicit description of the geometry of the data which, in turn, enables computationally efficient approaches to many tasks. In this work, we study commonly used weighted geometric graphs, where edge weights are a function of the distance between data points, which are the vertices of the graph, as described in \cref{sec:graph_intro}. 

One of the key objects associated to such graphs is the graph Laplacian which plays a key role in a variety of machine learning methods such as spectral clustering \cite{belkin2001laplacian,belkin2003laplacian,vonLuxburg2007Tutorial,ng2001spectral}, diffusion maps \cite{CoifmanLafon05DM}, and graph neural networks \cite{bruna2013spectral,defferrard2016convolutional,zhou20gnn}. 
The main forms of Laplacians considered are the unnormalized graph Laplacian \labelcref{def:unnormalized_graph_Lapl}, the normalized graph Laplacian \labelcref{def:normalized_graph_Lapl}, and the random walk Laplacian which is spectrally equivalent to the normalized one (see \cite{vonLuxburg2007Tutorial}).
The quadratic forms associated to these operators are the unnormalized graph Dirichlet energy \labelcref{eq:unnormalized_discrete_dirichlet} and the normalized graph Dirichlet energy \labelcref{eq:normalized_discrete_dirichlet}.

Providing theoretical guaranties for such methods spurred on intense research that has seen remarkable progress over the past two decades.  
The works considered the limit of these and related Laplacians under the assumption that data lie in an open set in Euclidean space or on a smooth manifold embedded in the Euclidean space. 
Early works established criteria and error estimates for convergence of graph Laplacians towards the weighted Laplacians, when applied to a fixed smooth function \cite{hein2007graph,singer2006graph}, a notion known as pointwise convergence. 
Subsequently, based on the techniques developed in \cite{garcia2016continuum}, $\Gamma$-convergence of graph Dirichlet energies, which we recall in \cref{thm:Gamma-convergence_standard}, was established on Euclidean domains
\cite{GTS18} and manifolds \cite{laux2025large}. 
Spectral convergence establishes criteria for the eigenvalues and eigenvectors of graph Laplacians to converge to eigenvalues and eigenfunctions of weighted Laplacians or weighted Laplace--Beltrami operators in manifold setting, respectively. 
It is implied by $\Gamma$-convergence via the use of the Courant--Fischer formula for eigenpairs \cite{GTS18,garcia2019variational}, but can also be established using operator perturbation estimates, see \cite{belkin2007convergence,vLBeBo08,CalderGT22,SinWu13}.
A number of recent works established error estimates on the convergence of eigenvalues and eigenvectors of graph Laplacians~\cite{ArmstrongVenkatraman25optimal,garcia2020error,CalderGTLewicka22,GTLi2025Venkatraman25minimax,WormellReich21,ChengWu22}.
   
The works above make the manifold hypothesis, namely assume that data lie on a low-dimensional manifold in the high-dimensional ambient space, or study the simpler case that the data lie in an open set. 
As we argue in the introduction, real data are often comprised of parts that have different intrinsic dimensions. To come closer to capturing the structure of such data, in this paper we consider the case that data lie on a union of, potentially intersecting, manifolds whose intrinsic dimension can differ. 
There are only a few papers in the literature that investigate graph Laplacians on samples of data from intersecting manifolds.

Ullrich \cite{ullrich2024} considers glued manifolds of differing dimensions. However, his goal and  objects of interest are different. 
Namely, we show that on unions of intersecting  manifolds (similar results would hold for glued ones) if the dimensions differ, the limiting problem reduces to the one of a disjoint union of the component manifolds.
In particular, the resulting diffusion would remain on the manifold it started on. 
On the other hand, Ullrich studies how the Laplacian should be reweighted near the intersection (meaning that the diffusion is accelerated) so that the resulting process would visit both manifolds. He is able to obtain precise conditions on the scaling of the weights with respect to the spaces involved for the mixing to happen.
We remark that from the perspective of machining learning tasks such as clustering and classification, it is desirable if the manifolds representing different clusters would be seen as disjoint in the limit. 

García Trillos, He, and  Li \cite{trillos2023large} study the asymptotic behavior of graph Laplacians on samples of intersecting manifolds with the goal of modifying the graph construction so that the limiting problem would be one on the disjoint union of manifolds. 
In particular, they consider graph constructions that have few connections between manifolds (``sparse outer connectivity'' in their terminology). They show that for such graphs the unnormalized graph Laplacian on unions of manifolds of the same dimension indeed spectrally converges to the (weighted) Laplace--Beltrami operator on the disjoint union of manifolds. Furthermore, they provide error estimates for eigenvalues and eigenvectors. When the manifold dimensions differ, they showed that the limit problem only sees the manifolds of the highest dimension, similarly to the limit in such situation in \cref{thm:unnormalized_spectral}. Finally, they provide an example of the graph construction that satisfies the conditions needed.
There, nodes in ``annular proximity graphs with angle constraints'' are connected by an edge if there exists a relatively straight path in a standard random geometric graph between them.

\subsubsection{Nonlocal functionals} \label{sec:literature-nonlocal}
The connection between nonlocal functionals and Dirichlet forms has been explored extensively in the literature \cite{AlonsoBaudoin23,Fukushima,KorevaarSchoen93,KumagaiSturm05,Sturm98AP,Sturm98how}.
   Korevaar and Schoen \cite{KorevaarSchoen93} introduced the functional, now known as the Korevaar--Schoen energy,
\begin{equation}
    E^{KS}_\eps(u) := \int_{\M} \dashint_{B(x,\eps)} \frac{(u(x)-u(y))^2}{\eps^2} \d\mu(y) \d\mu(x)
\end{equation}
in order to consider Sobolev spaces on spaces lacking smoothness. We note that the functional can be rewritten as
\begin{equation}
    E^{KS}_\eps(u) = \frac{1}{2\eps^2} \int_{\M} \int_{\M} \left( \frac{1}{\eta_\eps(\abs{x-\cdot})\star\mu} +  \frac{1}{\eta_\eps(\abs{y-\cdot})\star\mu}\right) \eta_\eps(|x-y|)  (u(x)-u(y))^2 \d\mu(y) \d\mu(x)
\end{equation}
  for $\eta(s) = \chi_{[0,1]}$. The structure of this energy is similar to that of the normalized nonlocal Dirichlet energy \labelcref{eq:nonlocal_fctl}. 

In the sequence of works that followed, Sturm \cite{Sturm98AP,Sturm98how} and Kumagai--Sturm \cite{KumagaiSturm05} used the Korevaar--Schoen energy and related nonlocal Dirichlet energies to define Dirichlet forms on spaces lacking  manifold structure, such as fractals (this required replacing $\eps^2$ by a more general function $h(\eps^2)$). 
In particular, Dirichlet forms are defined as $\Gamma$-limits of nonlocal energies along sequences. The limiting form may depend on the sequence considered. If the measures considered satisfy a doubling property, the authors are able to obtain more detailed properties of the resulting forms. 
Alonso and Baudoin \cite{AlonsoBaudoin23} have studied the Dirichlet energies on Cheeger spaces and shown compactness of sequences and have improved $\Gamma$-convergence to Mosco convergence.
Cheeger spaces are metric measure spaces where the measure satisfies a doubling property and where Poincar\'e inequality with a Lipschitz semi-norm replacing the modulus of the gradient holds. 
We remark that in all of these works the $\Gamma$-convergence follows from abstract results, and the limit may depend on the subsequence taken. 

The contribution of our work in the context of Dirichlet forms on general spaces is that we provide a precise characterization of the limiting form via classical (gradient-based) Dirichlet forms on individual manifolds. 
This provides detailed information and useful examples of Dirichlet forms on metric measure spaces which do not satisfy measure-doubling property in general. 
In particular, when two manifolds $\M\sind{1},\M\sind{2}\subset\R^d$ embedded in $\R^d$ have different dimensions and their intersection $\M\sind{12}:=\M\sind{1}\cap\M\sind{2}$ is non-empty, their union is a metric measure space (equipped with the volume measure inherited from the volume measures of the individual manifolds) which does not satisfy the volume doubling property. 
To see this, consider points $x$ in $\M\sind{2}$ at distance $r$ from $\M\sind{12}$ and compare $\mu(B(x,r))$ and $\mu(B(x,2r))$ while letting $r \to 0$.
We also note that our limiting form does not depend on the sequence considered. 
Finally, our compactness result  \cref{thm:compactness_NL} is new.

\section{Setup and main results}
\label{sec:setup+main}

\subsection{Geometry of the space}
\label{sec:unions}

For notational simplicity we just consider a space which is a union of two intersecting manifolds. 
Let $\M := \M\sind{1} \cup \M\sind{2}$ be the union of two compact, smooth Riemannian manifolds without boundary  of dimension $d\sind{i}$, $i=1,2$, embedded in Euclidean space, $\M\sind{i}\subset\R^N$. Without a loss of generality we will assume in throughout the paper that $1 \leq d\sind{1}\leq d\sind{2}$.
We assume that the manifolds intersect in a \emph{nondegenerate} way, which means that
their intersection $\M\sind{12}:=\M\sind{1}\cap \M\sind{2}$ is a manifold of dimension $d\sind{12}<d\sind{1}$ embedded in $\R^N$ and for every $x \in \M\sind{12}$ the dimension of $\spann(T_x\M\sind{1}  \cup T_x\M\sind{2}) $ is $d\sind{1}+d\sind{2}-d\sind{12}$. 
We note that the smallest principal angle
\begin{align}\label{eq:angle}
\begin{split}
    \theta := \min_{x\in\M\sind{12}}\Big\{  \angle(u,v) \st & u \in  T_x\M\sind{1}  \cap (T_x\M\sind{1}  \cap T_x\M\sind{2})^\perp, \; u \neq 0\\
    & v \in  T_x\M\sind{2}  \cap (T_x\M\sind{1}  \cap T_x\M\sind{2})^\perp, \; v \neq 0  \Big\},
\end{split}
\end{align}
is positive.
Here, $\angle(u,v) := \arccos\frac{\abs{\langle u,v\rangle}}{\abs{u}\abs{v}}$ is the  angle between two vectors $u,v\in\R^N$. 
We point to \cref{lem:angle} in the appendix for an important consequence of the positivity of the principal angle.

We consider random samples of a probability measure supported on $\M$. More precisely, 
consider probability measures $\mu\sind{i} \in \mathcal P(\M\sind{i})$ supported in the manifolds $\M\sind{i}$, for $i=1,2$. We assume that $\mu\sind{i}$ has a density $\rho\sind{i}$ with respect to the volume measure $\vol\sind{i}$ on $\M\sind{i}$ and as a consequence it holds $\mu\sind{i}(\M\sind{12}) = 0$.
We pose the following set of standard assumptions for the densities:
\begin{assumption}\label{ass:densities}
The probability densities $\rho\sind{i}$ with $i\in\{1,2\}$ satisfy the following:
    \begin{listi}
        \item $\rho\sind{i}$ is Lipschitz continuous on $\M\sind{i}$.
        \item $\rho\sind{i} $ is bounded from above and below by positive numbers: $\frac{1}{C_\rho}\leq\rho\sind{i}\leq C_\rho$ for some $C_\rho>0$.
    \end{listi}
\end{assumption}
 To describe the proportions of samples from each of the two manifolds, we introduce numbers $ \alpha\sind{i}>0$ for $i=1,2$, satisfying $ \alpha\sind{1}+ \alpha\sind{2}=1$. 
We use them to define a mixture model for the union of manifolds by defining the probability measure $\mu \in \mathcal P(\M)$ via $\mu(A) := \sum_{i=1}^2\alpha\sind{i}\mu\sind{i}(A\cap\M\sind{i})$ for Borel subsets $A\subset\M$.
 The measure $\mu$ has density $\rho$ with respect to the volume measure $\vol$ on $\M$, defined via $\vol\restr_{\M\sind{i}}:=\vol\sind{i}$, where 
\begin{align} \label{eq:rho}
\rho\vert_{\M\sind{i}}:=\alpha\sind{i}\rho\sind{i} \quad \te{for }i\in\{1,2\}.
\end{align}
Let $V_n$ be a set of  $n$  \emph{i.i.d.} samples of the probability measure $\mu$. 
Let $n\sind{i}$ be the number of samples that lie in $\M\sind{i}$ for $i=1,2$. 
Note that $n\sind{i}\sim\mathrm{Binomial}(n,\alpha\sind{i})$ and hence the points in $V_n\cap\M\sind{i}$ constitute a so-called Binomial point process. 
Then, almost surely, $n\sind{1}+n\sind{2}=n$ and by the law of large numbers
       \begin{align}\label{eq:alphas}
     \lim_{n\to\infty}\frac{n\sind{i}}{n} = \alpha\sind{i}
    \qquad i\in\{1,2\}.
\end{align}
                                                   
\subsection{Graph construction}
\label{sec:graph_intro}

Given a set of $n$ points $V_n$ in $\R^N$ (samples of $\M$), a weight profile $\eta:[0,\infty)\to[0,\infty)$ and a bandwidth parameter $\eps>0$, we construct a weighted graph whose vertices are points in $V_n$ and where the vertices $x,y\in V_n$ are connected by an edge with the weight $\eta_\eps(\abs{x-y})$.
Here $\abs{\cdot}$ denotes the Euclidean distance in $\R^N$. We define $\eta_\eps(\cdot)=\eta(\cdot/\eps)$.
We remark that the weight function $\eta_\eps(\abs{x-y})$ will be the only quantity which explicitly depends on the ambient Euclidean metric in $\R^N$. 
    
We note that our definition of $\eta_\eps$ does not contain a dimension-dependent scaling of the bandwidth~$\eps$. Many papers include such scaling, as it is needed when working with the unnormalized graph Dirichlet energy or Laplacian. 
However, such scaling is not relevant for the normalized energies we consider.
When dealing with the unnormalized Laplacian we explicitly include the dimensional scaling in the statement of \cref{thm:unnormalized_spectral}.
We impose the following requirements on the weight function $\eta$ that are standard in the field of graph-based learning.
\begin{assumption}\label{ass:eta}
    The function $\eta:[0,\infty)\to[0,\infty)$ satisfies:
    \begin{listi} \addtolength{\itemsep}{-3pt}
        \item $t\mapsto\eta(t)$ is non-increasing.
        \item $\eta(0)>0$ and $\eta$ is continuous at $0$.
        \item $\supp\eta = [0,1]$.
     \end{listi}   
\end{assumption}
We first remark that condition (iii) can be, trivially, replaced by the requirement that the support is compact. 
Moreover, condition (iii) could be replaced by a decay condition at infinity. However, this would necessitate estimating the tail terms and hence make the proofs more complicated. 
For $i=1,2$ we define the following moments
\begin{subequations}\label{eq:sigma_beta}
\begin{align}
    \label{eq:sigma}
    \sigma_{\eta}\sind{i} &:= 
    \int_{\R^{d\sind{i}}} \eta(|x|) |x_1|^2 \d x, \\
    \label{eq:beta}
    \beta_{\eta}\sind{i} &:= \int_{\R^{d\sind{i}}} \eta(|x|) \d x,
\end{align}
\end{subequations}
which are finite if $\eta$ satisfies \cref{ass:eta}.
Finally, we let
\begin{align}\label{eq:degree}
    \deg_{n,\eps}(x) := \frac{1}{n} \sum_{y \in V_n} \eta_\eps(|x-y|)   
\end{align}
denote the weighted degree of $x$, or in other words, the average of all edge weights adjacent to $x$.

\subsection{Discrete and continuum normalized Dirichlet energies}
\label{sec:normalized_energies}

For $\eps>0$, on the graph with vertices $V_n$ and edge weights $\eta_\eps(|x-y|)$, the
discrete normalized Dirichlet energy of a function $u:V_n\to\R$ is defined by
\begin{align}\label{eq:normalized_discrete_dirichlet}
E_{n,\eps}(u) := \frac{1}{n^2 \eps^2} \sum_{x,y\in V_n}\eta_\eps (|x-y|) \abs{\frac{u(x)}{\sqrt{\deg_{n,\eps}(x)}}-\frac{u(y)}{\sqrt{\deg_{n,\eps}(y)}}}^2.
\end{align}
The empirical measure of points in $V_n$ is the probability measure
\begin{align*}
    \mu_n:=\frac{1}{n}\sum_{x\in V_n}\delta_x
\end{align*}
Since $n\sind{i}$ are both positive with (very) high probability, $\mu_n$ can be written as a convex combination of two probability measures
\begin{align*}
    \mu_n =
    \sum_{i=1}^2
    \frac{n\sind{i}}{n}
    \mu_n\sind{i} \qquad \te{where }\;
    \mu_n\sind{i} := \frac{1}{n\sind{i}}\sum_{x\in V_n\cap\M\sind{i}}\delta_x,
\end{align*}
are the empirical measures of the data points that lie in $\M\sind{i}$ for $i\in\{1,2\}$.
We can rewrite and reinterpret $E_{n,\eps}$ as functional on $L^2(\mu_n)$ as follows
\begin{align}\label{eq:graph_fctl_integral}
    E_{n,\eps}(u)
    :=
    \frac{1}{\eps^2} \int_\M\int_\M\eta_\eps (|x-y|) \abs{\frac{u(x)}{\sqrt{\eta_\eps(\abs{x-\cdot})\star\mu_n}}-\frac{u(y)}{\sqrt{\eta_\eps(\abs{y-\cdot})\star\mu_n}}}^2\d\mu_n(x)\de\mu_n(y),
\end{align}
where we used the notation
\begin{align*}
    \eta_\eps(\abs{x-\cdot})\star\mu_n
    :=
    \int_\M \eta_\eps(\abs{x-z})\d\mu_n(z)
    =
    \deg_{n,\eps}(x).
\end{align*}
This interpretation gives rise to a nonlocal functional defined on $L^2(\M)$ which arises as large sample limit of $E_{n,\eps}$ and takes the form
\begin{align}\label{eq:nonlocal_fctl}
    \E_\eps(u) := 
    \frac{1}{\eps^2} \int_\M\int_\M\eta_\eps (|x-y|) \abs{\frac{u(x)}{\sqrt{\eta_\eps(\abs{x-\cdot})\star\mu}}-\frac{u(y)}{\sqrt{\eta_\eps(\abs{y-\cdot})\star\mu}}}^2\d\mu(x)\de\mu(y),
\end{align}
where $\eta_\eps(\abs{x-\cdot})\star\mu := \int_\M\eta_\eps(\abs{x-z})\d\mu(z)$.

The main result of this paper is the proof that the functionals $E_{n,\eps}$ $\Gamma$-converge as $n\to\infty$ and $\eps\to 0$ to a normalized Dirichlet energy $\E: L^2(\M) \to [0, \infty]$, defined as
\begin{align}\label{eq:limit_fctl}
    \E(u) = 
    \begin{dcases}
    \sum_{i=1}^2
    \frac{\sigma_\eta\sind{i}}{\beta_\eta\sind{i}}
    \int_{\M\sind{i}}\abs{\nabla_{\M\sind{i}} \left( \frac{u}{\sqrt{ \alpha\sind{i} \rho\sind{i}}} \right) \alpha\sind{i}\rho\sind{i}}^2 \d\vol\sind{i}
    \quad &\te{if } u\in H^1_{\sqrt \mu}(\M),  \\
    +\infty   \quad &\te{else}.
    \end{dcases}
\end{align}
Here, $\alpha\sind{i}$  are as in \labelcref{eq:rho}, $\sigma_\eta\sind{i}$ as well as $\beta_\eta\sind{i}$ are defined in \labelcref{eq:sigma_beta}, and the Sobolev space $H^1_{\sqrt{\mu}}(\M)$ is defined as
\begin{align}\label{eq:def_H1mu}
\begin{split}
    H^1_{\sqrt \mu}(\M) 
    :=&  
    \Bigg\{ u \in L^2(\M) \st  \frac{u|_{\M\sind{i}}}{\sqrt{\rho\sind{i}}} \in H^1(\M\sind{i}) \te{ for } i=1,2 \te{ and }
    \\ 
   &\quad\te{if } d\sind{1} = d\sind{2} = d\sind{12} + 1 \te{ then } \trace\sind{1}\left(\frac{u}{\sqrt{\alpha\sind{1}\rho\sind{1}}}\right)=\trace\sind{2}\left(\frac{u}{\sqrt{\alpha\sind{2}\rho\sind{2}}}\right)  \Bigg\},
\end{split}
\end{align}
where $\trace\sind{i} : H^1(\M\sind{i}) \to H^{1/2}(\M\sind{12})$ is the trace operator from each manifold $\M\sind{i}$ for $i=1,2$ to the intersection $\M\sind{12}$. 
Note that the trace condition applies only in the special case that manifolds are of the same dimension and their intersection is a codimension-$1$ submanifold in each. Then we are asking for the regularity of $u/\sqrt{\rho}$ on the union of the manifolds, while $u$
itself can have a jump discontinuity between the manifolds (if $\alpha\sind{1}\rho\sind{1}$ and $\alpha\sind{2}\rho\sind{2}$ differ of $\M\sind{12}$).

\begin{remark} \label{rem:Gauss}
We note that if $\eta(s) = c \exp(-s^2/2)$, that is, if $\eta$ is any constant multiple of the Gaussian with unit variance, then the factor   $\frac{\sigma_\eta\sind{i}}{\beta_\eta\sind{i}}=1$ regardless of $i$ (or in other words regardless of the dimension).  In particular the weighted Laplacian corresponding to $\E$ does not have dimension-dependent factors. We note that this example does not fit the \cref{ass:eta} (iii), but as we remarked the theoretical results are expected to hold for $\eta$ that decays sufficiently fast to zero. 
\end{remark}

For the $\Gamma$-convergence to hold we require, as is standard, that $\eps = \eps_n$ converges to zero sufficiently slowly as $n \to \infty$. In particular we assume
\begin{assumption}\label{ass:epsilon}
    The sequence of graph bandwidths $(\eps_n)_{n\in\N}\subset(0,\infty)$ satisfies
    \begin{align}\label{eq:scaling}
        \ell_n \ll \eps_n \ll 1,
    \end{align}
    where we let $d:=\max\left\{d\sind{1}, d\sind{2}\right\}$ denote the maximal dimension of the manifolds and define
    \begin{align*}
        \ell_n := 
        \begin{dcases}
        \sqrt{\frac{\log\log n}{n}} \quad&\text{if }d=1,
        \\
          \left(\frac{\log n}{n}\right)^\frac{1}{d} \quad&\text{if }d\geq 2.
        \end{dcases}        
    \end{align*}
\end{assumption}
  \begin{remark}
Note that with the notation $a_n \ll b_n$ (or equivalently $b_n \gg a_n$) for two sequences of non-negative real numbers $(a_n)_{n\in\N}$ and $(b_n)_{n\in\N}$ we mean that
\begin{align*}
    \lim_{n\to\infty}\frac{a_n}{b_n} = 0.
\end{align*}
We also remark that the condition \eqref{eq:scaling} is essentially sharp in terms of scaling, as it is known that even on a single manifold with $d \geq 2$ the functionals do not converge to a desirable limit if $\eps_n \ll   \left(\nicefrac{\log n}{n}\right)^{1/d}$, see \cite{GTS18}.  
For continuum limits in the challenging percolation regime $\eps_n \sim \left(\nicefrac{\log n}{n}\right)^{1/d}$ on Poisson point clouds we refer to \cite{bungert2024ratio} for the graph infinity Laplacian and to \cite{ArmstrongVenkatraman25optimal} for the spectrum of the graph Laplacian.
\end{remark}
\begin{remark}\label{rem:scaling_condition}
    With probability of order $n^{-\alpha/2}$ for arbitrary $\alpha>2$ the lower bound for $\eps_n$ in \labelcref{eq:scaling} upper bounds the Wasserstein-$\infty$ distance of the absolutely continuous measure $\mu$ and the empirical sample measure $\mu_n$, see \cite{trillos2015rate,garcia2020error} for $d\geq 2$ and \cite{slepcev2019analysis} for the case $d=1$.   
    As a consequence, there exists a constant $C>0$ such that with  probability one there are maps $T_n:\M\to\M$ which satisfy $(T_n)_\sharp\mu=\mu_n$ and
    \begin{align}\label{eq:special_maps}
        \limsup_{n\to\infty}
        \frac{\norm{\operatorname{id}-T_n}_{L^\infty(\M)}}{\ell_n} \leq C.
    \end{align}        
\end{remark}

\subsection{Main results}
\label{sec:main_results}

The following are our main results:
\begin{theorem}[Compactness]\label{thm:compactness}
Under \cref{ass:densities,ass:epsilon,ass:eta} let $u_n\subset L^2(\mu_n)$ be a sequence such that 
\begin{align*}
    \sup_{n\in\N} E_{n,\eps_n}(u_n)<\infty,\qquad \sup_{n\in\N} \norm{u_n}_{L^2(\mu_n)}<\infty.
\end{align*}
Then almost surely $(u_n)_{n\in\N}$ possesses a subsequence which converges in the $TL^2(\M)$-sense.
\end{theorem}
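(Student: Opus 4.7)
The overall strategy is to transport the discrete problem to the continuum via the transport maps from \cref{rem:scaling_condition} and then reduce to the compactness result for the nonlocal Dirichlet energy $\E_\eps$ (\cref{thm:compactness_NL}).

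First, I would invoke the maps $T_n \colon \M \to \M$ satisfying $(T_n)_\sharp \mu = \mu_n$ and $\norm{\mathrm{id}-T_n}_{L^\infty(\M)} \leq C\ell_n$, and set $\tilde u_n := u_n \circ T_n \in L^2(\mu)$. Since $T_n$ pushes $\mu$ forward to $\mu_n$, we have $\norm{\tilde u_n}_{L^2(\mu)} = \norm{u_n}_{L^2(\mu_n)}$, so $(\tilde u_n)$ is bounded in $L^2(\mu)$. By the definition of the $TL^2$ metric recalled in \cref{sec:appendix_TLp}, producing a subsequence of $(\tilde u_n)$ that converges strongly in $L^2(\mu)$ is equivalent to producing a $TL^2$-convergent subsequence of $(u_n)$.

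Second, I would transfer the energy bound. Changing variables $x = T_n(x')$, $y = T_n(y')$ in \labelcref{eq:graph_fctl_integral} and using monotonicity of $\eta$ with the triangle estimate $|T_n(x')-T_n(y')| \leq |x'-y'| + 2\norm{\mathrm{id}-T_n}_\infty$, together with the fact that $\norm{\mathrm{id}-T_n}_\infty \ll \eps_n$, gives
\begin{align*}
    \eta_{\eps_n}(|T_n(x')-T_n(y')|) \geq \eta_{\tilde\eps_n}(|x'-y'|), \qquad \tilde\eps_n = \eps_n(1+o(1)).
\end{align*}
The degree terms require an analogous pointwise comparison: the pushforward property and the transport estimate yield, uniformly in $x\in \M$,
\begin{align*}
    (1-o(1))\,(\eta_{\tilde\eps_n}(|x-\cdot|)\star\mu) \;\leq\; \deg_{n,\eps_n}(T_n(x)) \;\leq\; (1+o(1))\,(\eta_{\tilde\eps_n}(|x-\cdot|)\star\mu),
\end{align*}
so the normalization $1/\sqrt{\deg_{n,\eps_n}(T_n(\cdot))}$ can be replaced, up to a multiplicative $(1+o(1))$ factor, by $1/\sqrt{\eta_{\tilde\eps_n}(|\cdot|)\star\mu}$. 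Absorbing these errors yields, for all sufficiently large $n$, a bound of the form $\E_{\tilde\eps_n}(\tilde u_n) \leq C\, E_{n,\eps_n}(u_n) + o(1)$, so $\sup_n \E_{\tilde\eps_n}(\tilde u_n) < \infty$.

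Third, with $\sup_n \norm{\tilde u_n}_{L^2(\mu)} < \infty$ and $\sup_n \E_{\tilde\eps_n}(\tilde u_n) < \infty$ and $\tilde\eps_n \to 0$, I would apply \cref{thm:compactness_NL} to extract an $L^2(\mu)$-convergent subsequence of $(\tilde u_n)$, which by the first step is a $TL^2$-convergent subsequence of $(u_n)$. The main obstacle is the uniform comparison of the discrete and continuum degrees near the intersection $\M\sind{12}$: there the degree $\deg_{n,\eps_n}$ transitions between the dimensional scalings $\eps_n^{d\sind{1}}$ and $\eps_n^{d\sind{2}}$, so no single dimension-dependent lower bound is available globally. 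The ratio comparison must therefore be obtained without appealing to dimensional scaling, purely from the transport identity $(T_n)_\sharp \mu = \mu_n$ and the bound $\norm{\mathrm{id}-T_n}_\infty \ll \eps_n$, which makes the comparison simultaneously valid in both regimes and is exactly what allows a single compactness statement to accommodate manifolds of different dimensions.
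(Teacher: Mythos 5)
The reduction in your second step does not go through, and it hides exactly the difficulty that the paper's proof is built to handle. Two separate problems. First, even if you had a uniform two-sided comparison $\deg_{n,\eps_n}(T_n(x)) = (1\pm r_n)\,\eta_{\tilde\eps_n}(|x-\cdot|)\star\mu$ with $r_n\to 0$, replacing one normalization by the other \emph{inside} the squared difference does not give a multiplicative perturbation of the energy: writing the substitution out produces additive error terms of order $\eps_n^{-2}\,\sup_x r_n^2\,\norm{u_n}_{L^2(\mu_n)}^2$, since each of the two normalized values being subtracted is only controlled in $L^2$. So you need $\sup_x r_n = o(\eps_n)$, not $o(1)$. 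But the transport bound only yields a relative degree error of order $\norm{\operatorname{id}-T_n}_{L^\infty(\M)}/\eps_n \sim \ell_n/\eps_n$, and \cref{ass:epsilon} guarantees $\ell_n\ll\eps_n$, not $\ell_n\ll\eps_n^2$; already on a single manifold your reduction therefore requires a strictly stronger scaling than the theorem assumes. Second, the uniform $(1+o(1))$ comparison itself fails near the intersection at the assumed scaling: for the kernel comparison you must take $\tilde\eps_n\le\eps_n-2\norm{\operatorname{id}-T_n}_{L^\infty(\M)}$, and for $x\in\M\sind{2}$ at distance $\approx\eps_n$ from $\M\sind{1}$ the ball $B(x,\tilde\eps_n)$ can miss $\M\sind{1}$ entirely (so $\eta_{\tilde\eps_n}(|x-\cdot|)\star\mu\asymp\eps_n^{d\sind{2}}$), while $B(T_n(x),\eps_n)$ captures a cap of $\M\sind{1}$ of mass up to order $(\eps_n\ell_n)^{d\sind{1}/2}$; with $d\sind{1}=1$, $d\sind{2}=2$ the resulting relative discrepancy is of order $\ell_n^{1/2}\eps_n^{-3/2}$, which does not tend to zero in the full regime allowed by \cref{ass:epsilon}. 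Nor can you discard these points as a negligible set: nothing a priori prevents $u_n$ from concentrating its $L^2$ mass on that shell, and excluding precisely this concentration is the core of the compactness statement, so it cannot be "absorbed" as an $o(1)$.

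The paper's proof avoids both issues by never comparing discrete and continuum degrees uniformly. It normalizes with the discrete degrees themselves, $v_n = u_n/\sqrt{\eps_n^{-d\sind{i}}\deg_{n,\eps_n}}$, drops the cross terms to obtain bounded unnormalized graph Dirichlet energies on each manifold separately, and applies the single-manifold graph compactness (\cref{cor:Gamma-convergence_standard}) to $v_n$. It then converts convergence of $v_n$ into convergence of $u_n$ only on the set at distance greater than $\eps_n$ from $\M\sind{1}$, using the \emph{integrated} weight convergence of \cref{lem:weights_converge} (an $L^2$-weighted statement against a fixed limit function, which is much weaker than your uniform claim), and finally kills the possible $L^2$ mass of $u_n$ on the remaining $\eps_n$-strip of $\M\sind{2}$ by the iterative good-set/strip argument from the proof of \cref{thm:compactness_NL}, based on a Cauchy--Schwarz estimate of the cross term of the energy, a discrete degree lower bound near $\M\sind{1}$, and \cref{lem:angle}. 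Any repair of your outline would have to reproduce essentially this integrated-plus-iteration machinery, so the proposed shortcut through a uniform degree comparison constitutes a genuine gap.
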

We note that the definition of $TL^2(\M)$ metric is recalled in \cref{sec:appendix_TLp}.

\begin{theorem}[$\Gamma$-convergence]\label{thm:gamma}
Under \cref{ass:densities,ass:epsilon,ass:eta} almost surely the functionals~$E_{n, \eps_n}$, defined in \labelcref{eq:normalized_discrete_dirichlet}, $\Gamma$-converge to $\E$, defined in \labelcref{eq:limit_fctl}, in the $TL^2(\M)$-sense, meaning that
\begin{enumerate}
    \item Almost surely for all sequences $(u_n)\subset L^2(\mu_n)$ with $u_n\overset{TL^2}{\longrightarrow}u$ for $u\in L^2(\M)$ it holds
    \begin{align*}
        \E(u) \leq \liminf_{n\to\infty}E_{n,\eps_n}(u_n);
    \end{align*}
    \item Almost surely for all $u\in L^2(\M)$ there exists a so-called recovery sequence $(u_n)\subset L^2(\mu_n)$ with $u_n\overset{TL^2}{\longrightarrow}u$ and
    \begin{align*}
        \limsup_{n\to\infty}E_{n,\eps_n}(u_n) \leq \E(u).
    \end{align*}
\end{enumerate}
\end{theorem}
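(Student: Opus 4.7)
The plan is to deduce \cref{thm:gamma} from the already-announced nonlocal $\Gamma$-convergence \cref{thm:Gamma_NL} by bridging the discrete energies $E_{n,\eps_n}$ and the nonlocal energies $\E_{\eps_n}$ through optimal transport maps. By \cref{rem:scaling_condition}, almost surely there exist maps $T_n\colon\M\to\M$ with $(T_n)_\sharp\mu=\mu_n$ and $\norm{\mathrm{id}-T_n}_{L^\infty(\M)}\leq C\ell_n$. Since \cref{ass:epsilon} gives $\ell_n\ll\eps_n$, the pull-back through $T_n$ induces only an asymptotically negligible perturbation of both the kernel $\eta_{\eps_n}(|\cdot-\cdot|)$ and the degrees $\deg_{n,\eps_n}$. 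Throughout, a central technical input is that, thanks to $\eta(0)>0$ in \cref{ass:eta} and the lower bound $\rho\sind{i}\geq C_\rho^{-1}$ in \cref{ass:densities}, the degrees satisfy a uniform bound from below of the form $\deg_{n,\eps_n}(T_n(x))\geq c\,\eps_n^{d\sind{1}}$; this makes the normalization well-posed and controls the nonlinear dependence on $\mu_n$ in terms of $W_\infty(\mu_n,\mu)$.

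For the liminf inequality, given $u_n\xrightarrow{TL^2}u$, set $\tilde u_n\defeq u_n\circ T_n\in L^2(\mu)$, so $\tilde u_n\to u$ in $L^2(\mu)$. Performing the change of variables $x=T_n(\tilde x)$, $y=T_n(\tilde y)$ in \labelcref{eq:graph_fctl_integral} rewrites $E_{n,\eps_n}(u_n)$ as an integral over $\M\times\M$ against $\mu\otimes\mu$ with a transported kernel $\eta_{\eps_n}(|T_n\tilde x-T_n\tilde y|)$ and transported degrees $\deg_{n,\eps_n}\circ T_n$. Exploiting the monotonicity of $\eta$ (\cref{ass:eta}(i)), I sandwich the transported kernel: for any $\delta>0$ and $n$ large,
\[
\eta_{\eps_n}(|T_n\tilde x-T_n\tilde y|) \;\geq\; \eta_{(1-\delta)\eps_n}(|\tilde x-\tilde y|)
\]
holds up to a region of negligible measure (of scale $\ell_n/\delta$ where the remaining contribution is absorbed using that $u \in L^2$). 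An analogous uniform estimate compares $\deg_{n,\eps_n}\circ T_n$ with $\eta_{\eps_n}(|\cdot-\cdot|)\star\mu$ with a multiplicative relative error of order $\ell_n/\eps_n\to 0$. Combining these yields
\[
\liminf_{n\to\infty}E_{n,\eps_n}(u_n)\;\geq\;(1-o_\delta(1))\liminf_{n\to\infty}\E_{(1-\delta)\eps_n}(\tilde u_n),
\]
and the nonlocal liminf (\cref{thm:Gamma_NL}) gives the lower bound $(1-o_\delta(1))\E(u)$. Sending $\delta\to 0$ concludes; the same sandwich also delivers \cref{thm:compactness} by reducing a bounded discrete energy to a bounded nonlocal energy on $\tilde u_n$ and invoking \cref{thm:compactness_NL}.

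For the limsup, I first reduce by density. The results of \cref{sec:appendix_smooth_approx,sec:appendix_Lip_dens}, together with a standard diagonal argument, reduce the problem to functions $u$ for which each $u|_{\M\sind{i}}/\sqrt{\rho\sind{i}}$ is Lipschitz on $\M\sind{i}$ and, when applicable, satisfies the trace compatibility on $\M\sind{12}$. For such $u$, I take the canonical candidate $u_n\defeq u|_{V_n}\in L^2(\mu_n)$; continuity of $u$ together with $W_\infty(\mu_n,\mu)\to 0$ ensures $u_n\xrightarrow{TL^2}u$ via the characterization in \cref{sec:appendix_TLp}. Repeating the transport-and-sandwich estimate in the upward direction gives
\[
\limsup_{n\to\infty}E_{n,\eps_n}(u_n) \;\leq\; (1+o_\delta(1))\limsup_{n\to\infty}\E_{(1+\delta)\eps_n}(u),
\]
and the nonlocal limsup (\cref{thm:Gamma_NL}, applied to the constant sequence $u$) provides the bound by $(1+o_\delta(1))\E(u)$; a final $\delta\to 0$ closes the argument.

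The main obstacle is the $\sqrt{\deg_{n,\eps_n}}$ normalization near the intersection $\M\sind{12}$. For points $x$ within distance $O(\eps_n)$ of $\M\sind{12}$ the degree simultaneously picks up contributions from manifolds of two different dimensions, so it is not asymptotic to a single power of $\eps_n$. Consequently, the sandwiching has to be performed pointwise in the degrees, using that they are Lipschitz in the measure argument with constant $O(\eps_n^{-1})$ and uniformly bounded below by $c\,\eps_n^{d\sind{1}}$; this translates the Wasserstein-$\infty$ bound $W_\infty(\mu_n,\mu)\leq C\ell_n$ into a relative error $O(\ell_n/\eps_n)$ in the normalization, which is all the transport reduction needs. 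The finer geometric content, including the emergence of the tensorized limit and the codimension-one trace condition, is entirely encoded in \cref{thm:Gamma_NL} and in the appendix density statements, and is not revisited at the discrete level.
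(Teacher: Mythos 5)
Your liminf strategy (pull back through $T_n$, sandwich the kernel and the degrees, and invoke the nonlocal liminf of \cref{thm:Gamma_NL}) is a legitimate alternative to what the paper does: the paper instead drops the cross terms and applies the single-manifold graph result (\cref{cor:Gamma-convergence_standard}) to the degree-normalized functions on each $\M\sind{i}$, using the transport/kernel-shrinking trick only to establish the trace condition. Your route could work, but two technical points need repair. First, the claimed uniform lower bound $\deg_{n,\eps_n}(T_n(x))\geq c\,\eps_n^{d\sind{1}}$ is false for $x\in\M\sind{2}$ away from the intersection (there the degree scales like $\eps_n^{d\sind{2}}\ll\eps_n^{d\sind{1}}$); the correct statement, as in \cref{lem:degrees} and \cref{rem:scaling_volumes}, is $\deg_{n,\eps_n}\sim\vol(B(x,\eps_n)\cap\M)$. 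Second, comparing the discrete normalization $\deg_{n,\eps_n}$ (scale $\eps_n$, measure $\mu_n$) with the normalization of $\E_{(1\mp\delta)\eps_n}$ (scale $(1\mp\delta)\eps_n$, measure $\mu$) is not a uniform $1+o_\delta(1)$ perturbation when $d\sind{1}\neq d\sind{2}$: the ratio $\eta_{\eps_n}\!\star\mu/\eta_{(1-\delta)\eps_n}\!\star\mu$ tends to different $\delta$-dependent constants on the two manifolds and varies across the intersection region, so for fixed $\delta$ you are not bounding $\E_{(1-\delta)\eps_n}(\tilde u_n)$ but a spatially distorted functional; one must instead take $\delta=\delta_n\to0$ with $\delta_n\gg\ell_n/\eps_n$ and argue the distortion is uniformly $1+o(1)$.

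The genuine gap is in the limsup. You invoke ``the nonlocal limsup (\cref{thm:Gamma_NL}, applied to the constant sequence $u$)'', but $\Gamma$-convergence only guarantees the existence of \emph{some} recovery sequence; it does not give $\limsup_n\E_{(1+\delta)\eps_n}(u)\leq\E(u)$ for the fixed function $u$, and this pointwise bound is in fact false precisely in the regime that makes this paper nontrivial. When $d\sind{2}-d\sind{12}=2$ (e.g.\ a curve piercing a surface), the paper computes in \cref{sec:NL_limsup} that for $u$ locally constant with distinct values on the two manifolds one has $\E_{\eps_n}(u)\sim|u\sind{1}-u\sind{2}|\not\to0=\E(u)$; more generally, for Lipschitz $u$ the cross term between $\M\sind{1}$ and $\M\sind{2}$ scales like $\eps_n^{d\sind{i}-d\sind{12}-2}$ and does not vanish unless both codimensions exceed two. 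Consequently the plain restriction $u|_{V_n}$ (or the constant nonlocal sequence) cannot serve as a recovery sequence in Case 2 of \cref{prop:limsup_discrete}; the paper must build a nontrivial one, with the degree-correction factor $\sqrt{\eps_n^{-d\sind{i}}\deg_{n,\eps_n}/(\alpha\sind{i}\beta_\eta\sind{i}\rho\sind{i})}$ and a logarithmic interpolation layer on $\M\sind{2}$ between scales $\eps_n$ and $\sqrt{\eps_n}$ (exploiting the null harmonic capacity of $\M\sind{12}$ in codimension two), and must control the resulting errors via the Cheeger--Colding-based \cref{lem:nonlocal_error,lem:nonlocal_error_T_map}. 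If you wanted to salvage your reduction by discretizing the genuine nonlocal recovery sequence, you would have to transfer an $\eps_n$-dependent function whose gradient blows up like $1/(|\log\eps_n|\,d(x))$ near the intersection through the transport maps, which is exactly the content of those lemmas and is not supplied by your kernel/degree sandwich. There is also a structural caveat: the paper proves the critical-codimension nonlocal limsup only in sketch form, deferring full rigor to the discrete \cref{prop:limsup_discrete}, so within this paper leaning on \cref{thm:Gamma_NL} for that case would in any event be circular.
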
 

\cref{sec:proofs_graphs} is devoted to the proofs of these two theorems. 
In fact, the central difficulties of proving $\Gamma$-convergence of the graph functional $E_{n,\eps}$  to $\E$ already arise when one aims to prove $\Gamma$-convergence of its nonlocal counterpart $\E_\eps$ defined in \labelcref{eq:nonlocal_fctl}.
Since describing a proof of $\Gamma$-convergence for these continuum nonlocal functionals allows us to clearly present the main ideas and may be of independent interest, we dedicate \cref{sec:nonlocal} to nonlocal functionals, where we present arguments for proving their $\Gamma$-convergence as $\eps\to 0$.

\subsection{Convergence of eigenvalues and eigenvectors of normalized graph Laplacians}
\label{sec:normalized_EVconvergence}

The results on $\Gamma$-convergence can be directly applied to establish the convergence of eigenvalues and eigenvectors of the normalized graph Laplacian on the samples of the union of manifolds to the eigenvalues and eigenfunctions of the weighted Laplace--Beltrami operator on the union of manifolds. 
This proof relies on a variational description of the spectral problem via the Courant--Fisher formula and follows, almost verbatim, the proof of convergence of eigenvalues and eigenfunctions in Sections 3.1 and 3.2 of \cite{GTS18}. We therefore just state the result and refer to \cite{GTS18} for technical details.

We first note that the normalized graph Laplacian associated to the Dirichlet energy \labelcref{eq:normalized_discrete_dirichlet} has the following form: For $u: V_n \to \R$ and $x \in V_n$
  \begin{align} \label{def:normalized_graph_Lapl}
    L^{N}_{n,\eps} u(x) := \frac{2}{n \eps^2} 
    \left( u(x) - \sum_{y \in V_n} \frac{\eta_\eps(|x-y|)}{\sqrt{\deg_{n,\eps}(x)}\sqrt{\deg_{n,\eps}(y)}}\, u(y) \right)
\end{align}
In describing the limiting eigenvalue problem we distinguish  two cases:

\paragraph{Case 1, \texorpdfstring{$d\sind{2}-d\sind{12} \geq 2$}{d2-d12>=2}.}
In this case the limiting problem is a pair of eigenvalue problems on individual manifolds; the problems are entirely decoupled.
That is the eigenvalue problem for the operator  corresponding to functional defined in \labelcref{eq:limit_fctl} is as follows:
For $u \in H^1_{\sqrt{\mu}}(\M)$ (which in this case is equivalent to the restriction of $u$ to $\M\sind{i}$ lying in $H^1(\M\sind{i})$), we have for $i=1,2$ on $\M\sind{i}$:
\begin{align} \label{EVP1}
  \begin{split}
        - \frac{\sigma_\eta\sind{i}}{\beta_\eta\sind{i}}
      \frac{1}{ {\rho\sind{i}}^\frac{3}{2}} 
      \div_{\M\sind{i}} \left( {\rho\sind{i}}^2 \nabla_{\M\sind{i}} \left( \frac{u\sind{i}}{\sqrt{\rho\sind{i}}} \right)\right) & = \lambda u\sind{i}.
  \end{split}
\end{align}
We note that if $(\lambda,u\sind{i})$ is a  normalized (in $L^2_{\rho\sind{i}}$) eigenpair for the problem 
\labelcref{EVP1} considered on the \emph{individual} manifold $\M\sind{i}$, and if we define 
$u$ to be $u\sind{i}/\sqrt{\alpha\sind{i}}$ on $\M\sind{i}$ and zero on the other manifold, 
then $(\lambda,u)$ is a normalized (in $L^2_{\rho}$) eigenpair for the full problem \labelcref{EVP1}.
From the standard elliptic theory (see for example \cite{evans,GTS18}) we know that on each individual manifold the problem \labelcref{EVP1} has a  sequence of eigenpairs $(\lambda_k\sind{i},u_k\sind{i})$ such that $0=\lambda_1\sind{i} < \cdots \leq \lambda_k\sind{i} \leq \lambda_{k+1}\sind{i} \leq \cdots$ and  $\{u_k\sind{i}\}_{k \in \N}$ form an orthonormal basis of $L^2_{\rho\sind{i}} (\M\sind{i})$. 
Since $L^2_{\rho}(\M) \cong L^2_{\rho\sind{1}}(\M\sind{1}) \times L^2_{\rho\sind{2}}(\M\sind{2}) $,  extending $u_k\sind{i}$ by zero to the other manifold, as above, and combining the resulting eigenvectors for \labelcref{EVP1} into one sequence provides an orthornomal basis for $L^2_{\rho}(\M)$.
We conclude that there exists a sequence of eigenvalues $0=\lambda_1=\lambda_2 < \lambda_3 \leq \lambda_4 \leq \cdots$ which converges to infinity, and a corresponding orthonormal in $L^2_\rho$ sequence of eigenfunctions.

\paragraph{Case 2, \texorpdfstring{$d\sind{1}=d\sind{2}=:d$}{d1=d2=:d} and \texorpdfstring{$d-d\sind{12}=1$}{d-d12=1}.}
In this case the limiting eigenvalue problem still takes the form \labelcref{EVP1}, just that the fact that $u \in H^1_{\sqrt{\mu}}(\M)$ couples the two problems on $\M\sind{1}$ and $\M\sind{2}$. This problem too has a nondecreasing sequence of nonnegative eigenvalues and a corresponding orthonormal  basis of eigenfunctions. 
To establish this it is convenient to restate the Dirichlet form and the eigenvalue problem in terms of $v = \frac{u}{\sqrt\rho}$, where $\rho$ is as in \labelcref{eq:rho}. Then $v \in H^1(\M)$ and the Dirichlet form in \labelcref{eq:limit_fctl} can be restated as 
\begin{align}\label{eq:Ev}
    \E_v(v) := 
    \begin{dcases}
    \frac{\sigma_\eta}{\beta_\eta}
    \int_{\M}\abs{\nabla_{\M} v}^2 \rho^2 \d\vol
    \quad &\te{if } v\in H^1(\M),  \\
    +\infty   \quad &\te{else},
    \end{dcases}
\end{align}
where we dropped the indices on $\sigma_\eta$ and $\beta_\eta$ as they do not depend on $i$, and $\nabla_\M$ is considered to mean $\nabla_{\M\sind{i}}$ for points in $\M\sind{i}$. 
The eigenvalue problem is 
\begin{align}\label{EVPV}
      - \frac{\sigma_\eta}{\beta_\eta}
      \frac{1}{\rho^2} 
      \div_{\M} \left(\rho^2 \nabla_{\M} v \right)  = \lambda v.
\end{align}
The proof of the spectral theorem for the Laplacian in Euclidean domains (see for example \cite{evans}) readily extends to the present situation. Since this is long, but elementary, we omit the full proof. We just note 
that $\F_v(v) = \E_v(v) + \|v\|_{L^2(\M)}^2$ is a quadratic form that is equivalent to the squared $H^1$-norm.
Thus, the existence of solutions of elliptic problems on $\M$ follows using the Riesz representation lemma. The compactness of the solution operator as a mapping from $H^1(\M)$ to $L^2(\M)$ follows immediately by Rellich--Kondrachov theorem since if $w \in H^1(\M)$, then the restriction of $w$ to $\M\sind{i}$ belongs to $H^1(\M\sind{i})$ which compactly embeds in $L^2(\M\sind{i})$. Thus one can use the Fredholm alternative for compact operators to show that the operator has a complete discrete spectrum.

\begin{theorem} \label{thm:spectral_normalized}
Suppose \cref{ass:densities,ass:epsilon,ass:eta} hold.
Let $\lambda^{(n)}_k$ be the $k$-th eigenvalue of $L^N_{n,\eps_n}$ and let $u^{(n)}_k$ be the associated eigenfunction, normalized in $L^2(\mu_n)$. 
Let $\lambda_k$ be be the $k$-th eigenvalue for \cref{EVP1}.
  
Then almost surely
\begin{listi}
\item (Convergence of eigenvalues) For every $k \in \N$,
    \begin{align*}
       \lim_{n \to \infty} \lambda^{(n)}_k = \lambda_k.
    \end{align*}
\item (Convergence of eigenfunctions)
  The sequence of eigenfunctions $\{u^{(n)}_k\}_{n\in\N}$ (as well as any of its subsequences) has a convergent subsequence and the limit of any such subsequence is a normalized eigenfunction for \cref{EVP1} corresponding to eigenvalue $\lambda_k$.
\end{listi}
\end{theorem}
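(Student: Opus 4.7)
The plan is to deduce both claims from the variational characterization of the spectrum via the Courant--Fischer formula, using \cref{thm:gamma,thm:compactness} as the analytic input; this is the well-established strategy of \cite[Sections~3.1--3.2]{GTS18}. For every $k\in\N$,
\begin{align*}
    \lambda_k^{(n)} = \min_{\substack{S\subset L^2(\mu_n)\\ \dim S=k}}\,\max_{\substack{u\in S\\ \|u\|_{L^2(\mu_n)}=1}} E_{n,\eps_n}(u),\qquad
    \lambda_k = \min_{\substack{S\subset H^1_{\sqrt\mu}(\M)\\ \dim S=k}}\,\max_{\substack{u\in S\\ \|u\|_{L^2(\mu)}=1}} \E(u).
\end{align*}

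For the upper bound $\limsup_n\lambda_k^{(n)}\le\lambda_k$, take $L^2(\mu)$-orthonormal eigenfunctions $u_1,\dots,u_k$ of \cref{EVP1} and apply \cref{thm:gamma}(ii) to produce recovery sequences $u_j^{(n)}\overset{TL^2}{\to}u_j$ with $\limsup_n E_{n,\eps_n}(u_j^{(n)})\le \lambda_j$. Since $TL^2$-convergence upgrades to convergence of $L^2(\mu_n)$ inner products (see \cref{sec:appendix_TLp}), the Gram matrix of $(u_j^{(n)})$ tends to the identity, so a Gram--Schmidt correction produces an orthonormal family spanning a $k$-dimensional subspace $S_n$; by continuity in the basis, $\max_{S_n}E_{n,\eps_n}$ converges to $\max_{j\le k}\lambda_j=\lambda_k$. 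For the lower bound, this upper bound ensures $\lambda_k^{(n)}$ stays bounded; let $(u_j^{(n)})_{j=1}^k$ be $L^2(\mu_n)$-orthonormal eigenfunctions of $L^N_{n,\eps_n}$ for $\lambda_j^{(n)}\le\lambda_k^{(n)}$. Applying \cref{thm:compactness} diagonally in $j$ extracts $TL^2$-limits $u_j$, which by \cref{thm:gamma}(i) lie in $H^1_{\sqrt\mu}(\M)$ and satisfy $\E(u_j)\le\liminf_n\lambda_j^{(n)}$. Inner-product convergence preserves orthonormality, so $\spann\{u_j\}_{j=1}^k$ is a $k$-dimensional admissible subspace, yielding $\lambda_k\le\liminf_n\lambda_k^{(n)}$.

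For (ii), a sequence of normalized eigenpairs $(\lambda_k^{(n)},u_k^{(n)})$ satisfies $E_{n,\eps_n}(u_k^{(n)})=\lambda_k^{(n)}\to\lambda_k$, and \cref{thm:compactness} yields a $TL^2$-subsequential limit $u$ of unit $L^2(\mu)$-norm; \cref{thm:gamma}(i) places $u$ in $H^1_{\sqrt\mu}(\M)$ with $\E(u)\le\lambda_k$. To identify $u$ as a $\lambda_k$-eigenfunction, for any test $\varphi\in H^1_{\sqrt\mu}(\M)$ construct a recovery sequence $\varphi_n\overset{TL^2}{\to}\varphi$ and pass to the limit in the discrete weak form
\begin{align*}
    \langle L^N_{n,\eps_n}u_k^{(n)},\varphi_n\rangle_{L^2(\mu_n)}=\lambda_k^{(n)}\langle u_k^{(n)},\varphi_n\rangle_{L^2(\mu_n)}.
\end{align*}
The left-hand side is handled by polarizing the $\Gamma$-convergent quadratic form, combining the liminf inequality with the recovery bound to obtain convergence of the associated bilinear form; the right-hand side uses inner-product continuity. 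The resulting weak form of \cref{EVP1} identifies $u$ as a normalized eigenfunction with eigenvalue $\lambda_k$.

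The main technical obstacle is the standard one of passing from $\Gamma$-convergence of quadratic forms to convergence of the associated bilinear forms, together with correcting approximate $L^2(\mu_n)$-orthonormality to exact orthonormality; both steps rely on the transport maps of \cref{rem:scaling_condition} to bridge $L^2(\mu_n)$ and $L^2(\mu)$. In Case~2, an added subtlety is that recovery sequences must realize the trace-matching condition defining $H^1_{\sqrt\mu}(\M)$, but this is already ensured by \cref{thm:gamma}, so no additional argument is needed.
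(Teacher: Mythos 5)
Your proposal is correct and follows essentially the same route as the paper, which proves \cref{thm:spectral_normalized} exactly by the Courant--Fischer variational characterization combined with the $\Gamma$-convergence (\cref{thm:gamma}) and compactness (\cref{thm:compactness}) results, deferring the details to the proofs of Theorems 1.2 and 1.5 in \cite{GTS18}. Your identification of the limit eigenfunction via polarizing the quadratic form is a minor variant of the orthogonality/min-max argument in \cite{GTS18}, but it sits within the same framework and is sound.
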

The proof of the theorem relies on variational description of the spectrum using the Courant--Fisher formula and the $\Gamma$-convergence of the associated energies. This argument is carried out in detail in the proofs of Theorems 1.2 and 1.5 in \cite{GTS18}.

\subsection{The unnormalized (standard) graph Laplacian: \texorpdfstring{$\Gamma$}{Gamma}-convergence and spectral convergence}
\label{sec:unnormalized_EVconvergence}

As we indicate in the introduction, we advocate for the use of the normalized graph Dirichlet energy (and hence the associated normalized graph Laplacian) as a tool to preserve information from all manifolds in a multi-manifold setting. 
In order to be able to contrast the  behavior of the normalized graph Laplacian to the unnormalized one, we here study the asymptotic behavior (as $n \to \infty$) of the unnormalized graph Dirichlet energy and the associated  graph Laplacian. The main point is that when $d\sind{1} < d\sind{2}$ then only the information about $\M\sind{2}$ is retained in the limit.

Following the setting of  \cref{sec:normalized_energies}, we define the discrete Dirichlet energy of a graph function $u:V_n\to\R$ as
\begin{align}\label{eq:unnormalized_discrete_dirichlet}
    F_{n,\eps}(u) := \frac{1}{n^2 \eps^2} \sum_{x,y\in V_n}\eta_\eps (|x-y|) \abs{u(x)-u(y)}^2.
\end{align}
The associated (unnormalized) graph Laplacian has the form: For $x \in V_n$
\begin{align} \label{def:unnormalized_graph_Lapl}
    L^{U}_{n,\eps} u(x) := \frac{2}{n \eps^2}  \sum_{y \in V_n} \eta_\eps(|x-y|)
    \left( u(x) - u(y)  \right).
\end{align}
We note that the definitions above differ, by a power of $\eps$, from the definition of unnormalized Dirichlet energy and unnormalized Laplacian in the works that study their convergence, such as \cite{GTS18,laux2023large,laux2025large,CalderGT22,calder2018game,bungert2024poisson}, cf. \cref{thm:Gamma-convergence_standard} in the appendix. 
In those works one would also divide by $\eps^d$ where $d$ is the dimension of the space considered. In the setting of the manifolds with different dimension there is no power of~$\eps$ that we can divide by to properly normalize the functional. Thus we here define the unnormalized Dirichlet energy and the associated Laplacian without the dimension dependent rescaling factor. The proper rescalings are stated and discussed in the results below, namely \cref{thm:unnormalized_compactness,thm:unnormalized_Gamma,thm:unnormalized_spectral}.

The $\Gamma$-limit of a suitable rescaling of the energies $F_{n,\eps}$ is given by the following Dirichlet energy $\mathcal G: L^2(\M) \to [0, \infty]$:
\begin{align}\label{eq:limit_fctl_standard}
    \mathcal G(u) = 
    \begin{dcases}
    \sum_{i=1}^2
    \sigma_\eta\sind{i} \int_{\M\sind{i}}\abs{\nabla_{\M\sind{i}}u}^2\left(\alpha\sind{i}\rho\sind{i}\right)^2\d\vol\sind{i}
     &\te{if }d\sind{1}=d\sind{2} \te{ and } u\in H^1(\M),  \\
    \sigma_\eta\sind{2}
\int_{\M\sind{2}}\abs{\nabla_{\M\sind{2}}u}^2\left(\alpha\sind{2}\rho\sind{2}\right)^2\d\vol\sind{2}
    &\te{if }d\sind{1}<d\sind{2},\, u\in H^1(\M),\te{ }u\vert_{\M\sind{1}}\equiv const. \\
    +\infty   \quad &\te{else}.
    \end{dcases}
\end{align}
Here, since the densities $\rho\sind{i}$ do not appear within the gradient term, the Sobolev space $H^1(\M)$ is defined as in \labelcref{eq:def_H1mu} for $\mu\equiv const.$, i.e.,
\begin{align}
\label{eq:def_H1mv}
\begin{split}
    H^1(\M) 
    :=&  
    \Bigg\{ u \in L^2(\M) \st  u|_{\M\sind{i}}\in H^1(\M\sind{i}) \te{ for } i=1,2 \te{ and }
    \\ 
   &\quad\te{if } d\sind{1} = d\sind{2} = d\sind{12} + 1 \te{ then } \trace\sind{1}u=\trace\sind{2}u \Bigg\}.
\end{split}
\end{align} 
We now turn to $\Gamma$-convergence and compactness of unnormalized graph Dirichlet energy. 
\begin{theorem}[Compactness] \label{thm:unnormalized_compactness}
Under \cref{ass:densities,ass:epsilon,ass:eta} let $u_n\subset L^2(\mu_n)$ be a sequence such that 
\begin{align*}
    \sup_{n\in\N} \eps_n^{-d\sind{2}} F_{n,\eps_n}(u_n)<\infty,\qquad \sup_{n\in\N} \norm{u_n}_{L^2(\mu_n)}<\infty.
\end{align*}
If $d\sind{1}=d\sind{2}=d\sind{12}+1$, then almost surely $(u_n)_{n\in\N}$ possesses a subsequence which converges in the $TL^2(\M)$-sense.
If $d\sind{1}<d\sind{2}$, then almost surely $(u_n\vert_{\M\sind{2}})_{n\in\N}$ possesses a subsequence which converges in the $TL^2(\M\sind{2})$-sense.
\end{theorem}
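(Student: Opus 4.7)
The plan is to reduce to the compactness results for the standard unnormalized graph Dirichlet energy on a single compact manifold (recalled in \cref{sec:appendix_Gamma}), by discarding cross-pair contributions between the two manifolds. Specifically, for $i \in \{1,2\}$ set
\begin{align*}
F\sind{i}_{n,\eps_n}(u) := \frac{1}{n^2 \eps_n^2} \sum_{x,y \in V_n \cap \M\sind{i}} \eta_{\eps_n}(|x-y|)\, |u(x)-u(y)|^2 \leq F_{n,\eps_n}(u).
\end{align*}
Denoting by $\tilde F\sind{i}_n$ the standard unnormalized graph Dirichlet energy on the single manifold $\M\sind{i}$ with $n\sind{i}$ sample points of $\mu\sind{i}$, the algebraic identity $\eps_n^{-d\sind{i}} F\sind{i}_{n,\eps_n}(u) = (n\sind{i}/n)^2 \tilde F\sind{i}_n(u)$ combined with \labelcref{eq:alphas} shows that uniform upper bounds transfer between the two.

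For the $\M\sind{2}$ piece, the hypothesis $\sup_n \eps_n^{-d\sind{2}} F_{n,\eps_n}(u_n) < \infty$ immediately yields a uniform bound on $\tilde F\sind{2}_n(u_n|_{\M\sind{2}})$, while
\begin{align*}
\|u_n|_{\M\sind{2}}\|^2_{L^2(\mu_n\sind{2})} \leq (n/n\sind{2}) \|u_n\|^2_{L^2(\mu_n)}
\end{align*}
provides the $L^2$-bound. The standard single-manifold compactness statement (essentially the companion to \cref{thm:Gamma-convergence_standard}) then extracts a subsequence of $(u_n|_{\M\sind{2}})$ converging in $TL^2(\M\sind{2})$, closing the case $d\sind{1}<d\sind{2}$.

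When $d\sind{1}=d\sind{2}=:d$, the same bound applies symmetrically with $i=1$ and produces, after a further diagonal extraction, limits $u_n|_{\M\sind{i}} \to u\sind{i}$ in $TL^2(\M\sind{i})$ for both $i$. Since $\mu(\M\sind{12})=0$, the function $u$ defined by $u|_{\M\sind{i}} := u\sind{i}$ is well-defined in $L^2(\M)$. To upgrade to $TL^2(\M)$-convergence of the full sequence I plan to invoke the transportation maps from \cref{rem:scaling_condition}: almost surely there exist $T_n\colon\M\to\M$ with $(T_n)_\sharp\mu = \mu_n$ and $\|\operatorname{id}-T_n\|_{L^\infty(\M)} \lesssim \ell_n \ll \eps_n$. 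After a local adjustment in a thin tubular neighborhood of $\M\sind{12}$ of $\mu$-measure $|n\sind{i}/n - \alpha\sind{i}| \to 0$, one can arrange $T_n$ to send $\M\sind{i}$ into $V_n\cap\M\sind{i}$; then $\|u_n\circ T_n - u\|^2_{L^2(\mu)}$ splits into the two single-manifold $L^2(\mu\sind{i})$ errors (each vanishing) plus a boundary-layer contribution controlled by $\|u_n\|_{L^2(\mu_n)}$ times the layer's measure, which also vanishes.

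The main technical obstacle is the patching step in the equal-dimension case: producing a single transportation map consistent with the decomposition $\M = \M\sind{1} \cup \M\sind{2}$, given that the empirical split $n\sind{1}+n\sind{2}=n$ does not exactly match $\alpha\sind{1}+\alpha\sind{2}=1$. This mismatch is only $o(1)$, however, and the $L^2$-bound on $u_n$ keeps the associated correction asymptotically negligible. In the mixed-dimension case no such patching is required, since the statement only concerns $\M\sind{2}$.
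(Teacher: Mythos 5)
The paper does not actually spell out a proof of this theorem (it is dismissed as ``completely analogous to the normalized case or much simpler''), so there is no line-by-line comparison to make; your route --- drop the nonnegative cross-terms, identify $\eps_n^{-d\sind{i}}F\sind{i}_{n,\eps_n}(u_n)=(n\sind{i}/n)^2$ times the single-manifold energy on the binomial sample $V_n\cap\M\sind{i}$, and invoke \cref{cor:Gamma-convergence_standard} --- is exactly the ``much simpler'' argument the authors have in mind, and it is correct for the case $d\sind{1}<d\sind{2}$ as written. It is also genuinely simpler than the paper's detailed compactness proof for the \emph{normalized} energy (\cref{thm:compactness}), which needs the iterative strip argument near $\M\sind{12}$ only because of the degree weights; no such machinery is needed here.

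The one step you should tighten is the patching in the equal-dimension case. Your claim that the boundary-layer term is ``controlled by $\norm{u_n}_{L^2(\mu_n)}$ times the layer's measure'' is not valid for a map-based construction: if the mismatched mass $\delta_n:=\abs{n\sind{1}/n-\alpha\sind{1}}$ is re-routed by a transport \emph{map}, the corresponding error is $\norm{u_n}^2_{L^2(\mu_n\restr A_n)}$ for some set $A_n$ of sample points with $\mu_n(A_n)\to 0$, and a restricted $L^2$ norm is not proportional to the measure of the set --- $u_n$ could a priori concentrate there. Two clean fixes: (i) work with transport \emph{plans} and couple the mismatched mass by the product plan $\delta_n\,\mu\sind{2}\otimes\mu_n\sind{1}$ (legitimate by \cref{prop:char_TLp}); then the correction is literally bounded by $\delta_n\bigl(\norm{u}^2_{L^2(\mu\sind{2})}+\norm{u_n}^2_{L^2(\mu_n\sind{1})}\bigr)\to 0$, using the uniform $L^2(\mu_n)$ bound and that the plan is stagnating since the extra mass $\delta_n$ travels at most $\diam(\M)$; or (ii) keep your map construction and invoke \cref{lem:TLp_vanishing}: since you have already extracted $u_n\vert_{\M\sind{1}}\to u\sind{1}$ in $TL^2(\M\sind{1})$ along the subsequence, the $L^2(\mu_n\sind{1})$ mass on sets of vanishing empirical measure tends to zero, which is exactly what the boundary-layer term requires. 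With either repair the equal-dimension case goes through; note also that your argument never uses $d-d\sind{12}=1$, which is fine since the theorem's hypothesis is only more restrictive.
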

\begin{theorem}[$\Gamma$-convergence] \label{thm:unnormalized_Gamma}
Under \cref{ass:densities,ass:epsilon,ass:eta} almost surely the functionals~$\eps_n^{-d\sind{2}}F_{n, \eps_n}$ $\Gamma$-converge to $\mathcal G$ in the $TL^2(\M)$-sense, meaning that
\begin{listi}
    \item Almost surely for all sequences $(u_n)\subset L^2(\mu_n)$ with $u_n\overset{TL^2}{\longrightarrow}u$ for $u\in L^2(\M)$ it holds
    \begin{align*}
        \mathcal G(u) \leq \liminf_{n\to\infty}\eps_n^{-d\sind{2}}F_{n,\eps_n}(u_n);
    \end{align*}
    \item Almost surely for all $u\in L^2(\M)$ there exists a so-called recovery sequence $(u_n)\subset L^2(\mu_n)$ with $u_n\overset{TL^2}{\longrightarrow}u$ and
    \begin{align*}
        \limsup_{n\to\infty}\eps_n^{-d\sind{2}}F_{n,\eps_n}(u_n) \leq \mathcal G(u).
    \end{align*}
\end{listi}
\end{theorem}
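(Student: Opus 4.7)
\emph{Proof plan for \cref{thm:unnormalized_Gamma}.} Split the unnormalized functional across pairs by endpoint manifold:
\[
\eps_n^{-d\sind{2}}F_{n,\eps_n}(u) = F^{(11)}_n(u) + F^{(22)}_n(u) + 2F^{(12)}_n(u),
\]
with $F^{(ij)}_n$ the piece restricted to $(V_n\cap\M\sind{i})\times(V_n\cap\M\sind{j})$. Using $n\sind{i}/n\to\alpha\sind{i}$ almost surely, each diagonal piece $F^{(ii)}_n$ is $(n\sind{i}/n)^2\eps_n^{d\sind{i}-d\sind{2}}$ times the critically-normalized single-manifold Dirichlet energy on $\M\sind{i}$ with $n\sind{i}$ samples of $\mu\sind{i}$. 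So the $\M\sind{2}$-piece is always critical; the $\M\sind{1}$-piece is critical when $d\sind{1}=d\sind{2}$, and \emph{super-critical} (over-normalized by the factor $\eps_n^{d\sind{1}-d\sind{2}}\to\infty$) when $d\sind{1}<d\sind{2}$. Moreover, the positive-angle assumption (see \cref{lem:angle}) forces both endpoints of any cross pair $(x,y)\in\M\sind{1}\times\M\sind{2}$ with $|x-y|\leq\eps_n$ to lie within $O(\eps_n)$ of $\M\sind{12}$, which already limits the number of contributing cross pairs to $O(n^2\eps_n^{d\sind{1}+d\sind{2}-d\sind{12}})$.

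\emph{Liminf.} Discard the non-negative cross term and apply the single-manifold $\Gamma$-liminf (\cref{thm:Gamma-convergence_standard}) to each $F^{(ii)}_n$. On $\M\sind{2}$ this yields the $\sigma_\eta\sind{2}$ term of $\mathcal G(u)$. When $d\sind{1}=d\sind{2}$, the same argument on $\M\sind{1}$ delivers the $\sigma_\eta\sind{1}$ term, and the codim-1 trace matching in the definition of $H^1(\M)$ follows from \cref{sec:appendix_trace}. When $d\sind{1}<d\sind{2}$, the blow-up factor $\eps_n^{d\sind{1}-d\sind{2}}\to\infty$ forces the critically-normalized $\M\sind{1}$-energy of any $TL^2$-limit with finite liminf to vanish, hence $u|_{\M\sind{1}}$ is constant on connected components, exactly as required by the domain of $\mathcal G$.

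\emph{Limsup.} For $u$ with $\mathcal G(u)<\infty$, first approximate $u$ by a nicer $u_\delta$: in the codim-1 subcase of $d\sind{1}=d\sind{2}$ by a smooth function on $\M$ with matching traces across $\M\sind{12}$, via \cref{sec:appendix_smooth_approx}; in the codim $\geq 2$ subcase of $d\sind{1}=d\sind{2}$ by a Lipschitz function from \cref{sec:appendix_Lip_dens} multiplied by a capacity cutoff that forces $u_\delta\equiv 0$ in a $\delta$-neighborhood of $\M\sind{12}$; and in the super-critical case $d\sind{1}<d\sind{2}$ by a smooth approximant of $u|_{\M\sind{2}}$ modified through a capacity cutoff to equal the prescribed constant $c:=u|_{\M\sind{1}}$ in a $\delta$-neighborhood of $\M\sind{12}$. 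Both cutoffs are affordable since the $H^1$-capacity of $\M\sind{12}$ in $\M\sind{i}$ vanishes whenever the codimension is at least two. Set $u^\delta_n(v):=u_\delta(T_n(v))$ with $T_n$ from \cref{rem:scaling_condition}: the limsup half of \cref{thm:Gamma-convergence_standard} gives the expected diagonal contribution to $\mathcal G(u_\delta)$ (and $0$ on the super-critical $\M\sind{1}$-piece, since $u_\delta|_{\M\sind{1}}\equiv c$). In the smooth codim-1 case, global Lipschitzness yields $|u_\delta(x)-u_\delta(y)|\lesssim\eps_n$ and a total cross contribution of order $\eps_n^{d\sind{1}-d\sind{12}}\to 0$; in the cutoff cases, once $\eps_n\ll\delta$ the angle bound places both endpoints of any cross pair inside the cutoff zone, so $u_\delta(x)=u_\delta(y)$ and the cross piece vanishes identically. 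A diagonal argument in $(\delta,n)$ then produces a recovery sequence for the original $u$.

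The main obstacle is the super-critical limsup when $d\sind{1}<d\sind{2}$: the single-manifold recovery on $\M\sind{2}$ returned by \cref{thm:Gamma-convergence_standard} knows nothing about the constant value $c$ prescribed on $\M\sind{1}$, and must be surgically matched to $c$ near $\M\sind{12}$ without spoiling its $H^1$-convergence on $\M\sind{2}$. Carrying this out cleanly relies on the codim $\geq 2$ capacity/cutoff construction together with a careful simultaneous passage $\delta\to 0$ and $n\to\infty$ so that the residual cross and cutoff errors are absorbed into the prescribed $\mathcal G(u)$ bound.
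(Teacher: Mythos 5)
Your plan is sound, and its skeleton is exactly what the paper intends: the paper omits this proof, stating only that the steps are either analogous to the normalized case (\cref{thm:gamma}, proved in \cref{sec:proofs_graphs}) or simpler, and your decomposition into diagonal pieces handled by the single-manifold results (\cref{thm:Gamma-convergence_standard}, \cref{cor:Gamma-convergence_standard}), the use of \cref{lem:angle} to localize cross pairs, \cref{lem:trace} for the codimension-one trace condition in the liminf, and the appendix density lemmas (\cref{lem:density}, \cref{lem:Lip-dens}) for the limsup all match that template; the super-critical observation that the factor $\eps_n^{d\sind{1}-d\sind{2}}\to\infty$ forces $u\vert_{\M\sind{1}}$ to be constant is the right mechanism for the $d\sind{1}<d\sind{2}$ case. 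Where you genuinely deviate is the limsup treatment of the near-intersection cross terms when the codimension in $\M\sind{2}$ equals two: the paper's analogous construction (Case 2 of \cref{prop:limsup_discrete}, and \cref{sec:NL_limsup}) builds an $n$-dependent logarithmic transition layer between the scales $\eps_n$ and $\sqrt{\eps_n}$ and estimates its energy via \cref{lem:Cheeger--Colding,lem:nonlocal_error,lem:nonlocal_error_T_map}, whereas you freeze the modification at a fixed scale $\delta$ (capacity cutoff of the \emph{limit} function to $0$, resp.\ to the constant $c$) and diagonalize in $(\delta,n)$, so that for $C\eps_n<\delta$ the cross sum vanishes identically. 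This is legitimate here precisely because the unnormalized functional carries no degree factors that must be matched pointwise along the recovery sequence, and it buys a cleaner argument (no estimate of the interpolation layer against the kernel); the paper's $n$-dependent layer is needed only in the normalized setting. A few points you should still write out when executing the plan: (i) the diagonal liminf needs that $u_n$ restricted to $V_n\cap\M\sind{i}$ converges in $TL^2(\M\sind{i})$ to $u\vert_{\M\sind{i}}$, which requires a short argument (global $\infty$-optimal maps from \cref{rem:scaling_condition} plus the vanishing mass of $\eps$-neighborhoods of $\M\sind{12}$, in the spirit of \cref{lem:TLp_vanishing}) rather than being automatic; (ii) the domain of $\mathcal G$ demands a single constant on $\M\sind{1}$, so you should invoke connectedness of $\M\sind{1}$ (tacit in the paper) to upgrade ``constant on components''; (iii) in the codimension-one limsup, the bound $\abs{u_\delta(x)-u_\delta(y)}\lesssim\eps_n$ for cross pairs is not literally ``global Lipschitzness'' of the glued function but follows from per-manifold Lipschitz bounds, the matching traces, and \cref{lem:angle} via a nearest point on $\M\sind{12}$; and (iv) your $u^\delta_n(v):=u_\delta(T_n(v))$ should simply be the restriction $u_\delta\vert_{V_n}$ (the paper's constant recovery sequence), which is what the single-manifold limsup uses.
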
 
The steps of the proof of the $\Gamma$-convergence of the unnormalized graph Dirichlet energy are either completely analogous to the normalized graph Laplacian or are much simpler and follow the standard techniques presented here and in \cite{GTS18}. We omit them for brevity. 

We note that if the manifold dimensions are unequal, the $\Gamma$-limit $\mathcal{G}$ forces constant functions on the lower-dimensional manifold and just depends on the Dirichlet energy on the higher-dimensional manifold. 
This behavior is consistent with the spectral convergence for multi-manifold clustering with adapted weight constructions developed in \cite{trillos2023large}.

\begin{remark}
    Note that another potential scaling would be to consider $\eps_n^{-d\sind{1}}F_{n,\eps_n}$.
    However, in this case, we would only get compactness in the $TL^2(\M\sind{1})$ topology and we would lose it on the higher-dimensional manifold $\M\sind{2}$.
    The corresponding $\Gamma$-limit would be the standard Dirichlet energy on~$\M\sind{1}$.
\end{remark}
\begin{remark}
    Note that in the case $d\sind{1}=d\sind{2}=d\sind{12}+1$ the $\Gamma$-limit $\mathcal{G}$ is very similar to $\E$ in \labelcref{eq:limit_fctl} in that it is a sum of Dirichlet energies on the two manifolds together with trace constraint on their intersection. 
    The differences between the unnormalized and the normalized models become more apparent in case of different dimensions $d\sind{1}<d\sind{2}$, where the $\Gamma$-limit $\mathcal{G}$ of the unnormalized energies forces functions to be constant on $\M\sind{1}$ but is otherwise independent of the lower dimensional manifold.
\end{remark}
\medskip

To describe the eigenvalue problem associated to the unnormalized Dirichlet energy consider the following equation for $u \in H^1(\M)$, which  takes the following form on $\M\sind{i}$ for $i=1,2$
\begin{align} \label{EVP2}
L^U u:=  - \frac{\sigma\sind{i}_\eta}{{\alpha\sind{i} \rho\sind{i}}} \div_{\M\sind{i}} \left( ({\alpha\sind{i} \rho\sind{i}})^2 \nabla_{\M\sind{i}} u\right) = \lambda u
\end{align}
If $d\sind{1}=d\sind{2}$ the eigenfunctions are nonzero $u \in H^1(\M)$ which solve \cref{EVP2} on all of $\M$. 
If  $d\sind{1}<d\sind{2}$, then eigenfunctions $u$  are  nonzero functions in $H^1(\M) = H^1(\M\sind{1}) \times H^1(\M\sind{2})$, with the operator in the  equation on $\M\sind{1}$ replaced by 
\[  L^U u =
\begin{cases}
    0 \quad & \te{if }\; u = \te{const.} \\
    \infty & \te{otherwise.}
\end{cases} \]
We remark that when $d\sind{1}<d\sind{2}$, $\lambda=0$ is an eigenvalue of multiplicity 2 with eigenspace consisting of functions which are constant on both $\M\sind{1}$ and $\M\sind{2}$. Furthermore if $\lambda_k$ is the $k$-th eigenvalue of this problem and $\lambda_{k,2}$ the $k$-th eigenvalue of \cref{EVP2} considered on $\M\sind{2}$ alone, then for $k>1$, $\lambda_k = \lambda_{k-1,2}$. Furthermore, the corresponding eigenfunctions satisfy
$u_k = u_{k-1,2}$ on $\M\sind{2}$ and $u_k = 0$
  on $\M\sind{1}$ for $k>2$.

\begin{theorem} \label{thm:unnormalized_spectral}
Suppose \cref{ass:densities,ass:epsilon,ass:eta} hold.
Let $\lambda^{(n)}_k$ be the $k$-th eigenvalue of $\eps_n^{-d\sind{2}} L^U_{n,\eps_n}$ and let $u^{(n)}_k$ be the associated eigenfunction, normalized in $L^2(\mu_n)$. 
Let $\lambda_k$ be be the $k$-th eigenvalue for \cref{EVP2}.

Then almost surely
\begin{listi}
\item (Convergence of eigenvalues) For every $k \in \N$,
    \begin{align*} 
       \lim_{n \to \infty} \lambda^{(n)}_k = \lambda_k.
    \end{align*}
\item (Convergence of eigenfunctions)
  The sequence of eigenfunctions $\{u^{(n)}_k\}_{n\in\N}$ (as well as any of its subsequences) has a convergent subsequence and the limit of any such subsequence is a normalized eigenfunction for \cref{EVP1} corresponding to eigenvalue $\lambda_k$.
\end{listi}
\end{theorem}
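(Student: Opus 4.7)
The plan is to reduce the proof to the $\Gamma$-convergence and compactness results already established for the unnormalized energies (\cref{thm:unnormalized_Gamma,thm:unnormalized_compactness}) and then follow, essentially verbatim, the Courant--Fischer variational strategy used for the normalized spectral convergence in \cref{thm:spectral_normalized}, which is worked out in detail in the proofs of Theorems 1.2 and 1.5 of \cite{GTS18}. Concretely, I would represent each eigenvalue $\lambda_k^{(n)}$ as the min-max of the Rayleigh quotient $\eps_n^{-d\sind{2}}F_{n,\eps_n}(u)/\|u\|^2_{L^2(\mu_n)}$ over $k$-dimensional subspaces of $L^2(\mu_n)$, and $\lambda_k$ as the analogous min-max of $\mathcal G(u)/\|u\|^2_{L^2(\mu)}$ over $k$-dimensional subspaces of $L^2(\mu)$, with $\mathcal G$ and the graph energy extended by $+\infty$ outside their domains.

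For the upper bound $\limsup_{n\to\infty}\lambda_k^{(n)}\leq\lambda_k$, I would take an $L^2_\rho$-orthonormal basis $u_1,\dots,u_k$ of the span of the first $k$ continuum eigenfunctions of \cref{EVP2} and produce $TL^2$-recovery sequences $u_j^{(n)}$ via the $\Gamma$-$\limsup$ inequality. Since $TL^2$-convergence implies convergence of $L^2(\mu_n)$-inner products, the $\mu_n$-Gram matrix of the $u_j^{(n)}$ converges to the identity; a Gram--Schmidt correction inside $L^2(\mu_n)$ then produces a $k$-dimensional trial subspace on the graph whose largest Rayleigh quotient tends to $\lambda_k$.

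For the lower bound $\lambda_k\leq\liminf_{n\to\infty}\lambda_k^{(n)}$ and for eigenfunction convergence, I would take $L^2(\mu_n)$-orthonormal graph eigenfunctions $u_1^{(n)},\dots,u_k^{(n)}$ associated to the first $k$ graph eigenvalues (uniformly bounded by Step~1). \cref{thm:unnormalized_compactness} extracts $TL^2$-limits $u_j$; the $\Gamma$-$\liminf$ inequality then gives $\mathcal G(u_j)\leq\liminf\lambda_j^{(n)}$, and passing to the limit in the $L^2(\mu_n)$-orthonormality conditions produces an $L^2_\rho$-orthonormal set, which therefore spans a $k$-dimensional subspace of the continuum. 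Using this subspace as a competitor in the continuum min-max delivers the lower bound. Convergence of eigenfunctions along subsequences is then a direct consequence of compactness plus $\lambda_k^{(n)}\to\lambda_k$.

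The only step that requires anything beyond \cite{GTS18} is the degenerate case $d\sind{1}<d\sind{2}$: \cref{thm:unnormalized_compactness} provides $TL^2$-compactness only on $\M\sind{2}$, while the $\Gamma$-limit $\mathcal G$ forces the $\M\sind{1}$-restriction to be constant, and from index $k\geq 3$ the limit eigenfunctions vanish on $\M\sind{1}$. To close the liminf argument in this regime, I would additionally use weak $L^2(\mu_n\sind{1})$-compactness of the restrictions of $u_j^{(n)}$ to $V_n\cap\M\sind{1}$, which follows from the uniform $L^2(\mu_n)$-norm bound. The $\Gamma$-$\liminf$ inequality penalizes any non-constant weak limit on $\M\sind{1}$ with infinite energy and so forces the $\M\sind{1}$-restrictions to converge to constants; the specific constants are pinned down by $L^2_\rho$-orthogonality against the first two (zero-energy) eigenfunctions, which span the two-dimensional space of functions constant on each of $\M\sind{1}$ and $\M\sind{2}$ separately. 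The construction of recovery sequences for eigenfunctions vanishing on $\M\sind{1}$ is unproblematic because the zero extension contributes zero to the limit energy. With this adaptation, the Courant--Fischer argument of \cite{GTS18} goes through, and the main obstacle is precisely the bookkeeping on $\M\sind{1}$ in the degenerate case just described.
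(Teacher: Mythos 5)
Your overall route is the same as the paper's: the paper proves \cref{thm:unnormalized_spectral} by noting it is a straightforward modification of the Courant--Fischer variational argument of \cite[Theorems 1.2 and 1.5]{GTS18}, driven by \cref{thm:unnormalized_Gamma,thm:unnormalized_compactness}, which is exactly the structure you describe (recovery subspaces plus Gram--Schmidt for the upper bound, compactness plus the $\Gamma$-liminf for the lower bound and eigenfunction convergence).

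The one step that needs tightening is your treatment of the degenerate case $d\sind{1}<d\sind{2}$ via weak $L^2$-compactness on $\M\sind{1}$. As stated, the liminf inequality of \cref{thm:unnormalized_Gamma} requires strong $TL^2(\M)$ convergence of the full sequence, so it cannot be invoked against a merely weak limit of the $\M\sind{1}$-restrictions; moreover, weak convergence on $\M\sind{1}$ does not let you pass the $L^2(\mu_n)$-orthonormality of the graph eigenfunctions to the limit (mass can be lost, and products of weakly convergent sequences need not converge), which is precisely what the continuum min-max lower bound and your ``pinning down the constants'' step require. The fix is simpler and stronger than weak compactness: the $\M\sind{1}\times\M\sind{1}$ portion of $\eps_n^{-d\sind{2}}F_{n,\eps_n}(u_n)$ equals $(n\sind{1}/n)^2\,\eps_n^{d\sind{1}-d\sind{2}}\,G_{n\sind{1},\eps_n}(u_n;\M\sind{1})$, so the uniform eigenvalue bound forces the properly scaled single-manifold graph Dirichlet energy on $\M\sind{1}$ to be bounded (indeed to vanish like $\eps_n^{d\sind{2}-d\sind{1}}$). \cref{cor:Gamma-convergence_standard} then yields \emph{strong} $TL^2(\M\sind{1})$ precompactness of the restrictions, with constant limits, and with strong convergence on both manifolds the orthonormality, the identification of the constants via the two zero-energy eigenfunctions, and the rest of the \cite{GTS18} argument go through verbatim. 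With that substitution your proof is complete and coincides with the paper's.
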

                        We remark that in the cases of different dimensions a result of similar nature has been obtained in  \cite[Theorem 10]{trillos2023large}, where the authors also obtain error estimates for eigenvalues and eigenvectors.

Similarly to the  proof of \cref{thm:spectral_normalized}, the proof of 
\cref{thm:unnormalized_spectral} is a straightforward modification 
of arguments arguments in the proof of \cite[Theorems 1.2]{GTS18}.

Here we make a remark regarding the eigenvectors corresponding to the first two eigenvalues when  $d\sind{1}<d\sind{2}$. Then the limiting problem has eigenspace of dimension two corresponding to eigenvalue zero, spanned by normalized eigenfunctions $u_1 = c \chi_{\M}$ and $u_2 = c_1 \chi_{\M\sind{1}} + c_2 \chi_{\M\sind{2}}$.
Note that $\lambda_1^{(n)} =0$ with high probability for all $n$ large and thus, due to the connectivity of the graph, $u_1^{(n)}$ is a constant function for all $n$ large. Hence, any subsequential limit of $\{u_1^{(n)}\}_{n=1, 2, \dots}$ belongs to $\pm u_1$. Thus the convergence of eigenvectors of \cref{thm:unnormalized_spectral} and orthogonality imply that  any subsequential limit of $\{u_2^{(n)}\}_{n=1, 2, \dots}$ belongs to $\pm u_2$. In particular we see that $u_2^{(n)}$ separate the manifolds when $n$ is large, as expected. We note that this is not yet the case for $n$ and $\eps$ considered in experiments for \cref{fig:unnormalized}.

\subsection{Numerical illustrations} \label{sec:numerics}

In this section we illustrate some of the findings on a union of a 2D rectangle and a line segment, $\M=\M\sind{1} \cup \M\sind{2}$ where $\M_1 = \{0\} \times \{0\} \times [-0.65, 0.65]$ and $\M_2 = [-0.7,0.7] \times [-0.5, 0.5] \times \{0\}$. While this does not exactly fit the assumptions of our theorems as $\M\sind{1}$ and $\M\sind{2}$ have a boundary, we note that the theory is expected to carry over to manifolds with boundary where for eigenvalue problem one obtains Neumann  conditions at the boundary, as \cite{GTS18} establishes for Euclidean domains.

\begin{figure}[h!]
\centering
\subfloat[Eigenvector $u_2^{(n)}$]{\includegraphics[width=0.5\textwidth]{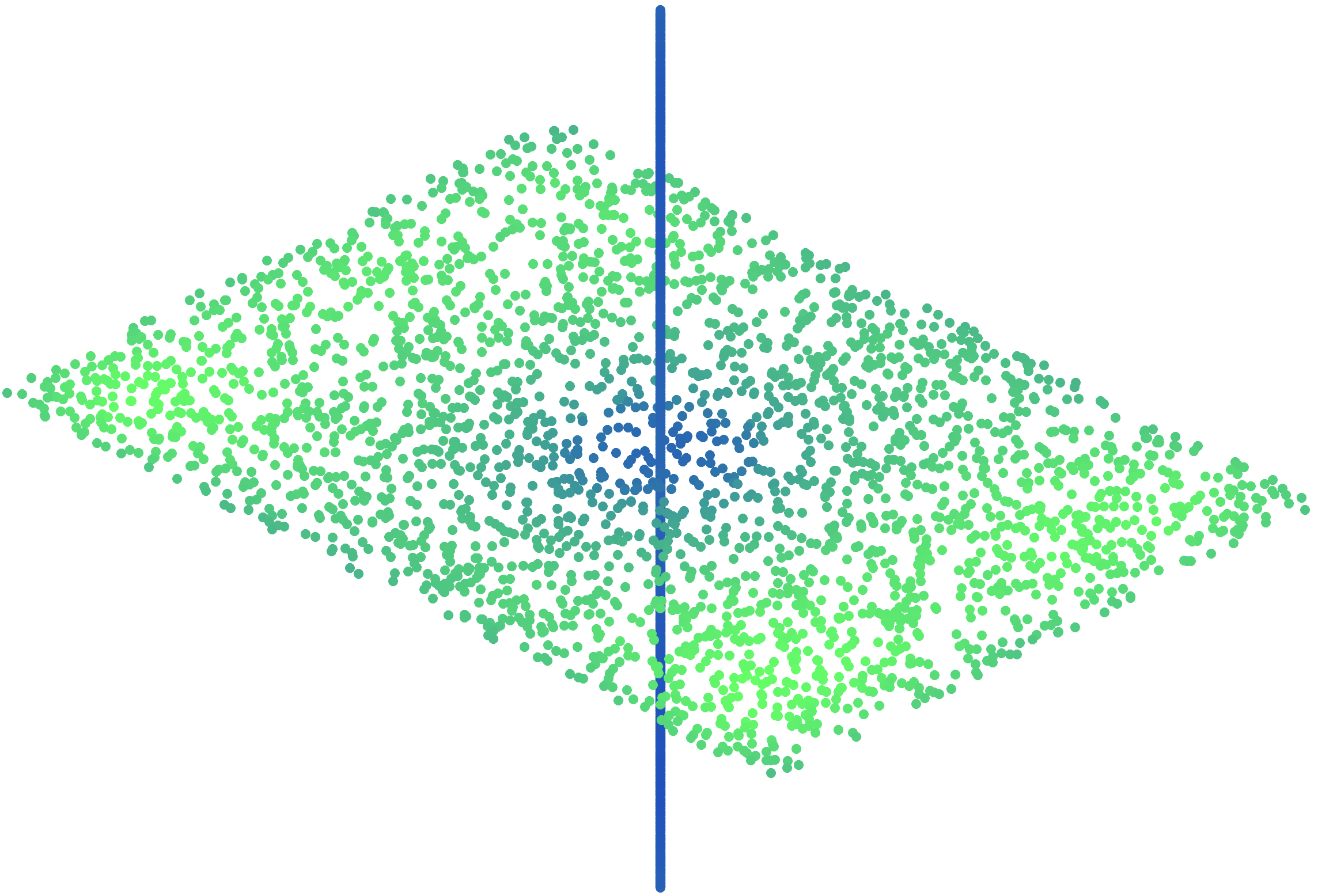}}
\subfloat[Eigenvector $u_3^{(n)}$]{\includegraphics[width=0.5\textwidth]{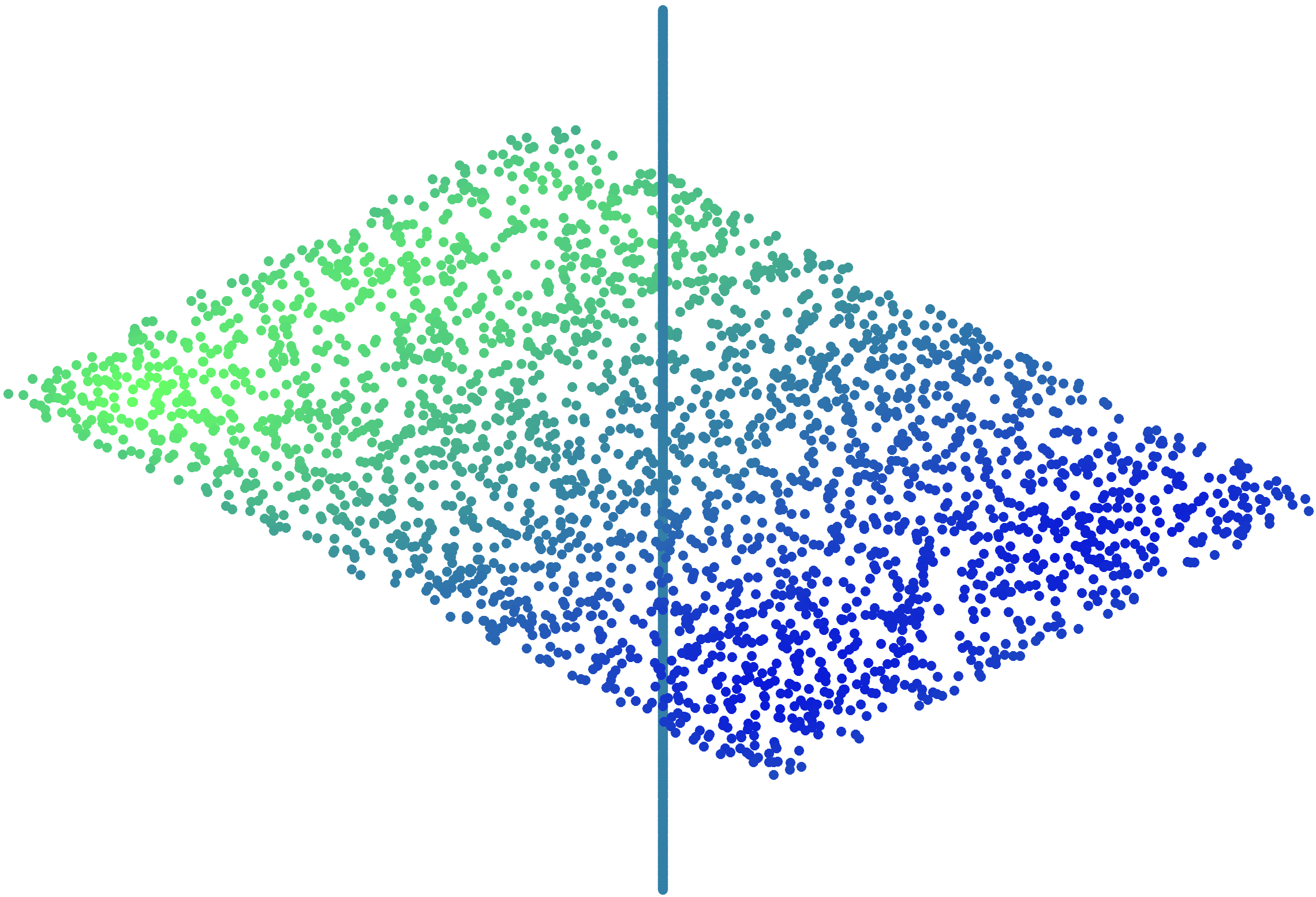}} \\
\subfloat[Eigenvector $u_4^{(n)}$]{\includegraphics[width=0.5\textwidth]{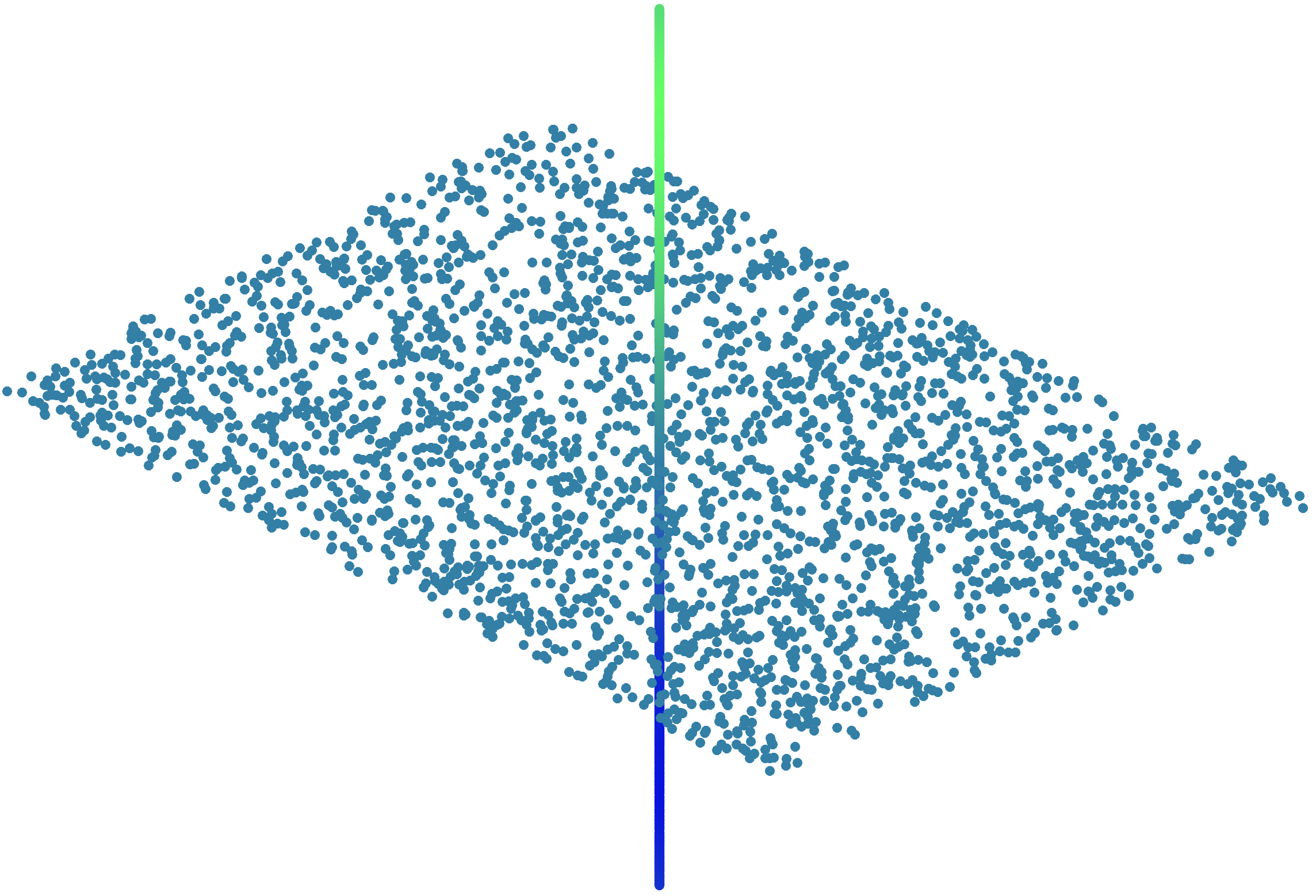}}
\subfloat[Eigenvector $u_5^{(n)}$]{\includegraphics[width=0.5\textwidth]{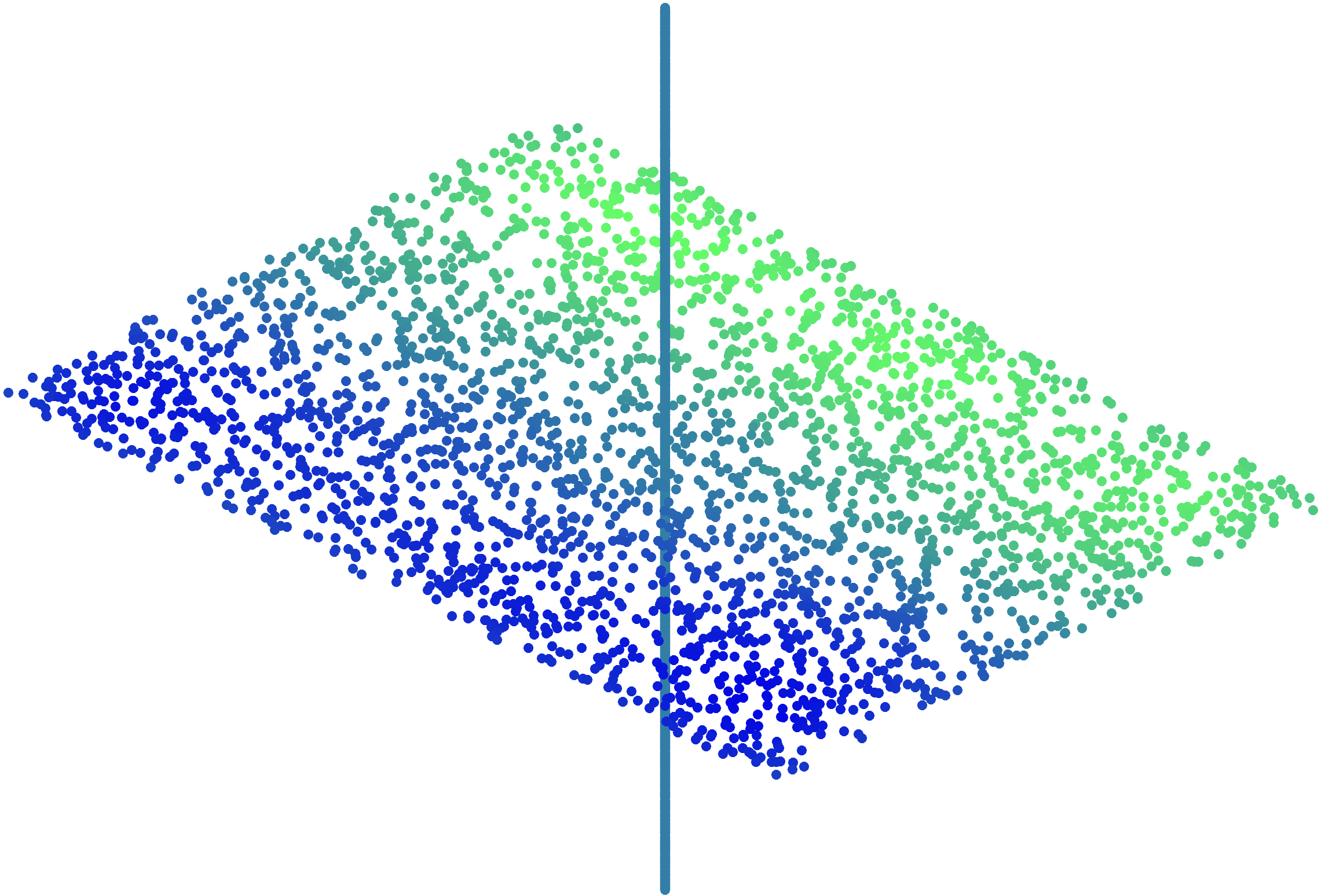}} 
\caption{Second to fifth eigenvector of the normalized Laplacian \labelcref{def:normalized_graph_Lapl} for $\eta = \chi_{[0,1]}$, $\eps = 0.13$,
 $n\sind{1}=2,600$ and $n\sind{2}=2,800$.}
\label{fig:normalized} 
\end{figure}

In our experiments we take $\eta = \chi_{[0,1]}$ and $\eps=0.13$ and hence use random geometric graphs. Direct computation gives $\sigma^{(1)} = \nicefrac{2}{3}$, $\beta^{(1)} = 2$, $\sigma^{(2)} = \nicefrac{\pi}{4}$, and $\beta^{(2)} = \pi$. We consider uniform measure on both the rectangle and the line segment.
Thus for the normalized Laplacian the first few eigenvalues of the problem on $\M\sind{1}$ are $0,\frac{\pi^2}{3\cdot 1.69},\frac{4\cdot \pi^2}{3\cdot 1.69}, \dots$, and for the problem on $\M\sind{2}$ are $0,\frac{\pi^2}{4\cdot 1.96},\frac{\pi^2}{4}, \frac{\pi^2}{4\cdot 1.96} + \frac{\pi^2}{4}, \dots$.
Comparing the values gives that the eigenvalues for the joint problem \cref{eq:Ev} on $\M$ are $0,0,\frac{\pi^2}{4\cdot 1.96},\frac{\pi^2}{3\cdot 1.69},\frac{\pi^2}{4},\frac{\pi^2}{4\cdot 1.96} + \frac{\pi^2}{4}, \dots$. We observe that this is in agreement with \cref{fig:normalized}, where we see that second eigenvector approximates function constant on each manifold and is separating 
the manifolds, the third and fifth eigenvectors correspond to Laplacian eigenfunction on the rectangle and fourth eigenvector corresponds to the first nonconstant Laplacian eigenfunction on the line segment, where all problems are considered with Neumann boundary conditions. In particular we see that the normalized Laplacian is able to see variations in data of both dimensions. 
Let us also remark that the fact that for $u_2^{(n)}$ the values on $\M\sind{1}$ are almost constant and the transition between the values $\M\sind{1}$ and the bulk of $\M\sind{2}$ happens in $\M\sind{2}$, close to the intersection is similar to the nature of transition layers constructed in the limsup arguments in \cref{sec:NL_limsup} and \cref{sec:limsup}.

The eigenvectors for the unnormalized graph Laplacian are displayed on \cref{fig:unnormalized}. We observe that only variations within the rectangle are captured by the first seven eigenvectors. This is in agreement with the theoretical prediction. From the perspective of using unnormalized Laplacians in machine learning tasks, this is undesirable,  as the information contained in $\M\sind{1}$ is largely lost (beyond just recognizing that data belong to $\M\sind{1}$). We note that the almost piecewise constant eigenvector corresponds to the third eigenvalue, while our results show that such vectors could converge to the second eigenfunction of the limiting eigenvalue problem as the number of points goes to infinity and $\eps_n\to 0$. The reason for this discrepancy in the nature of the second eigenvector/eigenfunction is that we are still far from the limiting regime when $\eps \to 0$; in particular the transition layers in $u_2^{(n)}$ and $u_3^{(n)}$ on \cref{fig:unnormalized} have comparable lengths. From the perspective of machine learning tasks it would be desirable if the second eigenvector was able to distinguish the manifolds. In our experiments, we observed that this is more of an issue for the unnormalized Laplacian than for the normalized Laplacian. Heuristic explanation is that the degrees of the nodes are high near the intersection, so for the normalized Laplacian the differences are suppressed near the intersection making the energy of the transition layer lower. 

\begin{figure}[h!]
\centering
\hspace*{-3pt}
\subfloat[Eigenvector $u_2^{(n)}$]{\includegraphics[width=0.34\textwidth]{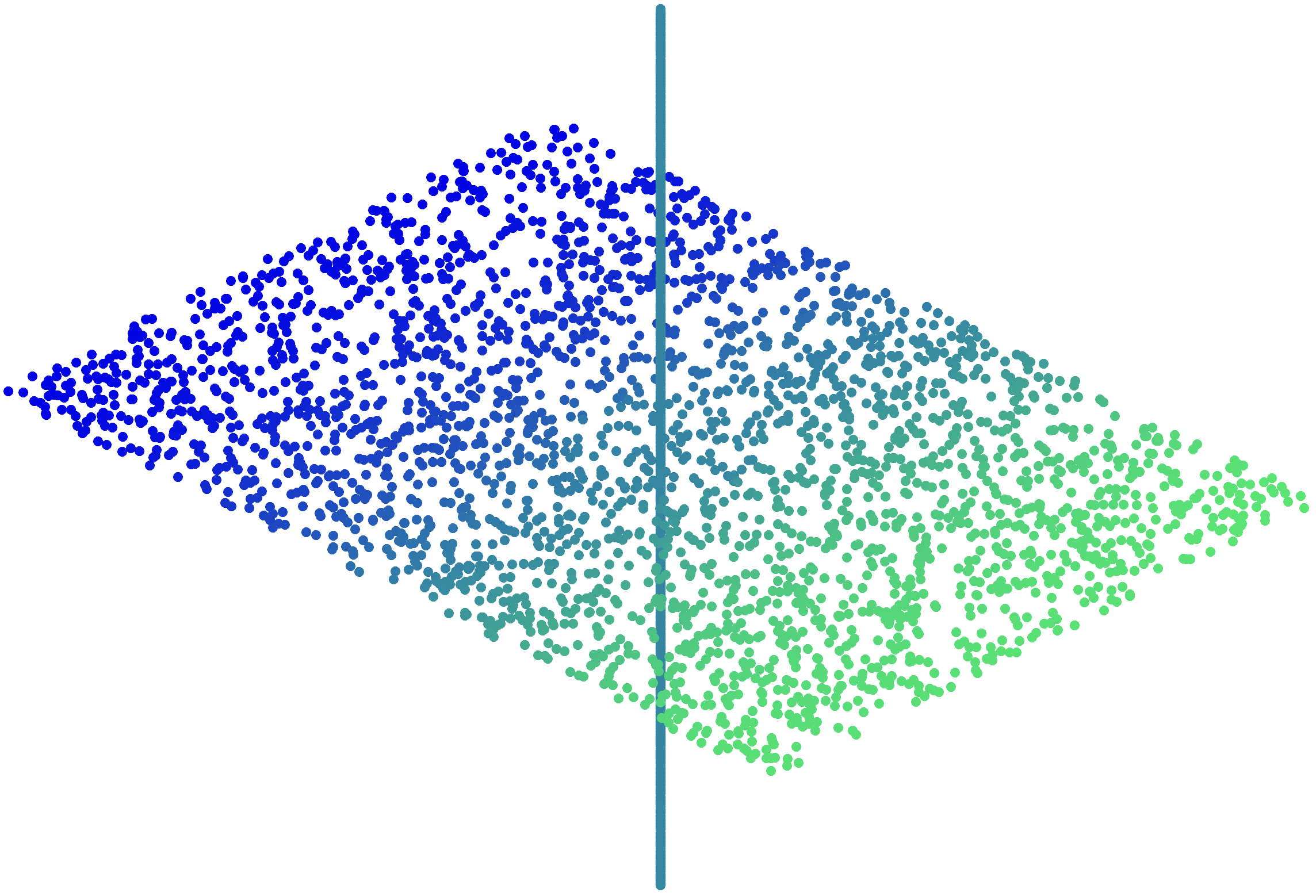}}
\hspace*{-3pt}
\subfloat[Eigenvector $u_3^{(n)}$]{\includegraphics[width=0.34\textwidth]{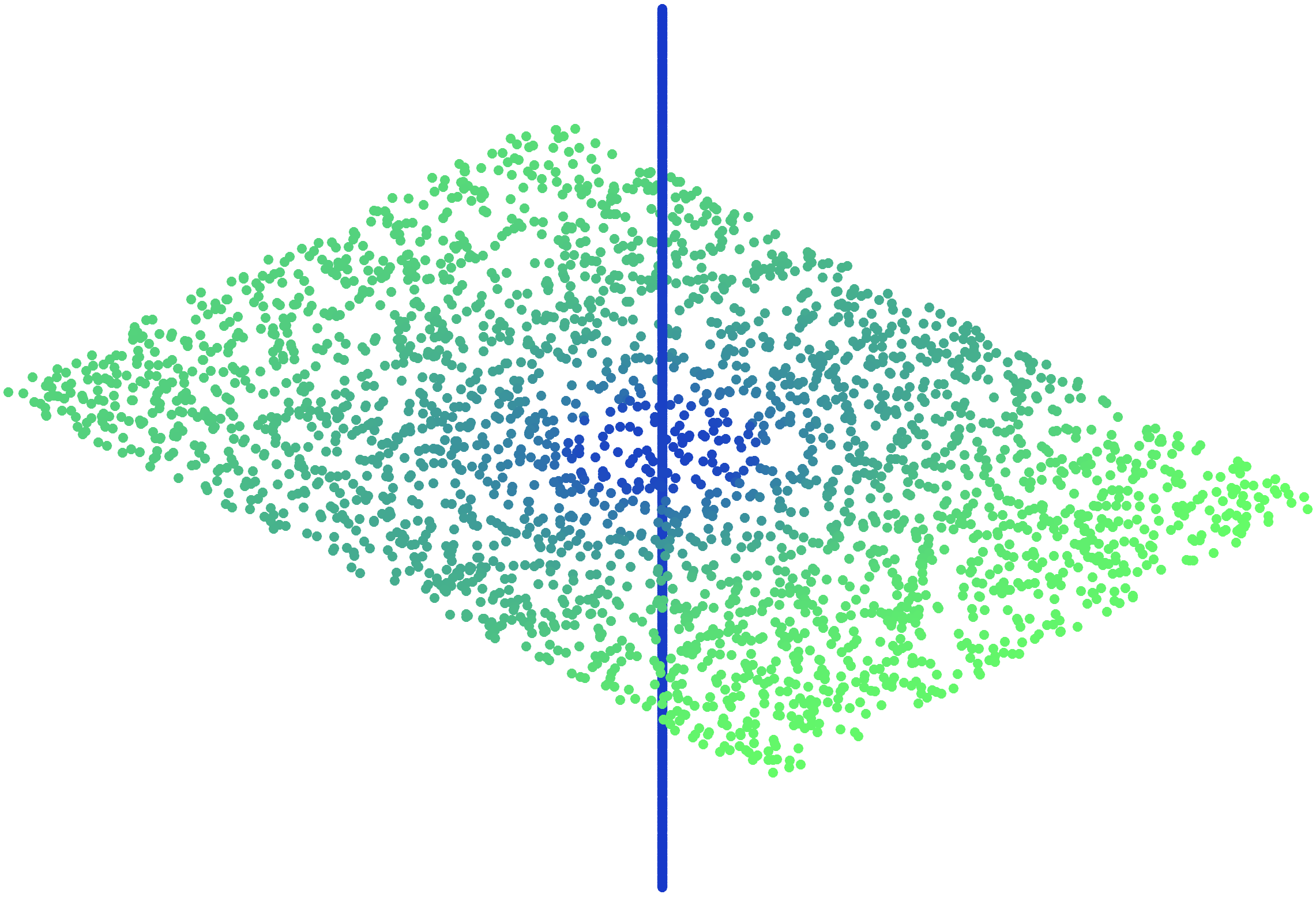}} 
\hspace*{-3pt}
\subfloat[Eigenvector $u_4^{(n)}$]{\includegraphics[width=0.34\textwidth]{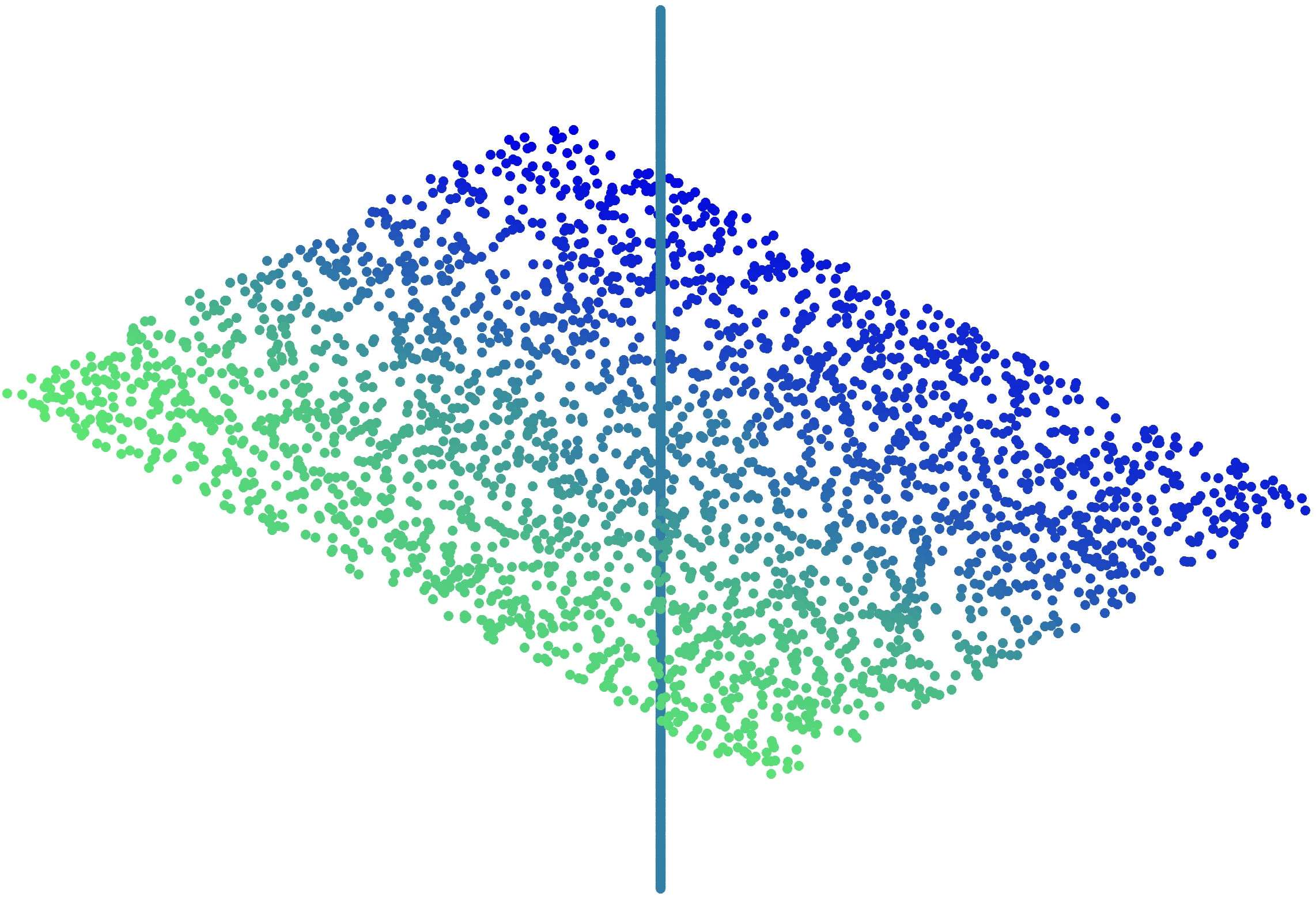}} \\
\hspace*{-3pt}
\subfloat[Eigenvector $u_5^{(n)}$]{\includegraphics[width=0.34\textwidth]{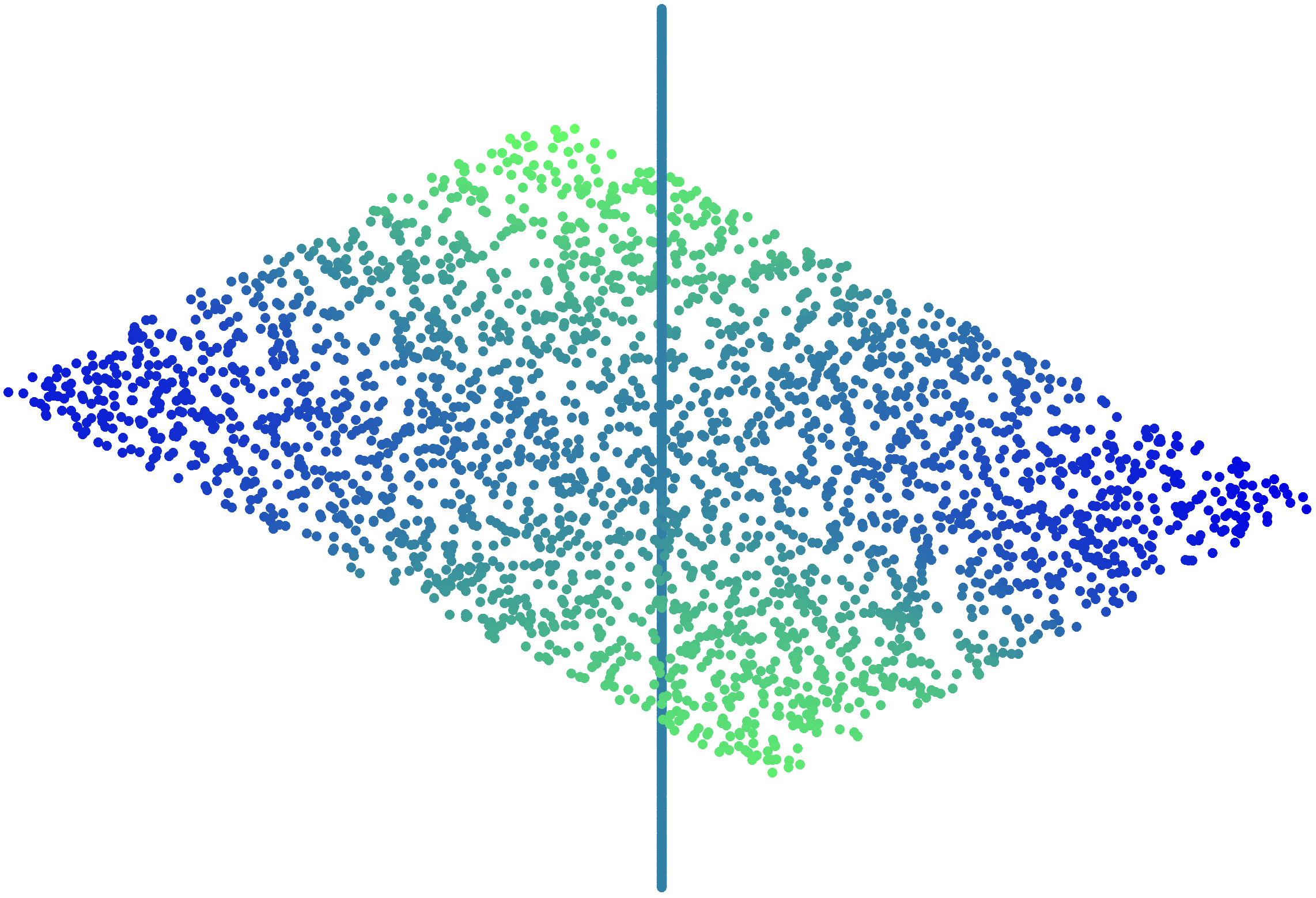}}
\hspace*{-3pt}
\subfloat[Eigenvector $u_6^{(n)}$]{\includegraphics[width=0.34\textwidth]{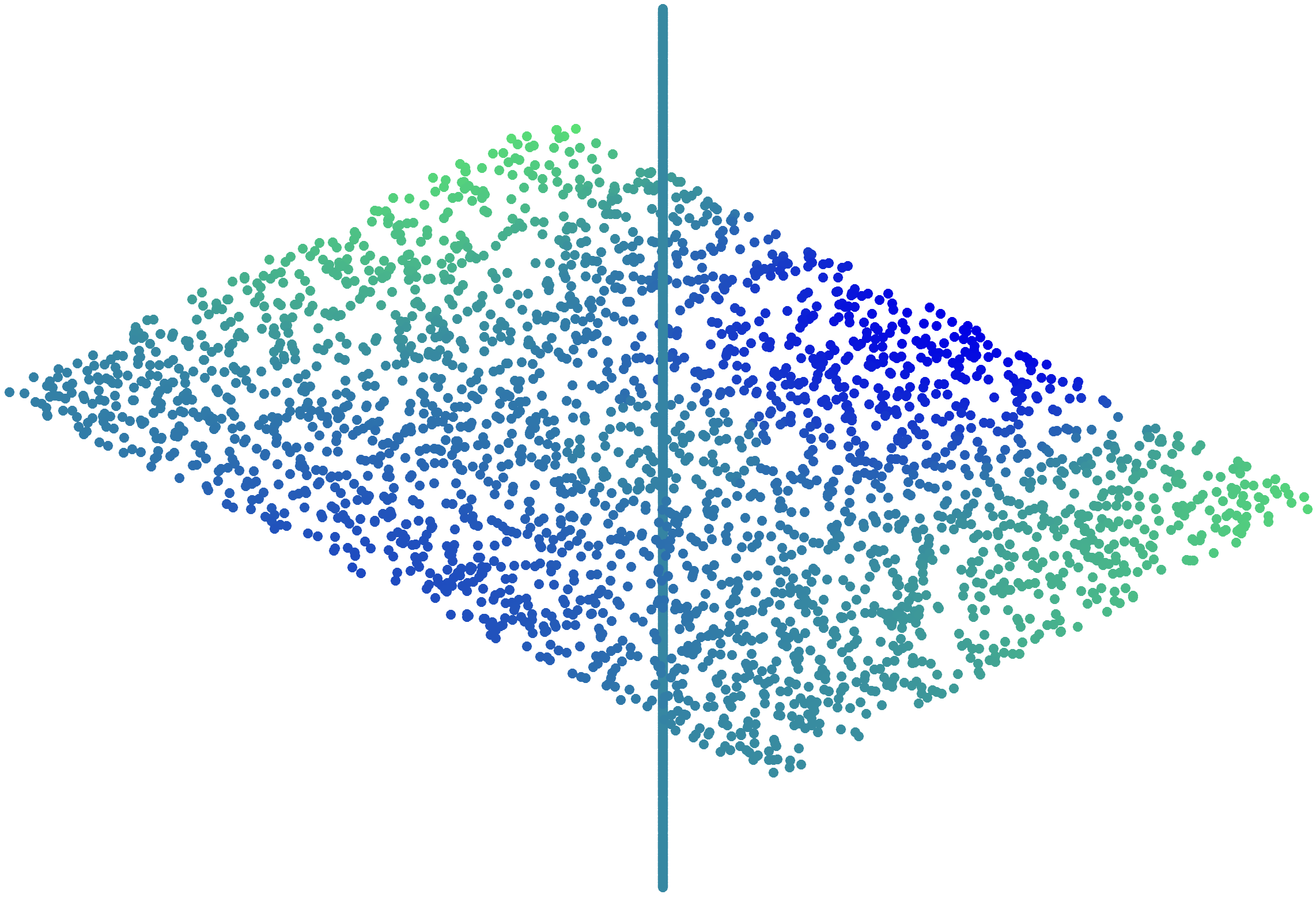}} 
\hspace*{-3pt}
\subfloat[Eigenvector $u_7^{(n)}$]{\includegraphics[width=0.34\textwidth]{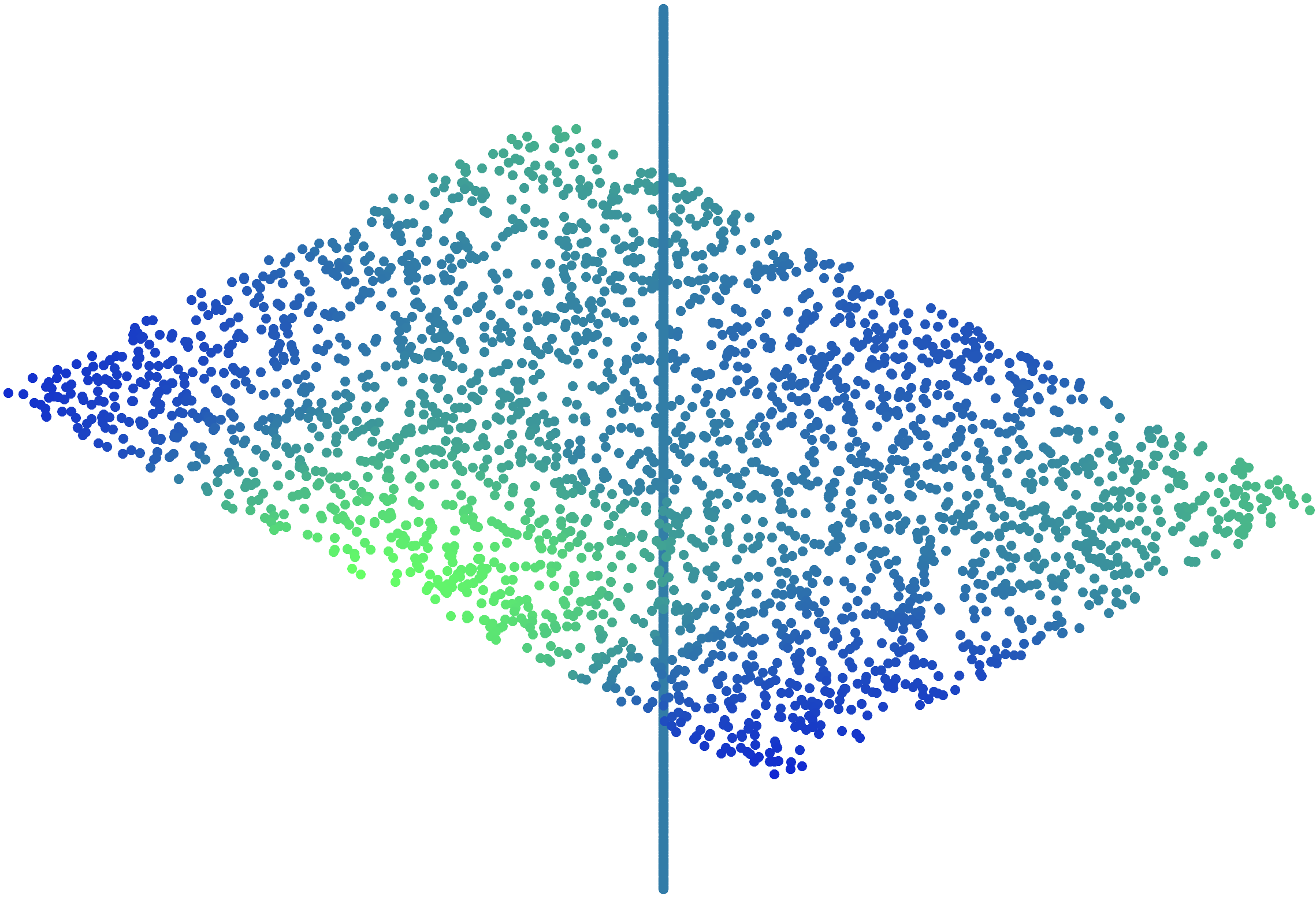}} 
\caption{Second to seventh eigenvector of the unnormalized Laplacian \labelcref{def:unnormalized_graph_Lapl} for $\eta = \chi_{[0,1]}$, $\eps = 0.13$, $n\sind{1}=2,600$, and $n\sind{2}=2,800$.}
\label{fig:unnormalized} 
\end{figure}
 \section{Continuum nonlocal-to-local limit}
\label{sec:nonlocal}

In this section, we consider nonlocal normalized Dirichlet energies, $\E_\eps$, on unions of manifolds and their limit $\E$ as the bandwidth $\eps \to 0$. This formally corresponds to the case $n=\infty$ in the graph-based setting of the previous section. These results are of independent interest and add to the rich field connecting nonlocal and local energies that we discussed in the introduction. 
However, our main aim here is to provide the key ideas of the proof of our main results \cref{thm:compactness,thm:gamma}, in an accessible way. 
    
Let us first recall the nonlocal and local normalized Dirichlet energies, defined in \labelcref{eq:nonlocal_fctl,eq:limit_fctl}, respectively: for $u\in L^2(\M)$
\begin{align*}
    \E_\eps(u) 
    &= 
    \frac{1}{\eps^2} \int_\M\int_\M\eta_\eps (|x-y|) \abs{\frac{u(x)}{\sqrt{\eta_\eps(\abs{x-\cdot})\star\mu}}-\frac{u(y)}{\sqrt{\eta_\eps(\abs{y-\cdot})\star\mu}}}^2\d\mu(x)\de\mu(y),
    \\
    \\
    \E(u) 
    &= 
    \begin{dcases}
    \sum_{i=1}^2
    \frac{\sigma_\eta\sind{i}}{\beta_\eta\sind{i}}
    \int_{\M\sind{i}}\abs{\nabla_{\M\sind{i}} \left( \frac{u}{\sqrt{\alpha\sind{i} \rho\sind{i}}} \right) \alpha\sind{i} \rho\sind{i}}^2 \d\vol\sind{i}
    \quad &\te{if } u\in H^1_{\sqrt \mu}(\M),  \\
    +\infty   \quad &\te{else}.
    \end{dcases}
\end{align*}
The proofs of $\Gamma$-convergence on union of manifolds uses the existing results of $\Gamma$-convergence on a single manifold. For the nonlocal-to-local convergence of the nonlocal Dirichlet energy in the continuum setting the relevant $\Gamma$-convergence result is \cref{thm:Gamma-convergence_standard_NL} from~\cite{laux2025large}.
 Similarly, the proofs of our main results \cref{thm:compactness,thm:gamma}, which we present in \cref{sec:proofs_graphs}, use \cref{thm:Gamma-convergence_standard}. 
in the appendix which provides a $\Gamma$-convergence and a compactness property for the standard graph Dirichlet energy on a single manifold. 
More precisely we shall use \cref{cor:Gamma-convergence_standard} which generalizes \cref{thm:Gamma-convergence_standard} to Binomial point processes.

We first state the compactness result, which is similar to \cref{thm:compactness}.
\begin{theorem}\label{thm:compactness_NL}
Under \cref{ass:densities,ass:eta} for any sequence $(\eps_n)_{n\in\N}\subset(0,\infty)$ converging to zero and any sequence of functions $(u_n)_{n \in N} \subset L^2(\M)$ satisfying
\begin{align*}
    \sup_{n\in\N} \E_{\eps_n}(u_n)<\infty,\qquad \sup_{n\in\N} \norm{u_n}_{L^2(\M)}<\infty,
\end{align*}
there exists a subsequence of  which converges in $L^2(\M)$.
\end{theorem}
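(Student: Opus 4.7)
The plan is to reduce the claim to $L^2$-compactness on each manifold $\M\sind{i}$ separately, and then invoke the existing nonlocal compactness on a single manifold. Since $\mu(\M\sind{12})=0$, one has $L^2(\M,\mu)\simeq L^2(\M\sind{1},\alpha\sind{1}\mu\sind{1})\oplus L^2(\M\sind{2},\alpha\sind{2}\mu\sind{2})$, so it suffices to extract, for each $i=1,2$, a subsequence of $u_n|_{\M\sind{i}}$ convergent in $L^2(\M\sind{i},\mu\sind{i})$.

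Restricting the double integral in $\E_{\eps_n}(u_n)$ to $\M\sind{i}\times\M\sind{i}$ yields a uniform bound on the corresponding partial energy. The key change of variable on $\M\sind{i}$ is $\tilde u_n := u_n/\sqrt{\rho\sind{i}}$, which is $L^2$-equivalent to $u_n$ since $\rho\sind{i}$ is Lipschitz and bounded above and below. Introducing $f_n(x):=\sqrt{\rho\sind{i}(x)/d_{\eps_n}(x)}$, the integrand factors as
\begin{equation*}
\frac{u_n(x)}{\sqrt{d_{\eps_n}(x)}} - \frac{u_n(y)}{\sqrt{d_{\eps_n}(y)}} = f_n(x)(\tilde u_n(x)-\tilde u_n(y)) + \tilde u_n(y)(f_n(x)-f_n(y)).
\end{equation*}
Fix a compact $K\subset\M\sind{i}\setminus\M\sind{12}$. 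For $\eps_n<\tfrac{1}{2}\dist(K,\M\sind{12})$, balls $B(x,\eps_n)$ with $x\in K$ do not meet $\M\sind{3-i}$, and a standard expansion gives $d_{\eps_n}(x)=\alpha\sind{i}\beta_\eta\sind{i}\rho\sind{i}(x)\eps_n^{d\sind{i}}(1+o(1))$ uniformly on $K$. The crucial observation is that the Lipschitz density $\rho\sind{i}$ cancels against the leading order of $d_{\eps_n}$ inside $f_n$, so that the oscillations of $f_n$ are of strictly lower order than those of $1/\sqrt{d_{\eps_n}}$ alone.

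Applying the elementary inequality $(X+Y)^2\geq X^2/2-2Y^2$ pointwise to the factorization above, integrating against $\eta_{\eps_n}(|x-y|)\,\de\mu\sind{i}\,\de\mu\sind{i}/\eps_n^2$, and using the improved oscillation estimate on $f_n$ to bound the $Y^2$-correction by $C_K\|u_n\|_{L^2(\mu)}^2$, one obtains
\begin{equation*}
\frac{1}{\eps_n^{d\sind{i}+2}}\int_K\int_K\eta_{\eps_n}(|x-y|)|\tilde u_n(x)-\tilde u_n(y)|^2\,\de\vol\sind{i}\,\de\vol\sind{i} \leq C_K\bigl(\E_{\eps_n}(u_n)+\|u_n\|_{L^2(\mu)}^2\bigr).
\end{equation*}
This matches the hypothesis of the single-manifold nonlocal compactness result (the continuum counterpart of \cref{thm:Gamma-convergence_standard}) applied to $\tilde u_n$ on $K$. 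Extracting a convergent subsequence on $K$ and diagonalizing over an exhausting sequence $K_j\uparrow\M\sind{i}\setminus\M\sind{12}$ yields a subsequence converging $\mu\sind{i}$-a.e.\ on $\M\sind{i}$ and in $L^2(K_j)$ for every $j$ (first for $\tilde u_n$, then for $u_n=\tilde u_n\sqrt{\rho\sind{i}}$).

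The principal obstacle lies in upgrading this to full $L^2(\M\sind{i})$-convergence, which amounts to showing the uniform equi-integrability $\sup_n\int_{U_\delta\cap\M\sind{i}}|u_n|^2\,\de\mu\sind{i}\to 0$ as $\delta\to 0$, where $U_\delta$ is a tubular $\delta$-neighborhood of $\M\sind{12}$. Here the plan is to combine the partial energy bound with a nonlocal Poincar\'e-type inequality on an annular strip $U_{2\delta}\setminus U_\delta$, controlling the near-intersection $L^2$-mass of $u_n$ by $C\delta^\gamma$ (with $\gamma>0$ depending on the codimension of $\M\sind{12}$ in $\M\sind{i}$) times an $\eps_n$-independent energy bound, plus the $L^2$-mass on the annular strip, which vanishes in $n$ by the $L^2(K_j)$-convergence already established. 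The codimension-one case $d\sind{1}=d\sind{2}=d\sind{12}+1$ requires the most care here, since $\mu\sind{i}(U_\delta\cap\M\sind{i})\sim\delta$ decays only linearly in $\delta$.
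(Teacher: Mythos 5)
Your first half runs parallel to the paper's opening step, but in a more laborious form: the paper substitutes $v_n := u_n/\sqrt{\eta_{\eps_n}(\abs{x-\cdot})\star\mu}$ directly, so that dropping the cross-terms gives the exact bound $\E_{\eps_n}(u_n)\geq(\alpha\sind{1})^2\mathcal G_{\eps_n}(v_n;\M\sind{1})+(\alpha\sind{2})^2\mathcal G_{\eps_n}(v_n;\M\sind{2})$ with no commutator term, applies \cref{thm:Gamma-convergence_standard_NL} to $v_n$ globally on each manifold (the degrees are bounded below everywhere, so no exhaustion by compacta is needed), and only converts back to $u_n$ at the level of the limit on the good set $A_{\eps_n}$. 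Your $f_n$-oscillation estimate is plausible on compact sets away from $\M\sind{12}$, but it is extra work that the substitution avoids, and it is not the heart of the matter.

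The genuine gap is at the step you yourself call the principal obstacle, and the mechanism you propose there cannot work. Having discarded the cross-terms, you retain only the energies restricted to $\M\sind{i}\times\M\sind{i}$, and you want to rule out concentration near $\M\sind{12}$ by a Poincar\'e-type inequality using that partial energy. But when $d\sind{1}<d\sind{2}$ the degree $\eta_{\eps_n}(\abs{x-\cdot})\star\mu$ on the strip $\{x\in\M\sind{2}\st\dist(x,\M\sind{1})<\eps_n/2\}$ is of order $\eps_n^{d\sind{1}}\gg\eps_n^{d\sind{2}}$, so the normalized energy restricted to $\M\sind{2}\times\M\sind{2}$ charges oscillations of $u_n$ inside that strip a factor $\eps_n^{d\sind{2}-d\sind{1}}$ less than elsewhere; no $\eps_n$-uniform control of the $L^2$ mass of $u_n$ there can follow from it. Concretely, with $d\sind{2}-d\sind{1}\geq 2$ (say a curve meeting a $3$-manifold in a point), take $u_n:=\eps_n^{-(d\sind{2}-d\sind{12})/2}$ on $\{x\in\M\sind{2}\st\dist(x,\M\sind{1})<\eps_n/4\}$ and $u_n:=0$ elsewhere: then $\norm{u_n}_{L^2(\M)}$ is bounded and bounded away from zero, the energy restricted to $\M\sind{1}\times\M\sind{1}$ vanishes, the one restricted to $\M\sind{2}\times\M\sind{2}$ is $O(\eps_n^{d\sind{2}-d\sind{1}-2})$, hence bounded, yet no subsequence converges in $L^2$; only the cross-term between $\M\sind{1}$ and the strip blows up (it is of order $\eps_n^{-2}$). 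So the cross-interaction is indispensable, and it is exactly what the paper uses: the iterated Cauchy--Schwarz estimate $C_k\norm{u_n}^2_{L^2(\mu\restr S^k_{\eps_n})}\leq\norm{u_n}^2_{L^2(\mu\restr G^k_{\eps_n})}+2\eps_n\sqrt{\E_{\eps_n}(u_n)}\,\norm{u_n}_{L^2(\mu)}$ over finitely many expanding strips (finiteness via \cref{lem:angle}), which transfers the vanishing of the mass of $u_n$ on $\M\sind{1}$ near the intersection to the $\eps_n$-strip in $\M\sind{2}$. Note also that you misplace the difficulty: in the codimension-one equal-dimension case the degrees are comparable everywhere and a concentration spike is excluded by the within-manifold energy alone; it is the unequal-dimension case where your scheme breaks, and moreover the dangerous region is an $\eps_n$-strip rather than a fixed $\delta$-neighborhood, so a fixed-scale annulus argument also mismatches the relevant scale.
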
 
The main result of this section is the $\Gamma$-convergence of nonlocal normalized Dirichlet energies.
\begin{theorem}\label{thm:Gamma_NL}
    Suppose that \cref{ass:densities,ass:eta} hold and $(\eps_n)_{n\in\N}\subset(0,\infty)$ converges to zero. Then $\E_{\eps_n}$ $\Gamma$-converges in $L^2(\M)$ to $\E$.
\end{theorem}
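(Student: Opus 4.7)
The plan is to leverage the single-manifold nonlocal-to-local $\Gamma$-convergence result from \cite{laux2025large} (recalled here as \cref{thm:Gamma-convergence_standard_NL}), combined with a careful analysis of the degree factor $\deg_\eps(x):=\eta_\eps(|x-\cdot|)\star\mu$ near the intersection $\M\sind{12}$. The unifying change of variable on each $\M\sind{i}$ is $v:=u/\sqrt{\alpha\sind{i}\rho\sind{i}}$, under which the target functional becomes $\E(u)=\sum_i(\sigma_\eta\sind{i}/\beta_\eta\sind{i})\int_{\M\sind{i}}|\nabla_{\M\sind{i}}v|^2(\alpha\sind{i}\rho\sind{i})^2\,d\vol\sind{i}$, matching the standard weighted Dirichlet form appearing in that reference.

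\textbf{Liminf inequality.} Assume $u_n\to u$ in $L^2(\M)$ with $\liminf_n\E_{\eps_n}(u_n)<\infty$. First I would show that for $x\in\M\sind{i}$ with $\dist(x,\M\sind{12})>\eps_n$ the other manifold contributes nothing to $\deg_{\eps_n}(x)$, so that $\deg_{\eps_n}(x)=\eps_n^{d\sind{i}}\alpha\sind{i}\rho\sind{i}(x)\beta_\eta\sind{i}(1+o(1))$ uniformly. Restricting the double integral in $\E_{\eps_n}(u_n)$ to the diagonal blocks $\M\sind{i}\times\M\sind{i}$ with both points at distance $>\eps_n$ from $\M\sind{12}$, and factoring out these degree asymptotics, reduces each block to a standard weighted nonlocal Dirichlet energy in the variable $v_n:=u_n/\sqrt{\alpha\sind{i}\rho\sind{i}}$, to which \cref{thm:Gamma-convergence_standard_NL} applies. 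Sending the width of the excluded tubular neighborhood to zero yields the bulk contribution of $\E(u)$. In the codim-one case the trace condition embedded in the definition of $H^1_{\sqrt\mu}(\M)$ is supplied by \cref{sec:appendix_trace}, which shows that a uniform bound on $\E_{\eps_n}(u_n)$ already forces the traces of $u_n/\sqrt{\alpha\sind{i}\rho\sind{i}}$ on $\M\sind{12}$ to match in the limit.

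\textbf{Limsup inequality.} By the density of smooth/Lipschitz functions established in \cref{sec:appendix_smooth_approx,sec:appendix_Lip_dens}, it suffices to produce a recovery sequence when $v=u/\sqrt{\alpha\sind{i}\rho\sind{i}}$ is smooth on each $\M\sind{i}$ and, in the codim-one case, continuous across $\M\sind{12}$. I would then take the constant-in-$\eps$ sequence $u_\eps\equiv u$, split $\E_\eps(u)$ into the four blocks over $\M\sind{i}\times\M\sind{j}$, and recognize the two diagonal blocks as standard nonlocal Dirichlet energies of $v$ on $\M\sind{i}$, up to a correction due to the enhancement of $\deg_\eps$ near $\M\sind{12}$ by the presence of the other manifold; the smoothness of $v$ controls this correction and \cref{thm:Gamma-convergence_standard_NL} then delivers the correct limit from the diagonal blocks alone.

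\textbf{Main obstacle.} The crux lies in the off-diagonal blocks, supported on $\{(x,y)\in\M\sind{1}\times\M\sind{2}:|x-y|\leq\eps\}$, together with the distortion of $\deg_\eps$ on the tubular $\eps$-neighborhood of $\M\sind{12}$. In the codim-$\geq 2$ case the $\mu$-measure of that tube inside $\M\sind{i}$ scales as $\eps^{d\sind{i}-d\sind{12}}\geq\eps^2$, which is enough to overcome the $\eps^{-2}$ prefactor and push both effects to zero. In the codim-one case this counting argument fails and one has to argue geometrically: the positivity of the principal angle $\theta$ (\cref{lem:angle}) makes the union look locally like two transversal half-hyperplanes along $\M\sind{12}$, and in Fermi coordinates the integrals defining $\deg_\eps(x)$ for $x\in\M\sind{1}$ and $\deg_\eps(y)$ for $y\in\M\sind{2}$ can be compared directly. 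The continuity of $v$ across $\M\sind{12}$ gives $|v(x)-v(y)|=O(\eps)$ on the cross support, and the delicate point is verifying that the $\sqrt{\deg_\eps}$-renormalization precisely cancels the jump $u$ itself may carry across $\M\sind{12}$ (caused by $\rho\sind{1}\neq\rho\sind{2}$), so that no spurious interfacial term survives in the limit.
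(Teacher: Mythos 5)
Your limsup strategy is where the proof genuinely breaks. Using the constant sequence $u_\eps\equiv u$ fails exactly in the delicate regimes. In the critical intermediate case $d\sind{2}-d\sind{12}=2$ (a curve piercing a surface, say), your counting argument that the tube has $\mu$-measure $\eps^{d\sind{i}-d\sind{12}}\geq\eps^2$ and hence ``overcomes the $\eps^{-2}$ prefactor'' only gives an $O(1)$ bound, not $o(1)$: the degree normalization contributes an extra factor $\sim\eps^{-d\sind{1}}$ inside the squared difference, and indeed for $u$ constant on each manifold (so $\E(u)=0$) one computes $\E_{\eps}(u)\sim|u\sind{1}-u\sind{2}|^2$, an order-one spurious interfacial term. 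The paper resolves this with a different recovery sequence: a logarithmic (capacity-type) interpolation on $\M\sind{2}$ in the annulus $\eps_n\leq d(x)\leq\sqrt{\eps_n}$ around $\M\sind{12}$, whose Dirichlet cost is $O(1/\abs{\log\eps_n})$, combined with modulation by $\sqrt{\eps_n^{-d\sind{i}}\eta_{\eps_n}(\abs{x-\cdot})\star\mu/(\alpha\sind{i}\beta_\eta\sind{i}\rho\sind{i})}$. In the codimension-one case the constant sequence is even worse: the degree-normalized function $u/\sqrt{\eps^{-d}\eta_\eps(\abs{x-\cdot})\star\mu}$ that actually enters the diagonal blocks has an $O(1)$ transition across an $\eps$-layer around $\M\sind{12}$ (the rescaled degree tends to $\beta_\eta(\alpha\sind{1}\rho\sind{1}+\alpha\sind{2}\rho\sind{2})$ at the intersection but to $\beta_\eta\alpha\sind{i}\rho\sind{i}$ at distance $\gtrsim\eps$), so the diagonal block alone scales like $1/\eps$, and in the cross term the jump $\sqrt{\alpha\sind{1}\rho\sind{1}}-\sqrt{\alpha\sind{2}\rho\sind{2}}$ survives; the $\sqrt{\deg}$-renormalization does \emph{not} ``precisely cancel'' it, contrary to your hope in the last paragraph. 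The paper's fix is to take the modulated sequence $u_n=u\sqrt{\eps_n^{-d}\eta_{\eps_n}(\abs{x-\cdot})\star\mu/(\alpha\sind{i}\beta_\eta\rho\sind{i})}$, so that the function appearing after normalization is exactly $v=u/\sqrt{\alpha\sind{i}\beta_\eta\rho\sind{i}}$, globally Lipschitz thanks to the density-with-trace-constraint result (\cref{lem:density}); the cross term is then $O(\eps_n)$ by Lipschitzness. These constructions are the core of the limsup proof and are missing from your proposal.

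The liminf also has a quantitative gap as written. You propose to ``factor out the degree asymptotics'' and pass to $v_n=u_n/\sqrt{\alpha\sind{i}\rho\sind{i}}$ inside the energy. Under \cref{ass:densities} the density is only Lipschitz, so $\eps^{-d\sind{i}}\eta_\eps(\abs{x-\cdot})\star\mu=\alpha\sind{i}\beta_\eta\sind{i}\rho\sind{i}(1+O(\eps))$ at best; after the $\eps^{-2}$ prefactor the replacement error is of size $O(1)\norm{u_n}_{L^2}^2$, not $o(1)$, so the lower bound is off by a constant. The paper sidesteps this by keeping the exact degree: with $v_n:=u_n/\sqrt{\eps_n^{-d\sind{i}}\eta_{\eps_n}(\abs{x-\cdot})\star\mu}$ the diagonal blocks are \emph{exactly} $(\alpha\sind{i})^2\mathcal G_{\eps_n}(v_n;\M\sind{i})$ with no error, and the kernel-density asymptotics are used only to prove $v_n\to v$ in $L^2(\M)$, where no $\eps^{-2}$ amplification occurs; the trace identification in codimension one then follows from \cref{lem:trace}, which is the part you correctly invoke. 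Finally, restricting the double integral to points at distance $>\eps_n$ from $\M\sind{12}$ and ``sending the excluded tube to zero'' would require a localized liminf inequality on manifolds with boundary, which \cref{thm:Gamma-convergence_standard_NL} as stated does not provide; applying it on the whole closed manifolds, as the paper does, avoids this issue altogether.
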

For proving these theorems it will be convenient to denote by $\mathcal G_{\eps}(\cdot;\M\sind{i})$ and $\mathcal G(\cdot;\M\sind{i})$ the nonlocal and the standard Dirichlet energy from \cref{thm:Gamma-convergence_standard_NL}, defined in the manifold $\M\sind{i}$ and equipped with the probability measures $\mu\sind{i}=\rho\sind{i}\vol\sind{i}$ for $i\in\{1,2\}$. 

\subsection{Proof of compactness for nonlocal normalized Dirichlet energies}
\label{sec:NL_compactness}

Here we provide a proof of \cref{thm:compactness_NL}, where the main difficulty arises when proving compactness on the larger dimensional manifold $\M\sind{2}$ close to its intersection with $\M\sind{1}$, where the convolutions $x\mapsto\eps_n^{-d\sind{2}}\eta_{\eps_n}(\abs{x-\cdot})\star\mu$ scale like $\eps_n^{d\sind{1}-d\sind{2}}$ and hence are potentially unbounded.

\begin{proof}[Proof of \cref{thm:compactness_NL}]
    Defining the sequence of functions $v_n \in L^2(\M)$ via
    $$
    v_n(x) := \frac{u_n(x)}{\sqrt{\eps_n^{-d\sind{i}}\eta_{\eps_n}(\abs{x-\cdot})\star\mu}}
    $$ 
    for $x\in\M\sind{i}$ and $i\in\{1,2\}$, and using that $\mu\restr\M\sind{i}=\alpha\sind{i}\mu\sind{i}$ we have that
    \begin{align*}
        \E_{\eps_n}(u_n) 
        \geq 
        \left(\alpha\sind{1}\right)^2\mathcal G_{\eps_n}(v_n; \M\sind{1})
        +
        \left(\alpha\sind{2}\right)^2
        \mathcal G_{\eps_n}(v_n; \M\sind{2}).
    \end{align*}
    In the case that the manifolds have the same dimension $d\sind{1}=d\sind{2}=:d$ we can bound the $L^2$-norm of $v_n$ by noting that thanks to \cref{ass:eta} there exists $\tau,C_\eta>0$ such that $\eta(t)\geq C_\eta$ for all $t\leq\tau$ and hence we have
    \begin{align*}
        \eta_{\eps_n}(\abs{x-\cdot})\star\mu 
        \geq 
        C_\rho 
        \int_{\M}\eta_{\eps_n}(\abs{x-y})
        \d\vol(y)
        \geq 
        C_\rho
        C_\eta
        \vol(B_{\tau\eps}(x)\cap\M)
        \sim 
        \eps^{d},
    \end{align*}
    where $a(\eps) \sim b(\eps)$ means $0 < \liminf_{\eps \to 0} a/b \leq \limsup _{\eps \to 0} a/b < \infty$. This
     implies $\norm{v_n}_{L^2(\M)}\leq C\norm{u_n}_{L^2(\M)}$ for a constant $C$ that depends on $\rho,\eta$ and $\M$.
     
    If the dimensions of the manifolds are unequal, we have
    \begin{align} \label{eq:convinM1scale}
         \eps_n^{-d\sind{1}}\eta_{\eps_n}(\abs{x-\cdot})\star\mu\sim O(\eps_n^{d\sind{2}-d\sind{1}}) + 1 \sim 1
    \end{align}
    for $x\in\M\sind{1}$ close to the intersection
    and
    \begin{align} \label{eq:convinM2scale}
    \eps_n^{-d\sind{2}}\eta_{\eps_n}(\abs{x-\cdot})\star\mu\sim \eps_n^{d\sind{1}-d\sind{2}}\to \infty
    \end{align}
    for $x\in\M\sind{2}$ close to the intersection.
    Hence, $\norm{v_n}_{L^2(\M)}$ is bounded in all cases, and thus \cref{thm:Gamma-convergence_standard_NL} implies that (up to a subsequence that we do not relabel)  $v_n\to v$ in $L^2(\M)$.
    Taking \labelcref{eq:convinM1scale} and the definition of $v_n$ into account, the dominated convergence theorem implies $u_n\to u$ in $L^2(\M\sind{1})$, where
    \begin{align}\label{eq:def_u_compactness_NL}
        u(x):=\sqrt{\alpha\sind{i}\beta_\eta\sind{i}\rho\sind{i}(x)}\, v(x),
        \qquad\; \te{for }
        x\in\M\sind{i}.
    \end{align}
    To argue that this convergence also holds true in $L^2(\M\sind{2})$, we first introduce some notation:
    Let $A_{\eps_n} := \M\sind{1} \cup \{ x \in \M\sind{2} \st  d(x,\M\sind{1}) > \eps_n\}$ and note that on $A_{\eps_n}$, for $x \in \M\sind{i}$
    \begin{align} \label{eq:convinAscale}
         \eps_n^{-d\sind{i}}\eta_{\eps_n}(\abs{x-\cdot})\star\mu\sim 1
    \end{align}
    Define, furthermore, $u^{good}_n := \chi_{A_{\eps_n}} u_n$ and $u^{bad}_n := u_n - u^{good}_n$.

    Let us make a few observations:
    Since $v_n \to v$ in $L^2(\M)$,  we also have $\chi_{A_{\eps_n}}  v_n \to v$ in $L^2(\M)$. 
    Furthermore, we note that by standard properties of the convolution (see, e.g., in \cite[Theorem 2.2]{wu2022strong}) we have
    \begin{align} \label{eq:mollification}
        \eps_n^{-d\sind{i}}\eta_{\eps_n}(\abs{x-\cdot})\star\mu\restr \M\sind{i} \to  \alpha\sind{i}\beta_\eta\sind{i}\rho\sind{i}(x) \quad\, \te{uniformly in } \M\sind{i} \te{ as } n\to \infty. 
    \end{align}
    Thus, due also to \labelcref{eq:convinM1scale}, the following function on $\M$
    \begin{equation} \label{eq:molli_uniform}
     x\mapsto\chi_{A_{\eps_n}}(x)  \eps_n^{-d\sind{i}}\eta_{\eps_n}(\abs{x-\cdot})\star\mu - \chi_{A_{\eps_n}}(x)  \alpha\sind{i}\beta_\eta\sind{i}\rho\sind{i}(x)
    \end{equation}
    (where $i$ corresponds to the manifold $x$ belongs to) converges uniformly to zero. 
    From these observations, and taking the definition of $v_n$ and \labelcref{eq:convinAscale} into account, it follows that 
    $u^{good}_n$ converges in $L^2(\M)$ to $u\in L^2(\M)$, defined in \labelcref{eq:def_u_compactness_NL}.
     So to prove $L^2(\M)$ convergence of $u_n$ (along a subsequence) it remains to show that $u^{bad}_n \to 0$ in $L^2(\M\sind{2})$, at least along a subsequence. 
    Note that we already know $u^{bad}_n\to 0$ in $L^2(\M\sind{1})$ since $u_n\to u$ and $u_n^{good}\to u$ in the same topology.
     
    The idea that guides the remainder of the proof is the following: If the $L^2$ norm of $u^{bad}_n$ is bounded from below by a positive number, since the energy $\E_{\eps_n}(u_n)$ is small, the cross-term of the energy would also force the $L^2$ norm of $u_n$ restricted to a vanishing (as $n \to \infty)$ region of $\M\sind{1}$ close to the intersection to be bounded from below by a positive number. However, this is not possible since $u^{good}_n$ converges in $L^2(\M\sind{1})$. 
    The precise argument is a bit more complicated since we are not able to execute the above in one step, but instead iterate over an expanding sequence of sets. 

    Let us define the ``good'' set $G^1_{\eps_n} := \{x \in \M\sind{1} \st d(x, \M\sind{2}) < \eps_n\}$ on which we already know that $u_n\to u$ as well as the ``strip'' 
    $S^1_{\eps_n} := \{x \in \M\sind{2} \st d(x, \M\sind{1}) < \eps_n/2\}$ of points where we want to deduce this convergence.
    For $k>1$ we update the good and the strip sets iteratively via
    $G^k_{\eps_n} := G^{k-1}_{\eps_n} \cup S^{k-1}_{\eps_n}$ and 
    $S^k_{\eps_n} = \{x \in \M\sind{2} \st d(x, G^{k-1}_{\eps_n}) < {\eps_n}/2 \}$.

    Let $\bar k>1$ be the smallest $k$ such that for all $n\in\N$, $\{ x \in \M\sind{2} \st  d(x, \M\sind{1}) \leq {\eps_n}\} \subset G^k_{\eps_n}$. We know by \cref{lem:angle} that $\bar k$ is finite, but that depending on the angle of intersection of manifolds may be bigger than two. 

    We show by induction that $\|u_n\|_{L^2(G^k_{\eps_n})} \to 0$ as $n \to \infty$. Since
    $\M\setminus A_{\eps_n} = \{ x \in \M\sind{2} \st  d(x, \M\sind{1})\} \leq \eps_n \} \subset G^{\bar k}_{\eps_n}$ this implies that $\|u^{bad}_n\|_{L^2(\M)} \to 0$, which is the desired conclusion.

    \paragraph{Case $k=1$.} We note that the fact that $\|u_n\|_{L^2(G^1_{\eps_n})} \to 0$ follows from the fact that $u_n$ restricted to $\M\sind{1}$ converge in $L^2$.

    \paragraph{Case $k>1$.} Observe that for all $k>1$, due to the definition of the sets $G^k_\eps$ and $S^k_\eps$, and \cref{ass:eta}(iii), we have that for all $x \in S^k_{\eps_n}$, 
    $\;\eta_{\eps_n}(\abs{x-\cdot})\star \chi_{G^k_{\eps_n}} > c_k>0$ and hence
    $\;\eta_{\eps_n}(\abs{x-\cdot})\star (\mu\restr{G^k_{\eps_n}}) \geq C_k \eta_{\eps_n}(\abs{x-\cdot})\star \mu $ for some $C_k>0$.
    Furthermore, for all $x\in\M$ we trivially have that $\eta_{\eps_n}(\abs{x-\cdot})\star(\mu\restr S_{\eps_n}^k)\leq \eta_{\eps_n}(\abs{x-\cdot})\star\mu$.     
    Therefore
    \begin{align*}
    C_k & \|u_n\|^2_{L^2(\mu\restr{S^k_{\eps_n}})}  - \|u_n\|^2_{L^2(\mu\restr{G^k_{\eps_n}})} 
    \\
    & 
    \leq 
    \int_{S^k_{\eps_n}} \frac{\eta_{\eps_n}(\abs{x-\cdot})\star (\mu\restr{G^k_{\eps_n}})}{\eta_{\eps_n}(\abs{x-\cdot})\star \mu } \,u_n^2(x) \de\mu(x)
    - \int_{G^k_{\eps_n}} \frac{\eta_{\eps_n}(\abs{y-\cdot})\star (\mu\restr{S^k_{\eps_n}})}{\eta_{\eps_n}(\abs{y-\cdot})\star \mu } \, u_n^2(y)\de\mu(y)  
    \\
    & = 
    \int_{S^k_{\eps_n}} \int_{G^k_{\eps_n}} \eta_{\eps_n}(\abs{x-y}) \left( \frac{u_n^2(x)}{\eta_{\eps_n}(\abs{x-\cdot})\star \mu } - \frac{u_n^2(y)}{\eta_{\eps_n}(\abs{y-\cdot})\star \mu }\right)\de\mu(y)\de\mu(x) \\
    & \leq \sqrt{ \int_{S^k_{\eps_n}} \int_{G^k_{\eps_n}}   \eta_{\eps_n}(\abs{x-y}) \abs{\frac{u_n(x)}{\sqrt{\eta_{\eps_n}(\abs{x-\cdot})\star \mu }} - \frac{u_n(y)}{\sqrt{\eta_{\eps_n}(\abs{y-\cdot})\star \mu}}}^2\de\mu(y)\de\mu(x)} \\
    & \phantom{\leq\;} \times \sqrt{ \int_{S^k_{\eps_n}} \int_{G^k_{\eps_n}}  \eta_{\eps_n}(\abs{x-y})  \left( \frac{u_n(x)}{\sqrt{\eta_{\eps_n}(\abs{x-\cdot})\star \mu }} + \frac{u_n(y)}{\sqrt{\eta_{\eps_n}(\abs{y-\cdot})\star \mu }}\right)^2\de\mu(y)\de\mu(x)} \\
    & \leq 2\eps_n \sqrt{ \E_{\eps_n}(u_n)} \|u_n\|_{L^2(\mu)}
    \end{align*}
    Therefore 
    \[ \|u_n\|^2_{L^2(\mu\restr{S^k_{\eps_n}})} \leq \frac{1}{C_k} \left( \|u_n\|^2_{L^2(\mu\restr{G^k_{\eps_n}})}  + 2 \eps_n \sqrt{ \E_{\eps_n}(u_n)} \|u_n\|_{L^2(\mu)} \right) \]
    Since, using also the induction assumption, the right-hand side converges to zero as $n \to \infty$, we conclude that $\|u_n\|^2_{L^2(\mu\restr{S^k_{\eps_n}})} \to 0$ and therefore $\|u_n\|^2_{L^2(\mu\restr{G^{k+1}_{\eps_n}})} \to 0$ as $n \to \infty$. 
                                      \end{proof}

For the full proof of \cref{thm:compactness} which follows similar ideas we refer to \cref{sec:compactness}.

\subsection{Proof of the nonlocal \texorpdfstring{$\Gamma$}{Gamma}-liminf inequality}

Here we prove the liminf inequality part of $\Gamma$-converence claimed in \cref{thm:Gamma_NL}. That is we prove that  for any sequence $(u_n)\subset L^2(\M)$ with $u_n\overset{L^2}{\to}u$ for $u\in L^2(\M)$ it holds
\begin{align*}
        \E(u) \leq \liminf_{n\to\infty}\E_{\eps_n}(u_n).
\end{align*}
The proof follows similar ideas as used in proving \cref{thm:compactness_NL} in \cref{sec:NL_compactness}. 
Here, however, the main challenge consist in establishing the trace condition in the definition of $H^1_{\sqrt{\mu}}(\M)$ in the case of codimension~$1$.
       \begin{proof}[Proof of the liminf inequality for \cref{thm:Gamma_NL}]
    Let $v_n(x) := \frac{u_n(x)}{\sqrt{\eps_n^{-d\sind{i}}\eta_{\eps_n}(\abs{x-\cdot})\star\mu}}$ 
    as before and $v(x):=\frac{u(x)}{\sqrt{\alpha\sind{i}\beta_\eta\sind{i}\rho\sind{i}(x)}}$ for $x\in\M\sind{i}$ and $i\in\{1,2\}$. We first prove that $v_n\to v$ in $L^2(\M)$ by computing
    \begin{align*}
        &\phantom{{}={}}
        \lim_{n\to\infty}
        \sum_{i=1}^2
        \int_{\M\sind{i}}
        \abs{v_n-v}^2\d\vol\sind{i}
        \\
        &=
        \lim_{n\to\infty}
        \sum_{i=1}^2
        \int_{\M\sind{i}}
        \abs{u_n(x)-\sqrt{\frac{\eps_n^{-d\sind{i}}\eta_{\eps_n}(\abs{x-\cdot})\star\mu}{\alpha\sind{i}\beta_\eta\sind{i}\rho\sind{i}(x)}}u(x)}^2\frac{\d\vol\sind{i}(x)}{\eps_n^{-d\sind{i}}\eta_{\eps_n}(\abs{x-\cdot})\star\mu}
        \\
        &\leq 
        2
        \lim_{n\to\infty}
        \sum_{i=1}^2
        \int_{\M\sind{i}}
        \abs{u_n(x)-u(x)}^2\frac{\d\vol\sind{i}(x)}{\eps_n^{-d\sind{i}}\eta_{\eps_n}(\abs{x-\cdot})\star\mu}
        \\
        &\qquad
        +
        2
        \lim_{n\to\infty}
        \sum_{i=1}^2
        \int_{\M\sind{i}}
        \abs{\frac{\sqrt{\alpha\sind{i}\beta_\eta\sind{i}\rho\sind{i}(x)}-\sqrt{\eps_n^{-d\sind{i}}\eta_{\eps_n}(\abs{x-\cdot})\star\mu}}{\sqrt{\eps_n^{-d\sind{i}}\eta_{\eps_n}(\abs{x-\cdot})\star\mu} \sqrt{\alpha\sind{i}\beta_\eta\sind{i}\rho\sind{i}(x)}}}^2
        \abs{u(x)}^2
        \d\vol\sind{i}(x).
    \end{align*}
    Using that $u_n\to u$ in $L^2$ and employing the dominated convergence theorem together with \labelcref{eq:convinM1scale,eq:convinM2scale,eq:mollification} for the second term, we get that $v_n\to v$ in $L^2(\M\sind{i})$.

    By definition of $v_n$ and $v$ have    
    \begin{align*}
        \E_{\eps_n}(u_n) \geq 
        \left(\alpha\sind{1}\right)^2\mathcal G_{\eps_n}(v_n; \M\sind{1})
        +
        \left(\alpha\sind{2}\right)^2
        \mathcal G_{\eps_n}(v_n; \M\sind{2}).
    \end{align*}
    and, using the liminf inequality from \cref{thm:Gamma-convergence_standard_NL}, we obtain the inequality
    \begin{align*}
        \sum_{i=1}^2
        \frac{\alpha\sind{i}\sigma_\eta\sind{i}}{\beta_\eta\sind{i}}
        \int_{\M\sind{i}}
        \abs{\nabla_{\M\sind{i}}
        \left(
        \frac{u}{\sqrt{\rho\sind{i}}}
        \right)
        \rho\sind{i}
        }^2
        \d\vol\sind{i}
        &=
        \left(\alpha\sind{1}\right)^2\mathcal G_{\eps_n}(v; \M\sind{1})
        +
        \left(\alpha\sind{2}\right)^2
        \mathcal G_{\eps_n}(v; \M\sind{2})
        \\
        &\leq 
        \liminf_{n\to\infty}\E_{\eps_n}(u_n).
    \end{align*}      
    To conclude the proof of the liminf inequality, it only remains to show the trace condition in the definition of the Sobolev space $H^1_{\sqrt{\mu}}(\M)$ from \labelcref{eq:def_H1mu} in the case $d\sind{1}=d\sind{2}=:d=d\sind{12}+1$. In this case  the intersection $\M\sind{12}$ locally divides the remaining manifolds into parts $\M\sind{i}_\pm$, one on either side of $\M\sind{12}$, see \cref{fig:trace}.
    In particular, there exists a bi-Lipschitz change of variables, $\Psi$, mapping  the intersection of a ball around a point $x\in\M\sind{12}$ with $\M\sind{1}_+\cup\M\sind{2}_+$ (the red set on the left) to an open subset of $\R^d$ (the red set on the right). Then the liminf inequality from \cref{thm:Gamma-convergence_standard_NL} implies that $u\circ\Psi^{-1}$ is a $H^1$-function and hence has a well-defined trace on $\Psi(\M\sind{12})$. Thus the two traces of $u$ $\M\sind{12}$ coincide on the neighborhood of $x$. Since $x$ was arbitrary the traces agree on all of $\M\sind{12}$.
    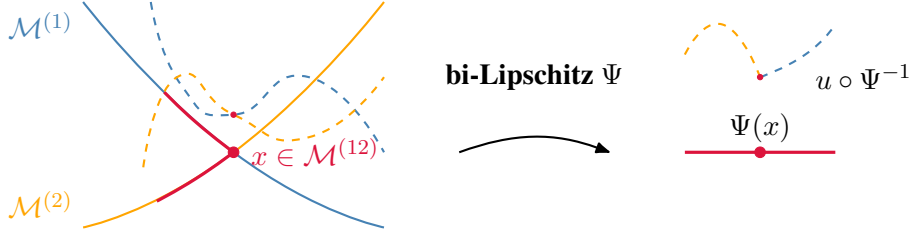
\begin{figure}[htb]
    \centering
    \begin{tikzpicture}
     \definecolor{manifoldA}{RGB}{70,130,180} 
    \definecolor{manifoldB}{RGB}{255,165,0}  
    \definecolor{intersectionColor}{RGB}{220,20,60} 
    \definecolor{ballColor}{RGB}{144,238,144} 
    \definecolor{highlightColor}{RGB}{0,100,0} 
     \draw[thick, manifoldA] plot[smooth, tension=1] coordinates {(-2,2) (0,0) (2,-1)};
    \draw[thick, manifoldB] plot[smooth, tension=1] coordinates {(-2,-1) (0,0) (2,2)};
     \filldraw[intersectionColor] (0,0) circle (2pt);
     \draw[very thick, intersectionColor] plot[smooth, tension=1] coordinates {(-0.92,0.8) (-0.5,0.4) (0,0)};
    \draw[very thick, intersectionColor] plot[smooth, tension=1] coordinates {(-1.02,-0.65) (-0.5,-0.35) (0,0)};
     \draw[manifoldA, dashed, thick] plot[smooth, tension=1] coordinates {(-1.1,2) (-0.7,0.9) (0,0.5) (1,1) (2,0) };
    \draw[manifoldB, dashed, thick] plot[smooth, tension=1] coordinates {(-1.2,-0.2) (-0.75,1) (0,0.5) (1,0.2) (2,1)};
    \filldraw[intersectionColor] (0,0.5) circle (1pt);
       \node[anchor=north east] at (-2,2) {\textcolor{manifoldA}{$\M\sind{1}$}};
    \node[anchor=south east] at (-2,-1) {\textcolor{manifoldB}{$\M\sind{2}$}};
    \node[anchor=west] at (0.1,0) {\textcolor{intersectionColor}{$x\in\M\sind{12}$}};
      \draw[-{Latex[round]}, thick] (3,0) to[out=20, in=160] (5,0);
    \node[anchor=south] at (4,0.7) {\textbf{bi-Lipschitz} $\Psi$};
     \draw[very thick, intersectionColor] plot[smooth, tension=1] coordinates {(6,0) (8,0)};
    \filldraw[intersectionColor] (7,0) circle (2pt);
    \node[anchor=south] at (7,0) {$\Psi(x)$};
    \draw[manifoldB, dashed, thick] plot[smooth, tension=1] coordinates {(6,1.3) (6.5,1.7) (7,1)};
    \draw[manifoldA, dashed, thick] plot[smooth, tension=1] coordinates {(7,1) (7.6,1.3) (8,1.7)};
     \filldraw[intersectionColor] (7,1) circle (1pt);
    \node[anchor=north west] at (7.6,1.3) {$u\circ\Psi^{-1}$};
    
    \end{tikzpicture}
    \caption{Validity of the trace condition as proved in \cref{lem:trace}. The ``one-sided'' neighborhoods of $x$ in $\M\sind{1}$ and $\M\sind{2}$ (red) are flattened together by $\Psi$.
    The dotted lines represent the values of the function $u$  (left plot) and  $u\circ\Psi^{-1}$ (right plot).}\label{fig:trace}
    \end{figure}
    
    This idea is made rigorous in \cref{lem:trace} in the appendix. 
    Applying it to $v_n$ and $v$ as defined above and assuming, without loss of generality, that $\liminf_{n\to\infty}\E_{\eps_n}(u_n)<\infty$ we obtain that $\trace\sind{1}(v)=\trace\sind{2}(v)$ on $\M\sind{12}$ and therefore, using also that $\beta_\eta\sind{1}=\beta_\eta\sind{2}$, we have 
    $$
    \trace\sind{1}\left(\frac{u}{\sqrt{\alpha\sind{1}\rho\sind{1}}}\right)=\trace\sind{2}\left(\frac{u}{\sqrt{\alpha\sind{2}\rho\sind{2}}}\right).
    $$ 
    This implies $u\in H^1_{\sqrt{\mu}}(\M)$ and concludes the proof.
\end{proof}
\begin{remark}
We note that the argument does not require that $u_n \to u$ in $L^2(\M)$, but only that 
$$
\int_{\M\sind{i}}  \abs{u_n(x)-u(x)}^2\frac{\d\vol\sind{i}(x)}{\eps_n^{-d\sind{i}}\eta_{\eps_n}(\abs{x-\cdot})\star\mu}\to 0,\quad n\to\infty,
$$ 
for $i=1,2$, which is strictly weaker if $d\sind{1}<d\sind{2}$ since $\eps_n^{-d\sind{i}}\eta_{\eps_n}(\abs{x-\cdot})\star\mu$ is bounded from below by a positive number and, by \labelcref{eq:convinM2scale}, on $\M\sind{2}$ near $\M\sind{1}$ diverges like $\eps_n^{d\sind{1}-d\sind{2}} \gg 1$. 
\end{remark}

We refer to \cref{sec:liminf} for the proof of the liminf inequality for \cref{thm:gamma}.

\subsection{Proof of nonlocal \texorpdfstring{$\Gamma$}{Gamma}-limsup inequality}
\label{sec:NL_limsup}

Finally, we discuss the proof of the limsup inequality for the $\Gamma$-convergence claimed in \cref{thm:Gamma_NL}, namely that for all $u\in L^2(\M)$ there exists a so-called recovery sequence $(u_n)\subset L^2(\M)$ such that $u_n\overset{L^2}{\to}u$ and
\begin{align*}
    \limsup_{n\to\infty}\E_{\eps_n}(u_n) \leq \E(u).
\end{align*}
The proof requires considering a few distinct cases with respect to the codimension of the intersection manifold $\M\sind{12}$.
In order to avoid repetition, we do not present the argument for the critical codimension $d\sind{2}-d\sind{12}=2$ in full generality but stick to a less technical description of the key ideas in this section. 
Note that later we will give the full proof of the discrete limsup inequality in \cref{prop:limsup_discrete}.
       \begin{proof}[Detailed proof sketch of the limsup inequality for \cref{thm:Gamma_NL}]
Here we go over the key steps and main difficulties in going from the standard argument on one manifold (see for example \cite{laux2025large}) to the union of manifolds.

\paragraph{Smooth approximations.} For proving the limsup inequality it is advantageous to reduce the argument  to creating recovery sequences for the energy $\E(u)$ for $u$ sufficiently regular, e.g., Lipschitz continuous. If $d\sind{1}=d\sind{2}=d\sind{12}+1$ one needs to establish the density of such functions among $H^1$ functions on the union of manifolds that respect the trace condition \labelcref{eq:def_H1mu}. This requires a detailed argument, which we present in \cref{lem:density} in the appendix.
 If $d\sind{2} \geq d\sind{12}+2$ the trace condition is of no interest and smoothing can be easily done, since the energy $\E$ decouples the two manifolds, cf. \labelcref{eq:limit_fctl}, and we can use the density of smooth functions in $H^1$ on each manifold separately.  
Hence, one can work with regular $u$ on both of the manifolds and construct individual recovery sequences.

\paragraph{Recovery sequence for codimensions larger than two.}
 When $d\sind{1}-d\sind{12}>2$, i.e., the codimension of the intersection in both of the manifolds is larger than two, one can use constant recovery sequences. By density we can take $u$ such that restriction of $u$ to each of the manifolds is smooth. 
Let $v\sind{i}=u/\sqrt{\alpha\sind{i}\beta\sind{i}\rho\sind{i}}$ on $\M\sind{i}$. 
By the proof of \cref{thm:Gamma-convergence_standard_NL} (where a constant recovery sequence is taken)
\begin{align}\label{eq:recov_standard}
\limsup_{n\to\infty}\mathcal G_{\eps_n}(v\sind{i};\M\sind{i})\leq\mathcal{G}\left(v\sind{i};\M\sind{i}\right)\quad\text{for }i\in\{0,1\}.
\end{align}
Then for $A_{\eps} :=  \M\sind{1}  \cup \{x \in \M\sind{2}\st  d(x, \M\sind{1})>\eps\}$ and $S_{\eps} := \M \setminus A_{\eps}$ we have
\begin{align*}
   \E_{\eps_n}(u) & = \frac{1}{\eps_n^2}
    \int_{\M}
    \int_{\M}
    \eta_{\eps_n}(\abs{x-y})
    \abs{
    \frac{v(x) \sqrt{\alpha\sind{i}\beta\sind{i}\rho\sind{i}(x)}}{\sqrt{\eta_{\eps_n}(\abs{x-\cdot})\star\mu}}
    -
    \frac{v(y) \sqrt{\alpha\sind{i}\beta\sind{i}\rho\sind{i}(y)}}{\sqrt{\eta_{\eps_n}(\abs{y-\cdot})\star\mu}}
    }^2
    \d\mu(x)\d\mu(y)
    \\
    &\leq 
    \frac{1}{\eps_n^2}
    \int_{A_{\eps_n}}
    \int_{A_{\eps_n}}
    \eta_{\eps_n}(\abs{x-y})
    \abs{
    \frac{v(x) \sqrt{\alpha\sind{i}\beta\sind{i}\rho\sind{i}(x)}}{\sqrt{\eta_{\eps_n}(\abs{x-\cdot})\star\mu}}
    -
    \frac{v(y) \sqrt{\alpha\sind{i}\beta\sind{i}\rho\sind{i}(y)}}{\sqrt{\eta_{\eps_n}(\abs{y-\cdot})\star\mu}}
    }^2
    \d\mu(x)\d\mu(y) \\
    & \phantom{\leq}  
      + \frac{2}{\eps_n^2}
    \int_{\M}
    \int_{S_{\eps_n}}
    \eta_{\eps_n}(\abs{x-y})
    \abs{
    \frac{v(x) \sqrt{\alpha\sind{i}\beta\sind{i}\rho\sind{i}(x)}}{\sqrt{\eta_{\eps_n}(\abs{x-\cdot})\star\mu}}
    -
    \frac{v(y) \sqrt{\alpha\sind{i}\beta\sind{i}\rho\sind{i}(y)}}{\sqrt{\eta_{\eps_n}(\abs{y-\cdot})\star\mu}}
    }^2
    \d\mu(x)\d\mu(y) \\
    & =: \E^{far}_{\eps_n}(u) +\E^{near}_{\eps_n}(u).
\end{align*}
Thanks to \labelcref{eq:mollification,eq:recov_standard} we have that 
\[ \limsup_{n \to \infty} \E^{far}_{\eps_n}(u) \leq \limsup_{n\to\infty} \sum_{i=1}^2 
\left(\alpha\sind{i}\right)^2
\mathcal G_{\eps_n}(v\sind{i};\M\sind{i})\leq  \sum_{i=1}^2  \left(\alpha\sind{i}\right)^2
\mathcal{G}\left(v\sind{i};\M\sind{i}\right) = \E(u).
\]
So it remains to show that $\lim_{n\to \infty} \E^{near}_{\eps_n}(u) = 0$.
From \labelcref{eq:mollification},  it follows that $\eps_n^{-d\sind{i}} \eta_{\eps_n}(\abs{x-\cdot})\star\mu \geq  \frac12 \alpha\sind{i}\beta\sind{i}\rho\sind{i}(x)$ on $\M\sind{i}$, for $n$ large enough. 
We estimate
 \begin{align*}
  \E^{near}_{\eps_n}&(u)   \lesssim
    \frac{1}{\eps_n^2}
    \int_{S_{2 \eps_n}}
    \int_{S_{2\eps_n}}
    \eta_{\eps_n}(\abs{x-y})
    \eps_n^{-d\sind{2}}
    \left(v(x)^2+ v(y)^2\right)
    \d\mu\sind{2}(x)\d\mu\sind{2}(y)
    \\
  &  \phantom{(u) \leq} 
    +
    \frac{1}{\eps_n^2}
    \int_{\M\sind{2}}
    \int_{\M\sind{1}}
    \eta_{\eps_n}(\abs{x-y})
    \left(\eps_n^{-d\sind{1}} v(x)^2+ \eps_n^{-d\sind{2}} v(y)^2\right)
    \d\mu\sind{1}(x)\d\mu\sind{2}(y)
    \\
    & \lesssim 
    \frac{C_\rho \norm{v}_{\infty}^2}{\eps_n^2}
      \left( \mu\sind{2}(S_{2\eps_n})
      + \eps_n^{d\sind{2} - d\sind{1}} \mu\sind{1}\left(\{x \in \M\sind{1} \! \st  d(x, \M\sind{2})<\eps_n\}\right) 
      + \eps_n^{d\sind{1} - d\sind{2}} \mu\sind{2}(S_{\eps_n}) 
      \right) \\
        & \lesssim  
    \frac{\norm{v}_{\infty}^2}{\eps_n^2}
      \left(\eps_n^{d\sind{2}- d\sind{12}}
      + \eps_n^{d\sind{2} - d\sind{1}}\eps_n^{d\sind{1}- d\sind{12}}
      + \eps_n^{d\sind{1} - d\sind{2}} \eps_n^{d\sind{2}- d\sind{12}}
      \right) \\
    & \lesssim     \norm{v}_{\infty}^2
      \left( \eps_n^{d\sind{2}- d\sind{12} -2} + 
      \eps_n^{d\sind{1}- d\sind{12}-2}
      \right) 
    \to 0
    \qquad
    \text{as }n\to\infty.
\end{align*}

\begin{remark}
We remark that the argument above can be extended to the case when $d\sind{1} - d\sind{12} \leq 2$ and  $d\sind{2} - d\sind{12} >2$.  Namely it is not hard to show that functions $v$ which are compactly supported in $\M^2 \setminus \M\sind{12}$ are dense in $H^1(\M^2)$ whenever  $d\sind{2} - d\sind{12} \geq 2$. Thus, by a density argument, it suffices to consider $v$ which are equal to zero in $S_{2\eps_n}$ for $n$ large enough. Thus in the estimate above we can obtain 
$\E^{near}_{\eps_n}(u) \lesssim \|v\|_{L^\infty}^2 \eps_n^{d\sind{2}- d\sind{12} -2}$ which converges to zero as $n \to \infty$ when $d\sind{2} - d\sind{12} >2$, regardless of $d\sind{1} - d\sind{12} >2$.
\end{remark}

\paragraph{Recovery sequence for intermediate codimensions.}
Here we deal with the case that $d\sind{1} - d\sind{12} \leq 2$ and 
$d\sind{2} - d\sind{12} \geq 2$.
The critical case arises if $d\sind{2} - d\sind{12} = 2$, i.e., the larger of the two codimensions is two, where one can think of a one-dimensional manifold piercing a two-dimensional one.
Even in this simple case with $d\sind{1}=1$, $d\sind{2}=2$, and $d\sind{12}=0$, if one tries to create a recovery sequence a function $u\in H^1_{\sqrt{\mu}}(\M)$ which is constant on each of the manifolds $\M\sind{i}$ with value $u\sind{i}\in\R$ but not constant on $\M$ as a whole, one observes that 
\begin{align*}
    \E_{\eps_n}(u) 
    &= 
    \frac{1}{\eps^2}\int_{\M\sind{1}}\int_{\M\sind{2}}
    \eta_{\eps_n}(\abs{x-y})\abs{\frac{u\sind{1}}{\sqrt{\eta_{\eps_n}(\abs{x-\cdot})\star\mu}} - \frac{u\sind{2}}{\sqrt{\eta_{\eps_n}(\abs{x-\cdot})\star\mu}}}^2\d\mu(x)\d\mu(y)
    \\
    &\sim 
    \frac{1}{\eps^2}
    \eps^{1+2-0} \abs{\frac{u\sind{1}-u\sind{2}}{\sqrt{\eps}}}^2
    \sim \abs{u\sind{1}-u\sind{2}} \neq 0 = \E(u)
\end{align*}
Hence, the constant sequence is not a recovery sequence, unlike in the case where the codimensions are larger than two. 
To fix this issue, we need to interpolate the value of $u$ on $\M\sind{1}$ to the higher dimensional manifold $\M\sind{2}$ in such a way that the energy converges.

The idea is that since the harmonic capacity of $\M\sind{12}$ in $\M\sind{2}$ is zero one can use an interpolation based on the fundamental solution of Laplacian in 2D (the codimension) to interpolate the values. 
Here we sketch this approach in the simplified setting outlined above. 
In particular, we assume $d\sind{1}=1$, $d\sind{2}=2$, $d\sind{12}=0$, and that $\M\sind{12}$ is a single point, w.l.o.g. $\M\sind{12}=\{0\}$.
To make computations simple we also assume that $\M\sind{2}:=\mathbb T^2$ is a two-dimensional flat torus, pierced orthogonally by a one-dimensional manifold $\M\sind{1}$ which is flat in a neighborhood of their intersection.
Let $u\in H^1_{\sqrt{\mu}}(\M)$ be such that $u\sind{i}:=u\vert_{\M\sind{i}}$ are Lipschitz continuous.
We define a recovery sequence $u_n \in L^2(\M)$ via 
\begin{align*}
    u_n(x) := u\sind{1}(x)\sqrt{\frac{\eps_n^{-1}\eta_{\eps_n}(\abs{x-\cdot})\star\mu}{\alpha\sind{1}\beta_\eta\sind{1}\rho\sind{1}(x)}},
    \quad
    x\in\M\sind{1},
\end{align*}
on the one-dimensional manifold $\M\sind{1}$ and
\begin{align*}
    u_n(x) := 
    \begin{dcases}
        u\sind{1}(0)
        \sqrt{\frac{\eps_n^{-1}
        \eta_{\eps_n}(\abs{x-\cdot})\star\mu}{\alpha\sind{1}\beta_\eta\sind{1}\rho\sind{1}(0)}}
        , \quad&\text{if }\abs{x}<\eps_n,\\
        u\sind{2}(x)
        \sqrt{\frac{\eps_n^{-2}
        \eta_{\eps_n}(\abs{x-\cdot})\star\mu}{\alpha\sind{2}\beta_\eta\sind{2}\rho\sind{2}(x)}}
        +
        \frac{\log\sqrt{\eps_n} - \log\abs{x}}{\log\sqrt{\eps_n}-\log\eps_n}
        \times&\\
        \times
        \left(
        u\sind{1}(0)
        \sqrt{\frac{\eps_n^{-1}
        \eta_{\eps_n}(\abs{x-\cdot})\star\mu}{\alpha\sind{1}\beta_\eta\sind{1}\rho\sind{1}(0)}}
        -
        u\sind{2}(x)
        \sqrt{\frac{\eps_n^{-2}
        \eta_{\eps_n}(\abs{x-\cdot})\star\mu}{\alpha\sind{2}\beta_\eta\sind{2}\rho\sind{2}(x)}}
        \right),
        \quad&\text{if }\eps_n\leq\abs{x}<\sqrt{\eps_n},\\
        u\sind{2}(x)
        \sqrt{\frac{\eps_n^{-2}
        \eta_{\eps_n}(\abs{x-\cdot})\star\mu}{\alpha\sind{2}\beta_\eta\sind{2}\rho\sind{2}(x)}}
        ,\quad&\text{if }\sqrt{\eps_n}\leq\abs{x},
    \end{dcases}
\end{align*}
                                for $x\in\M\sind{2}$ on the two-dimensional one.
Here we assume that $n\in\N$ is sufficiently large so that $\eps_n<1$.
Note that the square root terms are all very close to one or even equal to one (at least all but the first one) if the densities $\rho\sind{i}$ are constant on each of the two manifold. 
We plot this simplified situation in \cref{fig:log-interp} and it should be noted that the recovery sequence equals the value of $u\sind{1}$ in an $\eps_n$-neighborhood of the intersection, logarithmically interpolates between the values of $u\sind{1}$ and $u\sind{2}$, in an annulus of diameter $\sqrt{\eps_n}$, and equals $u\sind{2}$ outside of this annulus.
By construction, the terms $\sqrt{\eta_{\eps_n}(\abs{x-\cdot})\star\mu}$ will drop out once we plug $u_n$ into the nonlocal energy $\E_{\eps_n}$.
To show that $u_n$ is indeed a recovery sequence, we first need to argue that $u_n$ converges to $u$ in $L^2$.
This is a simple consequence of the Lipschitz continuity of $u\sind{i}$ and of the behavior of the convolutions $\eta_{\eps_n}(\abs{x-\cdot})\star\mu$ as analyzed in the compactness proof in \cref{sec:NL_compactness}.
\begin{figure}
    \centering
    \begin{tikzpicture}
    \definecolor{manifoldA}{RGB}{70,130,180} 
    \definecolor{manifoldB}{RGB}{255,165,0}  
    \definecolor{intersectionColor}{RGB}{220,20,60} 
     \draw[->,manifoldB, very thick] (-0.2,0) -- (6,0) node[right] {\color{manifoldB}$\M\sind{2}$};
    \draw[->,very thick] (0,-0.2) -- (0,4.5) node[above] {$u_n$};
     \coordinate (A) at (0,0);
    \coordinate (B) at (1,0);
    \coordinate (C) at (3,0);
     \fill[intersectionColor] (A) circle (2pt);
    \fill[black] (B) circle (1pt);
    \fill[black] (C) circle (1pt);
       \draw[scale=1, domain=0:1, smooth, variable=\x, intersectionColor, dashed, thick] plot
    ({\x}, {2.95*sqrt(1-\x*\x)+0.25});
        \draw[thick,manifoldB] (3,1.6) .. controls (3.7,1.3) and (4,1.2) .. (5.6,1.15);
     \draw[scale=1, domain=1:3, smooth, variable=\x, blue, dashed, thick] plot ({\x}, {2.5 - (2.5-0.25)*(log2(3)-log2(0.5*\x))/(log2(3)-log2(0.5))});
      \node[anchor = north east] at (A) {\color{intersectionColor}$\{0\}$};
    \node[anchor = north] at (B) {\footnotesize$\dist=\eps_n$};
    \node[anchor = north] at (C) {\footnotesize$\dist=\sqrt{\eps_n}$}; 
    \node[anchor = south west] at (5.4,1.2) {\color{manifoldB}$u\sind{2}$}; 
    \node[anchor = east] at (0,3.2) {\color{intersectionColor}$\approx u\sind{1}(0)$};    
\end{tikzpicture}
    \caption{
    The recovery sequence approximates the value $u\sind{1}(0)$ on the intersection $\M\sind{12}=\{0\}$ and the function $u\sind{2}$ far from the intersection for $\eps_n\ll 1$, interpolating logarithmically in between.
     }
    \label{fig:log-interp}
\end{figure}
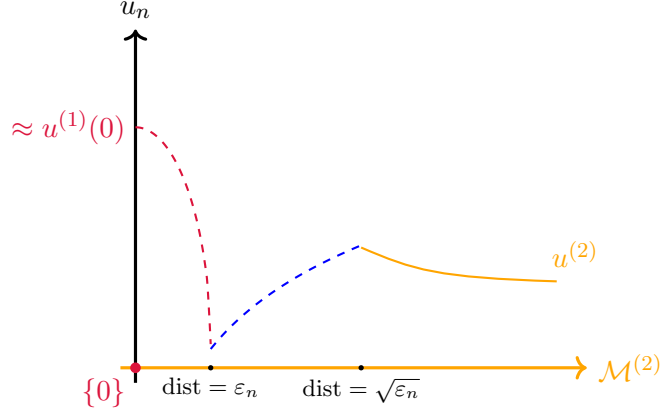
                                                                                           The interesting part is to prove that it satisfies the inequality $\limsup_{n\to\infty}\E_{\eps_n}(u_n)\leq\E(u)$.
Thanks to the regularity of $u$ (and $\rho$) on each of the two manifolds, the only non-trivial region to study is the interpolation region $\eps_n\leq\abs{x}\leq\sqrt{\eps_n}$.
In the end, it suffices to argue that the interactions within this region and those between this interpolation region and the rest of $\M\sind{2}$ in the nonlocal energy go to zero.
The main idea is that, since the harmonic capacity of a point in $\R^2$ is zero, one can use a logarithmic interpolation in the larger-dimensional manifold to match the values of the functions between the manifolds with little cost in terms of the (nonlocal) Dirichlet energy.
The corresponding term can be controlled (after a change of variables using the Cheeger--Colding segment inequality in \cref{lem:Cheeger--Colding} below, and by employing the Lipschitz regularity of $u$ and $\rho$) by the following integral
\begin{align}
\begin{split}
    \int_{\eps_n\leq\abs{x}\leq\sqrt{\eps_n}} &
    \abs{\nabla_x\frac{\log\sqrt{\eps_n} - \log\abs{x}}{\log\sqrt{\eps_n}-\log\eps_n}}^2\d x
    =
    \int_{\eps_n\leq\abs{x}\leq\sqrt{\eps_n}}
    \abs{\frac{\frac{x}{\abs{x}^2}}{\log\sqrt{\eps_n}-\log\eps_n}}^2\d x
    \\
    &=
    \frac{4}{\abs{\log\eps_n}^2}
    \int_{\eps_n\leq\abs{x}\leq\sqrt{\eps_n}}
    \frac{1}{\abs{x}^2}\d x
    =
    \frac{8\pi}{\abs{\log\eps_n}^2}
    \int_{\eps}^{\sqrt{\eps_n}}
    \frac{1}{t}
    \d t
    =
    \frac{4\pi}{\abs{\log\eps_n}}
\end{split} \label{eq:aux_log}
\end{align}
which goes to zero as $n\to\infty$.
Note that the same could not be achieved with other interpolations like, e.g., the linear one $\frac{\sqrt{\eps_n}-\abs{x}}{\sqrt{\eps_n}-\eps_n}$.
Instead, we used the logarithmic growth of the fundamental solution of the Laplace equation in codimension two for this construction to work.
      
\paragraph{Recovery sequence for codimension one: \texorpdfstring{$d\sind{1} = d\sind{2} =:d$}{d1=d2=:d} and \texorpdfstring{$d-d\sind{12} =1$}{d-d12=1}.}
In this case, the energy $\E$ couples both manifolds through the trace condition encoded in $H^1_{\sqrt{\mu}}(\M)$.
Hence, aiming to apply a density argument we cannot simply regularize a function $u\in H^1_{\sqrt{\mu}}(\M)$ on each of the manifolds separately since this violate the trace condition. 
Instead of approximating $u$ directly, we approximate $v := u/\sqrt{\rho}$ which satisfies $\trace\sind{1}(v)=\trace\sind{2}(v)$.
We  approximate $v$ by regular functions globally on $\M$ such that the trace condition is preserved and the energies of the approximations converge.
By \cref{lem:density} we can find a smooth approximation $v_\eps$ of $v$ such that $v_\eps\vert_{\M\sind{i}}\to v\vert_{\M\sind{i}}$ in $H^1(\M\sind{i})$ for $i\in\{1,2\}$ as $\eps\to 0$ and $\trace\sind{1}(v_\eps)=\trace\sind{2}(v_\eps)$.
We set $u_\eps := v_\eps \sqrt{\rho}$. Thus
$   \frac{u_\eps}{\sqrt{\alpha\sind{i}\rho\sind{i}}} = v_\eps$ on $\M\sind{i}$
and 
\begin{align*}
    \trace\sind{1}\left(
    \frac{u_\eps}{\sqrt{\alpha\sind{1}\rho\sind{1}}}\right)
    =
    \trace\sind{1}(v_\eps)
    =
    \trace\sind{2}(v_\eps)
    =
    \trace\sind{2}\left(
    \frac{u_\eps}{\sqrt{\alpha\sind{2}\rho\sind{2}}}\right),   
\end{align*}
which means that $u_\eps$ satisfies the trace condition.
The Lipschitz continuity and the lower and upper bounds of $\rho\sind{i}$ then imply that $u_\eps\vert_{\M\sind{i}}\in H^1(\M\sind{i})$ for $i\in\{1,2\}$ and hence we have showed that
\begin{align*}
    u_\eps \in H^1_{\sqrt{\mu}}(\M).
\end{align*}
Furthermore  $v_\eps \to v=u/\sqrt{\rho}$ in $L^2(\M)$ implies that $u_\eps \to u$ in $L^2(\M)$.
As before we note that 
\begin{align}\label{eq:energy_in_terms_of_unnormalized}
    \E(u) = 
    \left(\alpha\sind{1}\right)^2
    \mathcal{G}\left(\frac{u}{\sqrt{\alpha\sind{1}\beta_\eta\sind{1}\rho\sind{1}}};\M\sind{1}\right)
    +
    \left(\alpha\sind{1}\right)^2
    \mathcal{G}\sind{2}\left(\frac{u}{\sqrt{\alpha\sind{2}\beta_\eta\sind{2}\rho\sind{2}}};\M\sind{1}\right).
\end{align}
Taking into account \labelcref{eq:energy_in_terms_of_unnormalized}, the homogeneity of $\mathcal{G}(\cdot;\M\sind{i})$ as well as $\mathcal{G}(v_\eps;\M\sind{i}) \to \mathcal{G}(v;\M\sind{i})$ as $\eps\to 0$ by the $H^1$-convergence of $v_\eps$, we deduce that $\E(u_\eps)\to\E(u)$.

By the density above  (combined with a diagonal argument) it suffices to construct a recovery sequence for $u$ such that $v = u/\sqrt{\rho}$ is Lipschitz continuous.  
     To be able to control cross terms near the intersection, instead of the constant recovery sequence, we consider the sequence
\begin{align*}
    u_n(x) := u(x)\sqrt{\frac{\eps_n^{-d}\eta_{\eps_n}(\abs{x-\cdot})\star\mu}{\alpha\sind{i}\beta_\eta\rho\sind{i}(x)}},
    \qquad
    x\in\M\sind{i},
\end{align*}
where we dropped the index $i$ on $\beta_\eta$ since $d\sind{1}=d\sind{2}=d$.
The rationale behind this choice is that the square-root term converges to one uniformly (yielding $u_n\to u$ in $L^2(\M)$, cf. the proof of \cref{thm:compactness_NL}) and the numerator cancels once we plug $u_n$ into the energy $\E_{\eps_n}$.
Using the fact that  $v := \frac{u}{\sqrt{\alpha\sind{i}\beta_\eta\rho\sind{i}}}$ on $\M\sind{i}$, we have
\begin{align*}
    \E_{\eps_n}(u_n)
    &=
    \sum_{i=1}^2
    \left(\alpha\sind{i}\right)^2
    \mathcal G_{\eps_n}
    \left(
    v   
    ;\M\sind{i}\right)
    \\
    &\qquad
    +
    \frac{2}{\eps_n^{2+d}}
    \int_{\M\sind{2}}\int_{\M\sind{1}}
    \eta_{\eps_n}(\abs{x-y})
    \abs{v(x)-v(y)}^2\rho\sind{1}(x)\d\vol\sind{1}(x)\rho\sind{2}(y)\d\vol\sind{2}(y).
\end{align*}
Note that $v$ is globally Lipschitz on $\M$. 
In particular, by \cref{thm:Gamma-convergence_standard_NL} (in the proof of which a constant recovery is used for Lipschitz functions) we have
\begin{align*}
    \limsup_{n\to\infty}\sum_{i=1}^2
    \left(\alpha\sind{i}\right)^2
    \mathcal G_{\eps_n}
    \left(
    v   
    ;\M\sind{i}\right)
    =
    \sum_{i=1}^2
    \left(\alpha\sind{i}\right)^2    
    \mathcal G
    \left(
    v   
    ;\M\sind{i}\right)
    =
    \E(u).
\end{align*}
It remains to show that the cross term in the above energy decomposition of $\E_{\eps_n}(u_n)$ tends to zero for which we shall use Lipschitzness of $v$:
\begin{align*}
    &\phantom{{}={}}
    \frac{2}{\eps_n^{2+d}}
    \int_{\M\sind{2}}\int_{\M\sind{1}}
    \eta_{\eps_n}(\abs{x-y})
    \abs{v(x)-v(y)}^2\rho\sind{1}(x)\d\vol\sind{1}(x)\rho\sind{2}(y)\d\vol\sind{2}(y)
    \\
    &\lesssim
    \frac{1}{\eps_n^d}
    \int_{\M\sind{2}}    
    \int_{\M\sind{1}}
    \eta_{\eps_n}(\abs{x-y})
    \d\vol\sind{1}(x)\d\vol\sind{2}(y)
    \lesssim
    \frac{1}{\eps_n^d}
    \eps_n^{d-d\sind{12}}
    \eps_n^d
    =
    \eps_n \to 0,\qquad n\to\infty.
\end{align*}
\end{proof}
The proof of the limsup inequality for \cref{thm:gamma} can be found in \cref{sec:limsup} and follows the ideas and case distinctions outlined above.
In particular, we refer to \cref{lem:nonlocal_error} therein for the above mentioned consequence of the Cheeger--Colding segment inequality that allows one to upper bound the nonlocal by the local energy.

\section{Proofs of \texorpdfstring{$\Gamma$}{Gamma}-convergence and compactness for graph Dirichlet energies}
\label{sec:proofs_graphs}

In this section we prove our main results \cref{thm:compactness} and \cref{thm:gamma}.
We first establish the following uniform convergence for the degrees, which are nothing but kernel density estimates.
Let us consider the contribution to the degree from vertices in each manifold:
\[    \deg\sind{i}_{n,\eps}(x) := \frac{1}{n} \sum_{y \in V_n \cap \M\sind{i}} \eta_\eps(|x-y|).   \]
Note that $\deg_{n,\eps}(x)=\deg\sind{1}_{n,\eps}(x)+\deg\sind{2}_{n,\eps}(x)$.

\begin{lemma}[Uniform convergence of kernel density estimators]\label{lem:uniform_cvgc}
    Under \cref{ass:densities,ass:epsilon,ass:eta} it holds with 
       probability one  that
    \begin{align*}
        \lim_{n\to\infty}
        \max_{i\in\{1,2\}}
        \sup_{\substack{x\in\M\sind{i}}}
        \abs{\eps_n^{-d\sind{i}}\deg\sind{i}_{n,\eps_n}(x)-\alpha\sind{i}\beta_\eta\sind{i}\rho\sind{i}(x)}
        =
        0.
    \end{align*}
\end{lemma}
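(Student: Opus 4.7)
The plan is to reduce the uniform kernel density statement to a standard manifold convolution estimate by replacing the empirical measure $\mu_n\sind{i}$ with the true measure $\mu\sind{i}$ via a transport map, and then controlling the loss of continuity of $\eta$ by a sandwiching argument.

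First I would rewrite
\[
\eps_n^{-d\sind{i}}\deg\sind{i}_{n,\eps_n}(x) = \frac{n\sind{i}}{n}\int_{\M\sind{i}}\eps_n^{-d\sind{i}}\eta_{\eps_n}(|x-y|)\d\mu_n\sind{i}(y),
\]
and observe that, conditional on $n\sind{i}$ (which is positive with overwhelming probability and satisfies $n\sind{i}/n\to\alpha\sind{i}$ a.s.\ by the strong law of large numbers), the measure $\mu_n\sind{i}$ is the empirical measure of $n\sind{i}$ i.i.d.\ samples from $\mu\sind{i}$. Applying \cref{rem:scaling_condition} on the individual manifold $\M\sind{i}$ (with dimension $d\sind{i}\leq d$, so that $(\log n\sind{i}/n\sind{i})^{1/d\sind{i}} \leq C\ell_n$), almost surely there exist transport maps $T_n\sind{i}:\M\sind{i}\to\M\sind{i}$ with $(T_n\sind{i})_\sharp\mu\sind{i}=\mu_n\sind{i}$ and $\|\operatorname{id}-T_n\sind{i}\|_{L^\infty(\M\sind{i})}\leq C\ell_n$. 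Changing variables gives
\[
\int_{\M\sind{i}}\eps_n^{-d\sind{i}}\eta_{\eps_n}(|x-y|)\d\mu_n\sind{i}(y) = \int_{\M\sind{i}}\eps_n^{-d\sind{i}}\eta_{\eps_n}(|x-T_n\sind{i}(z)|)\d\mu\sind{i}(z).
\]

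Next, since $\eta$ is merely non-increasing with compact support $[0,1]$ (not Lipschitz), I would use a sandwich step. Writing $\delta_n := C\ell_n/\eps_n\to 0$, the triangle inequality and monotonicity of $\eta$ give, for every $x,z\in\M\sind{i}$,
\[
\eta\bigl(|x-z|/\eps_n + \delta_n\bigr) \leq \eta\bigl(|x-T_n\sind{i}(z)|/\eps_n\bigr) \leq \eta\bigl(\max(0,|x-z|/\eps_n-\delta_n)\bigr).
\]
Integrating against $\mu\sind{i}$ and multiplying by $\eps_n^{-d\sind{i}}$ sandwiches our quantity between two standard nonlocal convolutions with slightly shifted kernels. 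It remains to show that each such convolution converges to $\rho\sind{i}(x)\beta_\eta\sind{i}$ uniformly on $\M\sind{i}$. This is a classical result for smooth compact Riemannian manifolds embedded in $\R^N$: linearising the exponential map around $x$ and using the Lipschitz continuity of $\rho\sind{i}$ together with the fact that $\eta$ is bounded and compactly supported yields
\[
\sup_{x\in\M\sind{i}}\left|\int_{\M\sind{i}}\eps^{-d\sind{i}}\eta_\pm(|x-z|/\eps)\d\mu\sind{i}(z) - \rho\sind{i}(x)\int_{\R^{d\sind{i}}}\eta_\pm(|v|)\d v\right| \longrightarrow 0
\]
as $\eps\to 0$, where $\eta_\pm$ denote either of the shifted profiles above. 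Because $\eta$ has at most countably many discontinuities on $[0,1]$, dominated convergence ensures $\int\eta_\pm(|v|)\d v\to\beta_\eta\sind{i}$ as $\delta_n\to 0$.

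Combining the three ingredients — the transport replacement, the monotone sandwich, and the uniform continuum limit — together with $n\sind{i}/n\to\alpha\sind{i}$ yields the claim on each $\M\sind{i}$, and taking the maximum over $i\in\{1,2\}$ gives the full statement. The main technical obstacle I anticipate is aligning the probabilistic scaling: one must verify that $\ell_n\sind{i}\lesssim\ell_n\ll\eps_n$ uniformly in $i$ even when $d\sind{1}<d\sind{2}$, and must handle the negligible event where $n\sind{i}=0$, both of which are resolved by a Borel--Cantelli argument combined with the observation that $a\mapsto a^{1/d}$ is monotone on $(0,1)$ so that smaller intrinsic dimensions actually yield sharper transport estimates.
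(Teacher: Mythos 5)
Your argument is correct, but it is a genuinely different route from the paper's. The paper's proof of \cref{lem:uniform_cvgc} is a one-liner: it invokes the strong uniform consistency result for kernel density estimators on manifolds, \cite[Theorem 2.3 (2)]{wu2022strong}, together with the Borel--Cantelli lemma, so both the stochastic fluctuation and the deterministic bias are outsourced to that reference. You instead split the two: the stochastic part is handled by per-manifold $W_\infty$ transport maps (the rates behind \cref{rem:scaling_condition}, applied to $\mu\sind{i}$ and the Binomial-size empirical measure $\mu_n\sind{i}$, with the random $n\sind{i}$ controlled exactly as in the proof of \cref{cor:Gamma-convergence_standard} by Hoeffding plus Borel--Cantelli), and the deterministic part is the standard uniform mollification estimate, which is essentially \labelcref{eq:mollification} (i.e.\ the deterministic half of \cite[Theorem 2.2]{wu2022strong}) that the paper uses anyway. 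The monotone-kernel sandwich $\eta(|x-z|/\eps_n+\delta_n)\leq\eta(|x-T_n\sind{i}(z)|/\eps_n)\leq\eta(\max(0,|x-z|/\eps_n-\delta_n))$ with $\delta_n=C\ell_n/\eps_n\to0$ (using \cref{ass:epsilon}) is valid since $\eta$ is non-increasing, and your observation that $d\sind{i}\leq d$ makes the per-manifold transport rate dominated by $\ell_n$ is the right fix for unequal dimensions. What your approach buys is self-containedness within machinery already in the paper and it only needs $\eta$ monotone and bounded rather than any smoothness; what the paper's citation buys is brevity. The one point you should make explicit is that the uniform convolution estimate must be applied to the $n$-dependent shifted profiles $\eta_\pm$, so you need its error term to be uniform over this family --- which it is, because all $\eta_\pm$ are bounded by $\eta(0)$, supported in $[0,2]$, and their zeroth moments converge to $\beta_\eta\sind{i}$ by dominated convergence (monotone $\eta$ has at most countably many discontinuities), exactly as you indicate.
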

\begin{proof}
    The result follows from \cite[Theorem 2.3 (2)]{wu2022strong}, combined with the Borel--Cantelli lemma.
\end{proof}
The next result allows us to  take into account the interactions between manifolds. We use it in the proof of compactness.
 We show  that for $x \in \M$ the degree $\deg_{n,\eps_n}(x)$ with high probability scales like $\vol(B(x,\eps_n)\cap\M)$, i.e., the volume of the intersection of a Euclidean ball with radius $\eps_n$ with $\M$.
\begin{lemma}\label{lem:degrees}
Under \cref{ass:densities,ass:eta} there exist constants $C_1,C_2,C_3,C_4>0$ 
and $\tau\in(0,1]$ \nc only depending on $\rho$, $\eta$, and $\M$ such that for $n\in\N$ sufficiently large it holds
\begin{align*}
    \begin{split}
    &\phantom{{}={}}
    \mathbb P\Big((\forall x \in V_n) \,
     C_1 \vol(B(x,\tau\eps_n)\cap \M)
    \leq
    \nc 
    \deg_{n,\eps_n}(x)\leq C_2\vol(B(x,\eps_n)\cap\M)\;\Big)
    \\
    &\qquad\geq 
    1- C_3\exp\left(-C_4 n \eps_n^{d\sind{2}}+\log n\right).
    \end{split} 
\end{align*}
            \end{lemma}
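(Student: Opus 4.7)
\textbf{Proof plan for \cref{lem:degrees}.}
The plan is to combine two ingredients: a deterministic sandwich of the mean degree by the local volume, and a Bernstein-type concentration together with a union bound over the $n$ sample points.

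First, for a fixed point $x\in\M$, write $m_\eps(x):=\eta_\eps(|x-\cdot|)\star\mu=\int_\M\eta_\eps(|x-y|)\,\de\mu(y)$, which is the conditional mean of the sum $\sum_{y\in V_n,\,y\neq x}\eta_\eps(|x-y|)$ divided by $n-1$. Using \cref{ass:eta} (ii)--(iii) pick $\tau\in(0,1]$ and $c_\eta>0$ such that $\eta\geq c_\eta$ on $[0,\tau]$; combined with the upper and lower bounds on $\rho$ from \cref{ass:densities} (ii), this gives the deterministic sandwich
\begin{equation*}
    \frac{c_\eta}{C_\rho}\vol(B(x,\tau\eps)\cap\M)\;\leq\; m_\eps(x)\;\leq\;\eta(0)\,C_\rho\,\vol(B(x,\eps)\cap\M).
\end{equation*}
Since for small $\eps$ one has $\vol(B(x,\tau\eps)\cap\M)\geq c\,(\tau\eps)^{d\sind{i(x)}}\geq c\,(\tau\eps)^{d\sind{2}}$ uniformly in $x\in\M$ (with $d\sind{i(x)}$ the dimension of the component containing $x$, and $d\sind{1}\leq d\sind{2}$ together with $\eps_n\ll 1$), there exists $c_0>0$ such that $m_{\eps_n}(x)\geq c_0\eps_n^{d\sind{2}}$ for all $x\in\M$ and all $n$ large enough.

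Next, apply concentration. Condition on a fixed point $x\in V_n$; the remaining $n-1$ samples are i.i.d.~from $\mu$, so $Y_j:=\eta_{\eps_n}(|x-y_j|)$ satisfies $0\leq Y_j\leq\eta(0)$ and $\mathbb{E}[Y_j^2\mid x]\leq\eta(0)\,m_{\eps_n}(x)$. Bernstein's inequality applied with $t=\tfrac{1}{4}m_{\eps_n}(x)$ yields
\begin{equation*}
    \mathbb{P}\Bigl(\bigl|\tfrac{1}{n-1}\textstyle\sum_{j}Y_j-m_{\eps_n}(x)\bigr|>\tfrac14 m_{\eps_n}(x)\,\bigm|\,x\Bigr)\;\leq\; 2\exp\!\bigl(-\tilde c\,(n-1)\,m_{\eps_n}(x)\bigr)\;\leq\; 2\exp(-C_4 n\eps_n^{d\sind{2}}),
\end{equation*}
uniformly in $x\in\M$, where we used the lower bound on $m_{\eps_n}(x)$. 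Translating back, with conditional probability $\geq 1-2\exp(-C_4 n\eps_n^{d\sind{2}})$ we have $\tfrac34 m_{\eps_n}(x)\leq\tfrac{1}{n}\sum_{y\in V_n,\,y\neq x}\eta_{\eps_n}(|x-y|)\leq\tfrac54 m_{\eps_n}(x)$; adding the self-term $\eta(0)/n$ (which only helps the lower bound and is absorbed into the upper bound since $\eta(0)/n\ll m_{\eps_n}(x)$ under \cref{ass:epsilon}) and combining with the deterministic sandwich produces the claimed two-sided estimate with suitable $C_1,C_2$.

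Finally, perform a union bound over the $n$ points of $V_n$, using exchangeability so the above conditional estimate applies uniformly to each member of $V_n$. This yields a simultaneous failure probability bounded by $n\cdot 2\exp(-C_4 n\eps_n^{d\sind{2}})=2\exp(-C_4 n\eps_n^{d\sind{2}}+\log n)$, which is the stated bound. The main technical point is verifying the uniform lower bound $m_{\eps_n}(x)\gtrsim\eps_n^{d\sind{2}}$ on the union of manifolds, in particular that the worst case arises for points on $\M\sind{2}$ away from $\M\sind{12}$; no extra difficulty appears near the intersection since the total volume only increases there.
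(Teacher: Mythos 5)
Your proof is correct and follows essentially the same route as the paper's: sandwich $\mathbb{E}[\deg_{n,\eps_n}(x)]$ between constant multiples of $\vol(B(x,\tau\eps_n)\cap\M)$ and $\vol(B(x,\eps_n)\cap\M)$ using \cref{ass:densities,ass:eta}, apply Bernstein with deviation a fixed fraction of the mean together with the uniform lower bound $m_{\eps_n}(x)\gtrsim\eps_n^{d\sind{2}}$ (worst case on the higher-dimensional manifold), and finish with a union bound over the $n$ sample points. Your treatment of the self-term $\eta(0)/n$ and of the conditioning on $x\in V_n$ is in fact more explicit than the paper's; the only nitpick is that the lemma does not hypothesize \cref{ass:epsilon}, which is harmless because whenever the stated probability bound is non-vacuous one automatically has $n\eps_n^{d\sind{2}}\gtrsim\log n$, so $\eta(0)/n\ll\eps_n^{d\sind{2}}$ and the absorption of the self-term still goes through.
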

\begin{proof} 
    For fixed $x\in\M$, applying Bernstein's inequality (see \cite[Lemma 1]{calder2018game} for a convenient version) to the random variable $\deg_{n,\eps_n}(x)$ we obtain that for all $\lambda\in[0,1]$ it holds
\begin{align}\label{eq:bernstein}
    \begin{split}
 \!\!\!\! \!\!\!\!  \mathbb P&\left(\abs{\deg_{n,\eps_n}(x) - \mathbb E\left[\deg_{n,\eps_n}(x)\right]} > C_\rho   \vol(B(x,\eps_n)\cap\M) \lambda \right)
    \\
    & \qquad \qquad \leq 2\exp\left(-\frac{1}{4} C_\rho n \vol(B(x,\eps_n)\cap\M)\lambda^2\right)
    \leq 2\exp\left(-Cn\eps_n^{d\sind{2}}\lambda^2\right),
    \end{split}
\end{align}
where $C>0$ depends on $C_\rho$ and $\M$. We also used that for $n \in \N$ sufficiently large the volume $\vol(B(x, \eps_n))$ is smallest for points $x \in \M\sind{2}$, in the larger dimensional manifold.

Furthermore, we can use that the vertices $y\in V_n$ are i.i.d. samples from $\mu$ to compute
\begin{align*}
    \mathbb E\left[\deg_{n,\eps_n}(x)\right]
    =
    \mathbb E\left[\frac1n\sum_{y\in V_n}\eta_{\eps_n}(\abs{x-y})\right]
    =
    \int_\M \eta_{\eps_n}(\abs{x-y})\de\mu(y).
\end{align*}

Hence, thanks to \cref{ass:eta} (arguing as in \cref{sec:NL_compactness} 
there exist constants $C_1, C_2, \tau > 0$ (depending only on $\eta$) such that
\begin{align*}
     C_1 \vol(B(x,\tau\eps_n)\cap \M)
    \leq 
    \nc
    \mathbb E\left[\deg_{n,\eps_n}(x)\right]
    \leq 
    C_2 \vol(B(x,\eps_n)\cap \M).
\end{align*}
         Using this estimate, we can find a suitable $\lambda\in(0,1]$ in \labelcref{eq:bernstein} and constants $C_3,C_4>0$ such that
\begin{align*}
    \mathbb P\Big( 
     C_1 \vol(B(x,\tau\eps_n)\cap \M)
    \leq
    \nc
    \deg_{n,\eps_n}(x)\leq C_2\vol(B(x,\eps_n)\cap\M)\Big)
    \geq 1- C_3\exp\left(-C_4 n \eps_n^{d\sind{2}}\right).
\end{align*}
       By conditioning on $x\in V_n$, using a union bound and redefining $C_1,C_2,C_3,C_4$ we obtain the desired statement.
\end{proof}

\begin{remark}
    For the probability in \cref{lem:degrees} to be close to one we need \cref{ass:epsilon} to be satisfied which guarantees that $n\eps_n^{d\sind{2}}\gg \log n$.
\end{remark}

\begin{remark}\label{rem:scaling_volumes}
    Under assumptions of \cref{lem:degrees}, we note that for $x\in\M\sind{i}$
    \begin{alignat*}{2}
        \vol(B(x,\eps_n)\cap\M) & \leq  C\eps_n^{d\sind{i}}, \quad \; && \text{if } \dist(x,\M\sind{3-i})>\eps_n,\\
        \vol(B(x,\eps_n)\cap\M) & \geq C \eps_n^{d\sind{1}}
         &&\text{if } \dist(x,\M\sind{3-i})<\eps_n/2,
    \end{alignat*}
    for some $C>0$, depending on $\M$.
               Thus it can be expected that the lower dimensional manifold dominates the scaling of the degrees $\deg_{n,\eps_n}(x)$ of points $x$ close to the intersection of the two manifolds.
\end{remark}

From above lemmas and remarks we have the following corollary.
\begin{corollary}[Uniform convergence of kernel density estimators II]\label{cor:uniform_cvgc2}
    Under \cref{ass:densities,ass:epsilon,ass:eta}, if $d\sind{1}< d\sind{2}$ it holds with 
       probability one that
    \begin{align*}
        \lim_{n\to\infty}
        \sup_{\substack{x\in\M\sind{1}}}
        \abs{\eps_n^{-d\sind{1}}\deg_{n,\eps_n}(x)-\alpha\sind{1}\beta_\eta\sind{1}\rho\sind{1}(x)}
        =
        0.
    \end{align*}
\end{corollary}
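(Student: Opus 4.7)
The plan is to decompose $\deg_{n,\eps_n}(x) = \deg^{(1)}_{n,\eps_n}(x) + \deg^{(2)}_{n,\eps_n}(x)$, so that for $x \in \M\sind{1}$ the triangle inequality gives
\begin{align*}
\Bigl|\eps_n^{-d\sind{1}}\deg_{n,\eps_n}(x) - \alpha\sind{1}\beta_\eta\sind{1}\rho\sind{1}(x)\Bigr|
\leq \Bigl|\eps_n^{-d\sind{1}}\deg\sind{1}_{n,\eps_n}(x) - \alpha\sind{1}\beta_\eta\sind{1}\rho\sind{1}(x)\Bigr|
+ \eps_n^{-d\sind{1}}\deg\sind{2}_{n,\eps_n}(x).
\end{align*}
The first summand on the right-hand side goes to zero uniformly on $\M\sind{1}$ almost surely by direct application of \cref{lem:uniform_cvgc} with $i=1$, so all the work is in showing that the second term tends to zero uniformly in $x \in \M\sind{1}$ almost surely.

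For the second term, the key observation is that for $x \in \M\sind{1}$, the expected value of the contribution from $\M\sind{2}$ is
\begin{align*}
\mathbb E\bigl[\deg\sind{2}_{n,\eps_n}(x)\bigr]
= \alpha\sind{2}\int_{\M\sind{2}}\eta_{\eps_n}(|x-y|)\rho\sind{2}(y)\,\de\vol\sind{2}(y)
\leq C_\rho\,\eta(0)\,\vol\bigl(B(x,\eps_n)\cap \M\sind{2}\bigr)
\leq C\,\eps_n^{d\sind{2}},
\end{align*}
where the last inequality uses that $\M\sind{2}$ is a smooth $d\sind{2}$-dimensional embedded submanifold (so its intersection with a Euclidean ball of radius $\eps_n$ has $d\sind{2}$-dimensional volume $\lesssim \eps_n^{d\sind{2}}$), with a constant that can be chosen uniformly in $x \in \M\sind{1}$ by compactness. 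After multiplying by $\eps_n^{-d\sind{1}}$, the mean is therefore $O(\eps_n^{d\sind{2}-d\sind{1}})$, which vanishes since $d\sind{1} < d\sind{2}$.

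To make this bound uniform in $x$, I would repeat the Bernstein-type argument used in \cref{lem:degrees}: for fixed $x \in \M\sind{1}$, concentration yields $\deg\sind{2}_{n,\eps_n}(x) \leq C\,\eps_n^{d\sind{2}}$ with probability at least $1 - C_3\exp(-C_4 n\eps_n^{d\sind{2}})$. Passing to uniformity requires a covering argument: cover $\M\sind{1}$ by $O(\eps_n^{-d\sind{1}})$ balls of a small radius $\delta \ll \eps_n$, take a union bound, and use the monotonicity of $\eta$ in \cref{ass:eta}(i) together with the sandwich $B(x,\eps_n) \subset B(x',\eps_n+\delta)$ to transfer the bound from a net to all of $\M\sind{1}$ (as in the proof of \cref{lem:degrees}). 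Under \cref{ass:epsilon} we have $n\eps_n^{d\sind{2}} \gg \log n$, so the resulting probability of failure is summable, and the Borel--Cantelli lemma produces an almost sure uniform bound $\sup_{x\in\M\sind{1}}\eps_n^{-d\sind{1}}\deg\sind{2}_{n,\eps_n}(x) \leq C\,\eps_n^{d\sind{2}-d\sind{1}} \to 0$.

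The only mildly delicate point is the covering step, because $\eta$ is only assumed to be non-increasing and continuous at $0$, not Lipschitz; the monotonicity argument mentioned above circumvents this, since it only requires control of $\eta$ on a slightly larger support rather than modulus-of-continuity estimates. Combining the two uniform bounds yields the claim.
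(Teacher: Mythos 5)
Your proof is correct and follows essentially the same route as the paper: decompose $\deg_{n,\eps_n}=\deg\sind{1}_{n,\eps_n}+\deg\sind{2}_{n,\eps_n}$, handle the first term by \cref{lem:uniform_cvgc}, and show the cross-contribution is $O(\eps_n^{d\sind{2}})\ll\eps_n^{d\sind{1}}$ via Bernstein-type concentration as in \cref{lem:degrees} combined with \cref{rem:scaling_volumes}. Your covering/union-bound step just spells out the uniformity over all of $\M\sind{1}$ that the paper leaves implicit, so there is nothing substantive to add.
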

This follows from \cref{lem:density} since $\deg_{n,\eps_n} = \deg\sind{1}_{n,\eps_n} + \deg\sind{2}_{n,\eps_n}$ and, by \cref{lem:degrees,rem:scaling_volumes}, $\deg\sind{2}_{n,\eps_n}(x) \leq C\eps_n^{d\sind{2}} \ll \eps_n^{d\sind{1}}$.
  
\begin{remark} \label{rem:degrees_lb}
    Under assumptions \cref{ass:densities,ass:epsilon,ass:eta} , we almost surely have that
    \begin{align*}
        \limsup_{n\to\infty}\max_{i\in\{1,2\}}\sup_{x\in\M\sind{i}}\frac{1}{\eps_n^{-d\sind{i}}\deg_{n,\eps_n}(T_n(x))} < \infty.
    \end{align*}
This follows from \cref{lem:degrees} and \cref{rem:scaling_volumes}.
In particular, if $x \in \M\sind{1} $ and 
$T_n(x) \in \M\sind{2}$ then since $|T_n(x) - x | < \eps_n/2$ we have that 
$d(T_n(x), \M\sind{1})< \eps_n/2$ so \cref{rem:scaling_volumes} applies. 
 \end{remark}

Using the previous results we can prove the following lemma, which is needed in proving $\Gamma$-convergence and compactness.
\begin{lemma}\label{lem:weights_converge}
    Under assumptions \cref{ass:densities,ass:epsilon,ass:eta} 
       consider $v\in L^2(\M)$ and  let $T_n:\M\to\M$ be a map which almost surely satisfies $(T_n)_\sharp\mu=\mu_n$ and \labelcref{eq:special_maps}.
    Then it holds with probability one that
    \begin{align*}
        \lim_{n \to\infty}
        \max_{i\in\{1,2\}}
        \int_{\M\sind{i}}
        \abs{\sqrt{\frac{\alpha\sind{i}\beta_\eta\sind{i}\rho\sind{i}(x)}{\eps_n^{-d\sind{i}}\deg_{n,\eps_n}(T_n(x))}}-1}^2\abs{v(x)}^2\d\vol\sind{i}(x)
        =0.
    \end{align*}
\end{lemma}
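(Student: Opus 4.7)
The plan is to combine a uniform pointwise bound on the integrand with a density argument that isolates the region near the intersection $\M\sind{12}$, where $\eps_n^{-d\sind{i}}\deg_{n,\eps_n}$ may fail to converge. Indeed, for $x\in\M\sind{2}$ close to $\M\sind{12}$ with $d\sind{1}<d\sind{2}$ it blows up (cf. \labelcref{eq:convinM2scale}), while for $d\sind{1}=d\sind{2}$ both manifolds contribute to the degree near $\M\sind{12}$, so in neither case can one hope for uniform convergence all the way to the intersection.

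Set
$$A_n\sind{i}(x)\defeq\sqrt{\frac{\alpha\sind{i}\beta_\eta\sind{i}\rho\sind{i}(x)}{\eps_n^{-d\sind{i}}\deg_{n,\eps_n}(T_n(x))}}.$$
First I would establish that almost surely there exists a constant $C>0$ such that $|A_n\sind{i}(x)-1|^2\leq C$ for all $n$ large, all $i\in\{1,2\}$, and all $x\in\M\sind{i}$. This follows from \cref{rem:degrees_lb}, which uniformly bounds $1/(\eps_n^{-d\sind{i}}\deg_{n,\eps_n}(T_n(x)))$ from above, together with the upper bound on $\rho\sind{i}$ in \cref{ass:densities}; note that $A_n\sind{i}$ need not be bounded away from zero, but since $(A-1)^2\leq A^2+1$ an upper bound on $A_n\sind{i}$ is all one requires.

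Next I would approximate $v$ in $L^2(\M)$ by bounded continuous functions $v_\epsilon$ whose support is compactly contained in $\M\setminus\M\sind{12}$. Such functions are dense in $L^2(\M)$ because $\mu(\M\sind{12})=0$. Since $\M\sind{1}\cap\M\sind{2}=\M\sind{12}$ and the manifolds are compact, for each such $v_\epsilon$ there exists $\delta'>0$ such that $\dist(x,\M\sind{3-i})\geq\delta'$ for every $x\in\supp v_\epsilon\cap\M\sind{i}$ and $i\in\{1,2\}$. Using $\|T_n-\operatorname{id}\|_{L^\infty(\M)}\ll\eps_n\ll 1$, for $n$ large enough we have $T_n(\supp v_\epsilon\cap\M\sind{i})\subset\M\sind{i}$ and $B(T_n(x),\eps_n)\cap\M\sind{3-i}=\emptyset$, so $\deg_{n,\eps_n}(T_n(x))=\deg\sind{i}_{n,\eps_n}(T_n(x))$ on $\supp v_\epsilon\cap\M\sind{i}$. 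Then \cref{lem:uniform_cvgc} gives $\eps_n^{-d\sind{i}}\deg_{n,\eps_n}(T_n(x))\to\alpha\sind{i}\beta_\eta\sind{i}\rho\sind{i}(T_n(x))$ uniformly on $\supp v_\epsilon\cap\M\sind{i}$, and Lipschitz continuity of $\rho\sind{i}$ together with $\|T_n-\operatorname{id}\|_\infty\to 0$ yields $\rho\sind{i}(T_n(x))\to\rho\sind{i}(x)$ uniformly; hence $A_n\sind{i}\to 1$ uniformly on $\supp v_\epsilon\cap\M\sind{i}$.

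To conclude, apply the elementary inequality $(a+b)^2\leq 2a^2+2b^2$ to write, for each $i\in\{1,2\}$,
$$\int_{\M\sind{i}}|A_n\sind{i}-1|^2|v|^2\d\vol\sind{i}\leq 2\int_{\supp v_\epsilon\cap\M\sind{i}}|A_n\sind{i}-1|^2|v_\epsilon|^2\d\vol\sind{i}+2C\,\|v-v_\epsilon\|_{L^2(\M\sind{i})}^2.$$
The first summand vanishes as $n\to\infty$ by the uniform convergence $A_n\sind{i}\to 1$ on $\supp v_\epsilon\cap\M\sind{i}$ and boundedness of $v_\epsilon$, while the second is at most $2C\epsilon^2$ uniformly in $n$ by construction. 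Letting $n\to\infty$ and then $\epsilon\to 0$ yields the claim. The key obstacle is the uncontrolled behavior of $\deg_{n,\eps_n}$ near $\M\sind{12}$; it is neutralized precisely because the almost-sure bound from \cref{rem:degrees_lb} prevents $|A_n\sind{i}-1|^2$ from blowing up pointwise, so a standard cutoff away from $\M\sind{12}$ suffices.
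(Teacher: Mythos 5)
Your proof is correct and takes essentially the same approach as the paper: control the contribution near $\M\sind{12}$ through the almost-sure uniform lower bound on the degrees (\cref{rem:degrees_lb}), which makes the integrand a bounded multiplier against an $L^2$ tail of $v$, and use the uniform convergence of the kernel density estimates (\cref{lem:uniform_cvgc}) away from the intersection, where the cross-manifold contribution to the degree vanishes. The only difference is cosmetic: you truncate the function $v$ by approximants supported away from $\M\sind{12}$, whereas the paper splits the integration domain at a fixed $\delta$-neighborhood of $\M\sind{12}$ (using dominated convergence on the far region) and then sends $\delta\to 0$.
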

\begin{proof}
    Fix $\delta>0$ and let $x\in\M\sind{i}$ with $\dist(x,\M\sind{12})>\delta$.
    We have
    \begin{align*}
    \frac{\alpha\sind{i}\beta_\eta\sind{i}\rho\sind{i}(x)}{\eps_n^{-d\sind{i}}\deg_{n,\eps_n}(T_n(x))}
    =
    \frac{\alpha\sind{i}\beta_\eta\sind{i}\rho\sind{i}(T_n(x))}{\eps_n^{-d\sind{i}}\deg_{n,\eps_n}(T_n(x))}
    +
    \frac{\alpha\sind{i}\beta_\eta\sind{i}\left(\rho\sind{i}(x)-\rho\sind{i}(T_n(x))\right)}{\eps_n^{-d\sind{i}}\deg_{n,\eps_n}(T_n(x))}.
    \end{align*}
     We note that the presence of the other manifold only increases the observed degree. Therefore,
    combining \cref{lem:degrees,ass:epsilon} 
    with the Borel--Cantelli lemma and \cref{rem:scaling_volumes} we almost surely have 
    \begin{align*}
        \liminf_{n\to\infty}  \inf_{x \in \M}  \eps_n^{-d\sind{i}}\deg_{n,\eps_n}(T_n(x))>0,
    \end{align*}
    as established in \cref{rem:degrees_lb}, where $i$ corresponds to the manifold that $x$ belongs to. 

    Using also \cref{ass:densities,eq:special_maps,lem:uniform_cvgc,cor:uniform_cvgc2} we get for any $\delta>0$ and for all $x \in \M$ with $d(x,\M\sind{12})>\delta$
    \begin{align*}
        \lim_{n\to\infty}\abs{\sqrt{\frac{\alpha\sind{i}\beta_\eta\sind{i}\rho\sind{i}(x)}{\eps_n^{-d\sind{i}}\deg_{n,\eps_n}(T_n(x))}}-1}^2
        \leq 
        C
        \lim_{n\to\infty}
        \norm{T_n - \operatorname{id}}_{L^\infty(\M)}
        =0,
    \end{align*}
    for some $C>0$.
    Hence, the dominated convergence theorem implies that with probability one
    \begin{align*}
        \lim_{n \to\infty}
        \int_{\{x\in\M\sind{i}\st\dist(x,\M\sind{i})>\delta\}}
        \abs{\sqrt{\frac{\alpha\sind{i}\beta_\eta\sind{i}\rho\sind{i}(x)}{\eps_n^{-d\sind{i}}\deg_{n,\eps_n}(T_n(x))}}-1}^2\abs{v(x)}^2\d\vol\sind{i}(x)
        =0.
    \end{align*}
    On the other hand, using the lower bound on the degrees from above, we almost surely have that
    \begin{align*}
        \limsup_{n\to\infty}
        \int_{\{x\in\M\sind{i}\st\dist(x,\M\sind{i})<\delta\}} &
        \abs{\sqrt{\frac{\alpha\sind{i}\beta_\eta\sind{i}\rho\sind{i}(x)}{\eps_n^{-d\sind{i}}\deg_{n,\eps_n}(T_n(x))}}-1}^2\abs{v(x)}^2\d\vol\sind{i}(x) \\
       & \quad \leq C \int_{\{x\in\M\sind{i}\st\dist(x,\M\sind{i})<\delta\}}
       \abs{v(x)}^2\d\vol\sind{i}(x) .
    \end{align*}
    Combining the two estimates we almost surely have
    \begin{align*}
        \lim_{n \to\infty}
        \int_{\M\sind{i}}
        \abs{\sqrt{\frac{\alpha\sind{i}\beta_\eta\sind{i}\rho\sind{i}(x)}{\eps_n^{-d\sind{i}}\deg_{n,\eps_n}(T_n(x))}}-1}^2\abs{v(x)}^2\d\vol\sind{i}(x)
        =0.
    \end{align*}
\end{proof}

\subsection{Compactness}
\label{sec:compactness}

In this section we provide a proof of \cref{thm:compactness}. We start by a simple lemma that has broader applications.
\begin{lemma} \label{lem:TLp_vanishing}
Let $\M$ and $\mu$ be as above.\footnote{This can be extended to a general setting of metric measure spaces.}
Let  $\mu_n$ be a sequence of probability measures on $\M$ converging in Wasserstein distance to $\mu$. Let $v_n \in L^2(\mu_n)$ and $v \in L^2(\mu)$ be such that $v_n$ converge in $TL^2$ distance to $v$. Let $A_n$ be a sequence of sets of vanishing mass, 
$\mu_n(A_n) \to 0$ as $n \to \infty.$
 Then $v_n \chi_{A_n}$ converges to zero in the $TL^2$ sense.
\end{lemma}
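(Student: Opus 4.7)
My plan is to use the transport-map characterization of $TL^2$ convergence recalled in \cref{sec:appendix_TLp}. Since $\mu$ is atomless (absolutely continuous with respect to the volume measure), the assumption $v_n \to v$ in $TL^2$ can be realized by Borel maps $T_n\colon\M\to\M$ satisfying $(T_n)_\sharp\mu=\mu_n$, $\|T_n-\operatorname{id}\|_{L^2(\mu)}\to 0$, and $v_n\circ T_n\to v$ in $L^2(\mu)$. To establish $v_n\chi_{A_n}\to 0$ in $TL^2$, I would reuse the same maps $T_n$: the transport cost is then already controlled, so the $TL^2$ distance from $(\mu_n,v_n\chi_{A_n})$ to $(\mu,0)$ is dominated by $\|T_n-\operatorname{id}\|_{L^2(\mu)}$ plus the $L^2(\mu)$ norm of $(v_n\chi_{A_n})\circ T_n$. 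Thus the whole statement reduces to showing
\begin{align*}
\int_\M |v_n\circ T_n|^2\,\chi_{T_n^{-1}(A_n)}\,d\mu \;=\; \int_{A_n}|v_n|^2\,d\mu_n \;\longrightarrow\; 0.
\end{align*}

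The heart of the argument is uniform integrability. Because $v_n\circ T_n\to v$ in $L^2(\mu)$, one has $|v_n\circ T_n|^2\to|v|^2$ in $L^1(\mu)$, and any $L^1$-convergent sequence is uniformly integrable (via the converse direction of the Vitali convergence theorem). Hence for every $\varepsilon>0$ there is $\delta>0$ such that $\mu(B)<\delta$ forces $\int_B |v_n\circ T_n|^2\,d\mu<\varepsilon$ for all sufficiently large $n$. Applying this with $B=T_n^{-1}(A_n)$ and noting that $\mu(T_n^{-1}(A_n))=\mu_n(A_n)\to 0$ by hypothesis yields the displayed convergence, which completes the proof.

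I do not anticipate any significant technical obstacle. The only mild subtlety is the passage from general transport plans (through which $TL^2$ is usually defined) to actual transport maps; this is where the atomlessness of $\mu$ is invoked, and it is already standard in the $TL^p$ framework used throughout the paper.
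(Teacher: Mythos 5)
Your proposal is correct and follows essentially the same route as the paper: pass to stagnating transport maps $T_n$ with $(T_n)_\sharp\mu=\mu_n$, reduce the claim to $\int_{A_n}|v_n|^2\,\de\mu_n=\int_{T_n^{-1}(A_n)}|v_n\circ T_n|^2\,\de\mu\to 0$, and exploit $v_n\circ T_n\to v$ in $L^2(\mu)$ together with $\mu(T_n^{-1}(A_n))=\mu_n(A_n)\to 0$. The only cosmetic difference is that you conclude via uniform integrability (Vitali) of $|v_n\circ T_n|^2$, whereas the paper does the equivalent elementary splitting $|v_n\circ T_n|^2\leq 2\left(|v_n\circ T_n-v|^2+|v|^2\right)$ and uses absolute continuity of the integral of the fixed function $|v|^2$.
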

\begin{proof}
Let $T_{n, \sharp} \mu = \mu_n$ be a stagnating sequence of transport maps. Let $H_n=T_n^{-1}(A_n)$. Note that $\mu(H_n) =\mu_n(A_n) \to 0$ as $n\to \infty$. Thus $\|v \chi_{H_n}\|_{L^2(\mu)}\to 0$ as $n \to \infty$. By properties of $TL^2$ convergence, \cite[Lemma 3.11]{garcia2016continuum}, we have 
 $\int |v(x) - v_n(T_n(x)|^2\de\mu(x) \to 0$ as $n \to \infty$ and thus
 \[ \int_{H_n} |v_n(T_n(x)|^2\de\mu(x) \leq 2 \int_{H_n} |v(x) - v_n(T_n(x)|^2  + |v(x)|^2\de\mu(x)  \to 0 \qquad \te{as } n\to \infty.\]
Hence  $ \int_{A_n} |v_n(y)|^2\de\mu_n(y) \to 0$ as $n \to \infty$.
\end{proof}

\begin{proof}[Proof of \cref{thm:compactness}]
Since by \cref{ass:eta} $\eta \geq c_1 \chi_{[0,c_2]}$ for some $c_1,c_2>0$, it suffices to show the statement for $\eta = \chi_{[0,1]} $.
Let us define the functions $v_n(x) := \frac{u_n(x)}{\sqrt{\eps_n^{-d\sind{i}}\deg_{n,\eps_n}(x)}}$ for $x \in \M\sind{i}\cap V_n$. Using  that the $L^2(\mu_n)$-norms of $u_n$ are uniformly bounded and \cref{rem:degrees_lb}, we have that almost surely the $L^2(\mu_n)$-norms of $v_n$ are uniformly bounded as $n\to\infty$.
Using $\M=\M\sind{1}\cup\M\sind{2}$ in \labelcref{eq:graph_fctl_integral} and omitting cross-terms we have
\begin{align*}
    E_{n,\eps_n}(u_n) 
    &\geq
    \frac{1}{\eps_n^2}
    \sum_{i=1}^2
    \left(\frac{n\sind{i}}{n}\right)^2 \!
    \int_{\M\sind{i}} \! \int_{\M\sind{i}}\eta_{\eps_n} (|x-y|) \abs{\frac{u_n(x)}{\sqrt{\deg_{n,\eps_n}(x)}}-\frac{u_n(y)}{\sqrt{\deg_{n,\eps_n}(y)}}}^2\d\mu_n\sind{i}(x)\de\mu_n\sind{i}(y).
    \\
    &=
    \sum_{i=1}^2
    \left(\frac{n\sind{i}}{n}\right)^2
    \frac{1}{\eps_n^{d\sind{i}+2}}
    \int_{\M\sind{i}}\int_{\M\sind{i}}\eta_{\eps_n} (|x-y|) \abs{v_n\sind{i}(x) - v_n\sind{i}(y)}^2\d\mu_n\sind{i}(x)\de\mu_n\sind{i}(y).
\end{align*}
\cref{cor:Gamma-convergence_standard} together with the fact that almost surely we have $\lim_{n\to\infty}\frac{n\sind{i}}{n}=\alpha\sind{i}>0$ implies that the sequence $v_n$ has a subsequence (which we do not relabel) which converges almost surely as $n\to\infty$ in the $TL^2(\M)$-sense to some $v\in L^2(\M)$.

We now argue that this implies the $TL^2$ convergence of $u_n$ towards
\begin{align*}
    u(x):=\sqrt{\alpha\sind{i}\beta_\eta\sind{i}\rho\sind{i}(x)}\,v(x),
    \qquad
    x\in\M\sind{i}.
\end{align*}
Let us choose transport maps $T_n$ which satisfy \labelcref{eq:special_maps}.
Using the fact that $v_n\to v$ in $TL^2(\M)$ as well as \cref{lem:weights_converge} we obtain almost surely
\begin{align} \label{eq:local-TL2estimate}
  \begin{split}
     \lim_{n\to\infty}\sum_{i=1}^2\int_{\M\sind{i}} & \abs{u_n(T_n(x))-u(x)}^2\frac{\d\vol\sind{i}(x)}{\eps_n^{-d\sind{i}}\deg_{n,\eps_n}(T_n(x))}
    \\
    &=    \lim_{n\to\infty}\sum_{i=1}^2\int_{\M\sind{i}}\abs{v_n(T_n(x))-\sqrt{\frac{\alpha\sind{i}\beta_\eta\sind{i}\rho\sind{i}(x)}{\eps_n^{-d\sind{i}}\deg_{n,\eps_n}(T_n(x))}}v(x)}^2 \d\vol\sind{i}(x)
    \\
    &\leq     2\lim_{n\to\infty}\sum_{i=1}^2\int_{\M\sind{i}}\abs{v_n(T_n(x))-v(x)}^2 \d\vol\sind{i}(x)
    \\
    &\qquad
    +
    2    \lim_{n\to\infty}\sum_{i=1}^2\int_{\M\sind{i}}\abs{
    1-\sqrt{\frac{\alpha\sind{i}\beta_\eta\sind{i}\rho\sind{i}(x)}{\eps_n^{-d\sind{i}}\deg_{n,\eps_n}(T_n(x))}}
    }^2 \abs{v(x)}^2\d\vol\sind{i}(x)
    =0
   \end{split}
\end{align}
Let, as before, $A_\eps = \M\sind{1} \cup \{ x \in \M\sind{2} \st  d(x,\M\sind{1}) > \eps\}$.
    By \cref{lem:uniform_cvgc,cor:uniform_cvgc2}, with probability one, 
\[ 
    \lim_{n\to\infty}
        \sup_{\substack{x\in A_{\eps_n}\cap \M\sind{i}}}
        \abs{\eps_n^{-d\sind{i}}\deg_{n,\eps_n}(x)-\alpha\sind{i}\beta_\eta\sind{i}\rho\sind{i}(x)}=0, 
\]
and hence $\eps_n^{-d\sind{i}}\deg_{n,\eps_n}(x)$ is bounded from below uniformly in $x$ on $A_{\eps_n}$. Thus \labelcref{eq:local-TL2estimate} implies that 
\[ \lim_{n \to \infty}  \int_{T_n^{-1}(A_{\eps_n})} |u_n(T_n(x)) - u(x)|^2\de\mu(x) \to 0 \qquad \te{as } n \to \infty.\]
Consequently, it remains to prove that 
$\| u_n  \|_{L^2(\mu_n \restr A_{\eps_n}^c)} \to 0$ as $n\to 0$
where $A_{\eps_n}^c := \M \setminus A_{\eps_n} = \{ x \in \M\sind{2} \st  d(x, \M\sind{1}) < \eps_n\}$. We argue as in the 
proof of \cref{thm:compactness_NL}.
Let $S^k_{\eps}$ and $G^k_\eps$ be as in that proof. 

We claim that there exists $C_k>0$ such that for all $n$ large enough for all $x \in S^k_{\eps_n}$
\begin{align} \label{eq:deg_partial_lb}
  \eta_{\eps_n}(\abs{x-\cdot})\star (\mu_n\restr{G^k_{\eps_n}}) > C_k \eta_{\eps_n}(\abs{x-\cdot})\star \mu_n  = C_k\deg_{n, \eps_n}(x).  
\end{align}
For $k=1$ this follows from \cref{lem:degrees}, since for $x \in S^1_{\eps_n}$,
$\,B(x, \eps_n) \cap \M\sind{1} =B(x, \eps_n) \cap G^1_{\eps_n} $. 
For $k>1$ note that 
\[\eta_{\eps_n}(\abs{x-\cdot})\star (\mu_n\restr{G^k_{\eps_n}\cap \M\sind{1}}) = \eta_{\eps_n}(\abs{x-\cdot})\star (\mu_n \restr{ \M\sind{1})}\]
while the fact that 
\[\eta_{\eps_n}(\abs{x-\cdot})\star (\mu_n\restr{G^k_{\eps_n}\cap \M\sind{2}}) \geq C_k\eta_{\eps_n}(\abs{x-\cdot})\star (\mu_n \restr{ \M\sind{2})}\]
for some $C_k>0$ follows from a concentration estimate analogous to ones in \cref{lem:degrees}, since $\vol\restr\M\sind{2} (B(x,\eps_n) \cap G^k_{\eps_n}) > C \vol\restr\M\sind{2} (B(x,\eps_n)) $ for some $C>0$. Combining the two facts implies \eqref{eq:deg_partial_lb}.
Therefore,
  \begin{align*}
    C_k & \|u_n\|^2_{L^2(\mu_n\restr{S^k_{\eps_n}})}  - \|u_n\|^2_{L^2(\mu_n\restr{G^k_{\eps_n}})} \\
    & \leq \int_{S^k_{\eps_n}} \frac{\eta_{\eps_n}(\abs{x-\cdot})\star (\mu_n\restr{G^k_{\eps_n}})}{\eta_{\eps_n}(\abs{x-\cdot})\star \mu_n } \,u_n^2(x)\de\mu_n(x)
    - \int_{G^k_{\eps_n}} \frac{\eta_{\eps_n}(\abs{y-\cdot})\star (\mu_n\restr{S^k_{\eps_n}})}{\eta_{\eps_n}(\abs{y-\cdot})\star \mu_n } \, u_n^2(y)\de\mu(y)  \\
    & = \int_{S^k_{\eps_n}} \int_{G^k_{\eps_n}}   \eta_{\eps_n}(\abs{x-y}) \left( \frac{u_n^2(x)}{\eta_{\eps_n}(\abs{x-\cdot})\star \mu_n } - \frac{u_n^2(y)}{\eta_{\eps_n}(\abs{y-\cdot})\star \mu_n }\right)\de\mu_n(x)\de\mu_n(y) \\
    & \leq \sqrt{ \int_{S^k_{\eps_n}} \int_{G^k_{\eps_n}}   \eta_{\eps_n}(\abs{x-y}) \left( \frac{u_n(x)}{\sqrt{\eta_{\eps_n}(\abs{x-\cdot})\star \mu_n }} - \frac{u_n(y)}{\sqrt{\eta_{\eps_n}(\abs{y-\cdot})\star \mu_n} }\right)^2\de\mu_n(x)\de\mu_n(y)} \\
    & \phantom{\leq\;} \times \sqrt{ \int_{S^k_{\eps_n}} \int_{G^k_{\eps_n}}  \eta_{\eps_n}(\abs{x-y})  \left( \frac{u_n(x)}{\sqrt{\eta_{\eps_n}(\abs{x-\cdot})\star \mu_n }} + \frac{u_n(y)}{\sqrt{\eta_{\eps_n}(\abs{y-\cdot})\star \mu }}\right)^2\de\mu_n(x)\de\mu_n(y)} \\
    & \leq 2 \eps_n \sqrt{ E_{n,\eps_n}(u_n)} \|u_n\|_{L^2(\mu_n)},
    \end{align*}
where we also used that $\eta_{\eps_n}(\abs{x-\cdot})\star\mu_n = \deg_{n,\eps_n}(x)$ for $x\in\M$.
Hence
 \[ \|u_n\|^2_{L^2(\mu_n\restr{S^k_{\eps_n}})} \leq \frac{1}{C_k} \left( \|u_n\|^2_{L^2(\mu_n\restr{G^k_{\eps_n}})}  + 2 \eps_n \sqrt{ E_{n,\eps_n}(u_n)} \|u_n\|_{L^2(\mu_n)} \right) \]
We claim that 
 the right-hand side converges to zero as $n \to \infty$.
 Note that for $k=1$,  $\|u_n\|^2_{L^2(\mu_n\restr{G^1_{\eps_n}})} $ converges to zero by \cref{lem:TLp_vanishing} since $u_n \chi_{A_{\eps_n}}$  converges in $TL^2$, $G^1_{\eps_n} \subset A_{\eps_n} $, and
 $\mu_n(G^1_{\eps_n}) \to 0$ as $n \to \infty$, so the right-hand side converges to zero. For $k>1$ the conclusion follows by induction.
\end{proof}

\subsection{\texorpdfstring{$\Gamma$}{Gamma}-liminf inequality}
\label{sec:liminf}

In this section we prove the $\liminf$ inequality of \cref{thm:gamma}, using \cref{lem:trace}.

\begin{proposition}[Liminf inequality]
Under \cref{ass:densities,ass:epsilon,ass:eta} if $u_n\overset{TL^2}{\longrightarrow} u$, it almost surely holds that
\begin{align*}
    \E(u) \leq \liminf_{n\to\infty} E_{n,\eps_n}(u_n).
\end{align*}
\end{proposition}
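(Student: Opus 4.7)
The plan is to mirror the strategy used in \cref{sec:NL_limsup} for the nonlocal liminf inequality, substituting the empirical measure $\mu_n$ for $\mu$ throughout and invoking the single-manifold $\Gamma$-convergence result for Binomial point processes (\cref{cor:Gamma-convergence_standard}) in place of \cref{thm:Gamma-convergence_standard_NL}. First I would reparametrize by setting
\[
v_n(x) := \frac{u_n(x)}{\sqrt{\eps_n^{-d\sind{i}}\deg_{n,\eps_n}(x)}} \text{ for } x \in V_n \cap \M\sind{i}, \qquad v(x) := \frac{u(x)}{\sqrt{\alpha\sind{i}\beta_\eta\sind{i}\rho\sind{i}(x)}} \text{ for } x \in \M\sind{i},
\]
and show $v_n \overset{TL^2}{\longrightarrow} v$. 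Choosing transport maps $T_n$ with $(T_n)_\sharp\mu = \mu_n$ and \labelcref{eq:special_maps}, the triangle inequality splits $v_n\circ T_n - v$ into a term bounded by a uniform multiple of $\abs{u_n\circ T_n - u}$ and a term $u \bigl(1/\sqrt{\eps_n^{-d\sind{i}}\deg_{n,\eps_n}\circ T_n} - 1/\sqrt{\alpha\sind{i}\beta_\eta\sind{i}\rho\sind{i}}\bigr)$. The first vanishes in $L^2(\mu)$ by the $TL^2$ convergence of $u_n$ combined with the uniform lower bound on degrees in \cref{rem:degrees_lb}, and the second vanishes by \cref{lem:weights_converge}.

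Next, dropping the nonnegative cross terms in \labelcref{eq:normalized_discrete_dirichlet} corresponding to edges between distinct manifolds and substituting the identity $u_n(x)/\sqrt{\deg_{n,\eps_n}(x)} = \eps_n^{-d\sind{i}/2} v_n(x)$ for $x \in V_n \cap \M\sind{i}$, I would obtain
\[
E_{n,\eps_n}(u_n) \geq \sum_{i=1}^2 \left(\frac{n\sind{i}}{n}\right)^2 \frac{1}{(n\sind{i})^2 \eps_n^{2+d\sind{i}}} \sum_{x,y \in V_n \cap \M\sind{i}} \eta_{\eps_n}(\abs{x-y})\, \abs{v_n(x) - v_n(y)}^2.
\]
The inner double sum is precisely the standard unnormalized graph Dirichlet energy of $v_n|_{V_n\cap\M\sind{i}}$ evaluated on the Binomial point process in the single manifold $\M\sind{i}$. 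Since $n\sind{i}/n \to \alpha\sind{i}$ almost surely and (after restriction) $v_n \overset{TL^2}{\longrightarrow} v$ on each $\M\sind{i}$, applying \cref{cor:Gamma-convergence_standard} separately on each manifold yields
\[
\liminf_{n\to\infty} E_{n,\eps_n}(u_n) \geq \sum_{i=1}^2 (\alpha\sind{i})^2 \sigma_\eta\sind{i} \int_{\M\sind{i}} \abs{\nabla_{\M\sind{i}} v}^2 (\rho\sind{i})^2 \de\vol\sind{i},
\]
and, if the left-hand side is finite, also $v|_{\M\sind{i}} \in H^1(\M\sind{i})$; otherwise the claimed inequality $\E(u)\leq \liminf E_{n,\eps_n}(u_n)$ is trivial. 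A direct algebraic rearrangement using $v = u/\sqrt{\alpha\sind{i}\beta_\eta\sind{i}\rho\sind{i}}$ identifies the right-hand side with the sum on the right of \labelcref{eq:limit_fctl}, modulo the trace requirement.

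Finally, to conclude $u \in H^1_{\sqrt{\mu}}(\M)$ in the codimension-one case $d\sind{1} = d\sind{2} = d\sind{12}+1$, I would apply \cref{lem:trace} to $v$: the $H^1$ bounds on $v|_{\M\sind{i}}$ just obtained, pulled back through the bi-Lipschitz flattening $\Psi$ of a one-sided neighborhood of any $x \in \M\sind{12}$, produce an $H^1$ function on a Euclidean neighborhood of $\Psi(x)$, forcing the two one-sided traces of $v$ to agree on a neighborhood of $x$ in $\M\sind{12}$. Since $\beta_\eta\sind{1} = \beta_\eta\sind{2}$ when $d\sind{1}=d\sind{2}$, this is equivalent to $\trace\sind{1}(u/\sqrt{\alpha\sind{1}\rho\sind{1}}) = \trace\sind{2}(u/\sqrt{\alpha\sind{2}\rho\sind{2}})$. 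The most delicate step throughout is the $TL^2$ convergence of $v_n$ near $\M\sind{12}$, where degrees scale anomalously (cf.\ \cref{rem:scaling_volumes}); this is exactly what the uniform lower bound in \cref{rem:degrees_lb} together with \cref{lem:weights_converge} are designed to control.
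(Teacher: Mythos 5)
Your first two-thirds follows the paper's proof essentially verbatim: the reparametrization $v_n = u_n/\sqrt{\eps_n^{-d\sind{i}}\deg_{n,\eps_n}}$, the $TL^2$ convergence $v_n \to v$ via the transport maps, \cref{rem:degrees_lb} and \cref{lem:weights_converge}, dropping cross terms, and invoking \cref{cor:Gamma-convergence_standard} on each manifold to obtain the energy lower bound and the per-manifold $H^1$ regularity of $v$ — this part is fine.

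The gap is in your final step, the codimension-one trace condition. The mechanism you describe — "the $H^1$ bounds on $v|_{\M\sind{i}}$ just obtained, pulled back through the bi-Lipschitz flattening $\Psi$, produce an $H^1$ function on a Euclidean neighborhood, forcing the traces to agree" — is circular. A function which is $H^1$ on each manifold separately but has mismatched traces pulls back to a function which is $H^1$ on each half of the flattened neighborhood and \emph{not} $H^1$ across the interface; separate $H^1$ bounds can never force the traces to match (otherwise the trace constraint in \labelcref{eq:def_H1mu} would be vacuous). The trace condition comes precisely from the cross-interaction terms between $\M\sind{1}$ and $\M\sind{2}$ that you discarded: \cref{lem:trace} requires as a hypothesis that the \emph{continuum} nonlocal energy $\frac{1}{\tilde\eps_n^{d+2}}\int_\M\int_\M \eta_{\tilde\eps_n}(\abs{x-y})\abs{v_n(T_n(x))-v_n(T_n(y))}^2\d\mu(x)\d\mu(y)$ of the full sequence over all of $\M$ stays bounded, and it is only inside that lemma, using the nonlocal interactions across $\M\sind{12}$, that the flattened function is shown to be $H^1$ across the interface. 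So you must go back to the finiteness of the full discrete energy $E_{n,\eps_n}(u_n)$ (which may be assumed without loss of generality), push it to the continuum via the transport maps using $\abs{T_n(x)-T_n(y)} \geq \abs{x-y} - 2\norm{T_n-\operatorname{id}}_{L^\infty(\M)}$, the reduced bandwidth $\tilde\eps_n := \eps_n - 2\norm{T_n-\operatorname{id}}_{L^\infty(\M)}$ (with $\tilde\eps_n/\eps_n \to 1$ by \cref{ass:epsilon} and \cref{rem:scaling_condition}), and the reduction to $\eta = \chi_{[0,1]}$ by monotonicity, and only then apply \cref{lem:trace} to $v_n\circ T_n \to v$. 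Your proposal omits this transfer entirely, so the hypothesis of \cref{lem:trace} is never supplied.
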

\begin{proof}
Without loss of generality we can assume that
\begin{align}\label{ineq:finite_energy}
    \liminf_{n\to\infty}E_{n,\eps}(u_n)<\infty
\end{align}
since otherwise there is nothing to prove.
Dropping cross-terms we get
\begin{align*}
    E_{n,\eps_n}(u_n) 
    \geq
    \sum_{i=1}^2
    \left(\frac{n\sind{i}}{n}\right)^2
    \frac{1}{\eps_n^{2+d\sind{i}}}
    \int_{\M\sind{i}}\int_{\M\sind{i}}\eta_{\eps_n} (|x-y|) \abs{v_n(x)-v_n(y)}^2\d\mu_n\sind{i}(x)\de\mu_n\sind{i}(y),
\end{align*}
where we define
\begin{align*}
    v_n(x) := 
    \frac{u_n(x)}{\sqrt{\eps_n^{-d\sind{i}}\deg_{n,\eps_n}(x)}},\quad x\in\M\sind{i}\cap V_n.
\end{align*}
Let us first argue that $v_n\to v$ in the $TL^2(\M)$-sense where $v := \frac{u}{\sqrt{\alpha\sind{i}\beta_\eta\sind{i}\rho\sind{i}}}$ on $\M\sind{i}$.
For this we let $T_n:\M\to\M$ be transport maps satisfying $(T_n)_\sharp\mu=\mu_n$ as well as \labelcref{eq:special_maps}.
Using $u_n\to u$ in the $TL^2$-sense, together with \cref{ass:densities,lem:weights_converge}, we obtain
\begin{align*}
     \lim_{n\to\infty} &
    \sum_{i=1}^2
    \int_{\M\sind{i}}
    \abs{v_n(T_n(x))-v(x)}^2\d\mu\sind{i}(x)
    \\
    &=
    \lim_{n\to\infty}
    \sum_{i=1}^2
    \int_{\M\sind{i}}
    \abs{u_n(T_n(x))-\sqrt{\frac{\eps_n^{-d\sind{i}}\deg_{n,\eps_n}(T_n(x))}{\alpha\sind{i}\beta_\eta\sind{i}\rho\sind{i}(x)}}u(x)}^2\frac{\d\mu\sind{i}(x)}{\eps_n^{-d\sind{i}}\deg_{n,\eps_n}(T_n(x))}
    \\
    &\leq 
    2
    \lim_{n\to\infty}
    \sum_{i=1}^2
    \int_{\M\sind{i}}
    \abs{u_n(T_n(x))-u(x)}^2\frac{\d\vol\sind{i}(x)}{\eps_n^{-d\sind{i}}\deg_{n,\eps_n}(T_n(x))}
    \\
    &\qquad
    +
    2
    \lim_{n\to\infty}
    \sum_{i=1}^2
    \int_{\M\sind{i}}
    \abs{\sqrt{\frac{\alpha\sind{i}\beta_\eta\sind{i}\rho\sind{i}(x)}{\eps_n^{-d\sind{i}}\deg_{n,\eps_n}(T_n(x))}}-
    1
    }^2
    \frac{\abs{u(x)}^2}{\alpha\sind{i}\beta_\eta\sind{i}\rho\sind{i}(x)}
    \d\vol\sind{i}(x) = 0
\end{align*}
and hence the $TL^2(\M)$-convergence of $v_n$ to $v$ holds.

Applying the $\liminf$ inequality from \cref{cor:Gamma-convergence_standard} to the manifolds $\M\sind{i}$ we obtain that
\begin{align}\label{ineq:liminf_energies}
        \liminf_{n\to\infty}
        E_{n,\eps_n}(u_n) 
        \geq
        \sum_{i=1}^2
        \frac{\alpha\sind{i}\sigma_\eta\sind{i}}{\beta_\eta\sind{i}}
        \int_{\M\sind{i}} \abs{\grad_{\M\sind{i}}\left(\frac{u}{\sqrt{\rho\sind{i}}}\right)\rho\sind{i}}^2\d\vol\sind{i}
\end{align}
holds almost surely.
In particular, this inequality shows that $\frac{u}{\sqrt{\rho\sind{i}}} \in H^1(\M\sind{i})$ for $i=1,2$.

\paragraph{Case 1, \texorpdfstring{$d\sind1\neq d\sind{2}$}{d1!=d2} or \texorpdfstring{$d\sind1-d\sind{12} > 1$}{d1-d12>1}}

In this case we are done since $u\in H^1_{\sqrt{\mu}}(\M)$ (defined in \cref{eq:def_H1mu}) and hence \labelcref{ineq:liminf_energies} implies $\liminf_{n\to\infty} E_{n,\eps_n}(u_n) \geq \E(u)$.

\paragraph{Case 2, \texorpdfstring{$d\sind1= d\sind{2}=:d$}{d1=d2=:d} and \texorpdfstring{$d\sind{12} = d-1$}{d12=d-1}}

To show that $u\in H^1_{\sqrt{\mu}}(\M)$ we have to show the trace condition of 
\cref{eq:def_H1mu}.
For this, we take a sequence of transport maps $T_n : \M\to\M$ satisfying $(T_n)_\sharp\mu=\mu_n$ and \labelcref{eq:special_maps}.
Using their pushforward property it holds
\begin{align*}
    E_{n,\eps_n}(u_n) 
    =
    \frac{1}{\eps_n^{d+2}}
    \int_\M 
    \int_\M 
    \eta_{\eps_n}(\abs{T_n(x)-T_n(y)})
    \abs{
    v_n(T_n(x))
    -
    v_n(T_n(y))
    }^2
    \d\mu(x)
    \d\mu(y).
\end{align*}
By \cref{ass:eta} there exists $t_0\in(0,1]$ and $a_0>0$ such that $\eta(t)\geq a_0$ for all $0\leq t \leq t_0$.
Hence, as in \cite[Section 5]{garcia2016continuum} we can assume without loss of generality that $\eta(t) = 1_{[0,1]}(t)$. 
Using the estimate
\begin{align*}
    \abs{T_n(x)-T_n(y)} \geq \abs{x-y} - 2\norm{T_n - \operatorname{id}}_{L^\infty(\M)}
\end{align*}
and defining $\tilde\eps_n := \eps_n - 2\norm{T_n-\operatorname{id}}_{L^\infty(\M)}$ we see that
\begin{align*}
    E_{n,\eps_n}(u_n) 
    \geq
    \frac{\tilde\eps_n^{d+2}}{\eps_n^{d+2}}
    \frac{1}{\tilde\eps_n^{d+2}}
    \int_\M 
    \int_\M 
    \eta_{\tilde\eps_n}(\abs{x-y})
    \abs{
    v_n(T_n(x))-v_n(T_n(y))
    }^2
    \d\mu(x)
    \d\mu(y).
\end{align*}
By \cref{ass:epsilon,rem:scaling_condition} it holds $\lim_{n\to\infty}\frac{\tilde\eps_n}{\eps_n}=1$ and $\lim_{n\to\infty}\tilde\eps_n=0$.
Using also \labelcref{ineq:finite_energy} we have
\begin{align}\label{eq:finite_NL_energy}
    \liminf_{n\to\infty}
    \frac{1}{\tilde\eps_n^{d+2}}
    \int_\M 
    \int_\M 
    \eta_{\tilde\eps_n}(\abs{x-y})
    \abs{
    v_n(T_n(x))-v_n(T_n(y))
    }^2
    \d\mu(x)
    \d\mu(y)
    < \infty.
\end{align}
Above, we have already proved that $v_n\circ T_n\to v$ in $L^2(\M)$.
Using this together with \labelcref{eq:finite_NL_energy} and applying \cref{lem:trace} we obtain that $\trace\sind{1}\left({u}/{\sqrt{\alpha\sind{1}\rho\sind{1}}}\right)=\trace\sind{2}\left({u}/{\sqrt{\alpha\sind{2}\rho\sind{2}}}\right)$ on $\M\sind{12}$ which implies $u\in H^1_{\sqrt{\mu}}(\M)$.
Combining this with \labelcref{ineq:liminf_energies} we have that $\liminf_{n\to\infty}E_{n,\eps_n}(u_n)\geq\E(u)$, which concludes the proof.
\end{proof}

\subsection{\texorpdfstring{$\Gamma$}{Gamma}-limsup inequality}
\label{sec:limsup}

To conclude the proof of \cref{thm:gamma} we need to prove the $\limsup$ inequality, which requires distinguishing three cases. 
If the codimension of the intersection of the two manifolds within the one with the smaller dimension is bigger than two, the problem decouples completely and we show that recovery sequences on each of the manifolds do not interact in a significant way. 
Otherwise, one has to modify the recovery sequence on the manifold with larger dimension locally around the intersection so that the interaction with the other manifold does not contribute substantially to the energy. 
Lastly, if the two manifolds have equal dimension and their intersection has codimension one, we have to ensure the regularity of recovery sequence accross the intersection to control the interaction across the intersection. 
This requires us to prove an approximation result for Sobolev functions on the two manifolds whose traces on the intersection coincide with smooth functions that enjoy the same property, see \cref{lem:density} in the appendix.

Before we state and prove the $\limsup$ inequality, we prove two useful lemmas which allow us to crudely bound the nonlocal energy from above by the local one.
We shall use these estimates for bounding certain error terms.
To prove them, we recall the Cheeger--Colding segment inequality which is satisfied for all Riemannian manifolds with a lower Ricci curvature bound and hence unconditionally for compact manifolds.
\begin{lemma}[Cheeger--Colding segment inequality \cite{cheeger1996lower}]\label{lem:Cheeger--Colding}
    Let $\M$ be a $d$-dimensional compact, connected Riemannian manifold equipped with the metric $d_\M$.
    For a function $f:\M\to[0,\infty)$ and $x,y \in \M$ let 
    \begin{align*}
        \mathcal F_f(x,y) = \inf_\gamma \int_0^\ell f(\gamma(t)) \d t,\qquad x,y\in\M,
    \end{align*}
    where the infimum is taken over all unit-speed geodesics from $x$ to $y$, where $\ell=d_\M(x,y)$. 
     There exists $r_0=r_0(\M)>0$ and a constant $C=C(\M)>0$ such that for all $0<r<r_0$, and all open sets $A,B\subset B_\M(x_0,r)$  it holds that
    \begin{align*}
        \iint_{A\times B} \mathcal{F}_f \d\vol_{\M\times\M} \leq 
        C r \left(\vol(A)+\vol(B)\right)\int_{B_\M(x_0,2r)}f\d\vol.
    \end{align*}   
\end{lemma}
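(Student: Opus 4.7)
The plan is to prove the segment inequality by the classical Cheeger--Colding argument: reduce the double integral on $A\times B$ to an integral of $f$ on $B_\M(x_0,2r)$ by a Fubini-type exchange along the geodesic parameter, using Bishop--Gromov volume comparison (available unconditionally here because $\M$ is compact, hence has uniformly bounded Ricci and sectional curvatures and positive injectivity radius). First I would exploit the symmetry $x\leftrightarrow y$ of a geodesic to split
\[
\mathcal{F}_f(x,y)=\int_0^{\ell/2}f(\gamma_{xy}(t))\,\mathrm{d}t+\int_{\ell/2}^{\ell}f(\gamma_{xy}(t))\,\mathrm{d}t,\qquad \ell=d_\M(x,y),
\]
and reduce the task to showing that $I_A:=\int_A\int_B\int_0^{\ell/2}f(\gamma_{xy}(t))\,\mathrm{d}t\,\mathrm{d}\vol(y)\,\mathrm{d}\vol(x)$ is bounded by $Cr\,\vol(B)\int_{B_\M(x_0,2r)}f\,\mathrm{d}\vol$; the analogous bound, with $x$ and $y$ swapped, controls the second half and produces the $\vol(A)$ term.

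For $I_A$, I would fix $x\in A$ and introduce geodesic polar coordinates centered at $x$. Since $B\subset B_\M(x_0,r)\subset B_\M(x,2r)$ and we restrict to $r<r_0(\M)$ below the injectivity radius of $\M$, each $y\in B$ can be written as $y=\exp_x(\ell\omega)$ with $\ell=d_\M(x,y)\in[0,2r]$, $\omega\in S^{d-1}\subset T_x\M$, and $\mathrm{d}\vol(y)=J_x(\ell,\omega)\,\mathrm{d}\ell\,\mathrm{d}\omega$. Then $\gamma_{xy}(t)=\exp_x(t\omega)$, and after interchanging the $t$ and $\ell$ integrations I rewrite the inner integral as
\[
\int_{S^{d-1}}\int_0^{r}f(\exp_x(t\omega))\int_{2t}^{2r}\chi_B(\exp_x(\ell\omega))\,J_x(\ell,\omega)\,\mathrm{d}\ell\,\mathrm{d}t\,\mathrm{d}\omega,
\]
and change variables $(t,\omega)\mapsto z=\exp_x(t\omega)$ (with $\mathrm{d}\vol(z)=J_x(t,\omega)\,\mathrm{d}t\,\mathrm{d}\omega$) to obtain
\[
I_A=\int_A\int_{B_\M(x,r)}f(z)\left[\int_{2d_\M(x,z)}^{2r}\chi_B(\exp_x(\ell\omega_z))\,\frac{J_x(\ell,\omega_z)}{J_x(d_\M(x,z),\omega_z)}\,\mathrm{d}\ell\right]\mathrm{d}\vol(z)\,\mathrm{d}\vol(x).
\]
By Bishop--Gromov comparison (uniform on the compact $\M$), the ratio $J_x(\ell,\omega)/J_x(t,\omega)\le C(\ell/t)^{d-1}$ for $t\le\ell\le r_0$, so the bracket is at most $Cr\cdot r^{d-1}/d_\M(x,z)^{d-1}$. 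After a Fubini swap of $x$ and $z$ and a layer-cake estimate of the Riesz-type kernel $\int_{A\cap B_\M(z,r)}d_\M(x,z)^{-(d-1)}\mathrm{d}\vol(x)$, I conclude with the claimed inequality.

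The main obstacle is obtaining the linear dependence on $\vol(B)$ rather than $r^d$ on the right-hand side. If one simply drops $\chi_B$ in the bracket above, one is left with an estimate proportional to $r\cdot r^{d}\int f$, which is only sharp when $B$ fills $B_\M(x_0,r)$. To recover the tight $Cr\,\vol(B)$ dependence, one must retain $\chi_B$ through the change of variables: for fixed $x$ and $\omega$, the one-dimensional measure of $\{\ell\in[2d_\M(x,z),2r]\st\exp_x(\ell\omega)\in B\}$ is bounded by $\diam(B)\le 2r$, and integrating $\chi_B(\exp_x(\ell\omega))J_x(\ell,\omega)$ in $\ell,\omega$ reproduces $\vol(B)$ via the Jacobian. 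Combining this with the Bishop--Gromov comparison, rather than applying the two estimates separately, yields the factor $Cr\,\vol(B)$ and completes the proof.
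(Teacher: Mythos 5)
The paper does not prove this lemma (it is quoted from Cheeger--Colding), so your attempt should be measured against the standard segment-inequality argument — and it deviates from it at exactly the point where it matters. Your overall architecture is right: split $\mathcal F_f$ into the half nearer $x$ and the half nearer $y$, and bound the first by $Cr\,\vol(B)\int f$ and the second (by symmetry) by $Cr\,\vol(A)\int f$. The gap is in how you estimate $I_A$. You expand in polar coordinates centered at $x$, but for the half $t\in[0,\ell/2]$ the segment points sit in the \emph{inner} half of the radial coordinate, so the Jacobian ratio $J_x(\ell,\omega)/J_x(t,\omega)\le C(\ell/t)^{d-1}$ is genuinely unbounded as $t\to0$. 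Following your main computation (drop $\chi_B$, Fubini in $x,z$, layer-cake for the kernel $d_\M(x,z)^{-(d-1)}$) one only gets $I_A\le Cr^{d+1}\int f$, which does not imply the lemma, since $\vol(B)$ can be much smaller than $r^d$. Your closing paragraph acknowledges this but the proposed repair is not a proof: to produce $\vol(B)$ you must integrate $\chi_B(\exp_x(\ell\omega))J_x(\ell,\omega)$ over \emph{both} $\ell$ and $\omega$, while to produce $\int_\M f\d\vol$ you must integrate $f(\exp_x(t\omega))J_x(t,\omega)$ over both $t$ and $\omega$ — and the single $\omega$-integration cannot serve both purposes, because pointwise in $\omega$ the ray integral $\int_0^{r}f(\exp_x(t\omega))\d t$ is not controlled by $\int_\M f\d\vol$. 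In the coordinate system you chose, the volume element is only available at radius $\ell$ (where $B$ lives), not at radius $t$ (where $f$ is evaluated), and "combining the two estimates" does not resolve this; the bound $\abs{\{\ell\st\exp_x(\ell\omega)\in B\}}\le 2r$ you invoke cannot generate the factor $\vol(B)$ together with $\int f$.

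The classical resolution is to flip the base point for this half: fix $y\in B$ and expand in polar coordinates at $y$, writing $x=\exp_y(m\omega)$ and the segment point as $\exp_y(s\omega)$ with $s=m-t\in[m/2,m]$. Then $s$ lies in the \emph{outer} half, so the comparison gives $J_y(m,\omega)\le C\,J_y(s,\omega)$ with a constant uniform for $m\le 2r_0$; dropping $\chi_A\le1$, inserting $\chi_{B_\M(x_0,2r)}(\exp_y(s\omega))$ (segment points between two points of $B_\M(x_0,r)$ stay in $B_\M(x_0,2r)$ — a detail your writeup also skips, since $B_\M(x,2r)\not\subset B_\M(x_0,2r)$ in general), and swapping $s$ and $m$ yields, for each fixed $y$, a bound $Cr\int_{B_\M(x_0,2r)}f\d\vol$; the factor $\vol(B)$ then comes for free from the trivial outer integration over $y\in B$. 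With that modification (and the symmetric one for the other half, centered at $x$), your argument becomes the standard Cheeger--Colding proof; as written, the key step does not go through.
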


\begin{lemma}\label{lem:nonlocal_error}
    Let $\M\subset\R^N$ be a $d$-dimensional smooth, connected, compact manifold without boundary, $A\subset\M$ be an open subset, and $u\in \Lip(\M)$. 
    Under \cref{ass:eta} there exist $C,\eps_0>0$, independent of $u$, such that for all $0<\eps<\eps_0$ it holds
    \begin{align*}
        \int_{A}\int_{\M}
        \eta_\eps(\abs{x-y})\abs{u(x)-u(y)}^2\d\vol(y)\d\vol(x)
        \leq 
        C
        \eps^{2+d}
        \int_{A^\eps} 
        \abs{\nabla_\M
        u(y)}^2
        \d\vol(x),
    \end{align*}
    where $A^\eps = \{x\in\M\st d_\M(x,A)<\eps\}$ is the $\eps$-neighborhood of $A$ in $\M$.
\end{lemma}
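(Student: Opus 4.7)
The plan is to combine a geodesic representation of the Lipschitz increment $u(x)-u(y)$ with a covering argument based on the Cheeger--Colding segment inequality (\cref{lem:Cheeger--Colding}), proceeding in three stages.

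\emph{Stage 1 (passage to intrinsic distance).} Since $\M\subset\R^N$ is a smooth compact embedded submanifold, there exist $\eps_1>0$ and $C_1\geq 1$ such that $d_\M(x,y)\leq C_1\abs{x-y}$ whenever $\abs{x-y}<\eps_1$. Combined with $\supp\eta\subset[0,1]$ and $\eta\leq\eta(0)$ from \cref{ass:eta}, this yields the pointwise bound $\eta_\eps(\abs{x-y})\leq\eta(0)\,\chi_{\{d_\M(x,y)\leq C_1\eps\}}$ for every $\eps<\eps_1$.

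\emph{Stage 2 (chord--arc estimate).} For $u\in\Lip(\M)$ and $x,y\in\M$ with $d_\M(x,y)$ smaller than the injectivity radius of $\M$, Rademacher's theorem applied along a unit-speed minimizing geodesic $\gamma$ together with Cauchy--Schwarz gives
\begin{equation*}
\abs{u(x)-u(y)}^2 \;\leq\; d_\M(x,y)\int_0^{d_\M(x,y)}\abs{\nabla_\M u(\gamma(t))}^2 \d t \;\leq\; C_1\eps\,\mathcal F_{\abs{\nabla_\M u}^2}(x,y).
\end{equation*}

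\emph{Stage 3 (covering and the segment inequality).} I would pick a maximal $\eps$-separated set $\{x_k\}\subset A$. The balls $B_\M(x_k,C_1\eps)$ cover $A$, and by volume doubling on compact manifolds the enlarged family $\{B_\M(x_k,4C_1\eps)\}$ has bounded overlap independent of $\eps$. For $x\in B_\M(x_k,C_1\eps)\cap A$ and $y$ with $d_\M(x,y)\leq C_1\eps$, both points lie in $B_\M(x_k,2C_1\eps)$, so \cref{lem:Cheeger--Colding} applied with $x_0=x_k$ and $r=2C_1\eps$ produces
\begin{equation*}
\iint_{(B_\M(x_k,C_1\eps)\cap A)\times B_\M(x_k,2C_1\eps)} \mathcal F_{\abs{\nabla_\M u}^2} \d\vol_{\M\times\M} \;\leq\; C\,\eps^{1+d}\int_{B_\M(x_k,4C_1\eps)}\abs{\nabla_\M u}^2\d\vol.
\end{equation*}
Summing over $k$ with bounded overlap and absorbing the $\eta(0)\,C_1\eps$ prefactor from Stages 1 and 2 yields the desired $\eps^{2+d}$ scaling and localizes the gradient integral to a neighborhood of $A$.

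The main bookkeeping obstacle is matching the neighborhood $A^\eps$ appearing in the statement: the direct argument above yields an integral over $A^{4C_1\eps}$. I would handle this by running the covering at scale $\eps/(4C_1)$ instead of $\eps$ and shrinking $\eps_0$ accordingly, so that at the expense of a larger absolute constant $C$ the right-hand side gradient integral lands on $A^\eps$ exactly as stated.
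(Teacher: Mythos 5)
Your Stages 1--3 are essentially the paper's own proof: the chord--arc comparison between $\abs{x-y}$ and $d_\M(x,y)$, the fundamental-theorem-of-calculus plus Cauchy--Schwarz reduction to the segment functional $\mathcal F_{\abs{\nabla_\M u}^2}$, a covering of $A$ by geodesic balls of radius of order $\eps$, an application of \cref{lem:Cheeger--Colding} on each ball, and a bounded-overlap summation. That core is sound (with the same mild measure-theoretic glossing as the paper concerning differentiating a Lipschitz $u$ along a single geodesic), and it correctly produces the bound $C\eps^{2+d}\int_{A^{c\eps}}\abs{\nabla_\M u}^2\d\vol$ for some constant $c$ depending on the chord--arc constant.

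The gap is in your last paragraph. Running the covering at scale $\eps/(4C_1)$ does not bring the gradient integral down to $A^{\eps}$, because the enlargement is not caused by the covering radius: it comes from (a) the interaction radius --- pairs with $\eta_\eps(\abs{x-y})\neq0$ can have $d_\M(x,y)$ up to $C_1\eps$, so whatever the covering scale $\delta$, the inner set fed into \cref{lem:Cheeger--Colding} must have radius at least $C_1\eps$ --- and (b) the $r\mapsto 2r$ enlargement built into the segment inequality itself. With covering scale $\delta$ you are forced to take $r\geq\delta+C_1\eps$, the lemma returns $\int_{B_\M(x_k,2r)}\abs{\nabla_\M u}^2\d\vol$, and after summation you land on $A^{2C_1\eps+2\delta}$; letting $\delta\to0$ you still cannot get below roughly $A^{2\eps}$. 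In fact the inequality with exactly $A^\eps$ cannot hold on a curved manifold: since $\abs{x-y}\leq\eps$ permits $d_\M(x,y)$ slightly larger than $\eps$, one may take $u$ constant on $A^\eps$ with $\nabla_\M u$ supported just outside it (e.g.\ $\M$ a circle, $A$ an arc), making the right-hand side zero while the left-hand side is positive. So the honest conclusion of your argument --- and, to be fair, of the paper's proof as well, which ends with balls of radius $3\eps$ and is equally loose at this point --- is the estimate with $A^{C\eps}$ for a constant depending only on $\M$ and $\eta$; that weaker form is all that is needed where the lemma is used in \cref{sec:limsup}, and the correct fix is to restate the conclusion with $A^{C\eps}$ rather than to shrink the cover.
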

                                                                                                                       \begin{proof}
    We use the notation $a\lesssim b$ to denote that $a\leq Cb$ for some constant $C>0$ independent of $a$ and $b$.
    Furthermore, when we write $\eps$ sufficiently small, this upper bound is independent of $u$.
    For $x,y\in\M$ we let $\gamma_{xy}:[0,d_\M(x,y)]\to\M$ denote a unit-speed geodesic between $x$ and $y$. 
    Note that for $\eps>0$ sufficiently small and $d_\M(x,y)<\eps$ there exists a unique unit-speed geodesic connecting them.
    Using the fundamental theorem of calculus, Jensen's inequality and \cref{ass:eta} we get
    \begin{align*}
         \int_{A}\int_{\M} &
        \eta_\eps(\abs{x-y})\abs{u(x)-u(y)}^2\d\vol(y)\d\vol(x)
        \\
        &=
        \int_{A}\int_{\M}
        \eta_\eps(\abs{x-y})\abs{\int_0^{d_\M(x,y)}\frac{\d}{\d t}u(\gamma_{xy}(t))\d t}^2\d\vol(y)\d\vol(x)
        \\
        &\leq 
        \int_{A}\int_{\M}
        \eta_\eps(\abs{x-y})
        d_\M(x,y)
        \underbrace{\int_0^{d_\M(x,y)}\abs{\nabla_\M u(\gamma_{xy}(t))}^2\d t}_{=:\mathcal{F}(x,y)}\d\vol(y)\d\vol(x)
        \\
        &\lesssim
        \eps 
        \int_A 
        \int_{B_\M(x,2\eps)}
        \mathcal{F}(x,y)\d\vol(y)\d\vol(x).
    \end{align*}
    To apply the Cheeger--Colding segment inequality from \cref{lem:Cheeger--Colding} to this integral we need to localize it.
    Since $A$ is an open subset of a compact manifold we can cover it by finitely many geodesic balls of radius $\eps$, i.e., $A\subset\bigcup_{k=1}^K B_\M(x_k,\eps)$. 
    Hence, we get that
    \begin{align*}
        \int_A 
        \int_{B_\M(x,2\eps)}
        \mathcal{F}(x,y)\d\vol(y)\d\vol(x)
        &\leq 
        \sum_{k=1}^K
        \int_{B_\M(x_k,\eps)}
        \int_{B_\M(x,2\eps)}
        \mathcal{F}(x,y)\d\vol(y)\d\vol(x)
        \\
        &\leq 
        \sum_{k=1}^K
        \int_{B_\M(x_k,\eps)}
        \int_{B_\M(x_k,3\eps)}
        \mathcal{F}\d\vol_{\M\times\M}
        \\
        &\lesssim
        \eps^{1+d}
        \sum_{k=1}^K
        \int_{B_\M(x_k,3\eps)}
        \abs{\nabla_\M u}^2
        \d\vol,
    \end{align*}
    where we applied \cref{lem:Cheeger--Colding} in the last inequality.
    Note that we can pick a covering such that 
    \begin{align*}
        \sum_{k=1}^K
        \int_{B_\M(x_k,3\eps)}
        \abs{\nabla_\M u}^2
        \d\vol
        \lesssim
        \int_{A^\eps}
        \abs{\nabla_\M u}^2
        \d\vol.
    \end{align*}
Combing the three inequalities of this proof we obtain the assertion.
\end{proof}

\begin{lemma}\label{lem:nonlocal_error_T_map}
    Let $\M\subset\R^N$ be a $d$-dimensional smooth, connected, compact manifold without boundary, $A\subset\M$ be an open subset, $u\in \Lip(\M)$, and $T:\M\to\M$ a map which satisfies $\esssup_{x\in\M}\abs{T(x)-x}\leq\delta$ for some $\delta>0$. 
    Under \cref{ass:eta} there exist $C,\eps_0,\delta_0>0$, independent of $u$, such that for all $0<\eps<\eps_0$ and all $0<\delta<\delta_0$ it holds
    \begin{align*}
        \int_{A}\int_{\M}
        \eta_\eps(\abs{x-y})\abs{u(T(y))-u(y)}^2\d\vol(y)\d\vol(x)
        \leq 
        C
        \eps^d
        \delta^2
        \int_{A^{2\eps}}
        \esssup_{B_\M(x,2\delta)}
        \abs{\nabla_\M u}^2
        \d\vol(x).
    \end{align*}
\end{lemma}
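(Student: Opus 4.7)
The plan is to prove this via a direct pointwise Lipschitz estimate along geodesics rather than invoking the Cheeger--Colding segment inequality, reflecting the fact that here the displacement $\abs{T(y)-y}$ is at most $\delta$ pointwise in $y$, whereas in the previous lemma the points $x,y$ could be separated by a full $\eps$. First, I would fix $\delta_0>0$ small enough (depending only on the smooth embedding $\M\hookrightarrow\R^N$) so that for every $y\in\M$ and every $z\in\M$ with $\abs{y-z}\leq\delta_0$ the intrinsic distance satisfies $d_\M(y,z)\leq 2\abs{y-z}$ and there exists a unique minimizing unit-speed geodesic $\gamma_{y,z}$ joining them, with image contained in $B_\M(y,2\abs{y-z})$. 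Applied to $z=T(y)$, this yields $d_\M(y,T(y))\leq 2\delta$ and $\gamma_{y,T(y)}\subset B_\M(y,2\delta)$ whenever $\delta<\delta_0$.

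Since $u\in\Lip(\M)$, the composition $t\mapsto u(\gamma_{y,T(y)}(t))$ is absolutely continuous with derivative bounded in absolute value a.e.\ by $\abs{\nabla_\M u(\gamma_{y,T(y)}(t))}$. Combining the fundamental theorem of calculus with Jensen's inequality exactly as in the proof of the previous lemma, I obtain the pointwise bound
$$
\abs{u(T(y))-u(y)}^2\leq d_\M(y,T(y))^2\esssup_{B_\M(y,2\delta)}\abs{\nabla_\M u}^2\leq 4\delta^2\esssup_{B_\M(y,2\delta)}\abs{\nabla_\M u}^2.
$$
Substituting into the left-hand side of the claim and swapping the order of integration via Fubini, I would use \cref{ass:eta} and a standard volume estimate on the smooth compact manifold $\M$ to conclude
$$
\int_A\eta_\eps(\abs{x-y})\d\vol(x)\leq\eta(0)\vol(B(y,\eps)\cap\M)\leq C\eps^d\chi_{A^\eps}(y)
$$
for $\eps<\eps_0$ sufficiently small. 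Combining the two displays yields
$$
\int_A\int_\M\eta_\eps(\abs{x-y})\abs{u(T(y))-u(y)}^2\d\vol(y)\d\vol(x)\leq 4C\eta(0)\eps^d\delta^2\int_{A^\eps}\esssup_{B_\M(y,2\delta)}\abs{\nabla_\M u}^2\d\vol(y),
$$
and the claim follows after renaming the dummy variable $y\mapsto x$ and using the trivial inclusion $A^\eps\subset A^{2\eps}$.

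The only technical points are the comparability $d_\M(\cdot,\cdot)\sim\abs{\cdot-\cdot}$ at small scales and the validity of the chain rule for $u\circ\gamma$ with $u$ merely Lipschitz. Both follow from standard facts about smooth compact embedded manifolds (the former from a tubular neighborhood argument, the latter from Rademacher's theorem combined with the smoothness of $\gamma$), and neither constitutes a real obstacle. The argument is in fact strictly simpler than that of the preceding lemma precisely because here we only need an $L^\infty$ bound on $\abs{\nabla_\M u}^2$ along each short geodesic rather than an $L^1$ bound, which is what allows us to bypass the Cheeger--Colding segment inequality.
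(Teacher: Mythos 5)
Your argument is correct and takes essentially the same route as the paper's proof: Fubini, the estimate $\int_A\eta_\eps(\abs{x-y})\d\vol(x)\lesssim\eps^d$ supported on a neighborhood of $A$, and the fundamental-theorem-of-calculus/Jensen bound along the short geodesic from $y$ to $T(y)$ together with $d_\M(y,T(y))\leq 2\abs{y-T(y)}\leq 2\delta$ (the paper likewise does not invoke Cheeger--Colding for this lemma). One small precision: the inner integral is nonzero for every $y$ whose \emph{Euclidean} distance to $A$ is below $\eps$, a set contained in $A^{2\eps}$ but not necessarily in the intrinsic neighborhood $A^\eps$, so the passage to $A^{2\eps}$ is exactly where the comparability $d_\M\leq 2\abs{\cdot-\cdot}$ is used rather than being trivial slack in the inclusion $A^\eps\subset A^{2\eps}$.
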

\begin{proof}
Using Fubini's theorem and the fact that $\vol(B_\M(x,\eps))\lesssim\eps^d$ for $\eps>0$ sufficiently small, we get
\begin{align*}
     \int_{A}\int_{\M} &
    \eta_\eps(\abs{x-y})\abs{u(T(y))-u(y)}^2\d\vol(y)\d\vol(x)
    \\
    &=
    \int_{\M}
    \abs{u(T(y))-u(y)}^2
    \int_{A}
    \eta_\eps(\abs{x-y})\d\vol(x)
    \d\vol(y)
    \\
    &\lesssim 
    \eps^{d}
    \int_{\{y\in\M\st d(y,A)<\eps\}}
    \abs{u(T(y))-u(y)}^2\d\vol(y).
\end{align*}
For almost every $y\in\M$ we can pick a geodesic $\gamma_y:[0,1]\to\M$ with $\gamma_y(0)=y$ and $\gamma_y(1)=T(y)$.
Using the fundamental theorem of calculus and Jensen's inequality we get
\begin{align*}
     \int_{A}\int_{\M} &
    \eta_\eps(\abs{x-y})\abs{u(T(y))-u(y)}^2\d\vol(y)\d\vol(x)
    \\
    &\lesssim 
    \eps^{d}
    \int_{\{y\in\M\st d(y,A)<\eps\}}
    \int_0^1 
    \abs{
    \frac{\d}{\d s}
    u(\gamma_y(s))
    }^2
    \d s
    \d\vol(y)
    \\
    &\lesssim 
    \eps^d
    \delta^2
    \int_{\{y\in\M\st d(y,A)<\eps\}}
    \esssup_{B_\M(y,2\delta)}
    \abs{\nabla_\M u}^2
    \d\vol(y),
\end{align*}    
where we used that $d_\M(y,T(y))\leq 2\abs{y-T(y)}\leq 2\delta$ for $\delta>0$ sufficiently small.
We can conclude the proof by noting that $\{y\in\M\st d(y,A)<\eps\}\subset\{y\in\M\st d_\M(y,A)<2\eps\}=A^{2\eps}$ for $\eps>0$ sufficiently small.
\end{proof}

Now we continue with the proof of the limsup inequality for \cref{thm:gamma}. 
For notational convenience, we let $G_{n\sind{i},\eps_n}(\cdot;\M\sind{i})$ denote the standard graph Dirichlet energy from \cref{thm:Gamma-convergence_standard}, evaluated on the $n\sind{i}$ points in $V_n\cap\M\sind{i}$.

Furthermore, as before $\mathcal{G}(\cdot;\M\sind{i})$ denotes the standard local Dirichlet energy from the same theorem, evaluated on the manifold $\M\sind{i}$ using the probability measure $\mu\sind{i}=\rho\sind{i}\vol\sind{i}$.

\begin{proposition}[Limsup inequality]\label{prop:limsup_discrete}
Under \cref{ass:densities,ass:epsilon,ass:eta}  for all $u\in L^2(\M)$ there exists a sequence $(u_n)_{n\in\N}\subset L^2(\mu_n)$ such that $u_n\overset{TL^2}{\longrightarrow} u$ and
\begin{align*}
    \limsup_{n\to\infty} E_{n,\eps_n}(u_n)
    \leq 
    \E(u).
\end{align*}
\end{proposition}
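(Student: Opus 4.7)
The plan is to port the three-case analysis carried out for the nonlocal limsup in \cref{sec:NL_limsup} to the discrete setting. The bridge between the two is provided by the transport maps $T_n$ of \cref{rem:scaling_condition}, the uniform degree estimates in \cref{lem:uniform_cvgc,cor:uniform_cvgc2}, the weighted $L^2$ estimate of \cref{lem:weights_converge}, and \cref{cor:Gamma-convergence_standard} applied individually on each manifold $\M\sind{i}$. A diagonal argument first reduces to $u$ such that $v:=u/\sqrt{\alpha\sind{i}\beta_\eta\sind{i}\rho\sind{i}}$ is smooth (or at least Lipschitz) on each $\M\sind{i}$: when $d\sind{2}-d\sind{12}\geq 2$ the limit $\E$ decouples and componentwise density of smooth functions in $H^1(\M\sind{i})$ suffices; in the codimension-one case $d\sind{1}=d\sind{2}=d\sind{12}+1$ I would apply \cref{lem:density} to $u/\sqrt\rho$ to obtain a global approximant preserving the trace condition, and then multiply by $\sqrt\rho$.

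For such a regularized $u$ I would take the quasi-constant recovery sequence
\[
u_n(x) := u(x) \sqrt{\frac{\eps_n^{-d\sind{i}}\deg_{n,\eps_n}(x)}{\alpha\sind{i}\beta_\eta\sind{i}\rho\sind{i}(x)}}, \quad x \in V_n \cap \M\sind{i},
\]
so that after substitution in \eqref{eq:normalized_discrete_dirichlet} the $\deg_{n,\eps_n}$ factors cancel on within-manifold terms, leaving a sum of two standard unnormalized graph Dirichlet energies applied to $v$, plus cross-manifold interaction terms. \cref{cor:Gamma-convergence_standard} together with $n\sind{i}/n\to\alpha\sind{i}$ yields the correct limit $\sum_i (\alpha\sind{i})^2\mathcal{G}(v;\M\sind{i})=\E(u)$ on the diagonal, while $TL^2$-convergence $u_n\to u$ follows from uniform convergence of the degree ratio to $1$ on the bulk and from \cref{lem:weights_converge} near the intersection. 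The cross-manifold terms, after pushing forward by $T_n$ and using $\norm{T_n-\operatorname{id}}_{L^\infty(\M)}\ll\eps_n$, scale like $\eps_n^{-2}(\eps_n^{d\sind{1}-d\sind{12}}+\eps_n^{d\sind{2}-d\sind{12}})\,\norm{v}_\infty^2$, which vanishes whenever both codimensions strictly exceed two.

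When the codimension hypothesis fails I would modify $u_n$ on the annular layer $\{x\in V_n\cap\M\sind{2}\st\eps_n\leq\dist(x,\M\sind{12})<\sqrt{\eps_n}\}$ by a logarithmic (or, in codimensions $>2$, fundamental-solution-based) interpolant matching $u\sind{1}$ on $\M\sind{12}$, in analogy with the construction in \cref{sec:NL_limsup}; the extra Dirichlet cost is controlled by the $2$D capacity estimate \eqref{eq:aux_log} combined with \cref{lem:nonlocal_error} to transfer the bound to the nonlocal, and hence to the discrete, energy via $T_n$. In the codimension-one case with $d\sind{1}=d\sind{2}$, \cref{lem:density} makes $v$ globally Lipschitz across $\M\sind{12}$, so that the cross-term collapses to $O(\eps_n)$ via \cref{lem:nonlocal_error_T_map} applied after the $T_n$ push-forward and vanishes.

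The main obstacle I expect is the critical intermediate codimension $d\sind{2}-d\sind{12}=2$: in the $\eps_n$-tube around $\M\sind{12}$ the normalized degrees $\eps_n^{-d\sind{2}}\deg_{n,\eps_n}$ are inflated by contributions from the lower-dimensional manifold, so that neither \cref{lem:weights_converge} nor \cref{cor:uniform_cvgc2} gives a useful limit there. The logarithmic interpolant must therefore be written directly in terms of the random degrees rather than the deterministic limit density, and one has to verify that the modification region carries $\mu_n$-mass going to zero so that $u_n\to u$ in $TL^2$ is preserved, while the discrete Dirichlet cost in the annulus remains controlled uniformly in $n$ by the $2$D capacity estimate via the Cheeger--Colding comparison of \cref{lem:nonlocal_error}.
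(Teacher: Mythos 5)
Your proposal follows essentially the same route as the paper's proof: the same case split by codimension, the same degree-corrected quasi-constant recovery sequence $u_n = u\sqrt{\eps_n^{-d\sind{i}}\deg_{n,\eps_n}/(\alpha\sind{i}\beta_\eta\sind{i}\rho\sind{i})}$ reducing the diagonal terms to \cref{cor:Gamma-convergence_standard}, the same logarithmic interpolation layer on $\M\sind{2}$ written in terms of the random degrees for the critical codimension-two case (controlled via $T_n$, \cref{lem:nonlocal_error,lem:nonlocal_error_T_map} and the capacity/coarea estimate), and the same use of \cref{lem:density} in the codimension-one case — you even correctly anticipate the main difficulty the paper addresses. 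The only cosmetic deviations (the paper uses the plain constant sequence in the high-codimension case, and bounds the codimension-one cross term by a direct Lipschitz pair-counting estimate rather than \cref{lem:nonlocal_error_T_map}) do not change the argument.
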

\begin{proof}

The proof is divided into several cases based on the codimension of the intersection $\M\sind{12}$.
Recall that we assume that $d\sind{1}\leq d\sind{2}$.

\paragraph{Case 1, \texorpdfstring{$d\sind{1} \leq d\sind{2}$}{d1<=d2} and \texorpdfstring{$d\sind1-d\sind{12} > 2$}{d1-d12>2}.}
Since the energy $\E(u)$ is uncoupled in this case, one can assume without loss of generality that $u\vert_{\M\sind{i}}$ for $i\in\{1,2\}$ is smooth, as the general case follows by a standard density argument. 
                          Let us define
\[v\sind{i}
    = \frac{u}{\sqrt{\alpha\sind{i}\beta_\eta\sind{i}\rho\sind{i}}}
    \quad
    \text{on }\M\sind{i}\cap V_n.
\]
    We note
\begin{align}
    \label{eq:limsup_ineq_v}
    \lim_{n\to\infty}
    \left(
    \frac{n\sind{i}}{n}
    \right)^2
    G_{n\sind{i},\eps_n}(v\sind{i};\M\sind{i})
    =
    \left(\alpha\sind{i}\right)^2
    \mathcal{G}
    \left(
    v\sind{i};
    \M\sind{i}
    \right),
\end{align}
where these limits are known to exist almost surely by \labelcref{eq:alphas} as well as \cref{cor:Gamma-convergence_standard}, in the proof of which a constant recovery sequence is used.

We claim that the constant sequence $u\vert_{V_n}$ is a recovery sequence for $\E$.
To see this, note that for $A_{\eps} =  \M\sind{1}  \cup \{x \in \M\sind{2}\st  d(x, \M\sind{1})>\eps\}$ and $S_{\eps} = \M \setminus A_{\eps}$ we have 
\begin{align*}
   E_{n,\eps_n}(u) & = \frac{1}{\eps_n^2}
    \int_{\M}
    \int_{\M}
    \eta_{\eps_n}(\abs{x-y})
    \abs{
    \frac{v(x) \sqrt{\alpha\sind{i}\beta\sind{i}\rho\sind{i}(x)}}{\sqrt{\eta_{\eps_n}(\abs{x-\cdot})\star\mu_n}}
    -
    \frac{v(y) \sqrt{\alpha\sind{i}\beta\sind{i}\rho\sind{i}(y)}}{\sqrt{\eta_{\eps_n}(\abs{y-\cdot})\star\mu_n}}
    }^2
    \d\mu_n(x)\d\mu_n(y)
    \\
    &\leq 
    \frac{1}{\eps_n^2}
    \int_{A_{\eps_n}}
    \int_{A_{\eps_n}}
    \eta_{\eps_n}(\abs{x-y})
    \abs{
    \frac{v(x) \sqrt{\alpha\sind{i}\beta\sind{i}\rho\sind{i}(x)}}{\sqrt{\eta_{\eps_n}(\abs{x-\cdot})\star\mu_n}}
    -
    \frac{v(y) \sqrt{\alpha\sind{i}\beta\sind{i}\rho\sind{i}(y)}}{\sqrt{\eta_{\eps_n}(\abs{y-\cdot})\star\mu_n}}
    }^2
    \d\mu_n(x)\d\mu_n(y) \\
    & \phantom{\leq}  
      + \frac{2}{\eps_n^2}
    \int_{\M}
    \int_{S_{\eps_n}}
    \eta_{\eps_n}(\abs{x-y})
    \abs{
    \frac{v(x) \sqrt{\alpha\sind{i}\beta\sind{i}\rho\sind{i}(x)}}{\sqrt{\eta_{\eps_n}(\abs{x-\cdot})\star\mu_n}}
    -
    \frac{v(y) \sqrt{\alpha\sind{i}\beta\sind{i}\rho\sind{i}(y)}}{\sqrt{\eta_{\eps_n}(\abs{y-\cdot})\star\mu_n}}
    }^2
    \d\mu_n(x)\d\mu_n(y) \\
    & =: E^{far}_{n,\eps_n}(u) +E^{near}_{n,\eps_n}(u).
\end{align*}
Since, by \cref{lem:uniform_cvgc},
$ \chi_{A_{\eps_n}} \eps_n^{-d\sind{i}} \eta_{\eps_n}(\abs{x-\cdot})\star\mu_n -\chi_{A_{\eps_n}} \alpha\sind{i}\beta\sind{i}\rho\sind{i}(x)$ converges uniformly to zero, and also using \labelcref{eq:limsup_ineq_v}, we have that 
\[ \limsup_{n \to \infty} E^{far}_{n,\eps_n}(u) \leq \limsup_{n\to\infty} \sum_{i=1}^2 
\left(
\frac{n\sind{i}}{n}
\right)^2
G_{n\sind{i},\eps_n}(v\sind{i};\M\sind{i})\leq \sum_{i=1}^2  
\left(\alpha\sind{i}\right)^2
\mathcal{G}\left(v\sind{i};\M\sind{i}\right) = \E(u).
\]
So it remains to show that $\lim_{n\to \infty} E^{near}_{n,\eps_n}(u) = 0$.
By \cref{rem:degrees_lb}, on $\M\sind{i}$ it holds $\eps_n^{-d\sind{i}} \eta_{\eps_n}(\abs{x-\cdot})\star\mu \geq C>0$ for $n$ large enough. 

Note that by the first inequality of \cref{lem:degrees} we have $\eta_{\eps_n}(\abs{x-\cdot})\star\mu_n\restr S_{2\eps_n} \leq C \eps_n\sind{2}$ and, furthermore, that by a concentration inequality analogous to one of \cref{lem:degrees} we have that $\mu_n\sind{2}(S_{2\eps_n}) \leq C \eps_n^{d\sind{2} - d\sind{12}}$ and $\mu_n\sind{1}(\{x \in \M\sind{1} \st  d(x, \M\sind{2}) < \eps_n\}) \leq C \eps_n^{d\sind{2} - d\sind{12}}$. Thus, the reminder of the proof is practically identical to the estimate in the proof of limsup inequality in \cref{sec:NL_limsup}.

\paragraph{Case 2, \texorpdfstring{$d\sind{1} \leq d\sind{2}$}{d1<=d2} and \texorpdfstring{$d\sind{1}-d\sind{12} \leq 2 \leq d\sind{2}-d\sind{12}$}{d1-d12<=2<=d2-d12}.}

In this case the previous argument breaks down and the cross term does not vanish in the limit.  Thus we have to construct a non-trivial recovery sequence which makes sure that the interaction term between both manifolds converges to zero as $n\to\infty$.
As before, the energy $\E(u)$ is uncoupled and one can assume without loss of generality that the functions $u\sind{i}:=u\vert_{\M\sind{i}}$ for $i\in\{1,2\}$ are smooth and apply a diagonal argument to build a recovery sequence for general functions.

For $i\in\{1,2\}$, let us define the normalized Dirichlet energy $\E(\cdot;\M\sind{i})$ on $\M\sind{i}$ as
\begin{align*}
    \E(u;\M\sind{i}) := 
    \frac{\sigma_\eta\sind{i}}{\beta_\eta\sind{i}}
    \int_{\M\sind{i}}\abs{\nabla_{\M\sind{i}} \left( \frac{u\sind{i}}{\sqrt{\rho\sind{i}}} \right)\rho\sind{i}}^2 \d\vol\sind{i}.
\end{align*}
The recovery sequence which we shall construct is denoted by $u_n$ and for vertices lying in the lower-dimensional manifold $\M\sind{1}$ it is defined by
\begin{align} \label{eq:limsup_uM1}
    u_n(x) := u\sind{1}(x)
    \frac{\sqrt{\eps_n^{-d\sind{1}}\deg_{n,\eps_n}(x)}}{\sqrt{\alpha\sind{1}\beta_\eta\sind{1}\rho\sind{1}(x)}}
    ,\quad x\in V_n\cap \M\sind{1}.
\end{align}
  Note that from \cref{cor:uniform_cvgc2} it follows that $u_n$ converges uniformly to $u\sind{1}$ on $\M\sind{1}$.

On the higher-dimensional one, we need  a more subtle modification of the function $u\sind{2}$.
For this let us take $\eps_n>0$ so small that the geodesic projection $\pi : \M\sind{2} \to \M\sind{12}$ onto the intersection is uniquely defined in an $\eps_n$-neighborhood of $\M\sind{12}$. 
We abbreviate the distance of $x\in\M\sind{2}$ to $\M\sind{12}$ as $d(x):= \dist(x,\M\sind{12})$ and subdivide the higher-dimensional manifold $\M\sind{2}$ into a near, a mid, and a far region with respect to the distance to the intersection $\M\sind{12}$:
\begin{align*}
    \M\sind{2}_{\mathrm{near}} &:= \{x\in\M\sind{2}\st d(x)<C\eps_n\},
    \\
    \M\sind{2}_{\mathrm{mid}} &:= \{x\in\M\sind{2}\st C\eps_n\leq d(x)<C\sqrt{\eps_n}\},
    \\
    \M\sind{2}_{\mathrm{far}} &:= \{x\in\M\sind{2}\st d(x)>C\sqrt{\eps_n}\}.
\end{align*}
Here, the constant $C>0$ is chosen such that for $n\in\N$ large enough (and hence $\eps_n>0$ small enough) we have that $B(x,\eps)\cap\M\sind{1}=\emptyset$ for all $x\in\M\sind{2}\setminus\M\sind{2}_\mathrm{near}$ which is possible thanks to \cref{lem:angle}.

With the partitioning of $\M\sind{2}$ into a near, a middle, and a far region at hand, we can now define the recovery sequence as a logarithmic interpolation between the value of $u\sind{1}$ on the intersection $\M\sind{12}$ and $u\sind{2}$ in the middle region $\M\sind{2}_\mathrm{mid}$.
More precisely, it is defined as
\begin{align} \label{eq:limsup_uM2}
    u_n\sind{2}(x)
    :=
    \begin{dcases}
        u\sind{1}(\pi(x))\sqrt{\frac{\eps_n^{-d\sind{1}}\deg_{n,\eps_n}(x)}{\alpha\sind{1}\beta_\eta\sind{1}\rho\sind{1}(\pi(x))}},&\text{if }x\in\M\sind{2}_{\mathrm{near}},\\
        u\sind{2}(x)\sqrt{\frac{\eps_n^{-d\sind{2}}\deg_{n,\eps_n}(x)}{\alpha\sind{2}\beta_\eta\sind{2}\rho\sind{2}(x)}}
        +
        \frac{\log\sqrt{\eps_n} - \log d(x)}{\log\sqrt{\eps_n} - \log\eps_n}\times
        &
        \\
        \times\left(
        u\sind{1}(\pi(x))\sqrt{\frac{\eps_n^{-d\sind{1}}\deg_{n,\eps_n}(x)}{\alpha\sind{1}\beta_\eta\sind{1}\rho\sind{1}(\pi(x))}}
        -
        u\sind{2}(x)\sqrt{\frac{\eps_n^{-d\sind{2}}\deg_{n,\eps_n}(x)}{\alpha\sind{2}\beta_\eta\sind{2}\rho\sind{2}(x)}}
        \right),
        &\text{if }x\in\M\sind{2}_{\mathrm{mid}},\\
        u\sind{2}(x)
        \sqrt{\frac{\eps_n^{-d\sind{2}}\deg_{n,\eps_n}(x)}{\alpha\sind{2}\beta_\eta\sind{2}\rho\sind{2}(x)}},&\text{if } x\in\M\sind{2}_{\mathrm{far}},\\
    \end{dcases}
\end{align}
Our goal is to show $u_n\overset{TL^2}{\longrightarrow} u$ as well as the upper bound $\limsup_{n\to\infty}E_{n,\eps_n}(u_n)\leq\E(u)$ which would make $u_n$ a recovery sequence.
First note that $u_n\to u$ in the $TL^2$-sense follows from similar arguments as \cref{lem:weights_converge} using $d\sind{1}=d\sind{2}$, the Lipschitz continuity of $u\sind{i}$ for $i\in\{1,2\}$, and \cref{ass:densities}. 
           
For proving the desired upper bound, we note that we can split the energy $E_{n,\eps_n}(u_n)$ into a sum over vertices in $\M\sind{1}\times\M\sind{1}$, two over $\M\sind{1}\times\M\sind{2}$, and one over $\M\sind{2}\times\M\sind{2}$ which we now estimate. 
For the sum over $\M\sind{1}\times\M\sind{1}$ we observe that by the definition of $u_n$ it holds that
\begin{align}
    \nonumber
    &
    \limsup_{n\to\infty}
    \frac{1}{n^2\eps_n^2}
    \sum_{x,y\in V_n\cap\M\sind{1}}
    \eta_{\eps_n}(\abs{x-y})
    \abs{\frac{u_n(x)}{\sqrt{\deg_{n,\eps_n}(x)}}-\frac{u_n(y)}{\sqrt{\deg_{n,\eps_n}(y)}}}^2
    \\
    \nonumber
    &=
    \frac{1}{\alpha\sind{1}\beta_\eta\sind{1}}
    \limsup_{n\to\infty}
    \left(\frac{n\sind{1}}{n}\right)^2
    \frac{1}{(n\sind{1})^2\eps_n^2}
    \sum_{x,y\in V_n\cap\M\sind{1}}
    \eta_{\eps_n}(\abs{x-y})
    \abs{\frac{u\sind{1}(x)}{\sqrt{\rho\sind{1}(x)}}-\frac{u\sind{1}(y)}{\sqrt{\rho\sind{1}(y)}}}^2
    \\
    \label{eq:M1_M1}
    &=
    \alpha\sind{1}
    \E(u\sind{1};\M\sind{1})
\end{align}
since the Lipschitz function $\frac{u\sind{1}}{\sqrt{\rho\sind{1}}}$ is a recovery sequence for $\E(u\sind{1};\M\sind{1})$.

We continue with the sum over vertices $\M\sind{1}\times\M\sind{2}$. We note that $u_n\sind{2}$ is defined in such a way that these cross-interaction terms are small. 
\begin{align}
    \nonumber 
    &
    \frac{1}{n^2\eps_n^2}
    \sum_{\substack{y\in V_n\cap\M\sind{1}\\
    x\in V_n\cap\M\sind{2}}}
    \eta_{\eps_n}(\abs{x-y})
    \abs{\frac{u_n(x)}{\sqrt{\deg_{n,\eps_n}(x)}}-\frac{u_n(y)}{\sqrt{\deg_{n,\eps_n}(y)}}}^2
    \\
    \nonumber 
    &=
    \frac{1}{n^2\eps_n^{2+d\sind{1}}}
    \frac{1}{\alpha\sind{1}\beta_\eta\sind{1}}
    \sum_{\substack{y\in V_n\cap\M\sind{1}\\
    x\in V_n\cap\M\sind{2}}}
    \eta_{\eps_n}(\abs{x-y})
    \abs{\frac{u\sind{1}(\pi(x))}{\sqrt{\rho\sind{1}(\pi(x))}}-\frac{u\sind{1}(y)}{\sqrt{\rho\sind{1}(y)}}}^2
    \\
    \nonumber 
    &\leq 
    \frac{C}{\eps_n^{2+d\sind{1}}}
    \int_{\M\sind{1}}
    \int_{\M\sind{2}}    \eta_{\eps_n}\left(\abs{T_n\sind{2}(x)-T_n\sind{1}(y)}\right)
    \abs{\frac{u\sind{1}(\pi(T_n\sind{2}(x)))}{\sqrt{\rho\sind{1}(\pi(T_n\sind{2}(x)))}}-\frac{u\sind{1}(T_n\sind{1}(y))}{\sqrt{\rho\sind{1}(T_n\sind{1}(y))}}}^2 
    \\\nonumber
    & \phantom{C
    \frac{1}{\eps_n^{2+d\sind{1}}}
    \int_{\M\sind{1}}
    \int_{\M\sind{2}}
    \eta_{\eps_n}\left(\abs{T_n\sind{2}(x)-T_n\sind{1}(y)}\right)} 
    \d\mu\sind{2}(x)\d\mu\sind{1}(y)
    \\
    \nonumber 
    &\leq     
    C\frac{\tilde\eps_n^2}{\eps_n^{2+d\sind{1}}}
    \int_{\M\sind{1}}
    \int_{\M\sind{2}}
    \eta_{\tilde\eps_n}(\abs{x-y})
    \d\mu\sind{2}(x)\d\mu\sind{1}(y)
    \\
    &=
    \label{eq:M1_M2}
    O\left(\eps_n^{-d\sind{1}}\tilde\eps_n^{d\sind{1}+d\sind{2}-d\sind{12}}\right)
    =
    O(\eps_n^2)
    \quad
    \te{as }
    n\to\infty.
\end{align} 
where we used that ${u\sind{1}}/{\sqrt{\rho\sind{1}}}$ is Lipschitz continuous and that the codimension $d\sind{2}-d\sind{12}$ is at least two.
As in Case 1, we assumed without loss of generality that $\eta(t)=1_{[0,1]}(t)$, used the quantity $\tilde\eps_n$ which satisfies $\tilde\eps_n>\eps_n$ and $\lim_{n\to\infty}\frac{\eps_n}{\tilde\eps_n}=1$, and utilized the same transport maps $T_n\sind{i}:\M\sind{i}\to\M\sind{i}$.
Furthermore, the constant $C$ changes its value betweem the lines.

It remains to estimate the sum over $\M\sind{2}\times\M\sind{2}$ for which we introduce 
        the following abbreviation
\begin{align*}
    S(n,\eps_n) := 
    \left\{
    x \in V_n\cap\M\sind{2}\st x\in\M\sind{2}_\mathrm{near}\cup\M\sind{2}_\mathrm{mid}\text{ or }\dist(x,\M\sind{2}_\mathrm{mid})\leq\eps_n
    \right\}
\end{align*} 
for the set of graph points that fall into the near or mid region, or fall into the far region but have Euclidean distance at most $\eps$ to the mid region.
In particular we have that
\begin{align*}
    \left(V_n\cap\M\sind{2}\right)\setminus S(n,\eps_n)
    =
    \left\{x \in V_n\cap\M\sind{2}\st x\in\M\sind{2}_\mathrm{far}\text{ and }
    \dist(x,\M\sind{2}_\mathrm{mid})>\eps_n
     \right\},
\end{align*}
so points in that set do not interact with points in the near or the middle region.
Using this definition and the fact that $u_n=u\sind{2}\sqrt{\frac{\eps_n^{-d\sind{2}}\deg_{n,\eps_n}}{\alpha\sind{2}\beta\sind{2}\rho\sind{2}}}$ on $V_n\cap\M\sind{2}_\mathrm{far}$ we split the sum into  the terms that account for interaction only in the far region, that accounts for the bulk of the energy on $\M\sind{2}$, and a term that accounts for the transition layers and their interactions within $\M\sind{2}$, which we show vanish in the limit
\begin{align}
    \nonumber
     \frac{1}{n^2\eps_n^2} &
    \sum_{x,y\in V_n\cap\M\sind{2}}
    \eta_{\eps_n}(\abs{x-y})
    \abs{\frac{u_n(x)}{\sqrt{\deg_{n,\eps_n}(x)}}-\frac{u_n(y)}{\sqrt{\deg_{n,\eps_n}(y)}}}^2
    \\
    \nonumber
    &=
    \frac{1}{n^2\eps_n^2\alpha\sind{2}\beta_\eta\sind{2}}
    \sum_{\substack{x \in V_n\cap\M\sind{2}_\mathrm{far}\\
    x\notin S(n,\eps_n)}}
    \sum_{y\in V_n\cap\M\sind{2}}
    \eta_{\eps_n}(\abs{x-y})
    \abs{\frac{u\sind{2}(x)}{\sqrt{\rho\sind{2}(x)}}-\frac{u\sind{2}(y)}{\sqrt{\rho\sind{2}(y)}}}^2
    \\
    \nonumber
    &\qquad
    +
    \frac{1}{n^2\eps_n^2}
    \sum_{x\in S(n,\eps_n)}
    \sum_{y\in V_n\cap\M\sind{2}}
    \eta_{\eps_n}(\abs{x-y})
    \abs{\tilde u_n(x)-\tilde u_n(y)}^2
    \\
    \nonumber
    &\leq 
    \frac{1}{n^2\eps_n^2\alpha\sind{2}\beta_\eta\sind{2}}
    \sum_{x,y \in V_n\cap\M\sind{2}}
    \eta_{\eps_n}(\abs{x-y})
    \abs{\frac{u\sind{2}(x)}{\sqrt{\rho\sind{2}(x)}}-\frac{u\sind{2}(y)}{\sqrt{\rho\sind{2}(y)}}}^2
    \\
    \label{eq:M2_M2_part_1}
    &\qquad
    +
    \frac{1}{n^2\eps_n^2}
    \sum_{x\in S(n,\eps_n)}
    \sum_{y\in V_n\cap\M\sind{2}}
    \eta_{\eps_n}(\abs{x-y})
    \abs{\tilde u_n(x)-\tilde u_n(y)}^2
\end{align}
where the function $\tilde u_n$ is the recovery sequence divided by the square root of the degrees and equals
\begin{align}\label{eq:u_n_tilde}
    \tilde u_n(x) = 
    \begin{dcases}
        \frac{u\sind{1}(\pi(x))}{\sqrt{\eps_n^{d\sind{1}}\alpha\sind{1}\beta\sind{1}\rho\sind{1}(\pi(x))}},
        \quad
        &x\in\M\sind{2}_\mathrm{near},
        \\
        \frac{u\sind{2}(x)}{\sqrt{\eps_n^{d\sind{2}}\alpha\sind{2}\beta_\eta\sind{2}\rho\sind{2}(x)}}
        +
        \frac{\log\sqrt{\eps_n}-\log d(x)}{\log\sqrt{\eps_n}-\log\eps_n}\times
        &
        \\
        \times\left(\frac{u\sind{1}(\pi(x))}{\sqrt{\eps_n^{d\sind{1}}\alpha\sind{1}\beta\sind{1}\rho\sind{1}(\pi(x))}} - \frac{u\sind{2}(x)}{\sqrt{\eps_n^{d\sind{2}}\alpha\sind{2}\beta\sind{2}\rho\sind{2}(x)}}\right),
        \quad
        &x\in\M\sind{2}_{\mathrm{mid}},
        \\
        \frac{u\sind{2}(x)}{\sqrt{\eps_n^{d\sind{2}}\alpha\sind{2}\beta\sind{2}\rho\sind{2}(x)}},
        \quad
        &x\in\M\sind{2}_\mathrm{far}.
    \end{dcases}
\end{align}
Since the function $\frac{u\sind{2}}{\sqrt{\rho\sind{2}}}$ is Lipschitz, similar to before we have for the first term in \labelcref{eq:M2_M2_part_1} that
\begin{align}\label{eq:M2_M2_part_2}
    \limsup_{n\to\infty}
    \frac{1}{n^2\eps_n^2\beta_\eta\sind{2}}
    \sum_{x,y \in V_n\cap\M\sind{2}}
    \eta_{\eps_n}(\abs{x-y})
    \abs{\frac{u\sind{2}(x)}{\sqrt{\rho\sind{2}(x)}}-\frac{u\sind{2}(y)}{\sqrt{\rho\sind{2}(y)}}}^2
    \leq 
    \alpha\sind{2}
    \E(u\sind{2};\M\sind{2}).
\end{align}
We claim that the second term in \labelcref{eq:M2_M2_part_1} goes to zero as $n\to\infty$ and we estimate it as follows
\begin{align*}
    &
    \frac{1}{n^2\eps_n^2}
    \sum_{x\in S(n,\eps_n)}
    \sum_{y\in V_n\cap\M\sind{2}}
    \eta_{\eps_n}(\abs{x-y})
    \abs{\tilde u_n(x)-\tilde u_n(y)}^2
    \\
    &=
    \left(\frac{n\sind{2}}{n}\right)^{\!2}\!
    \frac{1}{\eps_n^2}
    \int_{D(n,\eps_n)}
    \int_{\M\sind{2}}
    \eta_{\eps_n}\left(\abs{T_n\sind{2}(x)-T_n\sind{2}(y)}\right)
    \abs{\tilde u_n(T_n\sind{2}(x))-\tilde u_n(T_n\sind{2}(y))}^2
    \d\mu\sind{2}(x)
    \d\mu\sind{2}(y)
    \\
    &\leq 
    \left(\frac{n\sind{2}}{n}\right)^{\!2}\!
    \frac{1}{\eps_n^2}
    \int_{D(n,\eps_n)}
    \int_{\M\sind{2}}
    \eta_{\tilde\eps_n}\left(\abs{x-y}\right)
    \abs{\tilde u_n\circ T_n\sind{2}(x)-\tilde u_n\circ T_n\sind{2}(y)}^2
    \d\mu\sind{2}(x)
    \d\mu\sind{2}(y),
\end{align*}
where $D(n,\eps_n)=\{x\in\M\sind{2}\st T_n\sind{2}(x)\in S(n,\eps_n)\}$.
We claim that in the limit $n\to\infty$ we can replace $\tilde u_n\circ T_n\sind{2}$ by $\tilde u_n$.
To show this, similarly as in \cite[p.267]{garcia2019variational}, we define
\begin{align*}
    A_n &:= \frac{1}{\eps_n^2}
    \int_{D(n,\eps_n)}
    \int_{\M\sind{2}}
    \eta_{\tilde\eps_n}\left(\abs{x-y}\right)
    \abs{\tilde u_n(x)-\tilde u_n(y)}^2
    \d\mu\sind{2}(x)
    \d\mu\sind{2}(y)
    \\
    B_n &:= \frac{1}{\eps_n^2}
    \int_{D(n,\eps_n)}
    \int_{\M\sind{2}}
    \eta_{\tilde\eps_n}\left(\abs{x-y}\right)
    \abs{\tilde u_n\circ T_n\sind{2}(x)-\tilde u_n\circ T_n\sind{2}(y)}^2
    \d\mu\sind{2}(x)
    \d\mu\sind{2}(y).
\end{align*}
Applying \cref{lem:nonlocal_error_T_map} we obtain that
\begin{align}
    \notag
    &
    (\sqrt{A_n}-\sqrt{B_n})^2
    \\
    \notag 
    &\leq 
    \frac{1}{\eps_n^2}
    \int_{D(n,\eps_n)}
    \int_{\M\sind{2}}
    \eta_{\tilde\eps_n}\left(\abs{x-y}\right)
    \abs{\tilde u_n\circ T_n\sind{2}(x)-\tilde u_n(x)+\tilde u_n(y)-\tilde u_n\circ T_n\sind{2}(y)}^2
    \d\mu\sind{2}(x)
    \d\mu\sind{2}(y)
    \\
    \notag 
    &\leq 
    \frac{2}{\eps_n^2}
    \int_{D(n,\eps_n)}
    \int_{\M\sind{2}}
    \eta_{\tilde\eps_n}\left(\abs{x-y}\right)
    \abs{\tilde u_n\circ T_n\sind{2}(x)-\tilde u_n(x)}^2
    +
    \abs{\tilde u_n(y)-\tilde u_n\circ T_n\sind{2}(y)}^2
    \d\mu\sind{2}(x)
    \d\mu\sind{2}(y)
    \\
    \label{eq:sA-sB}
    &\lesssim
    \left(\frac{\delta_n}{\eps_n}\right)^2
    \tilde\eps_n^{d\sind{2}}
    \int_{D(n,\eps_n)^{2\eps_n}}
    \sup_{B_\M(x,2\delta_n)}
    \abs{
    \nabla_{\M\sind{2}}\tilde u_n}^2
    \d\vol\sind{2}(x),
\end{align}
where $\delta_n := \max_{i\in\{1,2\}}\norm{T_n\sind{i}-\operatorname{id}}_{L^\infty(\M\sind{i})}$.
Applying \cref{lem:nonlocal_error} we also get that
\begin{align}\label{eq:A}
    A_n \lesssim 
    \eps_n^{d\sind{2}}
    \int_{D(n,\eps_n)^{\eps_n}}
    \abs{\nabla_{\M\sind{2}}\tilde u_n}^2
    \d\vol\sind{2}.
\end{align}
Let us assume for now that we already know that $(\sqrt{A_n}-\sqrt{B_n})^2$ in \labelcref{eq:sA-sB} goes to zero as $n\to\infty$ and that also $A_n$ goes to zero by \labelcref{eq:A}. 
This implies that $\abs{A_n-B_n}\to 0$ as $n\to\infty$ and, by applying \cref{lem:nonlocal_error} again, we can estimate the limies superior of the second term in \labelcref{eq:M2_M2_part_1} as follows:
\begin{align*}
     \limsup_{n\to\infty} &
    \frac{1}{n^2\eps_n^2}
    \sum_{x\in S(n,\eps_n)}
    \sum_{y\in V_n\cap\M\sind{2}}
    \eta_{\eps_n}(\abs{x-y})
    \abs{\tilde u_n(x)-\tilde u_n(y)}^2
    \\
    &\leq 
    \left(\alpha\sind{2}\right)^2
    \limsup_{n\to\infty}
    A_n 
    \leq 
    \left(\alpha\sind{2}\right)^2
    \limsup_{n\to\infty}
    \eps_n^{d\sind{2}}
    \int_{D(n,\eps_n)^{\eps_n}}
    \abs{\nabla_{\M\sind{2}}\tilde u_n}^2
    \d\vol\sind{2}
    = 0
\end{align*}
and therefore as a consequence of \labelcref{eq:M2_M2_part_1,eq:M2_M2_part_2} we get
\begin{align*}
    &\phantom{{}={}}
    \limsup_{n\to\infty}
    \frac{1}{n^2\eps_n^2}
    \sum_{x,y\in V_n\cap\M\sind{2}}
    \eta_{\eps_n}(\abs{x-y})
    \abs{\frac{u_n(x)}{\sqrt{\deg_{n,\eps_n}(x)}}-\frac{u_n(y)}{\sqrt{\deg_{n,\eps_n}(y)}}}^2
    \leq 
    \alpha\sind{2}
    \E(u\sind{2};\M\sind{2}).
\end{align*}
In combination with \labelcref{eq:M1_M1,eq:M1_M2} and the definition of $\E$ in \labelcref{eq:limit_fctl} this would conclude the proof.

For showing that the right hand sides in \labelcref{eq:sA-sB,eq:A} indeed converge to zero, and taking into account \cref{ass:epsilon,rem:scaling_condition}, it suffices to prove that
\begin{align*}
    \lim_{n\to\infty}\eps_n^{d\sind{2}}\int_{D(n,\eps_n)^{2\eps_n}}\sup_{B_\M(x,2\delta_n)}\abs{\nabla_{\M\sind{2}}\tilde u_n}^2\d\vol\sind{2}(x)
    =
    0.
\end{align*}
From the definition of $\tilde u_n$ in \labelcref{eq:u_n_tilde} it is immediately clear that
\begin{align*}
    \lim_{n\to\infty}\eps_n^{d\sind{2}}\int_{A}\sup_{B_\M(x,\delta_n)}\abs{\nabla_{\M\sind{2}}\tilde u_n}^2\d\vol\sind{2}(x)
    =
    0
\end{align*}
if $A\subset\{x\in\M_\mathrm{near}\sind{2}\st\dist(x,\M_\mathrm{mid}\sind{2})>2\delta_n\}$ or if $A\subset\{x\in\M_\mathrm{far}\sind{2}\st\dist(x,\M_\mathrm{mid}\sind{2})>2\delta_n\}$.
Therefore, it suffices to compute the integral over the following neighborhood of the middle region $A:=\{x\in\M\sind{2}\st \dist(x,\M_\mathrm{mid}\sind{2})<2\delta_n\}$.
Taking into account the definition of $\tilde u_n$ in \labelcref{eq:u_n_tilde} and using the product rule we see that on $\M\sind{2}_\mathrm{mid}$ the gradient $\nabla_{\M\sind{2}}\tilde u_n$ consists of three terms: The first one is the gradient of $\frac{u\sind{1}\circ\pi}{\sqrt{\eps_n^{d\sind{1}}\alpha\sind{1}\beta\sind{1}\rho\sind{1}\circ\rho}}$, the second one is the the gradient of the logarithmic interpolation times the round brackets, and the third one is the logarithmic interpolation factor (which lies between $0$ and $1$) times the gradient of the round brackets.
We note that the terms whose gradients are bounded can be neglected since the size of the domain vanishes as $n \to \infty$ and thus their contribution vanishes in the limit.
Since the functions $u\sind{i}/\sqrt{\rho\sind{i}}$ for $i=1,2$ are Lipschitz continuous
it hence suffices to consider the integral over the second of these terms. 
Using that 
\begin{align*}
    \abs{\nabla_{\M\sind{2}}
    \frac{\log\sqrt{\eps_n}-\log d(x)}{\log\sqrt{\eps_n}-\log\eps_n}}
    =
    \frac{2}{\abs{\log\eps_n}d(x)}
\end{align*}
we have
\begin{align*}
    \sup_{B_\M(x,2\delta_n)}
    \abs{\nabla_{\M\sind{2}}
    \frac{\log\sqrt{\eps_n}-\log d(x)}{\log\sqrt{\eps_n}-\log\eps_n}}^2
    =
    \frac{4}{\abs{\log\eps_n}^2(d(x)-2\delta_n)^2}.
\end{align*}
Hence, using also the coarea formula we have
\begin{align*}
     & \eps_n^{d\sind{2}} 
    \left(\norm{\frac{u\sind{1}\circ\pi}{\sqrt{\eps^{d\sind{1}}\alpha\sind{1}\beta\sind{1}\rho\sind{1}\circ\pi}}}_\infty^2
    +
    \norm{\frac{u\sind{2}}{\sqrt{\eps^{d\sind{2}}\alpha\sind{2}\beta\sind{2}\rho\sind{2}}}}_\infty^2
    \right) \times \\
    & \times 
    \int_A
    \sup_{B_\M(x,2\delta_n)}
    \abs{\nabla_{\M\sind{2}}
    \frac{\log\sqrt{\eps_n}-\log d(x)}{\log\sqrt{\eps_n}-\log\eps_n}}^2
    \d\vol\sind{2}(x)
    \\
    & \qquad\lesssim
    \frac{1}{\abs{\log\eps_n}^2}
    \int_A
    \frac{1}{(d(x)-2\delta_n)^2}
    \d\vol\sind{2}(x)
    \\
    &\qquad\lesssim
    \frac{1}{\abs{\log\eps_n}^2}
    \int_{C\eps_n-2\delta_n}^{C\sqrt{\eps_n}+2\delta_n}
    \mathcal H^{d\sind{2}-1}\left(\{y\in\M\sind{2}\st d(y)=t-2\delta_n\}\right)
    \frac{1}{(t-2\delta_n)^2}
    \d t
    \\
    &\qquad\lesssim
    \frac{1}{\abs{\log\eps_n}^2}
    \int_{C\eps_n-4\delta_n}^{C\sqrt{\eps_n}}
    \mathcal H^{d\sind{2}-1}\left(\{y\in\M\sind{2}\st d(y)=s\}\right)
    \frac{1}{s^2}
    \d s
    \\
    &\qquad\lesssim
    \frac{1}{\abs{\log\eps_n}^2}
    \int_{C\eps_n-4\delta_n}^{C\sqrt{\eps_n}}
    s^{d\sind{2}-d\sind{12}-3}
    \d s
    =
    O\left(\frac{1}{\abs{\log\eps_n}}\right)
\end{align*}
where we used $d\sind{2}-d\sind{12}\geq 2$ and inferred $\lim_{n\to\infty}\frac{\delta_n}{\eps_n}=0$ from \cref{ass:epsilon,rem:scaling_condition}.

\paragraph{Case 3, \texorpdfstring{$d\sind{1} = d\sind{2} =:d$}{d1=d2=:d} and \texorpdfstring{$d-d\sind{12} =1$}{d-d12=1}}
                                   
Using \cref{lem:trace} as in the proof of the nonlocal limsup inequality for \cref{thm:Gamma_NL} in \cref{sec:NL_limsup}, we can assume that $v:=u/\sqrt{\rho}$ is Lipschitz continuous on $\M$.
Similar to the argument there, we consider the sequence
\begin{align*}
    u_n(x) := u(x)\sqrt{\frac{\eps_n^{-d}\deg_{n,\eps_n}(x)}{\alpha\sind{i}\beta_\eta\rho\sind{i}(x)}},
    \qquad
    x\in\M\sind{i}\cap V_n,
\end{align*}
where we dropped the index $i$ on $\beta_\eta$ since $d\sind{1}=d\sind{2}=d$.
Again, the idea is that the square-root term converges to one and its numerator cancels once plugged into the normalized graph Dirichlet energy $E_{n,\eps_n}$.

Using the usual definition $v:=\frac{u}{\sqrt{\alpha\sind{i}\beta_\eta\rho\sind{i}}}$ we have
\begin{align*}
    E_{n,\eps_n}(u_n)
    &=
    \sum_{i=1}^2
    \left(\frac{n\sind{i}}{n}\right)^2
    G_{n,\eps_n}
    \left(
    v   
    ;\M\sind{i}\right)
    +
    \frac{2}{n^2\eps_n^{2+d}}
    \sum_{\substack{x\in V_n\cap\M\sind{1}\\y\in V_n\cap\M\sind{2}}}
    \eta_{\eps_n}(\abs{x-y})
    \abs{v(x)-v(y)}^2.
\end{align*}
Note that $v$ is globally Lipschitz on $\M$. 
In particular, by \cref{cor:Gamma-convergence_standard} (in the proof of which a constant recovery is used for Lipschitz functions) and the definition of $\E$ in \labelcref{eq:limit_fctl} we have almost surely
\begin{align*}
    \limsup_{n\to\infty}\sum_{i=1}^2
    \left(\frac{n\sind{i}}{n}\right)^2
    G_{\eps_n}
    \left(
    v   
    ;\M\sind{i}\right)
    =
    \sum_{i=1}^2
    \left(\alpha\sind{i}\right)^2    
    \mathcal G
    \left(
    v   
    ;\M\sind{i}\right)
    =
    \E(u).
\end{align*}
It remains to show that the cross term in the above energy decomposition of $E_{n,\eps_n}(u_n)$ tends to zero for which we shall use Lipschitzness of $v$:
\begin{align*}
    &\phantom{{}={}}
    \limsup_{n\to\infty}
    \frac{2}{n^2\eps_n^{2+d}}
    \sum_{\substack{x\in V_n\cap\M\sind{1}\\y\in V_n\cap\M\sind{2}}}
    \eta_{\eps_n}(\abs{x-y})
    \abs{v(x)-v(y)}^2
    \\
    &\leq 
    \limsup_{n\to\infty}
    \frac{\Lip(v)^2}{n^2\eps_n^{d}}
    \sum_{\substack{x\in V_n\cap\M\sind{1}\\y\in V_n\cap\M\sind{2}}}
    \eta_{\eps_n}(\abs{x-y})
    \\
    &=
    \limsup_{n\to\infty}
    \frac{n\sind{1}n\sind{2}}{n^2}
    \frac{\Lip(v)^2}{\eps_n^{d}}
    \int_{\M\sind{1}}
    \int_{\M\sind{2}}
    \eta_{\eps_n}\left(\abs{T_n\sind{1}(x)-T_n\sind{2}(y)}\right)
    \d\mu\sind{2}(x)
    \d\mu\sind{1}(y)
    \\
    &\leq 
    \limsup_{n\to\infty}
    \alpha\sind{1}
    \alpha\sind{2}
    \frac{\Lip(v)^2}{\tilde\eps_n^{d}}
    \int_{\M\sind{1}}
    \int_{\M\sind{2}}
    \eta_{\tilde\eps_n}\left(\abs{x-y}\right)
    \d\mu\sind{2}(x)
    \d\mu\sind{1}(y)
    \\
    &\leq 
    \limsup_{n\to\infty}
    \frac{C\Lip(v)^2}{\tilde\eps_n^{d}}
    \tilde\eps_n^{d-d\sind{12}}\tilde\eps_n^d = 0,
\end{align*}
where $C>0$ is a constant and we used that $d-d\sind{12}=1$.

\paragraph{Case 4, \texorpdfstring{$d\sind{1}-d\sind{12}<2$}{d1-d12<2} and \texorpdfstring{$d\sind{2}-d\sind{12}<2$}{d2-d12<2}}

We have the standing assumption that $d\sind{12}<d\sind{1}$ which in this case is only possible if $d\sind{12}=0$ and $d\sind{1}=1$.
Then, however, we also have that $d\sind{2}=1=d\sind{1}$ which takes us back to Case 3.
    
\end{proof}

\section*{Acknowledgments}

The authors would like to thank Simone
Di Marino and for enlightening discussions, Tim Laux for pointing them to the Cheeger--Colding segment inequality, and Anton Ullrich for careful reading of the manuscript and valuable comments. 
This work was initiated while the authors were visiting the Simons Institute for the Theory of Computing to participate in the program “Geometric Methods in Optimization and Sampling” during the
Fall of 2021, and they are very grateful for the hospitality of the institute.
Part of this work was done while LB was affiliated with the Hausdorff Center for Mathematics at the University of Bonn. 
LB acknowledges funding by the German Ministry of Science and Technology (BMBF) under grant agreement No. 01IS24072A (COMFORT) and by the Deutsche Forschungsgemeinschaft (DFG, German Research Foundation) – project number 544579844 (GeoMAR).  DS is grateful to NSF for support via NSF RTG grant 2342349 and NSF grant DMS-2511684. The authors are grateful to CMU's Center for Nonlinear Analysis for its support.

\printbibliography[heading=bibintoc]
 \begin{appendix}
\section{Appendix}
\subsection{The principal angle}
\label{sec:appendix_angle}

The positivity of the principal angle in \labelcref{eq:angle} implies the following lemma which we need for some of our proofs.
\begin{lemma}\label{lem:angle}
   If $\theta>0$, where $\theta$ is defined in \labelcref{eq:angle}, then there exist constants $C, \eps_0>0$ such that for all $0<\eps<\eps_0$ whenever $x\in\M\sind{i}$ with $\dist(x,\M\sind{12})>C\eps$, then $B(x,\eps)\cap\M\sind{j}=\emptyset$ for $i\neq j$.  
\end{lemma}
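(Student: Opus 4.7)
The plan is to argue by contradiction via a compactness and blow-up analysis at a limiting intersection point. Without loss of generality take $i=1$, $j=2$. If the conclusion fails, then for every $n\in\N$ we can find $\eps_n<1/n$, $x_n\in\M\sind{1}$, and $y_n\in\M\sind{2}$ with $|x_n-y_n|<\eps_n$ and $r_n:=\dist(x_n,\M\sind{12})>n\eps_n$, so in particular $\eps_n/r_n<1/n$. Choose $z_n\in\M\sind{12}$ realizing the distance $r_n$; for $n$ large, the tubular neighborhood theorem for the smooth submanifold $\M\sind{12}$ ensures $z_n$ is unique and $(x_n-z_n)\perp T_{z_n}\M\sind{12}$. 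By compactness of $\M$, along a subsequence $x_n,y_n\to z_\ast\in\M\sind{1}\cap\M\sind{2}=\M\sind{12}$, forcing $r_n\to 0$ and $z_n\to z_\ast$.

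The key step is the blow-up: set $u_n:=(x_n-z_n)/r_n$ and $v_n:=(y_n-z_n)/r_n$. Passing to a further subsequence, $u_n\to w$ with $|w|=1$. Since $x_n,z_n\in\M\sind{1}$ both converge to $z_\ast\in\M\sind{1}$ and $|x_n-z_n|=r_n\to 0$, the standard tangent-space approximation for smooth submanifolds yields $w\in T_{z_\ast}\M\sind{1}$; and the orthogonality $(x_n-z_n)\perp T_{z_n}\M\sind{12}$ passes to the limit by continuity of the tangent-space assignment, giving $w\perp T_{z_\ast}\M\sind{12}$. For $v_n$, we have $|u_n-v_n|\leq\eps_n/r_n\to 0$, hence $v_n\to w$ as well. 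On the other hand, $y_n,z_n\in\M\sind{2}$ both converge to $z_\ast\in\M\sind{2}$, and the triangle inequality together with $|x_n-y_n|<\eps_n$ yields $\bigl|\,|y_n-z_n|-r_n\,\bigr|<\eps_n$, so $|y_n-z_n|/r_n\to 1$. Writing $v_n=(|y_n-z_n|/r_n)\cdot(y_n-z_n)/|y_n-z_n|$ and applying the same tangent-space approximation along $\M\sind{2}$, we obtain $w\in T_{z_\ast}\M\sind{2}$.

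Finally, the unit vector $w$ lies simultaneously in $T_{z_\ast}\M\sind{1}\cap(T_{z_\ast}\M\sind{12})^\perp$ and in $T_{z_\ast}\M\sind{2}\cap(T_{z_\ast}\M\sind{12})^\perp$, so the pair $(u,v)=(w,w)$ is admissible in the definition \labelcref{eq:angle} at $z_\ast$ and yields $\angle(w,w)=0$, contradicting $\theta>0$. The main technical obstacle is the blow-up step: one has to verify carefully that $v_n$ and $u_n$ share the same limit and that this common unit vector lies in both tangent spaces. Both rely crucially on the vanishing ratio $\eps_n/r_n\to 0$, which is exactly what the hypothesis $r_n>n\eps_n$ provides; once this is in hand, the positivity of the principal angle furnishes the contradiction immediately.
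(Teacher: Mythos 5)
Your argument is correct, but it runs differently from the paper's. The paper also argues by contradiction with sequences $x_n\in\M\sind{1}$, $y_n\in\M\sind{2}$ and the nearest point $z_n\in\M\sind{12}$, but it works quantitatively at $z_n$: fixing $C=2/\sin\theta$ from the start, it bounds the chord angle $\angle(x_n-z_n,y_n-z_n)\leq\theta/2$ via the law of sines, and then uses the angle triangle inequality together with the fact that chord directions differ from the geodesic (log) directions by less than $\theta/8$ for large $n$, contradicting $\theta>0$. Your route instead rescales by $r_n=\dist(x_n,\M\sind{12})$ and extracts a blow-up limit $w$, which is a soft compactness argument: it yields no explicit constant $C$ (the paper's proof produces $C=2/\sin\theta$), but it is arguably cleaner on the one delicate point, namely admissibility of the competitor vectors in the minimum defining $\theta$ in \labelcref{eq:angle} — your limit $w$ is \emph{exactly} orthogonal to $T_{z_\ast}\M\sind{12}$ by the first-order optimality of $z_n$, whereas the paper compares log-map directions whose orthogonality to the intersection is only approximate. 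Two small remarks: to conclude that $(w,w)$ is admissible you should say explicitly that $T_{z_\ast}\M\sind{1}\cap T_{z_\ast}\M\sind{2}=T_{z_\ast}\M\sind{12}$, which is precisely the nondegeneracy assumption of \cref{sec:unions} (the dimension count forces equality, since the inclusion $\supseteq$ is automatic); and the appeal to the tubular neighborhood theorem for uniqueness of $z_n$ is both unnecessary (orthogonality $(x_n-z_n)\perp T_{z_n}\M\sind{12}$ holds for any minimizer) and slightly out of order, since at that stage you have not yet shown $r_n\to 0$ — harmless, but better dropped.
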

\begin{proof}
    Take $C = 2/\sin \theta$ and consider $\eps_0$ to be smaller than the injectivity radii of both manifolds.
    If the claim does not hold, then there exist sequences $x_n \in \M\sind{1}$ and $y_n \in \M\sind{2}$ such that $\dist(x_n, \M\sind{12}) \to 0$ and $\abs{x_n- y_n}<\tfrac{\sin\theta}{2}\dist(x_n, \M\sind{12}) $. 
    Let $z_n\in\M\sind{12}$ be the closest point to $x_n$ with respect to the Euclidean distance, and let $\log\sind{i}_{z_n}$ be the inverse of the exponential map from the ball of radius $\eps_0$ in $T_{\M\sind{i}}$ to $\M\sind{i}$ for $i=1,2$. 
    
    First, we note that combining the law of sines with our assumption that $\abs{x_n-y_n}\leq\tfrac{\sin\theta}{2}\abs{x_n-z_n}$, the angle $\beta_n := \angle \left(x_n - z_n, y_n - z_n\right)$ satisfies:
    \begin{align*}
        2 R_n \sin\beta_n = \abs{x_n-y_n}\leq\abs{x_n-z_n}\frac{\sin\theta}{2} \leq R_n\sin\theta,
    \end{align*}
    where $R_n$ is the radius of the circumcircle of the triangle with vertices $x_n$, $y_n$, and $z_n$.
    Hence, we obtain $\sin\beta_n \leq \tfrac{\sin\theta}{2}$ which implies $\beta_n\leq\tfrac{\theta}{2}$.
    Using this together with the definition of $\theta$ in \labelcref{eq:angle} and a simple triangle inequality for vectors we have
    \begin{align*}
        \theta 
        &\leq 
        \angle\left(\log\sind{1}_{z_n}(x_n), \log\sind{2}_{z_n}(y_n)\right) 
        \\
        &\leq  
        \angle\left(\log_{z_n}\sind{1}(x_n),x_n - z_n\right) + 
        \angle \left(x_n - z_n, y_n - z_n\right)
        + \angle\left(\log\sind{2}_{z_n}(x_n),y_n - z_n\right) 
        \\
        & \leq \angle \left(\log\sind{1}_{z_n}(x_n), x_n - z_n\right) + \frac{\theta}{2}
        + \angle\left(\log\sind{2}_{z_n}(x_n),y_n - z_n\right).
    \end{align*}
    Since for large $n$ we have $\angle\left(\log\sind{1}_{z_n}(x_n),x_n - z_n\right) < \frac{\theta}{8}$ and  $\angle\left(\log\sind{2}_{z_n}(y_n),y_n - z_n\right) < \frac{\theta}{8} $ we arrive at a contradiction.
\end{proof}

\subsection{The \texorpdfstring{$TL^p$}{TLp} convergence}
\label{sec:appendix_TLp}

To have a unified domain for functionals defined on graphs and continuum manifolds, we rely on the $TL^p$-topology, introduced in \cite{garcia2016continuum}.
There the $TL^p$-spaces for $p\in[1,\infty)$ where defined as spaces of pairs of probability measures and $L^p$-functions on a metric space $M$:
\begin{align*}
    TL^p(M) := 
    \left\lbrace
    (u,\mu) \st \mu \in \mathcal P(M),\;u\in L^p(M;\mu)
    \right\rbrace. 
\end{align*}
These spaces can be equipped with an optimal transport type metric by setting
\begin{align*}
    d_{TL^p}((u,\mu),(v,\nu)) 
    := 
     \inf_{\pi\in\Gamma(\mu,\nu)}
    \left(
    \iint_{M\times M}\big(d(x,y)^p
    +
    \abs{u(x)-v(y)}\big)^p
    \d\pi(x,y)
    \right)^\frac{1}{p},
         \end{align*}
where the infimum is taken over couplings $\pi\in\mathcal P(M\times M)$ of the two probability measures $\mu$ and $\nu$.
In \cite{garcia2016continuum} these definitions were used for $M$ being an open subset of Euclidean space.
In the case where $M=\M$ is a compact Riemannian manifold without boundary, equipped with a probability measure~$\mu$ that is absolutely continuous with respect to the volume measure on~$\M$, we refer to \cite{laux2025large,garcia2020error} for definitions and properties of $TL^p$-distances on such spaces. 
In our situation, where $\M=\M\sind{1}\cup\M\sind{2}$ is a union of manifolds embedded in $\R^N$ as defined in \cref{sec:unions}, we note that $M=\M$ is a metric space when equipped with the ambient Euclidean metric of $\R^N$.
                         
It turns out that one can characterize the convergence $(u_n,\mu_n)\to(u,\mu)$ in $TL^p(M)$ by means of so-called stagnating transport plans which are couplings $\pi_n\in\Pi(\mu,\mu_n):=\{\pi\in\mathcal P(M\times M)\st \pi(\cdot\times M)=\mu,\,\pi(M\times\cdot)=\mu_n\}$ of $\mu$ and $\mu_n$ that satisfy
\begin{align*}
    \lim_{n\to\infty}\iint_{M\times M}d(x,y)\d\pi_n(x,y) = 0.
\end{align*}
A special case of stagnating transport plans are of the form $\pi_n = (\operatorname{id}\times T_n)_\sharp\mu$ where $T_n:M\to M$ is a stagnating transport map, meaning that it satisfies $(T_n)_\sharp\mu = \mu_n$ and 
\begin{align*}
    \lim_{n\to\infty}\int_{M}d(x,T_n(x))\d\mu(x) = 0.
\end{align*}
Note, however, that such maps $T_n$ do not necessarily exist.
We have the following characterization of $TL^p$-convergence:
\begin{proposition}[{\cite[Proposition 3.12]{garcia2016continuum}}]\label{prop:char_TLp}
    Let $M$ be a Polish space, $\mu\in\mathcal P(M)$ be a Borel probability measure, and $p\in[1,\infty)$.
    Then the following statements are equivalent:
    \begin{listi}
        \item $(u_n,\mu_n)\to(u,\mu)$ in $TL^p(M)$ as $n\to\infty$;
        \item $\mu_n\wsto\mu$ weakly as measures and for every stagnating sequence of transport plans $\pi_n\in\Pi(\mu,\mu_n)$ it holds
        \begin{align}\label{eq:stagnating_cvgc}
            \lim_{n\to\infty}\iint_{M\times M}\abs{u(x)-u_n(y)}^p\d\pi_n(x,y) = 0;
        \end{align}
        \item $\mu_n\wsto\mu$ weakly as measures and there exists a stagnating sequence of transport plans $\pi_n\in\Pi(\mu,\mu_n)$ such that \labelcref{eq:stagnating_cvgc} holds.
    \end{listi}
    Moreover, if $M=\M$ is a union of Riemannian manifolds without boundary with volume form $\vol$ and $\mu\in\mathcal{P}(\M)$ is absolutely continuous with respect to $\vol$, then 1.-3. are also equivalent to 
    \begin{listi}
    \setcounter{broj}{3}
        \item $\mu_n\wsto\mu$ weakly as measures and for every stagnating sequence of transport maps $T_n:\M\to\M$ (with $(T_n)_\sharp\mu=\mu_n$) it holds
        \begin{align}\label{eq:stagnating_maps_cvgc}
            \lim_{n\to\infty}\int_{\M}\abs{u(x)-u_n(T_n(x))}^p\d\mu(x) = 0;
        \end{align}
        \item $\mu_n\wsto\mu$ weakly as measures and there exists a stagnating sequence of transport maps $T_n:\M\to\M$ (with $(T_n)_\sharp\mu=\mu_n$) such that \labelcref{eq:stagnating_maps_cvgc} holds.
    \end{listi}
\end{proposition}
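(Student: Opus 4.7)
The plan is to establish the general Polish equivalences via the cycle $3 \Rightarrow 1 \Rightarrow 2 \Rightarrow 3$, and then add $1 \Leftrightarrow 4 \Leftrightarrow 5$ in the manifold setting by using existence of optimal transport maps when $\mu$ is absolutely continuous with respect to $\vol$.

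First, the implication $3 \Rightarrow 1$ is by direct substitution: if $\pi_n \in \Pi(\mu,\mu_n)$ is a stagnating sequence for which \labelcref{eq:stagnating_cvgc} holds, then plugging $\pi_n$ as a competitor in the definition of $d_{TL^p}$ shows that $d_{TL^p}((u,\mu),(u_n,\mu_n)) \to 0$; weak convergence of $\mu_n$ to $\mu$ follows from disintegration of $\pi_n$ and the fact that it charges the diagonal in the limit. Next, $2 \Rightarrow 3$ only requires the existence of at least one stagnating coupling, which follows from $\mu_n \wsto \mu$: on a Polish space one can take, e.g., an optimal Kantorovich--Rubinstein plan between $\mu$ and $\mu_n$ (the bounded Lipschitz distance metrizes weak convergence, and optimal plans in the corresponding OT problem are stagnating).

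The delicate step is $1 \Rightarrow 2$: given that $d_{TL^p}((u,\mu),(u_n,\mu_n)) \to 0$, near-optimal plans $\pi_n^\ast$ exist for which $\iint (d(x,y)^p + |u(x) - u_n(y)|^p)\d\pi_n^\ast \to 0$, but we must transfer this to any stagnating sequence $\pi_n$. The standard trick is a three-term triangle estimate with a continuous approximant $\tilde u \in C_b(M)$ chosen so that $\|u - \tilde u\|_{L^p(\mu)} < \varepsilon$: write
\begin{align*}
|u(x) - u_n(y)| \leq |u(x) - \tilde u(x)| + |\tilde u(x) - \tilde u(y)| + |\tilde u(y) - u_n(y)|.
\end{align*}
The first term integrates against $\pi_n$ to give $\|u-\tilde u\|_{L^p(\mu)}^p < \varepsilon^p$, the middle term vanishes because $d(x,y) \to 0$ in $\pi_n$-measure together with uniform continuity of $\tilde u$ on the relevant support (tightness from $\mu_n \wsto \mu$ reduces to compact subsets), and the third term, $\|\tilde u - u_n\|_{L^p(\mu_n)}^p$, is controlled by applying the same triangle split against the \emph{near-optimal} plan $\pi_n^\ast$, which shows $\limsup_n \|\tilde u - u_n\|_{L^p(\mu_n)}^p \lesssim \varepsilon^p$. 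Letting $\varepsilon \to 0$ closes the argument.

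For the manifold setting, $4 \Rightarrow 5$ requires the existence of at least one stagnating transport map, and $5 \Rightarrow 3$ is trivial since $\pi_n := (\operatorname{id}\times T_n)_\sharp \mu \in \Pi(\mu,\mu_n)$ is stagnating with \labelcref{eq:stagnating_maps_cvgc} reducing to \labelcref{eq:stagnating_cvgc}. The remaining point, $1 \Rightarrow 5$, invokes classical optimal transport on Riemannian manifolds (McCann): since $\mu$ is absolutely continuous with respect to $\vol$, the $W_p(\M)$-optimal transport plan from $\mu$ to any $\mu_n$ is induced by a measurable map $T_n:\M \to \M$ with $(T_n)_\sharp \mu = \mu_n$, and $W_p(\mu,\mu_n) \to 0$ (which follows from $TL^p$ convergence) makes $T_n$ stagnating; \labelcref{eq:stagnating_maps_cvgc} for this specific $T_n$ then comes from $2$. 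Finally, $5 \Rightarrow 4$ is proved by the very same continuous-approximation argument as in $1 \Rightarrow 2$, now applied to the transport-map couplings. The main obstacle is the $L^p$-to-continuous transfer in $1 \Rightarrow 2$, together with verifying, in the manifold case, that existence of OT maps is robust enough to handle atomic $\mu_n$ (which it is, precisely because $\mu$ gives no mass to sets of dimension less than $d^{(i)}$ on each $\M^{(i)}$).
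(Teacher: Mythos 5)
Your overall structure---the cycle $3 \Rightarrow 1 \Rightarrow 2 \Rightarrow 3$ using a three-term triangle estimate against a continuous (Lipschitz) approximant for the key implication $1 \Rightarrow 2$, and the extension to (iv)--(v) via absolute continuity of $\mu$---is essentially the same approach as in the cited \cite[Proposition 3.12]{garcia2016continuum}. The paper does not reprove this result; its remark just records why the cited proof extends to Polish spaces (regularity of Borel probability measures gives density of bounded Lipschitz functions in $L^p(\mu)$) and to the manifold case.

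The one genuine gap is the invocation of McCann's theorem for $1 \Rightarrow 5$. First, $\M = \M\sind{1} \cup \M\sind{2}$ is a union of two intersecting manifolds of (possibly) different dimensions; it is not a single complete Riemannian manifold, so McCann's theorem does not apply, and you cannot use it componentwise because the $W_p$-optimal plan may move mass across components. Second, even setting that aside, McCann supplies an \emph{optimal} map for the squared-distance cost, which is stronger (and different) from what is needed. What García Trillos and Slepčev actually use is much weaker: when $\mu$ is non-atomic, any plan $\pi \in \Pi(\mu, \mu_n)$ can be approximated weakly by plans induced by maps pushing $\mu$ to $\mu_n$. This is an Ambrosio/Pratelli-type approximation lemma that relies only on the Polish structure and the non-atomicity of $\mu$, not on any manifold geometry. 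Absolute continuity of $\mu$ with respect to $\vol$ enters exactly to furnish non-atomicity, which you correctly identify in your closing sentence---but the logical route through McCann is broken. Replace it with the plan-by-maps approximation lemma, and the rest of your manifold-case argument ($4 \Leftrightarrow 5$ by the same triangle trick, and $5 \Rightarrow 3$ trivially) goes through. One further caveat, which the paper's own remark inherits: on a general Polish space, $\mu_n \wsto \mu$ alone does not produce a stagnating sequence of plans (one needs $W_p(\mu_n,\mu)\to 0$, which additionally requires $p$-th moment convergence), so your step $2 \Rightarrow 3$ as phrased is not automatic; on the compact $\M$ relevant here this resolves itself.
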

\begin{remark}
    Note that (i)-(iii) in \cref{prop:char_TLp} are, in particular, applicable to the union of manifolds $\M$ equipped with the probability measure $\mu\in\mathcal P(\M)$.
\end{remark}
\begin{remark}
    Note that \cite[Proposition 3.12]{garcia2016continuum} is stated in a less general form where $M$ is an open and bounded domain in $\R^d$. 
    However, an inspection of the proof shows that \cref{prop:char_TLp} holds. 
    Indeed, the equivalence of (ii) and (iii) follows from the assumption that $\mu$ is a Borel probability measure and hence regular, which implies that bounded Lipschitz functions are dense in $L^1(M;\mu)$, cf. \cite[Lemma 3.10]{garcia2016continuum}.
    Furthermore, the equivalence of (i) and (iii) is proved as in \cite[Proposition 3.12]{garcia2016continuum}, using that convergence in the Wasserstein-$p$ distance for $p\in[1,\infty)$ metrizes weak convergence of measures in a Polish space \cite[Theorem 6.9]{villani2008optimal}.
\end{remark}
Finally, we use the same abuse of notation which was introduced in \cite[Definition 3.13]{garcia2016continuum}.
It accounts for the fact that in graph-based learning the measure part of a sequence in $TL^p$ is typically prescribed and therefore less important to deal with than the function part.
\begin{definition}[$TL^p$-convergence and compactness of functions]\label{def:TL_convergence}
    Let $\mu_n\wsto\mu$ weakly as measures as $n\to\infty$ and let $(u_n)_{n\in\N}$ with $u_n \in L^p(M,\mu_n)$ be a sequence. 
    We say that $u_n$ converges to $u$ in $TL^p(M)$ if and only if $(u_n,\mu_n)\to(u,\mu)$ in $TL^p(M)$.
    Similarly, we say that $(u_n)_{n\in\N}$ is relatively compact in $TL^p(M)$ if $(u_n,\mu_n)_{n\in\N}$ is relatively compact in $TL^p(M)$.
\end{definition}

\subsection{\texorpdfstring{$\Gamma$}{Gamma}-convergence of unnormalized Dirichlet energies}
\label{sec:appendix_Gamma}
\begin{theorem}[{\cite[Theorem 11]{laux2025large}}]\label{thm:Gamma-convergence_standard_NL}
Let $\M$ be a $d$-dimensional compact Riemannian manifold embedded in $\R^N$, let $\eta:[0,\infty)\to[0,\infty)$ satisfy \cref{ass:eta}, let $\rho\in C(\M)$ be a continuous function satisfying $\frac{1}{C}\leq\rho\leq C$ on $\M$ with some $C>0$, and let $(\eps_n)_{n\in\N}\subset(0,\infty)$ be a sequence converging to zero. 
Then as $n\to\infty$ the functionals
\begin{align*}
    \mathcal G_{\eps_n} &: L^2(\M) \to [0,\infty],\\
    u &\mapsto \frac{1}{\eps_n^{d+2}}\int_{\M}\int_{\M} \eta_{\eps_n}(\abs{x-y})\abs{u(x)-u(y)}^2\rho(x)\rho(y)\d\vol(x)\d\vol(y),
\end{align*}
$\Gamma$-converge in $L^2(\M)$ to
\begin{align*}
    \mathcal{G} &: L^2(\M) \to [0,\infty],\\
    u &\mapsto
    \begin{cases}
    \sigma_\eta\int_\M\abs{\grad_\M u}^2\rho^2\d\vol\quad&\text{if }u\in H^1(\M),\\
    \infty\quad&\text{else},
    \end{cases}
\end{align*}
where $\sigma_\eta := \int_{\R^d}\eta(\abs{x})\abs{x_1}^2\d x$.
Furthermore, any sequence $u_n\subset L^2(\M)$ such that 
\begin{align*}
    \sup_{n\in\N} \mathcal G_{\eps_n}(u_n)<\infty,\qquad \sup_{n\in\N} \norm{u_n}_{L^2(\M)}<\infty,
\end{align*}
is precompact in $L^2(\M)$.
\end{theorem}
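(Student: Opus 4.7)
The plan is to reduce the manifold statement to its Euclidean analogue by localization. Since $\M$ is compact, fix a finite atlas of coordinate charts $\{(U_k,\phi_k)\}$ whose images are bounded open sets in $\R^d$, choose a subordinate partition of unity $\{\psi_k\}$, and pick $\delta>0$ smaller than the reach of $\M$ in $\R^N$. By \cref{ass:eta}(iii) the interaction $\eta_{\eps_n}(\abs{x-y})$ is supported on $\abs{x-y}\leq \eps_n$, so for $n$ large every nonzero pair $(x,y)$ lies in a common chart. On such pairs, the ambient Euclidean distance $\abs{x-y}$, the intrinsic geodesic distance $d_\M(x,y)$, and $\abs{\phi_k(x)-\phi_k(y)}$ weighted by the chart's metric agree to relative error $O(\eps_n^2)$, coming from the second fundamental form of the embedding and the Taylor expansion of the metric in normal coordinates. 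This yields two-sided comparisons of $\mathcal G_{\eps_n}$ with its pulled-back Euclidean counterpart on each chart, with multiplicative constants tending to $1$.

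For compactness I would prove a quantitative $L^2$-modulus of continuity. In a chart, using the substitution $y=x+\eps_n z$ together with the lower bound $\eta\geq c\,\chi_{[0,\tau]}$ from \cref{ass:eta}(ii), one obtains
\begin{align*}
\mathcal G_{\eps_n}(u)\;\gtrsim\;\int_{\abs{z}\leq \tau}\int_{\M}\frac{\abs{u(x+\eps_n z)-u(x)}^2}{\eps_n^2}\,\rho^2\,\d\vol(x)\,\de z,
\end{align*}
from which a Fubini argument gives $\norm{u(\cdot+h)-u(\cdot)}_{L^2}^2\leq C\abs{h}^2$ for $\abs{h}\leq \eps_n$. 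Together with mollification (pushed through the charts) and the uniform $L^2$ bound, the Kolmogorov--Riesz--Fréchet criterion furnishes a convergent subsequence in $L^2(\M)$.

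For the $\Gamma$-liminf, again work chart by chart. After the substitution $y=\exp_x(\eps_n z)$, the integrand becomes $\eta(\abs{z}+O(\eps_n^2))\bigl(\eps_n^{-1}(u(\exp_x(\eps_n z))-u(x))\bigr)^2\rho^2\,\d\vol$ up to unit Jacobian corrections. Testing against an arbitrary $w\in C_c^\infty(\M;\R^d)$, integrating by parts, and applying Fatou's lemma identifies any $L^2$-limit $u$ as an element of $H^1(\M)$ and yields the bound using the moment identity $\int_{\R^d}\eta(\abs{z})(\xi\cdot z)^2\,\de z=\sigma_\eta\abs{\xi}^2$, valid by rotational symmetry of $\eta$. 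For the $\Gamma$-limsup, by density of $C^\infty(\M)$ in $H^1(\M)$ I would take the constant recovery sequence $u_n\equiv u$ for smooth $u$, expand $u(y)-u(x)=\langle\nabla_\M u(x),\exp_x^{-1}(y)\rangle+O(d_\M(x,y)^2)$, and conclude by dominated convergence and the same moment identity, then pass to general $u$ by diagonalization.

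The main obstacle is that the weight $\eta_{\eps_n}$ depends on the \emph{ambient} Euclidean distance $\abs{x-y}$, whereas the natural Taylor expansion of $u$ and the natural change-of-variables are governed by the geodesic distance and the exponential map. The discrepancies are of order $\eps_n^2$ and so formally lower-order, but they are amplified by the $\eps_n^{-d-2}$ prefactor and therefore require a careful second-order expansion of the immersion together with precise estimates on the Jacobian $\det D\exp_x$ in normal coordinates. Once these error terms are shown to contribute $o(1)$ to $\mathcal G_{\eps_n}(u_n)$, the Euclidean arguments carry over cleanly.
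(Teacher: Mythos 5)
The paper does not prove this statement at all: it is imported verbatim from \cite[Theorem 11]{laux2025large}, so there is no internal proof to match your argument against. Your plan — localize via a finite atlas, compare the ambient distance $\abs{x-y}$ with the geodesic distance (relative error $O(\eps_n^2)$, absorbed by monotonicity of $\eta$ into a slightly rescaled bandwidth), use constant recovery sequences for smooth $u$ plus density and a diagonal argument, and get compactness through mollification at scale $\eps_n$ — is exactly the standard route taken in the literature the paper relies on, so in spirit you are re-deriving the cited result rather than finding a new path. That is legitimate, but two steps as written have real gaps.

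First, the liminf. ``Testing against an arbitrary $w\in C_c^\infty(\M;\R^d)$, integrating by parts, and applying Fatou'' shows at best that the $L^2$-limit $u$ has a distributional gradient in $L^2$, i.e.\ $u\in H^1(\M)$; it does not, as stated, deliver the lower bound with the sharp constant $\sigma_\eta$ and the weight $\rho^2$, because the anisotropic second-moment structure $\int\eta(\abs{z})(\xi\cdot z)^2\,\de z=\sigma_\eta\abs{\xi}^2$ enters only after one has identified the correct limiting energy density, not merely membership in $H^1$. You need either to exploit the chart comparison you set up in your first paragraph and invoke the known Euclidean $\Gamma$-liminf (freezing $\rho$ on small charts and using superadditivity over a covering), or to run the mollification argument: smooth $u_n$ at scale $\eps_n$, show the nonlocal energy dominates the local Dirichlet energy of the mollifications up to $o(1)$, and conclude by weak lower semicontinuity. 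Second, the compactness step: the estimate $\norm{u(\cdot+h)-u(\cdot)}_{L^2}^2\leq C\abs{h}^2$ only for $\abs{h}\leq\eps_n$ is useless on its own for Kolmogorov--Riesz--Fréchet, since for any fixed $h$ you eventually have $\abs{h}>\eps_n$; you must either chain translations in steps of size $\eps_n$ (giving control for all $\abs{h}$ with an additive $\eps_n$ error) or, more cleanly, show $\norm{u_n-u_n\ast\eta_{\eps_n}}_{L^2}^2\lesssim\eps_n^2\,\mathcal G_{\eps_n}(u_n)$ and $\norm{\nabla(u_n\ast\eta_{\eps_n})}_{L^2}^2\lesssim\mathcal G_{\eps_n}(u_n)$, then apply Rellich to the mollified sequence. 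You gesture at mollification but do not make this comparison, and it is precisely the step that converts the $\eps_n$-scale modulus into precompactness. Finally, your closing worry about the $\eps_n^{-d-2}$ prefactor amplifying the $O(\eps_n^2)$ distance discrepancy is overstated: after the change of variables $y=\exp_x(\eps_n z)$ the prefactor is spent on the Jacobian and the squared difference quotient, and the kernel discrepancy is handled by monotonicity of $\eta$ through a bandwidth rescaling $\eps_n\mapsto(1\pm C\eps_n^2)\eps_n$, whose moments converge to $\sigma_\eta$ and $\beta_\eta$.
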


\begin{theorem}\label{thm:Gamma-convergence_standard}
Let $\M$ be a $d$-dimensional compact Riemannian manifold embedded in a Euclidean space. Let $\eta:[0,\infty)\to[0,\infty)$ satisfy \cref{ass:eta}, and let $\rho\in C(\M)$ be a continuous function satisfying $\frac{1}{C}\leq\rho\leq C$ on $\M$ with some $C>0$. 
Letting $\{x_1,\dots,x_n\}\subset\M$ for $n\in\N$ be an i.i.d. sample of the measure $\d\mu:=\rho\d\vol$, define $\mu_n := \frac{1}{n}\sum_{i=1}^n \delta_{x_i}$ and let 
$\eps_n$ satisfy \cref{ass:epsilon}.
  Then as $n\to\infty$ almost surely the functionals
\begin{align*}
    G_{n,\eps_n} &: TL^2(\M_n;\mu_n) \to [0,\infty],\\
    u &\mapsto \frac{1}{\eps_n^{d+2}}\int_{\M}\int_{\M} \eta_{\eps_n}(\abs{x-y})\abs{u(x)-u(y)}^2\d\mu_n(x)\d\mu_n(y),
\end{align*}
$\Gamma$-converge in $TL^2(\M)$ to
\begin{align*}
    \mathcal{G} &: L^2(\M) \to [0,\infty],\\
    u &\mapsto
    \begin{cases}
    \sigma_\eta\int_\M\abs{\grad_\M u}^2\rho^2\d\vol\quad&\text{if }u\in H^1(\M),\\
    \infty\quad&\text{else},
    \end{cases}
\end{align*}
where $\sigma_\eta := \int_{\R^d}\eta(\abs{x})\abs{x_1}^2\d x$.
Furthermore, any sequence $(u_n,\mu_n)\subset TL^2(\M)$ such that 
\begin{align*}
    \sup_{n\in\N} G_{n,\eps_n}(u_n)<\infty,\qquad \sup_{n\in\N} \norm{u_n}_{L^2(\mu_n)}<\infty,
\end{align*}
is precompact in $TL^2(\M)$.
\end{theorem}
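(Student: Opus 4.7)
The plan is to deduce \cref{thm:Gamma-convergence_standard} from its continuum nonlocal counterpart \cref{thm:Gamma-convergence_standard_NL} via a transport-map comparison, in the spirit of the passage on Euclidean domains in \cite{garcia2016continuum,GTS18}. By \cref{ass:epsilon} together with \cref{rem:scaling_condition}, there exists almost surely a sequence of Borel maps $T_n\colon\M\to\M$ with $(T_n)_\sharp\mu=\mu_n$ and $\delta_n:=\|T_n-\operatorname{id}\|_{L^\infty(\M)}\leq C\ell_n\ll\eps_n$. Fix such a sequence once and for all. The key observation is that pushing the discrete energy through $T_n$ turns it into an integral over $\M\times\M$ with respect to $\mu\otimes\mu$, and therefore sits naturally alongside the continuum nonlocal functional $\mathcal{G}_{\eps}$ from \cref{thm:Gamma-convergence_standard_NL}; the comparison is then a matter of handling the discrepancy between $|T_n(x)-T_n(y)|$ and $|x-y|$ inside $\eta_{\eps_n}$.

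\paragraph{The sandwich.} For the discrepancy, I would use the monotonicity in \cref{ass:eta}(i): since $|T_n(x)-T_n(y)|-|x-y|$ lies in $[-2\delta_n,2\delta_n]$, the shifted profiles $\eta^{\pm}_n(s):=\eta\big((s\mp 2\delta_n/\eps_n)_+\big)$ satisfy $\eta^-_n(|x-y|/\eps_n)\leq \eta(|T_n(x)-T_n(y)|/\eps_n)\leq \eta^+_n(|x-y|/\eps_n)$ pointwise, and both $\eta^{\pm}_n$ converge to $\eta$ uniformly and monotonically with support in a slight neighborhood of $[0,1]$. After the pushforward this yields
\[
\mathcal{G}_{\eps_n}^{\eta^{-}_n}(\tilde u_n)\;\leq\;G_{n,\eps_n}(u_n)\;\leq\;\mathcal{G}_{\eps_n}^{\eta^{+}_n}(\tilde u_n),\qquad \tilde u_n:=u_n\circ T_n,
\]
where $\mathcal{G}_{\eps_n}^{\eta'}$ denotes the continuum nonlocal energy from \cref{thm:Gamma-convergence_standard_NL} built with the profile $\eta'$. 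Since $\delta_n/\eps_n\to 0$, a standard layer-cake/dominated convergence argument gives $\Gamma$-convergence of $\mathcal{G}_{\eps_n}^{\eta^{\pm}_n}$ to the \emph{same} local energy $\mathcal{G}$ with the unaltered constant $\sigma_\eta$ (the relevant second moments of $\eta^{\pm}_n$ converge to $\sigma_\eta$), and likewise the continuum compactness from \cref{thm:Gamma-convergence_standard_NL} carries over to the perturbed profiles.

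\paragraph{Liminf, compactness, and limsup.} Liminf and compactness are then quick. If $u_n\to u$ in $TL^2(\M)$ then \cref{prop:char_TLp} gives $\tilde u_n\to u$ in $L^2(\mu)$, and the lower sandwich combined with the nonlocal $\Gamma$-liminf for $\mathcal{G}_{\eps_n}^{\eta^{-}_n}$ yields $\liminf_n G_{n,\eps_n}(u_n)\geq \mathcal{G}(u)$. For compactness, boundedness of $G_{n,\eps_n}(u_n)$ and $\|u_n\|_{L^2(\mu_n)}$ transfers via the sandwich to a bound on $\mathcal{G}_{\eps_n}^{\eta^{-}_n}(\tilde u_n)$ and $\|\tilde u_n\|_{L^2(\mu)}$, whence the continuum compactness extracts an $L^2(\mu)$-convergent subsequence of $\tilde u_n$, equivalent to $TL^2$-compactness of $u_n$. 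For the limsup, I would reduce by density (and a diagonal argument) to $u\in C^\infty(\M)$, take the constant recovery $u_n:=u|_{V_n}$ (so $\tilde u_n=u\circ T_n$), and apply the upper sandwich; the right-hand side $\mathcal{G}_{\eps_n}^{\eta^{+}_n}(u\circ T_n)$ is controlled by $\mathcal{G}_{\eps_n}^{\eta^{+}_n}(u)$ up to an error $O(\delta_n/\eps_n)\|\nabla_\M u\|_\infty^2$ by a Lipschitz perturbation, and the former converges to $\mathcal{G}(u)$ by the continuum recovery for the smooth $u$.

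\paragraph{Main obstacle.} The delicate point is the joint limit $\eps_n,\delta_n\to 0$ with $\delta_n/\eps_n\to 0$ in the sandwich, because the dimensional prefactor $\eps_n^{-(d+2)}$ is scale-sensitive and the shifted profiles $\eta^{\pm}_n$ have supports of size $1\pm 2\delta_n/\eps_n$; verifying that their second moments $\sigma_{\eta^{\pm}_n}$ converge to $\sigma_\eta$ uniformly, and that this passes through the $\Gamma$-convergence statement of \cref{thm:Gamma-convergence_standard_NL} without a loss in the limit, is the one step that requires \cref{ass:eta}(i)--(iii) essentially. Everything else is a routine transcription of the Euclidean arguments of \cite{garcia2016continuum,GTS18} to the manifold setting provided by \cite{laux2025large}.
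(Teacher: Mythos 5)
Your route—push the discrete energy through $\infty$-optimal transport maps $T_n$ and sandwich the transported kernel between shifted profiles—is exactly the mechanism underlying the results the paper invokes; the paper itself does not rerun this argument but cites \cite[Theorem 9]{laux2025large}, which in turn rests on \cite{GTS18} (and \cite{slepcev2019analysis} for $d=1$) plus a refinement for $d=2$. So in spirit you are reconstructing the same proof, and for $d=1$ and $d\geq 3$ your sketch is essentially correct (with the cosmetic caveat that $\eta^{\pm}_n$ need not converge to $\eta$ uniformly when $\eta$ has interior jumps, e.g.\ $\eta=\chi_{[0,1]}$; the standard fix is to compare, for a fixed small $\delta$, with the fixed kernels $\eta(\cdot+\delta)$ and $\eta((\cdot-\delta)_+)$, apply \cref{thm:Gamma-convergence_standard_NL} for those, and let $\delta\to0$ using continuity of $\delta\mapsto\sigma_{\eta_\delta}$).

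The genuine gap is the case $d=2$ at the scaling claimed in the theorem. Your whole sandwich requires $\delta_n=\norm{T_n-\operatorname{id}}_{L^\infty(\M)}\ll\eps_n$, but for $d=2$ the $\infty$-transport distance between $\mu$ and $\mu_n$ is of order $(\log n)^{3/4}n^{-1/2}$ (see \cite{garcia2020error}, and indeed the paper's own proof of \cref{cor:Gamma-convergence_standard} uses this rate), which is \emph{strictly larger} than $\ell_n=(\log n/n)^{1/2}$ from \cref{ass:epsilon}. Hence, under the stated assumption $\eps_n\gg(\log n/n)^{1/2}$ you cannot conclude $\delta_n/\eps_n\to0$, and the shifted-kernel comparison collapses precisely at the sharp threshold the theorem asserts; \cref{rem:scaling_condition} must be read with this $d=2$ caveat. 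Closing this gap is not a routine transcription: one needs the two-scale/averaging comparison of \cite[Proposition 2.12]{CalderGT22} (or \cite[Lemmas 3.1, 3.2]{Caroccia20}), which replaces the pointwise $L^\infty$ transport bound by local averages at an intermediate scale, and this is exactly the ingredient the paper cites to obtain the sharp dependence of $\eps_n$ on $n$ for $d=2$. Your argument as written proves the theorem only for $d\neq2$, or for $d=2$ under the stronger assumption $\eps_n\gg(\log n)^{3/4}n^{-1/2}$.
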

\begin{proof}
    The theorem is essentially the one stated in \cite[Theorem 9]{laux2025large}, where it was assumed that $\rho$ is smooth and $d \geq 2$, although the proof allows for more general $\rho$ and for $d=1$.
    The proof relies on the corresponding $\Gamma$-convergence results in the Euclidean setting, see~\cite{GTS18} for $d\geq 2$ and \cite{slepcev2019analysis} for $d=1$, as well as the argument of \cite[Proposition 2.12]{CalderGT22} (or \cite[Lemmas 3.1, 3.2]{Caroccia20}) to get the sharp dependence of $\eps_n$ on $n$ for $d=2$.
  \end{proof}

We note that the theorem can be easily extended to the situation that the number of points $n$, is not deterministic, but is a random variable with controlled variance. We state and prove the corollary for $n$ which has a binomial distribution as this is the situation we face when dealing with samples from union of two manifolds. 
  \begin{corollary}\label{cor:Gamma-convergence_standard}
    In the setting of \cref{thm:Gamma-convergence_standard}, let $N\in\N$, $p\in(0,1)$, $n\sim\operatorname{Bin}(N,p)$, and $\{x_1,\dots,x_n\}\subset\M$ be an i.i.d. sample of the measure $\d\mu:=\rho\d\vol$.
    If \cref{ass:epsilon} holds,
     then as $N\to\infty$ the assertions of \cref{thm:Gamma-convergence_standard} remain true for the functional $G_{n,\eps_N}$. 
\end{corollary}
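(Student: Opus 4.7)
The plan is to reduce the random-size case to \cref{thm:Gamma-convergence_standard} by conditioning on the value of $n$ and exploiting the concentration of $n \sim \operatorname{Bin}(N,p)$ around $pN$.

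First I would apply the strong law of large numbers (or, more concretely, Hoeffding's inequality together with the Borel--Cantelli lemma) to the Binomial sequence $n=n_N$, obtaining that almost surely $n_N/N \to p$ as $N \to \infty$. Intersecting this with the full-measure event on which \cref{thm:Gamma-convergence_standard} and \cref{rem:scaling_condition} hold for the infinite i.i.d. sequence $\{x_i\}_{i \in \N}$ (of which the first $n_N$ entries form the sample), I obtain a single event $\Omega_0$ of probability one on which, for every $\delta \in (0,1)$, the bound $(1-\delta)pN \leq n_N \leq (1+\delta)pN$ holds for all sufficiently large $N$.

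Next, I would compare the length scales $\ell_{n_N}$ and $\ell_N$. Since $t \mapsto \sqrt{\log\log t/t}$ (resp.\ $t \mapsto (\log t/t)^{1/d}$) is decreasing for large $t$, the comparison $\ell_{n_N} \leq C_{p,d}\,\ell_N$ holds on $\Omega_0$ for large $N$. Combined with the hypothesis $\ell_N \ll \eps_N \ll 1$ of \cref{ass:epsilon}, this yields $\ell_{n_N} \ll \eps_N \ll 1$ almost surely, so in particular the Wasserstein-$\infty$ estimates of \cref{rem:scaling_condition} provide transport maps $T_{n_N}:\M\to\M$ with $(T_{n_N})_\sharp \mu = \mu_{n_N}$ and $\|T_{n_N}-\operatorname{id}\|_{L^\infty(\M)} \ll \eps_N$.

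With this coupling in hand, the proofs of the liminf, limsup, and compactness statements of \cref{thm:Gamma-convergence_standard} carry over verbatim: conditionally on $n_N$, the point cloud is genuinely i.i.d.\ from $\mu$, and the only probabilistic input to the single-manifold proof is precisely the scaling bound on $\|T_n-\operatorname{id}\|_{L^\infty}$ together with the uniform density-estimator bounds of \cref{lem:uniform_cvgc} type, both of which we have already secured on $\Omega_0$.

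The only mild obstacle is bookkeeping: the bandwidth sequence is indexed by $N$ rather than by the (random) number of points $n_N$, so one must check that every estimate in the original proof of \cref{thm:Gamma-convergence_standard} that was stated in terms of $\eps_n$ and $\ell_n$ continues to hold with $\eps_N$ and $\ell_{n_N}$. This is an immediate consequence of the uniform comparison $\ell_{n_N} \leq C_{p,d}\,\ell_N$ established above, since no step of the argument uses the precise functional dependence of $\eps$ on the number of points beyond the scaling assumption.
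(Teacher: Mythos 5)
Your proposal is correct and follows essentially the same route as the paper: concentrate the Binomial size $n_N$ around $pN$ (Hoeffding/Borel--Cantelli), transfer the Wasserstein-$\infty$ transport-map estimate using the comparability $\ell_{n_N}\lesssim \ell_N \ll \eps_N$, and observe that this transport bound is the only probabilistic input to the proof of \cref{thm:Gamma-convergence_standard}, so the argument runs unchanged with bandwidth $\eps_N$. The paper organizes the same ingredients slightly differently (quantitative union bound at each $N$ plus Borel--Cantelli rather than plugging the random index into the almost-sure statement), but the substance is identical.
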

\begin{proof}
    A simple application of Hoeffding's inequality to the binomial random variable $n$ shows
    \begin{align*}
        \mathbb P\left[
        \abs{n-Np} \geq t
        \right]
        \leq 
        2\exp\left(-\frac{2t^2}{N}\right)
    \end{align*}
    and taking $t:=Np/2$ we obtain
    \begin{align*}
        \mathbb P\left[
        n \geq \frac{Np}{2}
        \right]
        \geq 
        1- 2\exp\left(-\frac{Np^2}{2}\right).
    \end{align*}
    At the same time, by \cite[Theorem 2]{garcia2020error} there exist constants $C_1,C_2>0$ such that for $m\in\N$ with high probability there exists a transport map $T_m$ pushing $\mu$ onto $\mu_m$ with infinity transport distance at most $C_1 \delta_m$
    \begin{align*}
        \mathbb P\left[
            \norm{\operatorname{id}-T_m}_{L^\infty(\M)}
            > C_1\delta_m 
        \right]
        \leq \frac{C_2}{m^2},
    \end{align*}
    where
        \begin{align*}
        \delta_n := 
        \begin{dcases}
        \sqrt{\frac{\log\log n}{n}} \quad&\text{if }d=1,
        \\
      \frac{(\log n)^{3/4}}{n^{1/2}} \quad&\text{if }d=2,
       \\
        \left(\frac{\log n}{n}\right)^\frac{1}{d} \quad&\text{if }d\geq 2.
        \end{dcases}        
    \end{align*}
    With a union bound we obtain for any $m\in\N$:
    \begin{align*}
        \mathbb P\left[
            n\geq\frac{Np}{2}
            \text{ and }
            \norm{\operatorname{id}-T_m}_{L^\infty(\M)}
            \leq C_1\delta_m 
        \right]
        \geq 1- \frac{C_2}{m^2} - 2\exp\left(-\frac{Np^2}{2}\right).
    \end{align*}
    Now we can choose $m$ as the smallest integer larger or equal to $\frac{Np}{2}$ (implying that $n\geq m$ in the above probability) and use the law of total probability to obtain
    \begin{align*}
        \mathbb P\left[
            \norm{\operatorname{id}-T_n}_{L^\infty(\M)}
            \leq C_1\delta_{Np/2} 
        \right]
        &\geq
        \mathbb P\left[
            \norm{\operatorname{id}-T_n}_{L^\infty(\M)}
            \leq C_1\delta_{Np/2} 
            \,\Big\vert\,
            n\geq \frac{Np}{2}
        \right]
        \mathbb P\left[n\geq \frac{Np}{2}\right]
        \\
        &=
        \mathbb P\left[
            n\geq\frac{Np}{2}
            \text{ and }
            \norm{\operatorname{id}-T_n}_{L^\infty(\M)}
            \leq C_1\delta_{Np/2}  
        \right]
        \\
        &\geq 
        1- \frac{4C_2}{p^2N^2} - 2\exp\left(-\frac{Np^2}{2}\right).
    \end{align*}
Combining this estimate with the Borel--Cantelli lemma and using that $\limsup_{n\to\infty}\frac{\delta_N}{\delta_{Np/2}}\in(0,\infty)$ implies that almost surely we have
\begin{align*}
    \limsup_{n\to\infty}\frac{\norm{\operatorname{id}-T_n}_{L^\infty(\M)}}{\delta_N}<\infty.
\end{align*}
Since this is the only estimate on the distribution of points used in the proof of \cref{thm:Gamma-convergence_standard} and since $\left(\frac{\log N}{N}\right)^\frac{1}{d}\ll\eps_N\ll 1$ we assume is equivalent to $\left(\frac{\log (pN/2)}{pN/2}\right)^\frac{1}{d}\ll\eps_N\ll 1$, we conclude that 
the corollary holds.  
\end{proof}

\subsection{Establishing the trace condition}
\label{sec:appendix_trace}

The challenging part in proving the liminf inequality is to establish the trace condition encoded in \labelcref{eq:def_H1mu,eq:limit_fctl} in the case that the manifolds have equal dimension and the codimension of their intersection equals one.
Towards that goal, we first prove the following auxiliary lemma.
\begin{lemma}\label{lem:trace}
In the multi-manifold setting of \cref{sec:unions}, in particular, under \cref{ass:densities,ass:eta} let $d\sind{1}=d\sind{2}=:d$ and $d\sind{12}=d-1$.
Assume that $\eps_n\to 0$ and $v_n \to v$ in $L^2(\M)$ as $n\to\infty$ where $v\vert_{\M\sind{i}}\in H^1(\M\sind{i})$, and that 
\begin{align*}
    \liminf_{n\to\infty}
    \frac{1}{\eps_n^{d+2}} \int_{\M}\int_{\M} \eta_{\eps_n}(\abs{x-y}) \abs{v_n(x)-v_n(y)}^2\d\mu(x)\d\mu(y) < \infty.
\end{align*}
Then $\trace\sind{1}(v) = \trace\sind{2}(v)$ on $\M\sind{12}$.
\end{lemma}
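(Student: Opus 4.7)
The plan is to reduce the statement to a local, Euclidean trace theorem by flattening one ``side'' of each manifold across the intersection. Since the conclusion is local on $\M\sind{12}$ and $\M\sind{12}$ is covered by finitely many coordinate patches, it suffices to fix $x_0 \in \M\sind{12}$ and prove the equality of traces in a neighborhood of $x_0$. In a small enough geodesic ball around $x_0$ the intersection $\M\sind{12}$ is a smooth codimension-one submanifold of each $\M\sind{i}$ and hence locally separates $\M\sind{i}$ into two sides $\M\sind{i}_+$ and $\M\sind{i}_-$. Pick one side on each manifold, say $\M\sind{1}_+$ and $\M\sind{2}_+$, and form the glued manifold $\tilde\M := \M\sind{1}_+ \cup \M\sind{2}_+$, which is a topological $d$-manifold whose two ``halves'' meet along $\M\sind{12}$ with positive opening angle bounded below by $\theta>0$.

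The first key step is to construct a bi-Lipschitz change of coordinates $\Psi: \tilde\M \cap B(x_0,r) \to U \subset \R^d$, with $U$ open and $\Psi(\M\sind{12}\cap B(x_0,r))$ a smooth $(d-1)$-submanifold of $U$ that separates $\Psi(\M\sind{1}_+)$ from $\Psi(\M\sind{2}_+)$. One can build $\Psi$ by choosing geodesic normal coordinates on $\M\sind{1}$ adapted to $\M\sind{12}$, and then using the exponential map of $\M\sind{2}$ along the normal direction of $\M\sind{12}$ in $\M\sind{2}$ to extend these coordinates across the intersection. The fact that the principal angle $\theta$ from \labelcref{eq:angle} is strictly positive (together with \cref{lem:angle}) guarantees that this map is bi-Lipschitz, with Lipschitz constants depending only on $\theta$ and the local geometry. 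Under $\Psi$, the ambient Euclidean distance satisfies the two-sided comparison
\begin{equation*}
    C^{-1}|\Psi(x)-\Psi(y)| \le |x-y| \le C|\Psi(x)-\Psi(y)|, \qquad x,y \in \tilde\M\cap B(x_0,r),
\end{equation*}
for some $C=C(\theta)>0$. This is the main technical point, and I expect this to be the main obstacle, since one has to verify that vectors in $\M\sind{1}_+$ meeting $\M\sind{12}$ are not parallel to vectors in $\M\sind{2}_+$, which is exactly ensured by $\theta>0$.

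Next, push forward: let $\tilde v_n := v_n \circ \Psi^{-1}$ and $\tilde v := v\circ \Psi^{-1}$, regarded as functions on $U\subset \R^d$. Since $\mu\restr\tilde\M$ has density bounded above and below with respect to the volume measure on $\tilde\M$ (by \cref{ass:densities}) and $\Psi$ is bi-Lipschitz, the pushforward measure is comparable to Lebesgue measure on $U$. By the two-sided distance comparison above and the monotonicity of $\eta$, there is a constant $c>0$ (depending only on $\theta$, $\eta$, $C_\rho$) such that with $\tilde\eta(t) := \eta(t/C)$,
\begin{equation*}
    \frac{1}{\eps_n^{d+2}}\int_U\int_U \tilde\eta_{\eps_n}(|\xi-\zeta|)|\tilde v_n(\xi)-\tilde v_n(\zeta)|^2\d\xi\,\d\zeta \le c\,\frac{1}{\eps_n^{d+2}}\int_{\tilde\M}\int_{\tilde\M}\eta_{\eps_n}(|x-y|)|v_n(x)-v_n(y)|^2\d\mu(x)\,\d\mu(y),
\end{equation*}
and the right-hand side is bounded above by the nonlocal energy of $v_n$ on $\M$, which has bounded $\liminf$ by hypothesis. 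Moreover, $\tilde v_n \to \tilde v$ in $L^2(U)$ since $v_n \to v$ in $L^2(\M)$ and $\Psi$ is bi-Lipschitz.

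The final step is to apply the liminf part of the nonlocal-to-local convergence in the Euclidean setting (\cref{thm:Gamma-convergence_standard_NL} applied to the flat $d$-dimensional chart $U$, with $\rho\equiv 1$) to the sequence $\tilde v_n$. This yields $\tilde v \in H^1(U)$. Since $\Psi(\M\sind{12}\cap B(x_0,r))$ is a smooth $(d-1)$-submanifold of $U$ separating $\Psi(\M\sind{1}_+)$ from $\Psi(\M\sind{2}_+)$, the classical Euclidean trace theorem gives that the trace of $\tilde v$ from either side of $\Psi(\M\sind{12})$ agrees. Pulling back via $\Psi$ (which is bi-Lipschitz and thus preserves traces up to the change of variables) yields $\trace\sind{1}(v)=\trace\sind{2}(v)$ on $\M\sind{12}\cap B(x_0,r)$. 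By repeating the argument with the other choices of sides $\M\sind{i}_\pm$ and covering $\M\sind{12}$ by finitely many such patches, we obtain the equality globally on $\M\sind{12}$.
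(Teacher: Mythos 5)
Your proposal is correct and follows essentially the same route as the paper: localize at a point of $\M\sind{12}$, glue one side of each manifold, flatten the glued piece by a bi-Lipschitz map (whose existence rests on the positive principal angle from the nondegeneracy assumption), transfer the bounded nonlocal energy to the Euclidean chart with a rescaled bandwidth, invoke the Euclidean liminf/compactness result to get local $H^1$ regularity of the pushed-forward limit, and conclude equality of the one-sided traces. The only cosmetic difference is your final step of repeating the argument over the other side choices, which is unnecessary since the trace of an $H^1(\M\sind{i})$ function on the interior hypersurface $\M\sind{12}$ is already independent of the side of approach.
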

\begin{proof}
Let us fix a point $z\in\M\sind{12}$ and a radius $r>0$.
Since $\M\sind{12}$ has codimension $1$ in $\M\sind{1}$ and $\M\sind{2}$, for $r$ small, $\M\sind{12}$ divides them locally into two pieces. Namely let $\M\sind{i}_\pm$ for $i=1,2$ be connected components of $B(z,r)  \cap \M\sind{i} \setminus \M\sind{12}$.
We define $U:=B(z,r) \cap (\M\sind{1}_+\cup\M\sind{2}_+ \cup \M\sind{12})$.
Since the intersection of the two manifolds is assumed nondegenerate and they have equal dimension, there exists open set $V \subset \R^d$ and a bi-Lipschitz map $\Psi:U\to V$, i.e., there exists $L>0$ such that
\begin{align*}
    \frac{1}{L}\abs{\Psi(x)-\Psi(y)}\leq\abs{x-y}
    \leq 
    L\abs{\Psi(x)-\Psi(y)}\quad\forall x,y\in U.
\end{align*}
Without loss of generality we can assume that $\Psi$ is such that
\begin{align*}
    \Psi(U\cap\M\sind{1}_+)\subset\{x\in\R^d\st x_1\geq 0\},
    \qquad
    \Psi(U\cap\M\sind{2}_+)\subset\{x\in\R^d\st x_1\leq 0\}
\end{align*}
and hence
\begin{align*}
    \Psi(B(z,r)\cap\M\sind{12}) \subset \{x\in\R^d \st x_1=0\}.
\end{align*}
Let $w_n := v_n \circ \Psi^{-1}$.  Using the regularity of $\Psi$ we obtain
\begin{align*}
     \frac{1}{\eps_n^{d+2}} \int_{\M}\int_{\M} & \eta_{\eps_n}(\abs{x-y}) \abs{v_n(x)-v_n(y)}^2\d\mu(x)\d\mu(y)
    \\
    &\geq 
    \frac{1}{\eps_n^{d+2}} \int_{U}\int_{U} \eta_{\eps_n}(\abs{x-y}) \abs{v_n(x)-v_n(y)}^2\d\mu(x)\d\mu(y)
    \\
    &=
    \frac{1}{\eps_n^{d+2}} \int_{V}\int_{V} \eta_{\eps_n}(\abs{\Psi^{-1}(\tilde x)-\Psi^{-1}(\tilde y)}) \abs{w_n(\tilde x)-w_n(\tilde y)}^2\d\Phi_\sharp\mu(\tilde x)\d\Phi_\sharp\mu(\tilde y)
    \\
    &\geq 
    \frac{1}{\eps_n^{d+2}} \int_{V}\int_{V} \eta_{\eps_n}(L\abs{\tilde x-\tilde y}) \abs{w_n(\tilde x)-w_n(\tilde y)}^2\d\Phi_\sharp\mu(\tilde x)\d\Phi_\sharp\mu(\tilde y)
    \\
    &\geq 
    \frac{1}{L^{d+2}}
    \frac{1}{\tilde\eps_n^{d+2}} \int_{V}\int_{V} \eta_{\tilde\eps_n}(\abs{\tilde x-\tilde y}) \abs{w_n(\tilde x)-w_n(\tilde y)}^2\d\Phi_\sharp\mu(\tilde x)\d\Phi_\sharp\mu(\tilde y)
    \\
    &\geq
    \frac{c}{L^{d+2}}
    \frac{1}{\tilde\eps_n^{d+2}} \int_{V}\int_{V} \eta_{\tilde\eps_n}(\abs{\tilde x-\tilde y}) \abs{w_n(\tilde x)-w_n(\tilde y)}^2\d\tilde x\d\tilde y
\end{align*}
where $\tilde\eps_n := \eps_n/L$ and $c>0$ depends on $L$ and the lower bound of $\rho$ (the density of $\mu$).
Using that $w_n$ (being a composition of a Lipschitz function and a convergent sequence of functions $v_n$) converges to $w:=v\circ\Psi^{-1}\in L^2(V)$ and the $\liminf$ inequality from \cref{thm:Gamma-convergence_standard} inequality in the Euclidean setting we get
\begin{align*}
    \int_V \abs{\grad w}^2\d x < \infty.
\end{align*}
Therefore, the trace of $w$ on the $d-1$-dimensional set $\mathcal S := V\cap\{x\in\R^d\st x_1=0\}$ is well-defined. Thus, the trace of $v$ on $\Psi^{-1}(\mathcal S)$ is well defined and hence
\begin{align*}
    \trace\sind{1}(v\vert_U) = \trace\sind{2}(v\vert_U).
\end{align*}
\end{proof}

\subsection{Smooth approximation with trace constraint} 
\label{sec:appendix_smooth_approx}
We show that smooth functions on a union of manifolds of equal dimension are dense in the space of $H^1$-functions which have identical traces on the intersection $\M\sind{12}$ in case the latter has codimension one.
This is nontrivial since smoothing the functions separately on each of the manifolds is not guaranteed to lead to traces values which agree. We were not able to find such result in the literature and think it is of independent interest. 

\begin{lemma} \label{lem:density}
Let $d\sind{1}=d\sind{2}=:d$, assume that $d\sind{12}=d-1$, and define
\begin{align*}
    \mathcal S 
    := 
    \left\{ u \in L^2(\M) \st 
    u\vert_{\M\sind{i}} \in C^\infty(\M\sind{i}) \te{ for } i=1,2 \te{ and }\trace\sind{1}(u)=\trace\sind{2}(u)\right\}.
\end{align*}  
Then $\mathcal S$ is dense in 
\begin{align*}
    H^1(\M) :=
    \left\lbrace
    u \in L^2(\M)
    \st 
    u\vert_{\M\sind{i}} \in H^1(\M\sind{i}) \text{ for }i\in\{1,2\}\text{ and }
    \trace\sind{1}(u)=\trace\sind{2}(u)
    \right\rbrace.
\end{align*}
\end{lemma}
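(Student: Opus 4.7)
The plan is to mollify $u$ separately on each manifold $\M\sind{i}$ to obtain smooth approximants and then repair the small resulting mismatch of traces on $\M\sind{12}$ via a smooth correction on $\M\sind{1}$. I would first observe that since each $\M\sind{i}$ is a closed smooth Riemannian manifold, standard mollification (convolution in local charts using a partition of unity, or the heat semigroup on $\M\sind{i}$) produces $u_k\sind{i} \in C^\infty(\M\sind{i})$ with $u_k\sind{i} \to u|_{\M\sind{i}}$ in $H^1(\M\sind{i})$. Continuity of the trace operator combined with $\trace\sind{1}(u) = \trace\sind{2}(u)$ then yields that
\begin{equation*}
g_k := \trace\sind{1}(u_k\sind{1}) - \trace\sind{2}(u_k\sind{2}) \in C^\infty(\M\sind{12}), \qquad g_k \to 0 \text{ in } H^{1/2}(\M\sind{12}).
\end{equation*}

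The heart of the argument is to construct a right inverse $R : H^{1/2}(\M\sind{12}) \to H^1(\M\sind{1})$ of $\trace\sind{1}$ that is bounded on these fractional scales and that additionally maps $C^\infty(\M\sind{12})$ into $C^\infty(\M\sind{1})$. The naive product extension $g(s)\chi(t)$ in normal coordinates is smoothness-preserving but bounded only as $H^1 \to H^1$, not as $H^{1/2}\to H^1$, while the harmonic extension is bounded but generally not smooth across $\M\sind{12}$; the needed compromise is to smooth $g$ along $\M\sind{12}$ at a scale proportional to the distance to $\M\sind{12}$. Concretely, after localizing to a tubular neighborhood of the form $\M\sind{12}\times(-\delta,\delta)$ in $\M\sind{1}$ via a finite atlas and a partition of unity (to handle a possibly non-trivial normal bundle), I would set
\begin{equation*}
Rg(s,t) := \chi(t)\, \bigl(e^{-t^2 \Delta_{\M\sind{12}}} g\bigr)(s),
\end{equation*}
where $\chi$ is a smooth cutoff with $\chi(0)=1$ supported in $(-\delta,\delta)$ and $e^{-\tau \Delta_{\M\sind{12}}}$ is the heat semigroup on $\M\sind{12}$. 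A direct spectral computation in an orthonormal eigenbasis of $\Delta_{\M\sind{12}}$ gives $\|Rg\|_{H^1(\M\sind{1})} \lesssim \|g\|_{H^{1/2}(\M\sind{12})}$; smoothness of $g$ propagates to $Rg$ because $t\mapsto t^2$ is smooth and the heat semigroup is smoothing in $s$; and $Rg|_{\M\sind{12}} = g$ is immediate.

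With $R$ in hand, I would then set $\widetilde u_k\sind{1} := u_k\sind{1} - R g_k$ and $\widetilde u_k\sind{2} := u_k\sind{2}$. Then $\widetilde u_k\sind{i} \in C^\infty(\M\sind{i})$ and, by construction,
\begin{equation*}
\trace\sind{1}(\widetilde u_k\sind{1}) = \trace\sind{1}(u_k\sind{1}) - g_k = \trace\sind{2}(u_k\sind{2}) = \trace\sind{2}(\widetilde u_k\sind{2}),
\end{equation*}
so $(\widetilde u_k\sind{1}, \widetilde u_k\sind{2}) \in \mathcal{S}$. Combining $u_k\sind{i} \to u|_{\M\sind{i}}$ in $H^1(\M\sind{i})$ with the bound $\|Rg_k\|_{H^1(\M\sind{1})} \lesssim \|g_k\|_{H^{1/2}(\M\sind{12})} \to 0$ yields convergence in $H^1(\M\sind{1})\oplus H^1(\M\sind{2})$, which is the claimed density. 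The main obstacle is the construction of $R$: one has to simultaneously ensure boundedness at the fractional scale $H^{1/2}\to H^1$ and preservation of smoothness, which is what rules out both the cutoff extension and the harmonic extension and motivates the heat-semigroup-based choice.
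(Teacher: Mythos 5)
Your argument is correct, but it takes a genuinely different route from the paper. The paper localizes around the intersection with a tubular--neighborhood/partition-of-unity argument, transports the piece of $u$ on $\M\sind{2}$ over to $\M\sind{1}$ via a diffeomorphism $\Psi$ between the two tubular neighborhoods that fixes $\M\sind{12}$, so that the difference $u_0=\bar u\sind{1}-\bar u\sind{2}\circ\Psi^{-1}$ has \emph{zero} trace; it then approximates $u_0$ by smooth functions compactly supported on either side of $\M\sind{12}$ and transports back, never invoking any extension operator at the fractional scale. You instead mollify on each manifold separately and repair the resulting trace mismatch $g_k\to 0$ in $H^{1/2}(\M\sind{12})$ by subtracting $Rg_k$, where $R$ is a right inverse of $\trace\sind{1}$ that is simultaneously bounded $H^{1/2}(\M\sind{12})\to H^1(\M\sind{1})$ and maps $C^\infty$ to $C^\infty$; your heat-semigroup choice $Rg(s,t)=\chi(t)\,(e^{-t^2\Delta_{\M\sind{12}}}g)(s)$ does deliver both properties (the spectral computation you allude to gives $\int\lambda_j^{2}t^{2}e^{-2t^{2}\lambda_j}\,\mathrm dt\sim\lambda_j^{1/2}$ etc., and smoothness up to $t=0$ holds because the dependence is through $t^2$), and you correctly identify why neither the product extension $g(s)\chi(t)$ (only $H^1\to H^1$) nor the Poisson-type extension (not smooth at $t=0$) suffices. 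What each approach buys: yours is more global and modular --- once the smoothness-preserving $H^{1/2}\to H^1$ extension operator is granted, the proof is three lines, and the same scheme generalizes to matching traces on any compact hypersurface --- at the price of invoking fractional trace/extension machinery; the paper's proof is more elementary and self-contained, trading the extension operator for explicit local diffeomorphisms and the standard density of compactly supported smooth functions in the zero-trace class. Two minor points you should tighten if writing this up: the norm bound for $R$ should be checked against the (non-product but comparable) Fermi-coordinate metric of the tubular neighborhood, and the possible non-triviality of the normal bundle of $\M\sind{12}$ in $\M\sind{1}$ is handled most cleanly by writing $Rg(x)=\chi(d(x))\,\bigl(e^{-d(x)^2\Delta_{\M\sind{12}}}g\bigr)(\pi(x))$ with $d$ the distance to $\M\sind{12}$ and $\pi$ the nearest-point projection, since this depends only on $d(x)^2$ and is therefore globally defined and smooth.
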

\begin{proof}
    We let $u \in H^1(\M)$ and abbreviate $u\sind{i} := u\vert_{\M\sind{i}}$ for $i\in\{1,2\}$.

    \paragraph{Step 1, local tubular neighborhood theorem.}
    For any point $x \in \M\sind{12}$ let us consider an open neighborhood $V\subset\R^N$ of $x$ in $\R^N$.
    Let us furthermore consider an open neighborhood $O$ of $x$ in $\M\sind{12}$, compactly contained in $V$.
    We choose the neighborhood $O$ sufficiently small such that we can apply tubular neighborhood theorem to obtain $\eps>0$ and open neighborhoods $U\sind{i}$ of $x$ in $\M\sind{i}$ as well as diffeomorphisms $\Psi\sind{i}:U\sind{i}\to O \times(-\eps,\eps)$ with $\Psi\sind{i}(p,0)=p$ for all $p \in O$.
    Consequently, we can define the diffeomorphism $\Psi := (\Psi\sind{1})^{-1} \circ \Psi\sind{2} : U\sind{2}\to U\sind{1}$.
     Note that by construction the intersection $\M\sind{12}$ divides $U\sind{i}$ into two parts, $U\sind{i} \setminus \M\sind{12} =U\sind{i}_+ \cup U\sind{i}_-$ where $U\sind{i}_+ := (\Psi\sind{i})^{-1}(O\times(0,\eps))$ and $U\sind{i}_- := (\Psi\sind{i})^{-1}(O\times(-\eps,0))$. 
    Let $W \subset \R^N$ be an open neighborhood of $x$ such that $W \cap \M\sind{i} \subset U\sind{i}$ for $i=1,2$.

    \paragraph{Step 2, partition of unity.}
    By compactness of $\M\sind{12}$ we can cover $\M\sind{12}$ by finitely many such open sets $(O_k)_{k\in K}$ corresponding to points on $\M\sind{12}$, as constructed above. 
    By  $(U\sind{i}_k)_{k\in K}$, $(W_k)_{k\in K}\subset\R^N$, and $(\Psi\sind{i}_k)_{k\in K}$ for $i\in\{1,2\}$ we denote the associated open covers and diffeomorphisms constructed in Step 1.
    We let $(\xi_k)_{k\in K\cup\{\infty\}}\subset C^\infty(\R^N)$ be a smooth partition of unity subordinate to the open cover $(W_k)_{k\in K\cup\{\infty\}}$ of $\R^N$ where we let $W_\infty := \R^N\setminus\M\sind{12}$ with $\supp\xi_k\subset W_k$ for all $k\in K\cup\{\infty\}$.

    \paragraph{Step 3, local reduction to zero trace.}
    Using the partition of unity we can now write $u\sind{i} = \sum_{k\in K}\xi_k u + \xi_\infty u$ and note that $\xi_\infty u\equiv 0$ in a neighborhood of the intersection $\M\sind{12}$. 
    Hence, the last term can be easily smoothed without changing its trace values.
    So the key step is to smooth each of the functions $\xi_k u$, which have compact support in $U_k\sind{i}$.

    We now fix the index $k\in K$ and omit it in our notation of the open sets $U\sind{i}$ and the diffeomorphism $\Psi : U\sind{2}\to U\sind{1}$.
    We also use the abbreviation $\bar u\sind{i}:=\xi_k u\sind{i}$.
    We define the functions $\hat{u}\sind{2},u_0:U\sind{1}\to\R$ via $\hat{u}\sind{2}:=\bar u\sind{2}\circ\Psi^{-1}$ and $u_0 := \bar u\sind{1}-\hat{u}\sind{2}$ and note that 
    \[\trace\sind{1}(u_0)=\trace\sind{1}(\bar u\sind{1})-\trace\sind{1}(\bar u\sind{2}\circ\Psi^{-1})=\trace\sind{1}(\bar u\sind{1})-\trace\sind{2}(\bar u\sind{2})=0,\]
    since $\Psi$ equals the identity map on $\M\sind{12}$ and $\trace\sind{i}(\bar u\sind{i})=\trace\sind{i}(\xi_k u\sind{i})$ is independent of $i\in\{1,2\}$.
    We also note that, by its definition, $u_0$ has compact support in $U\sind{1}$
    We can find a smooth approximation $v\sind{1}$ of $\bar u\sind{1}$ with compact support in $U\sind{1}$, not caring about the trace values. 
    Provided we manage to find a smooth approximation $v_0\in C^\infty(\M\sind{1})$ of $u_0$ with $\trace\sind{1}(v_0)=0$, we claim that the function $v\sind{2}:=(v\sind{1} - v_0)\circ\Psi : U\sind{2}\to \R$ is a smooth approximation of $\bar u\sind{2}$ with the same trace as $v\sind{1}$.
    Indeed, since $\Psi$ is the identity on $\M\sind{12}$ and $\trace\sind{1}(v_0)=0$ it holds $\trace\sind{2}(v\sind{2})=\trace\sind{1}(v\sind{1})$.
    Furthermore, we have
    \begin{align*}
        \norm{v\sind{2}-\bar u\sind{2}}_{H^1(U\sind{2})}
        &=
        \norm{(v\sind{1} - v_0)\circ\Psi
        -
        \bar u\sind{2}}_{H^1(U\sind{2})}
        \\
        &=
        \norm{(v\sind{1} - \bar u \sind{1} + \bar u\sind{1} - u_0 + u_0 - v_0)\circ\Psi
        -
        \bar u\sind{2}}_{H^1(U\sind{2})}
        \\
        &=
        \norm{(v\sind{1} - \bar u \sind{1} + \hat u\sind{2} + u_0 - v_0)\circ\Psi
        -
        \bar u\sind{2}}_{H^1(U\sind{2})}
        \\
        &=
        \norm{(v\sind{1} - \bar u \sind{1}+ u_0 - v_0)\circ\Psi}_{H^1(U\sind{2})}
        \\
        &\leq 
        \norm{(v\sind{1}-\bar u\sind{1})\circ\Psi}_{H^1(U\sind{2})}+
        \norm{(u_0-v_0)\circ\Psi}_{H^1(U\sind{2})}
        \\
        &\lesssim
        \norm{\nabla \Psi^{-1}}_{L^\infty(U\sind{1})}
        \left[
        \norm{v\sind{1}-\bar u\sind{1}}_{H^1(U\sind{1})}+
        \norm{u_0-v_0}_{H^1(U\sind{1})}\right]
    \end{align*}
    using that $\Psi$ is a diffeomorphism.
    Hence, $v\sind{2}$ is a smooth approximation of $\bar u\sind{2}$ as claimed.
   
    \paragraph{Step 4, smoothing with zero trace.}
    To conclude the proof we have to prove that we can approximate the function $u_0 \in H^1(\M\sind{1})$, defined above, which is compactly supported in $U\sind{1}$ and has zero trace on the intersection, by a smooth function $v_0$ which too is  supported in $U\sind{1}$ and has zero trace on the intersection.    
       Furthermore, as noted in Step 1 the intersection $\M\sind{12}$ divides $U_k\sind{1}\setminus \M\sind{12}$ into two components $U\sind{1}_\pm$. 
    Since $u_0$ has zero trace in each of the parts and it can be approximated by smooth functions, $v_0^+$ and $v_0^-$ compactly supported in open sets $U\sind{1}_+$ and $U\sind{1}_-$ respectively. The function $v$ defined to be equal to $v_0^+$ on
    $U\sind{1}_+$, $v_0^-$ on
    $U\sind{1}_-$, and zero on $\M\sind{12}\cap U_k\sind{1}$ is the desired smooth approximation of $u_0$ on $U_k\sind{1}$.
 \end{proof}
    
\subsection{Density of Lipschitz functions in \texorpdfstring{$H^1(\M)$}{H1(M)} when \texorpdfstring{$d\sind{2}-d\sind{12}\geq 2$}{d2-d12 >= 2}} 
\label{sec:appendix_Lip_dens}
  
When $d\sind{2}-d\sind{12} \geq 2$ the Sobolev space $H^1(\M)$ is the space of functions whose restriction to $\M\sind{i}$ belong to  $H^1(\M\sind{i})$ for $i=1,2$. Here we show that the Lipschitz functions on the union, $\M$, are still dense in $H^1(\M)$. This helps bridge the gap to the literature on metric measure spaces where the Dirichlet energy is typically defined on the closure of the space of Lipschitz functions.

\begin{lemma} \label{lem:Lip-dens}
   In the setting of \cref{sec:unions}, assume $d\sind{2}-d\sind{12} \geq 2$.
   We claim that Lipschitz continuous functions on $\M$ are dense in $H^1(\M)$ which we previously characterized in \cref{eq:def_H1mv} as $H^1(\M\sind{1}) \times H^1(\M\sind{2})$.
\end{lemma}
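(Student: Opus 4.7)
The plan is to approximate on each manifold separately, exploiting that $\M\sind{12}$ has codimension at least two in $\M\sind{2}$, so its harmonic capacity there vanishes and one can freely prescribe the values of a Lipschitz approximation of $u\sind{2}$ in a neighborhood of $\M\sind{12}$. This will allow the approximations coming from $\M\sind{1}$ and $\M\sind{2}$ to be matched to a single continuous Lipschitz function on $\M$.

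The first step is to build, for each $\eta>0$, a Lipschitz cutoff $\varphi_\eta\colon\M\sind{2}\to[0,1]$ with $\varphi_\eta \equiv 1$ in an inner neighborhood of $\M\sind{12}$, $\varphi_\eta \equiv 0$ outside a larger outer neighborhood, and $\|\varphi_\eta\|_{H^1(\M\sind{2})}\to 0$ as $\eta\to 0$. When $d\sind{2}-d\sind{12}\geq 3$ a standard piecewise-linear bump in the distance to $\M\sind{12}$ works; when $d\sind{2}-d\sind{12}=2$ we instead use the logarithmic interpolation between $\eta$ and $\sqrt{\eta}$ (mirroring the construction leading to \eqref{eq:aux_log}), which gives Dirichlet energy on the order of $1/|\log\eta|$.

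Given $u=(u\sind{1},u\sind{2})\in H^1(\M)$, I first pick a Lipschitz $v\sind{1}_k$ on $\M\sind{1}$ with $\|v\sind{1}_k-u\sind{1}\|_{H^1(\M\sind{1})}<1/k$ (standard density of Lipschitz functions in $H^1$ on a smooth compact manifold). Its restriction $g_k := v\sind{1}_k\vert_{\M\sind{12}}$ is Lipschitz on $\M\sind{12}$, and by a McShane-type extension one finds a Lipschitz function $\tilde g_k$ on $\M\sind{2}$ (with respect to the ambient Euclidean distance) extending $g_k$, with Lipschitz constant controlled by that of $g_k$ and the Lipschitz geometry of the embedding near $\M\sind{12}$. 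Next, pick a Lipschitz $\tilde w_k$ on $\M\sind{2}$ approximating $u\sind{2}-\tilde g_k$ in $H^1(\M\sind{2})$, and define $v\sind{2}_{k,\eta} := (1-\varphi_\eta)\tilde w_k + \tilde g_k$. By the usual product-rule computation, the correction $\varphi_\eta \tilde w_k$ satisfies $\|\varphi_\eta \tilde w_k\|_{L^2}\to 0$ (since $\mu\sind{2}$ of the support of $\varphi_\eta$ tends to $0$) and $\|\nabla(\varphi_\eta \tilde w_k)\|_{L^2}\to 0$ (using $\|\nabla\varphi_\eta\|_{L^2}\to 0$ together with $\|\tilde w_k\|_{L^\infty}<\infty$, and the vanishing support of $(1-\varphi_\eta)-1 = -\varphi_\eta$ for the remaining term), so $v\sind{2}_{k,\eta}\to u\sind{2}$ in $H^1(\M\sind{2})$ as $\eta\to 0$ and then $k\to\infty$. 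By construction $v\sind{2}_{k,\eta}\equiv\tilde g_k$ in the inner neighborhood of $\M\sind{12}$.

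It remains to glue: define $v_{k,\eta}$ to equal $v\sind{1}_k$ on $\M\sind{1}$ and $v\sind{2}_{k,\eta}$ on $\M\sind{2}$. The two pieces agree on $\M\sind{12}$ (both give $g_k$), so $v_{k,\eta}$ is well-defined on $\M$, and its $H^1(\M\sind{i})$-norm converges to $u\sind{i}$ after diagonalization in $(k,\eta)$. The main obstacle is verifying that $v_{k,\eta}$ is genuinely Lipschitz on $\M$ with the ambient Euclidean metric, i.e. controlling $|v_{k,\eta}(x)-v_{k,\eta}(y)|$ for $x\in\M\sind{1}$ and $y\in\M\sind{2}$. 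For $|x-y|$ small this is handled by \cref{lem:angle}: there is $C>0$ such that if $x\in\M\sind{1}$, $y\in\M\sind{2}$ with $|x-y|\leq\eps$, then $\mathrm{dist}(x,\M\sind{12})\leq C\eps$, so picking $z\in\M\sind{12}$ with $|x-z|\leq C|x-y|$ and $|z-y|\leq (C+1)|x-y|$ and using Lipschitzness of $v\sind{1}_k$ and $\tilde g_k$ gives $|v\sind{1}_k(x)-\tilde g_k(y)|\leq L|x-y|$, provided $\eta$ is small enough that $y$ lies in the region where $v\sind{2}_{k,\eta}=\tilde g_k$. For $|x-y|$ larger than some fixed constant, boundedness of $v_{k,\eta}$ is sufficient. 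Thus $v_{k,\eta}\in\mathrm{Lip}(\M)$ and the diagonal sequence provides the required approximation of $u$.
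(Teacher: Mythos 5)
Your proof is correct and rests on the same mechanism as the paper's: since $d\sind{2}-d\sind{12}\geq 2$, the intersection $\M\sind{12}$ has vanishing capacity in $\M\sind{2}$, so values near it can be overwritten at Dirichlet cost of order $1/\abs{\log\eta}$ — your cutoff $\varphi_\eta$ is exactly the paper's logarithmic profile, and the paper's interpolant is nothing but $u\sind{2}+\varphi_\eta\,(u\sind{1}\circ\pi-u\sind{2})$. The differences are in the packaging. The paper first reduces to $u$ with smooth restrictions and transports the data from $\M\sind{1}$ to $\M\sind{2}$ via the geodesic projection $\pi$ onto $\M\sind{12}$; you instead work directly with Lipschitz approximants $v\sind{1}_k$, $\tilde w_k$ and carry the boundary data by a McShane extension $\tilde g_k$ of the trace of $v\sind{1}_k$, which skips the smooth-reduction step and makes the agreement of the two pieces on $\M\sind{12}$ automatic. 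You are also more explicit than the paper about why the glued function is Lipschitz \emph{across} the two manifolds, invoking \cref{lem:angle}. One quantifier in that last step should be repaired: the issue is not that ``$\eta$ is small enough,'' but that the genuine Lipschitz estimate applies to pairs $x\in\M\sind{1}$, $y\in\M\sind{2}$ with $\abs{x-y}\lesssim\eta$ (so that $y$ lies in the pinned region $\{\varphi_\eta=1\}$), while pairs with $\abs{x-y}\gtrsim\eta$ — not merely $\abs{x-y}$ larger than a fixed constant — are handled by boundedness of $v_{k,\eta}$; the resulting Lipschitz constant then depends on $k$ and $\eta$, which is harmless for a density statement since no uniform Lipschitz bound is claimed.
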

\begin{proof}
Since functions $u \in H^1(\M)$ whose restrictions to $\M\sind{1}$ and to $\M\sind{2}$ are smooth are dense in $H^1(\M)$ it suffices to show that such function $u$ can be approximated in $H^1(\M)$ by Lipschitz continuous functions. Let $u_i$ be the smooth representative of the restriction of $u$ to $\M\sind{i}$ for $i=1,2$. The construction is similar to the one in the proof of limsup results in \cref{sec:NL_limsup,sec:limsup}.

For $n>1$  let $\eps_n=\frac{1}{n}$. 
Let us consider $n$ large enough so that  the orthogonal geodesic projection $\pi : \M\sind{2} \to \M\sind{12}$ onto the intersection is uniquely defined in an $\eps_n$-neighborhood of $\M\sind{12}$. 
We denote the distance of $x\in\M\sind{2}$ to $\M\sind{12}$ by $d(x):= \dist(x,\M\sind{12})$. Let
\begin{align}
    u_n(x) =
    \begin{cases}
        u\sind{1}(x) \quad & \te{if } x \in \M\sind{1}, \\
        u\sind{1}(\pi(x)) \quad & \te{if } x\in \M\sind{2}      \te{ and }        d(x) \leq \eps_n, \\
        u\sind{2}(x) + \frac{\ln \sqrt{\eps_n} - \ln d(x)}{\ln \sqrt{\eps_n} - \ln \eps_n } \left( u\sind{1}(\pi(x)) - u\sind{2}(x) \right) & \te{if } x\in \M\sind{2}      \te{ and }     \eps_n  < d(x) <  \sqrt{\eps_n} \\
        u\sind{2}(x) \quad & \te{if } x\in \M\sind{2} \te{ and }        d(x) \geq \sqrt{\eps_n}.
    \end{cases}
\end{align}
We note that $u_n$ is Lipschitz by construction, since both $\pi$ and $d$ are Lipschitz functions on the regions where they are used. 
It remains to show that $u_n - u\sind{2} \to 0$ in $H^1(\M\sind{2})$. 
Let $A_n = \{ x \in \M\sind{2} \::\: d(x) < \sqrt{\eps_n} \}$. Note that volume of $A_n$ vanishes as $n \to \infty$ while $u_n$ and $u\sind{2}$ are uniformly bounded on $\M\sind{2}$. Thus $u_n - u\sind{2} \to 0$ in $L^2(\M\sind{2})$. 
Note that for $x \in \M\sind{2}$
\begin{align}
    \nabla u_n(x) =
    \begin{cases}
        D\pi(x)^T \nabla u\sind{1}(\pi(x)) \quad & \te{if }       d(x) \leq \eps_n, \\
        \nabla u\sind{2}(x) 
        + \frac{2\nabla  d(x)}{d(x) \ln \eps_n}  \left( u\sind{1}(\pi(x)) - u\sind{2}(x) \right) & \\
        + \frac{\ln \sqrt{\eps_n} - \ln d(x)}{\ln \sqrt{\eps_n} - \ln \eps_n } \left( D\pi(x)^T \nabla u\sind{1}(\pi(x)) - \nabla u\sind{2}(x) \right) 
        & \te{if }    \eps_n  < d(x) <  \sqrt{\eps_n} \\
        \nabla u\sind{2}(x) \quad & \te{if }       d(x) \geq \sqrt{\eps_n}.
    \end{cases}
\end{align}
Showing that $\nabla u_n -\nabla u\sind{2}$ converges to zero in $L^2(\M\sind{2})$ boils down, due to uniform in $x$ boundedness of $D\pi$, $\nabla u\sind{1}$, and $\nabla d$, to showing that 
\[ \frac{1}{(\ln \eps_n)^2 }\int_{\{\eps_n  < d(x) <  \sqrt{\eps_n} \}}  \frac{1}{d(x)^2} \d x \to 0 \qquad \te{as } n \to \infty. \]
This follows by disintegrating over perpendicular slices, and arguing  analogously to \labelcref{eq:aux_log} in each slice, to obtain
\[ \int_{\{\eps_n  < d(x) <  \sqrt{\eps_n} \}}  \frac{1}{d(x)^2} \d x = O(|\ln \eps_n|). \]
\end{proof}

\end{appendix}

\end{document}